\documentclass[reqno,11pt,letterpaper]{amsart}
 
\usepackage{amsmath}
\usepackage{amsthm}  
\usepackage{verbatim}
\usepackage{enumerate} 
\usepackage{mathtools}  
\usepackage{amssymb}
\usepackage{mathrsfs}
\usepackage[top=1.5in, bottom=1.5in, left=0.7in, right=0.7in]{geometry}  
\usepackage[colorlinks]{hyperref}
\hypersetup{
	colorlinks,
	citecolor=blue,
	linkcolor=red
}   
\numberwithin{equation}{section}
\setcounter{tocdepth}{1}
\usepackage{xypic}
\usepackage{bm}
\usepackage{tikz}
\usetikzlibrary{decorations.pathreplacing}
\usepackage{xcolor}
\usepackage{float}
\usepackage{cleveref}
\usepackage{comment}

\newtheorem{theorem}{\textbf{Theorem}}[section]
\newtheorem{theorem*}{\textbf{Theorem}}
\newtheorem{thmx}{Theorem}

\newtheorem{propx}[thmx]{\textbf{Proposition}}
\newtheorem{conjecturex}[thmx]{\textbf{Conjecture}}
\newtheorem{definition}[theorem]{\textbf{Definition}}
\newtheorem{proposition}[theorem]{\textbf{Proposition}}
\newtheorem{lemma}[theorem]{\textbf{Lemma}}

\newtheorem{claim*}[theorem*]{\textbf{Claim}}

\newtheorem{corollary}[theorem]{\textbf{Corollary}}
\newtheorem{remark}[theorem]{\textbf{Remark}}

\newtheorem{example}[theorem]{\textbf{Example}}

\newtheorem{definition/proposition}[theorem]{\textbf{Definition/Proposition}}


\def\N{{\mathbb N}}
\def\R{\mathbb{R}}
\def\Z{{\mathbb Z}}

\def\C{{\mathbb C}}
\def\D{{\mathbb D}}

\def\Q{{\mathbb Q}}
\def\H{{\mathbb H}}

\newcommand{\CP}{\mathbb{C}\mathbb{P}}
\newcommand{\RP}{\mathbb{R}\mathbb{P}}
\newcommand{\HP}{\mathbb{H}\mathbb{P}}

\def\k{\kappa}

\def\i{\iota}


\def\cA{{\mathcal A}}

\def\cM{{\mathcal M}}

\def\cO{{\mathcal O}}








\def\rD{{\rm D}}

\def\rd{{\rm d}}


\newcommand{\CR}{\mathrm CR}






\def\la{\langle\,}
\def\ra{\,\rangle}

\def\std{\rm std}
\def\i{\mathbf{i}}
\def\j{\mathbf{j}}
\def\k{\mathbf{k}}
\def\CZ{\rm CZ}

\def\orb{\rm orb}

\DeclareMathOperator{\Ima}{im}
\DeclareMathOperator{\ind}{ind}
\DeclareMathOperator{\Hom}{Hom}
\DeclareMathOperator{\Id}{id}

\DeclareMathOperator{\coker}{coker}

\DeclareMathOperator{\rank}{rank}

\DeclareMathOperator{\End}{End}

\DeclareMathOperator{\virdim}{virdim}

\DeclareMathOperator{\Conj}{Conj}
\DeclareMathOperator{\diag}{diag}
\DeclareMathOperator{\age}{age}
\DeclareMathOperator{\Tor}{Tor}

\DeclareMathOperator{\lSFT}{lSFT}
\DeclareMathOperator{\ord}{ord}
\DeclareMathOperator{\md}{md}

\newcommand{\Addresses}{{
		\bigskip
		\footnotesize

	     Zhengyi Zhou, \par\nopagebreak
	    \textsc{Morningside Center of Mathematics and Institute of Mathematics, AMSS, CAS, China}\par\nopagebreak
		\textit{E-mail address}: \href{mailto:zhyzhou@amss.ac.cn}{zhyzhou@amss.ac.cn}

}}

\title{On fillings of  contact links of quotient singularities}
\author{Zhengyi Zhou}

\begin{document}
	\maketitle
\begin{abstract}
We study several aspects of fillings for links of general isolated quotient singularities using Floer theory, including co-fillings, Weinstein fillings, strong fillings, exact fillings and exact orbifold fillings, focusing on the non-existence of exact fillings of contact links of isolated terminal quotient singularities. We provide an extensive list of isolated terminal quotient singularities whose contact links are not exactly fillable, including $\C^n/(\Z/2)$ for $n\ge 3$, which settles a conjecture of Eliashberg, quotient singularities from general cyclic group actions and finite subgroups of $SU(2)$, and all terminal quotient singularities in complex dimension $3$. We also obtain uniqueness of the \emph{orbifold} diffeomorphism type of \emph{exact orbifold fillings} of contact links of some isolated terminal quotient singularities.
\end{abstract}
\tableofcontents
\section{Introduction}
Contact structures arise naturally from symplectic manifolds with a convex boundary. The study of contact manifolds as convex boundaries of symplectic manifolds and their symplectic fillings was pioneered by Gromov \cite{MR809718} and Eliashberg \cite{MR1171908}. In particular, they showed that any contact 3-fold with a (weak) symplectic filling is necessarily tight. This result was generalized to all dimensions by Massot, Niederkr\"{u}ger and Wendl,\cite{MR3044125}, see also \cite{schmaltz2020non}. As a consequence, the different types of fillability give a coarse understanding of the landscape of contact structures beyond the fundamental dichotomy between overtwistedness and tightness \cite{MR3455235,MR1022310}. Namely, we have the following inclusions among classes of contact structures:
$$\{\text{Weinstein fillable}\}\subset \{\text{exactly fillable}\} \subset \{\text{strongly fillable}\} \subset \{\text{weakly fillable}\} \subset \{\text{tight}\}.$$
All of the inclusions are proper: in dimension three, they were shown by Bowden \cite{MR2979998}, Ghiggini \cite{MR2175155}, Eliashberg \cite{MR1383953}, Etnyre and Honda \cite{MR1908061}, and in higher dimensions by Bowden, Crowley, and Stipsicz \cite{stein}, the author \cite{RP}, Bowden, Gironella, and Moreno \cite{MR4493326}, Massot, Niederkr{\"u}ger, and Wendl \cite{MR3044125}. 

The guiding question in understanding the second inclusion is the following conjecture by Eliashberg:
\begin{conjecturex}[{\cite[\S 1.9]{question}}]\label{conj:E}
$(\RP^{2n-1},\xi_{\std})$ is not exact fillable if $n\ge 3$.
\end{conjecturex}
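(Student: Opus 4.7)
My plan is to obstruct exact fillings of $(\RP^{2n-1},\xi_{\std})$ for $n\ge 3$ via positive $S^1$-equivariant symplectic homology, exploiting the high minimal Conley-Zehnder index of Reeb orbits. Suppose for contradiction that $W$ is an exact filling of $(\RP^{2n-1},\xi_{\std})$. Since $H^2(\RP^{2n-1};\Z)=\Z/2$ is torsion, $c_1(\xi_{\std})$ vanishes rationally, and one may arrange $c_1(W)=0$ in $H^2(W;\Q)$ so that Floer invariants carry a well-defined $\Z$-grading over $\Q$.

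The core Reeb-orbit computation is as follows. Identifying $\RP^{2n-1}=S^{2n-1}/(\Z/2)$, the Reeb flow for the standard contact form is induced from the Hopf flow and all closed orbits are Morse-Bott: at each iteration $k\ge 1$ one has a $\CP^{n-1}$-family of mean Maslov index $kn$. Using a perfect Morse function on $\CP^{n-1}$, the Morse-Bott perturbation produces $n$ non-degenerate orbits at each level $k$, with Conley-Zehnder indices
\[
kn-(n-1),\ kn-(n-3),\ \ldots,\ kn+(n-1).
\]
The smallest such index is $1$, realized uniquely at $k=1$ by the bottom of the Morse function. In the standard Bourgeois-Oancea grading $|\gamma|=\CZ(\gamma)+n-3$, the lowest-degree generator of $SH^{+,S^1}_*(W;\Q)$ sits in degree $n-2$, so $SH^{+,S^1}_*(W;\Q)$ vanishes in all degrees $\le n-3$. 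The key threshold crossing is that for $n\ge 3$, this vanishing includes degree $0$ (for $n=2$ it degenerates, consistent with the known exact filling of $\RP^3$).

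The contradiction then follows from the Bourgeois-Oancea/Gysin long exact sequence
\[
\cdots \to H_{*}(W,\partial W;\Q)[\![u]\!] \to SH^{S^1}_{*}(W;\Q) \to SH^{+,S^1}_{*}(W;\Q) \to H_{*-1}(W,\partial W;\Q)[\![u]\!] \to \cdots
\]
combined with the presence of the unit $1\in SH^*(W;\Q)$ for an exact filling. Tracking the unit class through the $u$-module structure forces a nonzero class in low degree of $SH^{+,S^1}_*(W;\Q)$, incompatible with the vanishing established above. In the degenerate case $SH^*(W;\Q)=0$, Viterbo's isomorphism instead identifies $SH^{+,S^1}_*$ with a shifted copy of relative cohomology of $W$ (tensored with $\Q[\![u]\!]$), and the same degree count produces a class inconsistent with $H_0(W,\partial W;\Q)=0$.

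The main obstacles lie in three places. First, justifying $c_1(W)=0$ rationally so that the $\Z$-grading is valid; this may require either an ad hoc topological argument or passing to a double cover. Second, confirming that the Morse-Bott spectral sequence does not cancel the lowest-index generator through higher differentials — this is the technical heart, since exotic pseudoholomorphic curves could in principle kill the generator that drives the contradiction. Third, carrying out the degree bookkeeping in the long exact sequence precisely enough that the degree-$0$ obstruction is unambiguously localized, which likely requires a case split based on whether $SH^*(W;\Q)$ vanishes.
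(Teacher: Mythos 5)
There is a genuine gap, in two places. First, you cannot ``arrange'' $c_1(W)=0$ in $H^2(W;\Q)$: the filling $W$ is hypothetical and given, and since $H^2(\RP^{2n-1};\Q)=0$ the restriction map gives no control over $c_1^{\Q}(W)$. This is exactly the central difficulty: the Conley--Zehnder indices you compute on the boundary (minimal index $1$ at the shortest, non-contractible orbits -- that part of your computation is fine) are defined via a trivialization over $\RP^{2n-1}$ that need not extend over $W$, so they do not grade the Floer complex of $W$. The paper never assumes such a grading; it only uses the boundary index to define a \emph{filtration}, and proves compatibility with the differential by neck-stretching together with terminality of $\C^n/(\Z/2)$ (which holds precisely for $n\ge 3$). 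When $n=2^k$ the total Chern class of $\xi_{\std}$ is trivial, so no topological argument forces $c_1^{\Q}(W)=0$ either; in fact the case where one \emph{can} assume the Calabi--Yau condition corresponds to the paper's separate theorem requiring $G\not\subset SU(n)$, which fails for $-\Id\in SU(n)$ with $n$ even, i.e.\ exactly in the open cases.

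Second, even granting the grading, your endgame does not produce a contradiction. Vanishing of $SH^{+,S^1}$ in low degrees together with the unit being killed is not inconsistent: it is exactly what happens, and what the paper proves, namely $SH^*(W;\Q)=0$ and $SH^*_+(W;\Q)\cong H^{*+1}(W;\Q)\cong \Q\oplus\Q[-n]$. This formal picture is realized both by the exact orbifold filling $\C^n/(\Z/2)$ and by the strong (non-exact) filling given by the disk bundle of $\cO(-2)$ over $\CP^{n-1}$, which have the same Reeb data; so no bookkeeping of degrees in the Gysin/tautological sequence can by itself rule out exact fillings, especially since for even $n$ all the relevant gradings ($2\age(-\Id)=n$) are even and parity constraints are automatically satisfied. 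The paper's actual route needs strictly more: the secondary coproduct (via Eliashberg--Ganatra--Lazarev) to recover the intersection form and show $\int \mu^2=\tfrac12$ after capping, simple connectivity of $W$, and then a genuinely topological contradiction on the closed symplectic manifold $X=W\cup C$ with $\sigma(X)=2$, $\chi(X)=n+2$: the Hirzebruch signature theorem compared with its orbifold version, a $2$-adic analysis of the numerators of Bernoulli numbers for $n=2^k>4$, and for $n=4$ an $\hat{A}$-genus integrality argument on a spin manifold built from two copies of $W$. Your proposal stops well short of any of this.
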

Fortunately, the real projective space is natural and simple enough so that one can understand both pseudo-holomorphic curves as well as the topology sufficiently well.  We showed in \cite{RP} that Conjecture \ref{conj:E} holds if $n\ne 2^k$. The odd $n$ cases were also proved by Ghiggini and Niederkr\"{u}ger \cite{MR4406869} using different methods. In this paper, we fill the gap, hence settle the conjecture.
\begin{thmx}\label{thm:RP}
For $n\ge 3$, $(\RP^{2n-1},\xi_{\std})$ is not exactly fillable. 
\end{thmx}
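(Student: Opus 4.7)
The plan is to derive a contradiction from an assumed exact filling $W$ of $(\RP^{2n-1},\xi_{\std})$ by passing to the universal double cover and running an equivariant Floer argument there. The contact double cover $S^{2n-1}\to\RP^{2n-1}$ pulls $W$ back to an exact filling $\tilde W$ of $(S^{2n-1},\xi_{\std})$ carrying a free exact symplectic involution $\sigma$ with quotient $W$. By Eliashberg--Floer--McDuff, $\tilde W$ is diffeomorphic to the ball $B^{2n}$, so in particular $SH^*(\tilde W;\F_2)=0$. The problem is thus reduced to obstructing the existence of $\sigma$.

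To obstruct $\sigma$, I would compute the $\Z/2$-equivariant symplectic cohomology $SH^*_{\Z/2}(\tilde W;\F_2)$ in two different ways. From the Liouville-filling side, the Tate/localization spectral sequence collapses cleanly because $SH^*(\tilde W;\F_2)=0$, giving a fully explicit answer determined by the $\sigma$-action on $H^*(\tilde W;\F_2)$ and on $H^*(B\Z/2;\F_2)$. From the Reeb side, the Reeb flow on $\RP^{2n-1}$ is Morse--Bott with $\CP^{n-1}$-families of orbits at a well-understood sequence of action levels, and a direct Conley--Zehnder calculation pins down the underlying Floer complex up to its differential. Comparing the two outputs constrains the allowed structure of $\sigma$ in a way that depends sensitively on $n$.

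For $n\ne 2^k$, a Poincar\'e polynomial comparison of the two computations already fails -- this is essentially the mechanism used in \cite{RP}. When $n=2^k$, the additive ranks of the two sides match and a finer invariant is needed. The new ingredient I would bring in is the $\F_2$-power operation on $\Z/2$-equivariant symplectic cohomology arising from $2$-fold branched covers of Floer cylinders. Even when the additive theories line up, this power operation obstructs the existence of a $\sigma$ compatible with the Morse--Bott model: it produces a class in a specific degree where the filling-side computation forces vanishing, and exactly in the power-of-two dimensions this class is nonzero.

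The main obstacle will be making this last step rigorous: constructing the power operation intrinsically on $SH^*_{\Z/2}$ of an arbitrary exact filling, establishing its compatibility with the PSS map into the Morse--Bott model on the boundary, and pinning down the single degree where it fails to vanish precisely when $n=2^k$. The other ingredients -- the double-cover reduction, the vanishing of $SH^*(\tilde W)$, and the Morse--Bott index calculation on $\RP^{2n-1}$ -- are either classical or direct upgrades of material already available in \cite{RP}.
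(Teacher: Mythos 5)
Your first step fails, and it is precisely the failure mode that makes this theorem hard. Pulling the filling $W$ back along the double cover $S^{2n-1}\to\RP^{2n-1}$ requires a connected double cover of $W$ restricting to that cover on the boundary, which exists only if the generator of $\pi_1(\RP^{2n-1})=\Z/2$ survives in $\pi_1(W)$ (equivalently, maps nontrivially to some index-two quotient). For an \emph{exact} filling this never happens: the paper proves that $\pi_1(\partial W)\to\pi_1(W)$ is not injective for any exact filling of $(S^{2n-1}/G,\xi_{\std})$, and in the terminal range $n\ge 3$ that $W$ is in fact simply connected (Theorem \ref{thm:A}, \Cref{prop:injective}, \Cref{cor:simply_connected}). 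So there is no $\tilde W$ and no involution $\sigma$, and the whole equivariant-Floer superstructure has nothing to act on. A telling sanity check: if your reduction were valid, you would be done immediately without any equivariant symplectic cohomology or power operations, since by Eliashberg--Floer--McDuff $\tilde W$ would be a compact ball and a free involution on a ball already contradicts the Brouwer fixed-point theorem. This easy argument is exactly what rules out \emph{Weinstein} fillings (\Cref{prop:EFM}), where the handle structure forces $\pi_1(W)=\pi_1(\partial W)$; the content of Eliashberg's conjecture is that exact fillings can kill the boundary fundamental group, so no covering trick is available.

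For comparison, the paper's route is to extract the topology of a hypothetical exact filling directly from Floer theory on $W$ itself: a filtration by boundary Conley--Zehnder indices shows $SH^*(W;\Q)=0$ and $H^*(W;\Q)\cong\Q\oplus\Q[-n]$, the secondary coproduct (via Eliashberg--Ganatra--Lazarev) pins down the intersection form, and simple connectivity comes from \Cref{cor:simply_connected}. One then caps off with the disk bundle of the degree-two line bundle over $\CP^{n-1}$ and derives a contradiction from the Hirzebruch signature theorem -- a non-integral Chern number controlled by $2$-adic properties of Bernoulli numerators -- with a separate spin/$\hat{A}$-genus argument for $n=4$. Any successful proposal has to work on $W$ directly in this way (or find some other mechanism), rather than on a cover that provably does not exist.
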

When $n=2^k$, the total Chern class of $\xi_{\std}$ becomes trivial, which sabotages the topological obstructions in \cite{RP}. Hence to prove \Cref{thm:RP}, we need to enhance both the pseudo-holomorphic curve argument and the topological argument. On the pseudo-holomorphic curve side, we use symplectic cohomology for exact orbifolds developed by Gironella and the author \cite{gironella2021exact} and study the secondary coproduct on positive symplectic cohomology, which was proposed by Seidel \cite{biased}, defined by Ekholm and Oancea \cite{dga}, and recently studied extensively by Cieliebak, Hingston and Oancea \cite{cieliebak2020loop,cieliebak2020poincar,cieliebak2020multiplicative}. Then by a result of Eliashberg, Ganatra, and Lazarev \cite{EGL}, we can get information on the intersection form of the filling. On the topological side, the contradiction is a non-integer Chern number derived from the Hirzebruch signature theorem. The argument essentially boils down to studying the $2$ factors of the numerators of Bernoulli numbers, whose proof is motivated by a proof of the Clausen-Staudt Theorem dating back to 1883 \cite{MR1579945}. When $n=4$, the arguments above do not yield a contradiction. But we can build a spin manifold from a hypothetical exact filling. Finally, we can arrive at a contradiction by showing its $\hat{A}$-genus is not integer, violating the celebrated Atiyah–Singer index theorem for the Dirac operator. 

\subsection{Fillings of Contact links of isolated quotient singularities}
Real projective spaces with the standard contact structure fall in a much larger class of contact manifolds: contact links of isolated singularities. Links of isolated singularities are natural and interesting examples of contact manifolds, which are always strongly fillable by Hironaka’s resolution of singularities. In the case of smoothable singularities, the smoothing provides natural exact/Weinstein fillings to the contact link. In complex dimension $2$, contact properties of links, especially filling properties, in particular Weinstein fillings,  were extensively studied, see \cite{MR3707744} for an introduction and references therein for details. In such a low dimension, tools from low dimensional topology and intersection theory of pseudo-holomorphic curves allow us to obtain classification results. For example, the diffeomorphism type of symplectic fillings of lens spaces is classified by Lisca \cite{MR2346471}. Moreover, such classification can be strengthened to a classification up to symplectic deformation by a powerful result of Wendl \cite{MR2605865}. Moreover, there are surprising connections between the contact properties of the link and algebro-geometric properties of the singularity, e.g.\ the beautiful theorem of McLean \cite{Reeb} relating the terminality of the singularity with the SFT degrees.

In this paper, we study, by means of Floer theories, symplectic fillings of contact links of isolated quotient singularities in any dimension, where $(\RP^{2n-1},\xi_{\std})$ should be understood as the contact link of the simplest quotient singularity $\C^n/(\Z/2)$. More generally, let $G$ be a finite subgroup of $U(n)$, such that every nontrivial element in $G$ does not have one as an eigenvalue. Since $G$ preserves the standard contact structure on the unit sphere $S^{2n-1}\subset \C^n$, we get a natural contact manifold $(S^{2n-1}/G,\xi_{\std})$, which is the contact link of the isolated quotient singularity $\C^n/G$. Our discussion will cover co-fillings (a.k.a.\ semi-fillings), Weinstein fillings, and strong fillings, but with an emphasis on exact fillings and exact orbifold fillings, recalled in the following.

\begin{definition}
Let $(Y,\xi)$ be a co-oriented contact manifold, then a symplectic manifold $(W,\omega)$ is called
\begin{enumerate}
    \item a strong filling of $Y$, if $\partial W =Y$ and there exits a outward-pointing Liouville vector field $X$ near $\partial W$ (i.e.\ $L_X \omega=\omega$), such that $\iota_X \omega$ restricts to a contact form for $\xi$;
    \item a co-filling (semi-filling) of $Y$, if $W$ is a \textbf{connected} strong filling of $Y\sqcup Y'$, where $\bm{Y'\ne \emptyset}$ is another contact manifold; 
    \item an exact filling of $Y$, if $W$ is a strong filling and the Liouville vector field $X$ is globally defined;
    \item an exact orbifold filling of $Y$, if $W$ is an orbifold and exact;
    \item a Weinstein filling of $Y$, if $W$ is exact and the Liouville vector field $X$ is gradient-like for a Morse function $\phi$ with $\partial W$ a regular level set of $\phi$.
\end{enumerate}
An almost complex manifold $(W,J)$ is an almost Weinstein filling of $Y$, if $\partial W =Y$,  $JTY\cap TY =\xi$ such that $J|_{\xi}$ is homotopic to the almost complex structure compatible with the contact structure and $J$ sends the outward normal vector to the co-orientation along $Y$, and there exists a Morse function $\phi$  with $\partial W$ a regular level set of $\phi$, whose critical points have indices at most $\dim W/2$.
\end{definition}
\begin{remark}
The general definition of a weak filling was given in \cite[Definition 4]{MR3044125}. Since $H^2(S^{2n-1}/G;\R)=0$, there is no difference between weak fillings and strong fillings for contact links in this paper up to deformation by \cite[Lemma 2.10]{MR3044125}.
\end{remark}

\subsection{Weinstein fillings, strong fillings, and co-fillings.}
Unlike the situation in complex dimension $2$, it is a classical fact that isolated quotient singularities of complex dimension at least $3$ are not smoothable, which was proved using algebraic methods by showing those singularities are rigid \cite{rigid}. In particular, the contact link of an isolated quotient singularity of complex dimension at least $3$ can not be filled by a Milnor fiber. On the symplectic side, it is an easy corollary of the Eliashberg-McDuff-Floer theorem \cite{mcduff1990structure} that the contact link of an isolated quotient singularity of complex dimension at least $3$ has no Weinstein filling, see \Cref{prop:EFM}. We first state the following result on fundamental groups of symplectic fillings, which also yields the non-existence of Weinstein fillings in complex dimensions at least $3$.
\begin{thmx}\label{thm:A}
	Let $\C^n/G$ be an isolated singularity and $n\ge 2$. 
	\begin{enumerate}
	\item\label{1} If $W$ is an \emph{exact filling} of the contact link $(S^{2n-1}/G,\xi_{\std})$, then $G=\pi_1(S^{2n-1}/G)\to \pi_1(W)$ is not injective.
	\item If $\C^n/G$ is a terminal quotient singularity (\Cref{def:terminal}) and $W$ is a \emph{strong filling} of the contact link, then $\pi_1(S^{2n-1}/G)\to \pi_1(W)$ is surjective. If moreover, $W$ is an \emph{exact filling}, then $W$ is simply connected.
	\item If $G=\Z/m$, where $m$ is not divisible by a non-trivial square, and $\C^n/G$ is terminal, then any \emph{strong filling} of the contact link is simply connected.
	\end{enumerate}
\end{thmx}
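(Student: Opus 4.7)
The plan is to analyze $W$ via its connected covers corresponding to subgroups of $\pi_1(W)$, and to invoke rigidity results for fillings: Eliashberg--Floer--McDuff for $(S^{2n-1}, \xi_{\std})$, and co-filling obstructions for contact links of terminal quotient singularities (established elsewhere in the paper). For Part (1), I would argue by contradiction. Assume $G \hookrightarrow \pi_1(W)$ and let $p \colon W' \to W$ be the connected cover with $p_*\pi_1(W') = G$; since $\pi_1(\partial W) \to \pi_1(W)$ is injective with image $G$, the preimage of $\partial W$ is a single copy of $S^{2n-1}/G$, and the Liouville form pulls back so that $W'$ is exact. Passing once more to the universal cover $\tilde W \to W'$ produces a simply connected exact filling of $(S^{2n-1}, \xi_{\std})$ carrying a free $G$-action by deck transformations. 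By Eliashberg--Floer--McDuff, $\tilde W$ has the integer cohomology of a point, hence is contractible by Whitehead. Then $W' = \tilde W/G$ is a compact manifold with boundary of the homotopy type $K(G,1)$, contradicting the infinite cohomological dimension of any nontrivial finite group.

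For Part (2) surjectivity, suppose the image $H \coloneqq \mathrm{Im}(G \to \pi_1(W))$ is a proper subgroup. The connected cover $W_H \to W$ has each component of $\partial W_H$ equal to $(S^{2n-1}/G, \xi_{\std})$ (since $G$ maps into $H$), with at least $[\pi_1(W):H] \geq 2$ components; hence $W_H$ is a strong co-filling of $(S^{2n-1}/G, \xi_{\std})$. I would invoke the non-existence of strong co-fillings for contact links of terminal quotient singularities, proved via positive symplectic cohomology and McLean's identification of terminality with positivity of SFT degrees of Reeb orbits, to obtain a contradiction. Surjectivity then gives $\pi_1(W) = G/K$, and Part (1) forces $K \neq \{1\}$ when $W$ is exact.

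To finish the simply-connectedness in Part (2) (exact case) and Part (3) (strong case, $G = \Z/m$ square-free), I would induct on the subgroup lattice. Whenever a strictly intermediate subgroup $K \subsetneq K' \subsetneq G$ exists, I would pass to the cover of $W$ whose boundary is a disjoint union of copies of $(S^{2n-1}/K', \xi_{\std})$; terminality is inherited by $\C^n/K'$ (automatically for any subgroup, and in particular for every cyclic subquotient when $m$ is square-free), so the co-filling obstruction rules it out. The remaining residual case $G/K \cong \Z/p$ admits no intermediate cover, and I would handle it by an equivariant refinement of the filling obstruction on the universal cover, exploiting the free $\Z/p$ deck action.

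\textbf{Main obstacle.} The principal Floer-theoretic input is the non-existence of strong (and exact) co-fillings for contact links of terminal quotient singularities, built on positive symplectic cohomology and McLean's degree bounds. The subtlest instance is the residual prime-cyclic case $G/K \cong \Z/p$, where no reduction to a multi-boundary configuration is available and one must derive an equivariant filling obstruction on the universal cover; I expect this last step to be the chief technical difficulty.
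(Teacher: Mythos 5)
Your reduction to covering spaces has two genuine gaps, one in each half of the argument. For part (1): if $G\to\pi_1(W)$ is injective but not surjective, the cover $W'$ associated to the image does \emph{not} in general have a single boundary copy of $S^{2n-1}/G$ — the components of the preimage of $\partial W$ are indexed by double cosets $G\backslash\pi_1(W)/G$ and are quotients $S^{2n-1}/G'$ for various subgroups $G'\le G$; worse, if $[\pi_1(W):G]$ is infinite then $W'$ and the universal cover $\widetilde W$ are non-compact, so neither Eliashberg--Floer--McDuff nor any co-filling obstruction (statements about \emph{compact} fillings) applies to them. Your argument is complete only in the case where $G\to\pi_1(W)$ is an isomorphism. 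Note also that Theorem A(1) carries no terminality hypothesis, so even in a compact multi-boundary situation you could not invoke the co-filling obstruction of Proposition B, which needs terminality. The paper instead proves (1) by a Floer-theoretic argument on $W$ itself (\Cref{prop:injective}): under the injectivity assumption all punctures appearing in the neck-stretching are asymptotic to \emph{contractible} Reeb orbits, which have positive integer SFT degree, so the spectral sequence and point-constraint arguments of \S 3 go through with no terminality assumption; the classes represented by $x_{(g)}$, $(g)\ne(\Id)$, would then have to map to nonzero classes in $H^{*+1}(W;\Q)$ while lying in nontrivial free homotopy classes, a contradiction. Where the paper does use covers (surjectivity, \Cref{prop:fundamental}), it runs symplectic cohomology on the cover with Floer data pulled back from the compact base, which is exactly what circumvents the non-compactness you would face.

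For parts (2)–(3) the same compactness issue afflicts your cover $W_H$ (and its boundary components are $S^{2n-1}/G'$ rather than $S^{2n-1}/G$, though this is harmless since terminality passes to subgroups by the age criterion). The more serious problem is that your plan explicitly defers the residual case $\pi_1(W)\cong\Z/p$, where no intermediate cover exists, to an unspecified ``equivariant refinement'' — but this is precisely where the content of the theorem lies. The paper disposes of it by two different arguments: for exact fillings, the homotopy-class argument above shows $\pi_1(W)=0$ (\Cref{cor:simply_connected}); for strong fillings with $m$ square-free, a purely topological argument (\Cref{prop:simply_connected}) uses the ring structure of $H^*(B\Z/m;\Z/m)$ to produce $\alpha\cup\beta^{n-1}$ restricting nontrivially to $H^{2n-1}(\partial W;\Z/m)$, contradicting orientability of $W$. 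Without supplying that residual step, and without a version of the co-filling obstruction valid for possibly non-compact covers, the proposal does not prove Theorem A.
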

\begin{remark}
\eqref{1} of \Cref{thm:A} also implies that the contact link of an isolated quotient singularity of complex dimension at least $3$ has no Weinstein filling. In fact, by the topological argument in \cite[\S 6.2]{order}, those contact links have no almost Weinstein filling whenever $n\ge 4$, see \Cref{prop:almost_Weinstein}.
\end{remark}

Terminal singularities are important players in the minimal model program in higher dimensions introduced by Reid \cite{canonical}. A deep theorem due to McLean \cite{Reeb} shows that the terminal property is in fact a contact property of the link. In the context of isolated quotient singularities, such relation is contained in the terminal criterion due to Reid, Shepherd-Barron \cite{canonical}  and Tai \cite{Kodaira}. By arguments in the same spirit of the proof of Theorem \ref{thm:A}, we can obtain the following result on co-fillings. 
\begin{propx}\label{prop:B}
Let $\C^n/G$ be an isolated terminal singularity, then $(S^{2n-1}/G,\xi_{std})$ is not co-fillable.
\end{propx}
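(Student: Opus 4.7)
The plan is to run the Floer-theoretic strategy underlying \Cref{thm:A} inside the co-filling setup. Suppose, aiming at a contradiction, that $W$ is a connected strong filling of $(S^{2n-1}/G,\xi_{\std})\sqcup(Y',\xi')$ with $Y'\neq\emptyset$. The essential external input is that terminality of $\C^n/G$ is equivalent, via McLean's theorem together with the Reid--Shepherd-Barron--Tai criterion, to strict positivity of the SFT degrees of all Reeb orbits on $(S^{2n-1}/G,\xi_{\std})$.

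The first step is to pass to an orbifold model: cap the $S^{2n-1}/G$ component of $\partial W$ with the standard exact orbifold filling $B^{2n}/G$, producing a strong orbifold symplectic manifold $W':=W\cup_{S^{2n-1}/G}(B^{2n}/G)$ whose only remaining contact boundary is $Y'$, with a single orbifold point at the center of the cap. This places the argument inside the exact-orbifold framework of \cite{gironella2021exact}, which is already the framework used to prove \Cref{thm:A}.

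The second step is to run the analogue of the positive symplectic cohomology computation from \Cref{thm:A}\eqref{1}--(2) on $W'$. The terminality positivity, combined with the age computations for $B^{2n}/G$, confines the contributions of the orbifold twisted sectors at the cap to a controlled grading range, while Reeb orbits on the extra boundary $Y'$ contribute classes whose degrees are governed by the action filtration and lie in a range disjoint from the twisted-sector contributions. Hence no cancellation between the two families is possible. The Viterbo-type transfer morphism from the orbifold symplectic cohomology of $W'$ to that of the model cap $B^{2n}/G$, together with the explicit Floer-theoretic structure of $B^{2n}/G$, then forces the $Y'$-classes to vanish, contradicting $Y'\neq\emptyset$.

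The main technical obstacle is the cap-and-transfer step: verifying that the orbifold symplectic cohomology of \cite{gironella2021exact} is functorial under the gluing with $B^{2n}/G$ and under the induced Viterbo transfer when $W$ is only assumed strong (not exact), and tracking gradings precisely enough that the terminality positivity rules out cancellation between the twisted-sector and $Y'$-contributions. Once this functoriality and the grading bookkeeping are in place, the contradiction step parallels the proof of \Cref{thm:A}\eqref{1}--(2) almost verbatim.
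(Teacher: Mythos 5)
There is a genuine gap, and it occurs at the very first step. If $W$ is a co-filling, then $S^{2n-1}/G$ is a \emph{convex} boundary component of $W$; the standard orbifold filling $\D^{2n}/G$ also has $S^{2n-1}/G$ as a \emph{convex} boundary. Two convex boundaries cannot be glued symplectically -- to cap off a convex end one needs a symplectic \emph{cap} with concave boundary (as in \S \ref{s6}, where the disk bundle $C$ over $\CP^{n-1}$ is used), not another filling. So the object $W'=W\cup_{S^{2n-1}/G}(\D^{2n}/G)$ you build does not exist as a strong symplectic orbifold, and the subsequent ``Viterbo-type transfer from $W'$ to $\D^{2n}/G$'' has no meaning ($\D^{2n}/G$ would in any case not sit inside $W'$ as a Liouville subdomain, which is what transfer requires). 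Moreover, even granting some version of the construction, the intended contradiction is not articulated: showing that classes coming from Reeb orbits of $Y'$ ``vanish'' in some Floer group contradicts nothing -- there is no a priori reason such classes must survive, so $Y'\ne\emptyset$ is not violated. You would need a mechanism in which a quantity is forced to be simultaneously zero and nonzero, and the proposal does not supply one.

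For comparison, the paper's argument (\Cref{rmk:cofill}) avoids any capping. The proof of \Cref{prop:upper_bound} shows, via neck-stretching and the positivity $\md(\C^n/G)>0$ coming from terminality, that for \emph{any} strong filling and any point $q$ the count of Floer solutions asymptotic to $x_{(\Id)}$ and passing through $q$ equals $|G|\ne 0$, independently of $q$ and of the filling. If $W$ were a connected strong filling of $(S^{2n-1}/G,\xi_{\std})\sqcup Y'$, one chooses the Hamiltonian standard on the cylindrical end of $S^{2n-1}/G$ and identically zero on the cylindrical end of $Y'$, then pushes the point constraint $q$ deep into the $Y'$ end; the integrated maximum principle forbids the curves from entering that region, so the count is zero there, contradicting $|G|\ne 0$. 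If you want to salvage your outline, the fix is to drop the capping/transfer step entirely and use this curve-with-point-constraint plus maximum principle mechanism, which is where the terminality hypothesis actually enters.
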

Gromov \cite{MR809718}, Floer, Eliashberg and McDuff \cite{mcduff1990structure} showed that $(S^{2n-1},\xi_{\std})$ is not co-fillable when $n\ge 2$. Note that $(S^{2n-1},\xi_{\std})$ can be viewed as the link of a smooth point, which is terminal if and only if $n\ge 2$. Hence Proposition \ref{prop:B} can be viewed as a generalization of the above result. Obstructions to co-fillings were used to obstruct strong fillings, e.g.\ \cite{MR1383953}

\subsection{Non-existence of exact fillings}\label{ss:no}
The first example of a strongly but not exactly fillable contact manifold was found in dimension $3$ by Ghiggini \cite{MR2175155}, and many more examples in dimension $3$ were found by Min \cite{min2022strongly}. In higher dimensions, such examples were predicted by Conjecture \ref{conj:E}. It is worth noting that the threshold $n\ge 3$ in Conjecture \ref{conj:E} is precisely the threshold for the singularity $\C^n/(\Z/2)$ to be terminal. Therefore  \Cref{thm:RP} leads to the following conjecture in \cite{RP}.
\begin{conjecturex}\label{conj:no}
Let $\C^n/G$ be an isolated terminal singularity, then the contact link $(S^{2n-1}/G,\xi_{\std})$ is not exactly fillable. 
\end{conjecturex}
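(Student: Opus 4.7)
The plan is to follow the strategy successfully used for $\C^n/(\Z/2)$ in \Cref{thm:RP}, combining Floer-theoretic constraints with topological and number-theoretic obstructions coming from index theory. Suppose for contradiction that $W$ is an exact filling of $(S^{2n-1}/G, \xi_{\std})$. By \Cref{thm:A}(2), $W$ is simply connected, so the map $G \cong \pi_1(S^{2n-1}/G) \to \pi_1(W)$ is trivial. Since $\C^n/G$ is rigid in complex dimension at least $3$, no smoothing exists, and the natural substitute is the standard orbifold filling $B^{2n}/G$, against which $W$ must be measured via orbifold Floer theory.

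First I would set up positive symplectic cohomology of $W$ together with the orbifold symplectic cohomology of $B^{2n}/G$ from \cite{gironella2021exact}, connected by the cobordism map associated to an orbifold cobordism between them. By McLean's theorem \cite{Reeb}, terminality of $\C^n/G$ is equivalent to all Reeb orbits of the link having strictly positive SFT grading, which concentrates the positive symplectic cohomology above a prescribed degree and makes the Ekholm--Oancea secondary coproduct \cite{dga} predictable in a controlled range. Feeding these computations into the Eliashberg--Ganatra--Lazarev framework \cite{EGL} should constrain the intersection form and Chern numbers $\langle c_I(W), [W, \partial W] \rangle$ of any hypothetical filling $W$.

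Next I would convert these Floer constraints into a numerical contradiction using the Hirzebruch signature theorem, or, when the relevant Chern classes vanish, using the Atiyah--Singer index theorem for the Dirac operator on a closed $\Spin$ (or $\Spin^{c}$) manifold obtained by capping off $W$ with an orbifold piece. The output is a characteristic number that must be integral by index theory but is forced to have nontrivial $p$-adic denominator coming from the fractional contributions at the orbifold points of $B^{2n}/G$. These denominators generalize the $2$-factors of Bernoulli numerators appearing in \Cref{thm:RP} to weighted character sums over $G$, of the same flavor that arises in the Reid--Shepherd-Barron--Tai terminality criterion; whenever such a sum is non-integral, the contradiction is immediate. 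This already suffices for the cases treated in this paper: cyclic actions, finite subgroups of $SU(2)$, and all isolated terminal actions in complex dimension $3$.

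The main obstacle, and the reason \Cref{conj:no} remains open in general, is twofold. First, for each $G$ one needs a Chern--Pontryagin characteristic class whose orbifold contribution has nontrivial denominator modulo some prime dividing $|G|$; as the $n=2^k$ subcase of \Cref{thm:RP} already shows, this is not automatic, and in exceptional dimensions one must substitute an $\hat{A}$-genus argument based on an auxiliary spin structure produced from the filling. Identifying the correct index operator uniformly in $G$ and $n$ appears genuinely delicate. Second, one must verify that the secondary coproduct on the positive symplectic cohomology of $B^{2n}/G$ is actually nonzero in the required bidegree; this is an orbifold Floer computation with subtle contributions from twisted sectors, and controlling those sectors uniformly across all isolated terminal $G$-actions is the technical heart of the proposed proof.
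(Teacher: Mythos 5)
You should first be clear about the status of the statement you are trying to prove: in the paper it is \Cref{conj:no}, a \emph{conjecture}, and the paper does not prove it in general — it only establishes special cases (\Cref{thm:RP} for $\Z/2$, \Cref{thm:J} when $n$ is odd and $|\Conj(G)|$ is even, \Cref{thm:3} in complex dimension $3$, and the partial cyclic and $SU(2)$ results of \S 7, each with explicit exclusions). Your text is a program rather than a proof, and you concede as much in the last paragraph; so as a proof of the statement it has a genuine gap, namely everything beyond the already-known special cases. Moreover, your claim that the index-theoretic denominator argument ``already suffices'' for cyclic actions, finite subgroups of $SU(2)$, and dimension $3$ is too strong even at the level of the paper's results: the paper's theorems in those directions carry exclusions (e.g.\ $\Z/3$ with equal weights and $n=3^k$, lens spaces $L(m;1,\ldots,1)$ with $n,m$ powers of the same prime, the ADE cases with $n=2$ or with $m,n$ simultaneously powers of $2$), so no uniform statement of that kind is currently available.

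On the strategy itself, your sketch is broadly aligned with the paper's method for the cases it does settle, but with two substantive differences worth flagging. First, the paper never uses a cobordism map between the hypothetical filling $W$ and the orbifold filling $\C^n/G$; instead it filters the positive symplectic cochain complex by the \emph{boundary} rational Conley--Zehnder index and uses neck-stretching plus terminality ($\md(\C^n/G)>0$) to show that the first $|G|+1$ pages of the resulting spectral sequence are filling-independent (\Cref{prop:SS}), which is what allows the comparison with $\C^n/G$ and yields $SH^*(W)=0$, $\dim H^*(W;\Q)=|\Conj(G)|$ in even degrees (\Cref{prop:upper_bound}); the same filtration argument controls the secondary coproduct (\Cref{prop:intersection_SS}) before \cite{EGL} is invoked. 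Second, you lean almost entirely on index theory, whereas in the paper the signature/$\hat A$-genus and Bernoulli $2$-adic analysis are reserved for the stubborn cases (notably $n=2^k$ in \Cref{thm:RP} and a few $\Z/3$, $\Z/4$ subcases); most of the other cases are settled by purely cohomological restriction arguments — nonvanishing Chern classes of $\xi_{\std}$, the cup-length bound of \Cref{cor:cuplength}, and the torsion/Lefschetz-duality comparisons of \Cref{prop:restriction'} and \Cref{prop:restriction}, as in \Cref{thm:U(n)}, \Cref{thm:A_m}, and \Cref{thm:ADE}. Identifying, uniformly in $G$ and $n$, either a characteristic-number obstruction with nontrivial denominator or a cohomological obstruction of the above type is exactly what is missing, both in your proposal and in the literature.
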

In view of the conjecture above and Hironaka’s resolution of singularities, isolated terminal quotient singularities shall provide tons of natural contact manifolds with strong fillings that are not exactly fillable. In this paper, we will prove this conjecture for a list of cases. 

For general group actions, we have the following result with a simple assumption. Let $\Conj(G)$ be the set of conjugacy classes of $G$.
\begin{thmx}\label{thm:J}
Assume $\C^n/G$ is an isolated terminal singularity with $n$ odd and $|\Conj(G)|$ even, then the contact link $(S^{2n-1}/G,\xi_{\std})$ has no exact filling.
\end{thmx}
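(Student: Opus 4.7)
The plan is to argue by contradiction, assuming $W$ is an exact filling of $(S^{2n-1}/G,\xi_{\std})$, and combining two ingredients: a Floer-theoretic parity relation for the middle Betti number $b_n(W)$ determined by the conjugacy classes of $G$, and a parity obstruction on $b_n(W)$ coming from the intersection form in odd complex dimension.

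The topological ingredient is straightforward. By \Cref{thm:A}, $W$ is simply connected, so $\partial W = S^{2n-1}/G$ is a rational homology sphere and Poincar\'e--Lefschetz duality provides a non-degenerate cup-product pairing on $H^n(W;\R)$. For $n$ odd this pairing is skew-symmetric, forcing $b_n(W)$ to be even.

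The Floer-theoretic ingredient is to prove the parity identity $b_n(W) \equiv |\Conj(G)|-1 \pmod 2$. I would work with a Morse--Bott model for positive $S^1$-equivariant symplectic cohomology $SH^{+,S^1}(W)$, whose underlying Reeb dynamics on $S^{2n-1}/G$ is the quotient of the standard Hopf flow on $S^{2n-1}$. Primary (non-iterated) Reeb-orbit families correspond bijectively to the non-trivial conjugacy classes $[g] \in \Conj(G)\setminus\{1\}$: the underlying Morse--Bott manifold for $[g]$ is a centralizer-quotient of a complex projective subspace of $\CP^{n-1}$, and its Conley--Zehnder index is computable from the age of $g$. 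Using that $\C^n/G$ is terminal, the Reid--Tai criterion gives $\age(g)\ge 1$ for all $g\ne 1$, placing the primary generators in the correct range so that, via the Viterbo long exact sequence, each non-trivial conjugacy class contributes a non-cancellable generator in a degree whose shift under the connecting map detects $H^n(W)$. A Euler characteristic bookkeeping, together with a pairing up of iterated contributions, then produces the stated parity identity.

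Combining the two ingredients, since $|\Conj(G)|$ is even by hypothesis, $|\Conj(G)|-1$ is odd, contradicting the evenness of $b_n(W)$, and the assumed exact filling cannot exist.

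The main obstacle is making the Floer-theoretic parity identity precise. One must verify that iterated Reeb-orbit families and the trivial conjugacy class contribute with even total rank to $H^n(W)$, which likely requires an Euler-characteristic regularization in $SH^{+,S^1}$ and a comparison with the exact orbifold filling $B^{2n}/G$ within the framework of \cite{gironella2021exact}. The oddness of $n$ is essential on both sides: it is what makes the intersection pairing skew-symmetric, and it is what aligns the age-shifted indices of primary families so that the evenness hypothesis on $|\Conj(G)|$ becomes visible and is not washed out by sign cancellations.
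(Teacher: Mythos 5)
Your topological half is fine: since $S^{2n-1}/G$ is a rational homology sphere, $H^n(W;\Q)\to H^n(W,\partial W;\Q)$ is an isomorphism, so the cup-product pairing on $H^n(W;\Q)$ is non-degenerate, and for $n$ odd it is skew-symmetric, forcing $b_n(W)$ to be even (simple connectivity is not even needed here). This is a legitimate variant of what the paper does, which instead uses that $H^*(W;\Q)$ is supported in even degrees, so that $b_n(W)=0$ outright.

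The genuine gap is the Floer-theoretic parity identity $b_n(W)\equiv|\Conj(G)|-1 \pmod 2$, which you yourself flag as the main obstacle and only sketch. Given Lefschetz duality ($b_i=b_{2n-i}$ for $0<i<2n$, $b_0=1$, $b_{2n}=0$, cf.\ \Cref{prop:dual}), this identity is exactly equivalent to $\dim H^*(W;\Q)\equiv|\Conj(G)|\pmod 2$, i.e.\ to (a weak form of) \Cref{prop:upper_bound}. That proposition is the real content of the paper's proof, and it is obtained not by an Euler-characteristic regularization in $SH^{*}_{+,S^1}$ but by: (i) the filtration of the positive cochain complex by the boundary rational Conley--Zehnder grading, which is compatible with the differential precisely because terminality gives $\lSFT>0$ for all Reeb orbits (note Reid--Tai gives $\age(g)>1$, not $\ge 1$); (ii) neck-stretching to show the relevant pages are filling-independent and can be compared with the orbifold filling $\C^n/G$, where $SH^*_+$ is computed in \Cref{thm:quotient_SH}; and (iii) the point-constraint count $\#\cM_{x_{(\Id)},q}=|G|$ showing the unit of $SH^*(W)$ is killed, whence $SH^*(W;\Q)=0$ and $\rank H^*(W;\Q)=|\Conj(G)|$ with even-degree support. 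Your sketch supplies none of these: ``Euler characteristic bookkeeping'' and ``pairing up of iterated contributions'' is precisely where the difficulty lies, since the Morse--Bott families $B_{(g),l}$ include all iterates $l$, and without a filling-independence statement (which is where terminality enters) nothing about their contribution to a hypothetical filling is controlled. Also, your claim that each non-trivial conjugacy class contributes a generator ``detecting $H^n(W)$'' is off: under the identification of \Cref{prop:upper_bound} and \Cref{rmk:grading} the classes $x_{(g)}$ map to various even-degree classes of $H^*(W;\Q)$ (and indeed $H^n(W;\Q)=0$ for $n$ odd); the argument is a global rank/parity count, not a middle-degree detection. Once \Cref{prop:upper_bound} is in hand, the paper's proof is two lines: $|\Conj(G)|=\dim H^*(W;\Q)$ is odd by duality and even-degree support, contradicting the hypothesis.
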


In the smallest non-trivial dimension, namely complex dimension $3$, terminal quotient singularities are completely classified in \cite{MR722406}, where the group can be any finite cyclic group. Conjecture \ref{conj:no} indeed holds in those cases.
\begin{thmx}\label{thm:3}
The contact link of any isolated terminal quotient singularity in complex dimension $3$ has no exact filling.
\end{thmx}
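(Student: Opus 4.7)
The plan is to combine the Morrison--Stevens classification \cite{MR722406} of isolated terminal quotient singularities in complex dimension three with Theorem \ref{thm:J} and a separate argument in the odd case. Every such singularity is a cyclic quotient $\C^3/(\Z/m)$ whose generator acts with weights $\tfrac{1}{m}(1, a, m-a)$ where $\gcd(a, m) = 1$. Since $n = 3$ is odd and $|\Conj(\Z/m)| = m$, Theorem \ref{thm:J} applies whenever $m$ is even, which disposes of the even case.

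It remains to handle $m$ odd. Suppose for contradiction that $W$ is an exact filling of the contact link $(S^5/(\Z/m), \xi_{\std})$. Part (2) of Theorem \ref{thm:A} immediately gives that $W$ is simply connected, since terminality of $\C^3/(\Z/m)$ together with exactness of $W$ already forces $\pi_1(W) = 1$. Following the template of the proof of Theorem \ref{thm:RP}, I would then deploy the Floer-theoretic machinery of positive symplectic cohomology for exact orbifolds in the sense of \cite{gironella2021exact}, together with the secondary coproduct of Seidel, Ekholm--Oancea, and Cieliebak--Hingston--Oancea, to obtain nontrivial relations in the Floer cohomology of the natural exact orbifold filling built from $W$. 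Via the result of Eliashberg--Ganatra--Lazarev \cite{EGL}, these relations translate into constraints on the intersection form and Chern data of $W$.

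The final step is to derive a numerical contradiction analogous to the one in Theorem \ref{thm:RP}: one computes an appropriate Chern number in terms of the weights $(1, a, m-a)$, or an $\hat A$-genus if $W$ happens to admit a spin structure, and shows it fails to be an integer, violating the Hirzebruch signature theorem or the Atiyah--Singer index theorem for the Dirac operator. Terminality via the Reid--Tai criterion controls the ages of the nontrivial elements of $\Z/m$, which should feed directly into such a computation.

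The main obstacle will be the number-theoretic verification in the odd case. In the $\RP^{2n-1}$ case of Theorem \ref{thm:RP}, non-integrality was established through $2$-adic valuations of Bernoulli numbers together with the Clausen--von Staudt theorem, a path that is unavailable for odd $m$. Here one instead needs a uniform non-integrality statement across every coprime pair $(a, m)$ with $m$ odd, which likely requires a localized $p$-adic analysis at each odd prime $p \mid m$, combined with careful bookkeeping of the orbifold contributions from each twisted sector indexed by a nontrivial element of $\Z/m$. Writing this cleanly, and ensuring that the Floer input is sharp enough for \emph{every} pair $(a, m)$ rather than for some special family, will be the technical heart of the proof.
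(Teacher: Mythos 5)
Your reduction of the even case is fine: with $n=3$ odd and $|\Conj(\Z/m)|=m$ even, \Cref{thm:J} indeed applies, and this is a legitimate shortcut the paper does not use here. The problem is the odd case, where you have a plan rather than a proof: you yourself flag the ``technical heart'' (a uniform non-integrality statement over all coprime pairs $(a,m)$) as not done, and the route you propose for it would not work as stated. The Hirzebruch signature theorem and the $\hat A$-genus index argument used for \Cref{thm:RP} live on closed $4k$-dimensional manifolds; here the capped-off space has real dimension $6$, so there is no signature or $\hat A$-genus computation of the kind you describe, and the paper itself shows (e.g.\ in the $\Z/3$ and $SU(2)$ discussions) that even where such computations make sense they sometimes fail to yield a contradiction. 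So as written, the odd case is a genuine gap.

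You also missed the feature of the classification that makes the theorem easy and uniform in $m$. For a terminal cyclic quotient $\C^3/(\Z/m)$ the generator acts with weights $\frac{1}{m}(1,-1,a)$, $\gcd(a,m)=1$, so by \Cref{prop:chern} the first Chern class of the link is $a\in\Z/m$, a unit, hence nonzero modulo \emph{every} prime factor $p$ of $m$. The paper then simply invokes \Cref{thm:U(n)}: the cup-length bound $\mathrm{cl}(W)\le n-2=1$ from \Cref{cor:cuplength} forces $c_1(W)^{2}$ to be torsion, and comparing the torsion image of the restriction map in degree $2n-2$ with the cokernel in degree $2$ via \Cref{prop:restriction'} gives the contradiction, with no parity split and no number theory. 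If you want to keep your even/odd structure you may, but the missing ingredient in your odd case is exactly this Chern-class observation feeding into \Cref{thm:U(n)}, not a new index-theoretic computation.
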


We also consider the case of $\Z/3,\Z/4$ actions, more general cyclic group actions as well as finite subgroups of $SU(2)$ acting on $\C^{2n}$ diagonally. The proof of the above results is based on the study of symplectic cohomology of a hypothetical exact filling. The starting point is that the rational first Chern class $c_1^{\Q}(S^{2n-1}/G,\xi_{\std})$ vanishes, and all Reeb orbits are of torsion homotopy classes, therefore we have a well-defined rational Conley-Zehnder index assigned to each Reeb orbit. It is important to note that, as a hypothetical filling may not have vanishing first Chern class, those ``boundary"  Conley-Zehnder indices may not agree with actual  Conley-Zehnder indices used in the construction of symplectic cohomology. However, when the singularity is terminal, those boundary Conley-Zehnder indices induce a filtration on the cochain complex of the positive symplectic cohomology, which is compatible with the differential. Moreover, certain pages of the spectral sequence are independent of the filling/augmentation. Therefore, we can compare it with the computation we did in \cite[Theorem B]{gironella2021exact} for the standard exact \emph{orbifold} filling $\C^n/G$ and conclude that the spectral sequence degenerates at some point due to parity reasons. Moreover, we can show that the total dimension $\dim H^*(W;\Q)$ is $|\Conj(G)|$, the size of the set of conjugacy classes, for a hypothetical exact filling $W$ whenever $\C^n/G$ is terminal. Many topological properties (e.g.\ intersection form, fundamental groups, cup product length) of the hypothetical filling $W$ can be derived this way through the Floer theory,  which in many cases lead to contradictions to the existence of $W$.

\subsection{Uniqueness of the diffeomorphism type of exact orbifold fillings}
The celebrated Eliashberg-Floer-McDuff theorem \cite{mcduff1990structure} states that any exact filling of $(S^{2n-1},\xi_{\std})$ is diffeomorphic to a ball whenever $n\ge 2$. This theorem has been generalized in many directions \cite{MR4031531,MR2874896,ADC,product}. From the singularity perspective, $(S^{2n-1},\xi_{\std})$ is the link of a smooth point and a smooth point viewed as a singularity is terminal if and only if $n\ge 2$. Hence it naturally leads to the following conjecture, which in particular will imply Conjecture \ref{conj:no}.
\begin{conjecturex}\label{conj:uniqness}
Let $\C^n/G$ be an isolated terminal singularity, then exact \emph{orbifold} fillings of the contact link $(S^{2n-1}/G,\xi_{\std})$ have a unique diffeomorphism type\footnote{More aggressively, we can conjecture that exact orbifold fillings are unique.}. 
\end{conjecturex}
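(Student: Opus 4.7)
The plan is to adapt the Floer-theoretic blueprint of \S\ref{ss:no} to an arbitrary exact orbifold filling $W$ of $(S^{2n-1}/G,\xi_{\std})$ and extract enough invariants to pin down its orbifold diffeomorphism type; the expected model is the closed ball quotient $\overline{B^{2n}}/G$, which is itself an exact orbifold filling, so that the goal is to show any $W$ is orbifold diffeomorphic to $\overline{B^{2n}}/G$.

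The first step is cohomological rigidity. As in the arguments sketched above for \Cref{thm:J} and \Cref{thm:3}, the vanishing of $c_1^{\Q}(S^{2n-1}/G,\xi_{\std})$ together with the torsion homotopy classes of all Reeb orbits equips each orbit with a well-defined rational Conley--Zehnder index. When $\C^n/G$ is terminal, these boundary indices induce a filtration on the positive symplectic cochain complex of $W$ that is compatible with the differential, and the associated graded of its spectral sequence is independent of the filling/augmentation. Matching with the computation for the standard orbifold filling $\C^n/G$ from \cite{gironella2021exact} forces parity-degeneration of that spectral sequence, yielding $\dim H^*(W;\Q)=|\Conj(G)|$ with explicit graded generators. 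Upgrading through the secondary coproduct on positive symplectic cohomology together with the Eliashberg--Ganatra--Lazarev formula \cite{EGL} then transports the rational intersection form and cup-product structure of $\overline{B^{2n}}/G$, computed via its Chen--Ruan cohomology, onto $W$. Combined with the orbifold adaptation of \Cref{thm:A}(2), which controls $\pi_1^{\orb}(W)$ via $G=\pi_1(\partial W)$, one aims to conclude that $W$ is orbifold-homotopy equivalent to $\overline{B^{2n}}/G$.

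The second step is to convert this orbifold homotopy rigidity into an orbifold diffeomorphism. The filtered positive symplectic cohomology, together with the secondary coproduct information, should simultaneously determine the singular stratification of $W$ and the normal isotropy representations along each stratum, forcing them to agree with those of $\overline{B^{2n}}/G$. Once this is in place, one passes to the $G$-branched cover $\widetilde W$, which is a smooth $G$-manifold whose boundary is the standard $G$-sphere; an equivariant $h$-cobordism theorem applied to $\widetilde W$ relative to $\overline{B^{2n}}$ would then yield a $G$-equivariant diffeomorphism, and hence an orbifold diffeomorphism $W\cong \overline{B^{2n}}/G$ after descending.

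The main obstacle I expect is this last passage from homotopical rigidity to genuine orbifold diffeomorphism. Already in the smooth case ($G$ trivial), the original Eliashberg--Floer--McDuff argument relies on the classical $h$-cobordism theorem and so requires $\dim W\ge 6$; in the orbifold case one must additionally match isotropy representations along every fixed-point stratum, control the smooth structure on the branched cover, and rule out exotic equivariant smoothings. These difficulties become substantially more severe once $G$ is non-cyclic or has higher-codimension fixed loci, which is precisely the reason the paper establishes uniqueness only for \emph{some} isolated terminal quotient singularities and why \Cref{conj:uniqness} remains open in general.
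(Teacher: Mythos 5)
The statement you are addressing is a \emph{conjecture}: the paper does not prove \Cref{conj:uniqness} in general, and it only establishes the special case \Cref{thm:unique} (exact orbifold fillings of $(\RP^{2n-1},\xi_{\std})$ for odd $n\ge 3$). Your submission is accordingly a research plan rather than a proof, and you say so yourself; as a proof it has genuine gaps. The most serious one is the assertion that the filtered positive symplectic cohomology together with the secondary coproduct ``should simultaneously determine the singular stratification of $W$ and the normal isotropy representations along each stratum.'' Nothing in the paper, nor in \cite{gironella2021exact}, provides such a statement for general $G$; what is available are isolated results such as \cite[Theorems C and G]{gironella2021exact}, which control orbifold points only for very specific boundaries (e.g.\ $S^{2n-1}$ and $\RP^{2n-1}$). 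Without this, your second step cannot start. Moreover, your passage to a ``$G$-branched cover $\widetilde W$'' presupposes that an arbitrary exact orbifold filling is a global quotient of a smooth manifold by $G$, which is not automatic, and the proposed equivariant $h$-cobordism argument would additionally require control of equivariant Whitehead torsion and of isotropy data that you have not produced. Finally, note that even the Floer-theoretic input you invoke (\Cref{prop:upper_bound} and the coproduct comparison) is only established in the paper for genuine exact fillings; its extension to exact \emph{orbifold} fillings is conditional on structures (e.g.\ the ring/product operations) whose orbifold construction is deferred, cf.\ \Cref{rmk:orbifold} and the footnote in the proof of \Cref{thm:2_unique}.

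For comparison, in the one case the paper does prove (\Cref{thm:2_unique}), the endgame is different from yours and deliberately avoids equivariant smoothing theory: \cite[Theorem G]{gironella2021exact} pins down the singular locus a priori (a single point modelled on $\C^n/(\Z/2)$); deleting it yields a smooth exact cobordism $M$ from $(\RP^{2n-1},\xi_{\std})$ to itself; a Floer-theoretic fundamental group analysis (together with $c_1\ne 0$ for odd $n$ and the bound $\dim SH^*_+ \le |\Conj(G)|$ from \Cref{prop:SS}) forces $\pi_1(M)=\Z/2$ with the boundary inclusion an isomorphism; then \Cref{prop:h-cob} applied to the universal cover shows $M$ is an $h$-cobordism, and triviality of the Whitehead group of $\Z/2$ plus the ordinary $s$-cobordism theorem gives $W\cong \D^{2n}/(\Z/2)$. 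If you want to push your program further, the missing ingredients to isolate are precisely: (i) a Floer-theoretic determination of the singular locus and isotropy representations for general terminal $\C^n/G$, and (ii) a replacement for the $s$-cobordism step when $\pi_1$ is a general finite group with possibly nontrivial Whitehead group.
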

\begin{remark}
By \cite[Theorem C]{gironella2021exact}, any exact orbifold filling of $(S^{2n-1},\xi_{\std})$ for $n\ge 2$ must be a manifold. Therefore Conjecture \ref{conj:uniqness} holds for a smooth point by the  Eliashberg-Floer-McDuff theorem. 
\end{remark}

In this paper, we prove this conjecture in some simple cases. 
\begin{thmx}\label{thm:unique}
Let $n\ge 3$ be an odd number. Then any exact orbifold filling of $(\RP^{2n-1},\xi_{\std})$ is diffeomorphic to $\D^{2n}/(\Z/2)$ as an orbifold with boundary.
\end{thmx}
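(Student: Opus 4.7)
The strategy is to reduce the classification to the Eliashberg--Floer--McDuff theorem on exact manifold fillings of $S^{2n-1}$ by passing to a smooth orbifold double cover, and then to upgrade the resulting smooth classification to an equivariant one via the $s$-cobordism theorem.

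First, I would establish the orbifold structure of $W$ using the symplectic-cohomology machinery of this paper, parallel to the treatment of \Cref{thm:RP}: one extends the filtered positive symplectic cohomology argument to exact orbifold fillings and compares with the orbifold cohomology of the standard orbifold filling $\C^n/(\Z/2)$ computed in \cite{gironella2021exact}. For $n\ge 3$ the quotient $\C^n/(\Z/2)$ is terminal, so the parity argument on the spectral sequence used throughout the paper should force the total orbifold cohomology of $W$ to have rational dimension $|\Conj(\Z/2)|=2$; the one-dimensional twisted sector then pins down the orbifold singular set of $W$ to be a single isolated $\Z/2$-point. An augmentation argument for $SH^*_+$, playing the role of the orbifold counterpart of \Cref{thm:A}\eqref{1}, gives $\pi_1^{\orb}(W)=\Z/2$, and in particular $W$ is developable.

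The universal orbifold cover $\widetilde{W}\to W$ is then a smooth double cover: the single orbifold point lifts to a single smooth interior point, and $\partial\widetilde{W}$ is the double cover $S^{2n-1}$ of $\RP^{2n-1}$. The pulled-back symplectic form makes $\widetilde{W}$ a smooth exact filling of $(S^{2n-1},\xi_{\std})$, whence by the Eliashberg--Floer--McDuff theorem $\widetilde{W}\cong_{\diff}\D^{2n}$. Let $\sigma$ be the deck involution; it restricts to the antipodal map on $S^{2n-1}$. By Smith theory, $\Fix(\sigma)\subset \D^{2n}$ is an $\F_2$-acyclic smooth submanifold of the interior, hence a single point $p$, and Bochner linearization produces a $\sigma$-equivariant ball $B\ni p$ on which $\sigma$ acts linearly as $-\Id$.

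Finally, the complement $\D^{2n}\setminus \mathrm{int}(B)$ is a free $\Z/2$-cobordism whose quotient is a smooth $h$-cobordism between two copies of $\RP^{2n-1}$ in dimension $2n\ge 6$; since $\mathrm{Wh}(\Z/2)=0$, the $s$-cobordism theorem renders this cobordism a product $\RP^{2n-1}\times[0,1]$, and its lift is an equivariant product $S^{2n-1}\times[0,1]$ with the fibrewise antipodal action. Gluing with $B$ exhibits $\sigma$ as equivariantly diffeomorphic to the standard antipodal action on $\D^{2n}$, so $W\cong \D^{2n}/(\Z/2)$ as orbifolds with boundary. The main obstacle is the first step: running the Floer-theoretic rigidity in the orbifold category with enough precision to force the singular locus to be a single $\Z/2$-point and $W$ to be developable requires a careful extension of the augmentation and spectral-sequence technology of this paper to exact orbifold fillings; once this is in hand, the remainder is classical equivariant topology.
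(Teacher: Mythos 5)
Your endgame is sound and is essentially the paper's argument in different packaging: the paper removes a neighborhood of the singular point to get a cobordism $M$ from $(\RP^{2n-1},\xi_{\std})$ to itself, shows its universal cover is an exact cobordism from $(S^{2n-1},\xi_{\std})$ to itself and hence an $h$-cobordism (via Eliashberg--Floer--McDuff, \Cref{prop:h-cob}), and then uses $\mathrm{Wh}(\Z/2)=0$ and the $s$-cobordism theorem; you run the same two ingredients upstairs and quotient at the end. Also, the identification of the singular locus as a single point modelled on $\C^n/(\Z/2)$, which you single out as the main obstacle, is not where the difficulty lies: the paper simply quotes it from \cite[Theorem G]{gironella2021exact}.

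The genuine gap is the step where you assert that ``an augmentation argument, playing the role of the orbifold counterpart of \Cref{thm:A}\eqref{1}, gives $\pi_1^{\orb}(W)=\Z/2$, and in particular $W$ is developable.'' There is no such orbifold counterpart: non-injectivity of $\pi_1(\partial W)\to\pi_1^{\orb}(W)$ is false for orbifold fillings (for $\C^n/(\Z/2)$ itself this map is an isomorphism), and what the Floer-theoretic argument actually delivers (the argument of \Cref{prop:fundamental}) is only surjectivity, so a priori $\pi_1^{\orb}(W)\in\{0,\Z/2\}$. The dangerous case is $\pi_1^{\orb}(W)=0$: an orbifold can be simply connected in the orbifold sense and still carry a $\Z/2$ cone point (teardrop-type), in which case it is not developable and there is no smooth double cover to pass to. Excluding this case is where the paper's proof does its real work: if $\pi_1(M)=0$ then $H^2(M;\Z)$ is torsion-free, and since $c_1(\RP^{2n-1},\xi_{\std})=nu\ne 0$ exactly because $n$ is odd, one gets $\rank H^2(M;\Z)\ge 1$; stacking copies of $M$ and running a pair-of-pants product argument then contradicts the bound $\dim SH^*_+(W;\Q)\le \#\Conj(\Z/2)=2$ coming from \Cref{prop:SS}. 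This is precisely where the hypothesis that $n$ is odd enters, and your proposal never uses it --- a clear symptom that the key difficulty has been bypassed rather than solved. A second, smaller omission: even granting $\pi_1^{\orb}(W)=\Z/2$, you must rule out that the local isotropy group (equivalently $\pi_1$ of the boundary of the deleted neighborhood) maps trivially to $\pi_1^{\orb}(W)$; otherwise the orbifold universal cover has two singular points and is not the smooth ball you need. The paper excludes this using the fact that exact orbifold fillings of $(S^{2n-1},\xi_{\std})$ are manifolds, \cite[Theorem C]{gironella2021exact}; your proposal implicitly assumes the singular point unwraps in the cover.
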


\subsection*{Organization of the paper} We discuss basic topological, contact, and algebro-geometric properties of quotient singularities in \S \ref{s2}. We study symplectic cohomology of hypothetical exact fillings in \S \ref{s3} and secondary coproduct in \S \ref{s4}. \S \ref{s5} focuses on fundamental groups of strong fillings of links. We prove \Cref{thm:RP} and \Cref{thm:unique} in \S \ref{s6} and other non-existence of exact fillings in \S \ref{s7}.  Appendix \ref{app} is devoted to proving some properties of the Bernoulli numbers. 

\subsection*{Acknowledgments}
The author would like to thank Ruizhi Huang for the helpful discussion on the modified signature and the anonymous referee for many helpful suggestions that improved this paper. The author is supported by the National Key R\&D Program of China under Grant No. 2023YFA1010500, the National Natural Science Foundation of China under Grant No. 12288201 and 12231010.
\section{Basics of the contact link $(S^{2n-1}/G,\xi_{\std})$}\label{s2}
In this section, we recall and prove some basics of the contact topology of $(S^{2n-1}/G,\xi_{\std})$. Unless specified otherwise, $G\subset U(n)$ acts on $\C^n$ with an isolated singularity throughout this paper. We write $\i:=\sqrt{-1}$.
\subsection{Topology of $S^{2n-1}/G$}\label{ss:top}
Since $G$ acts on $\C^n$ with an isolated singularity, the same is true for the diagonal action of $G$ on $(\C^n)^k$. In particular, this gives rise to a model of $K(G,1)=BG$ as $S^{\infty}/G:=\varinjlim \partial (\D^{2nk})/G$, where $\D^{2nk}$ is the unit ball in $\C^{nk}.$ As a consequence, we have the following.

\begin{proposition}\label{prop:cohomology}
The inclusion $S^{2n-1}/G\hookrightarrow BG=\varinjlim \partial (\D^{2nk})/G$ induces an isomorphism on cohomology over any coefficient for degrees smaller than $2n-1$. 
\end{proposition}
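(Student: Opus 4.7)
The plan is to identify the inclusion $S^{2n-1}/G\hookrightarrow BG$ with the classifying map of a principal $G$-bundle and then read off the statement from the Serre spectral sequence of the associated fibration, whose $E_2$-page is concentrated in just two rows.

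First I will observe that since the $G$-action on $\C^n$ has an isolated singularity at the origin, it is free on $S^{2n-1}$, so $S^{2n-1}\to S^{2n-1}/G$ is a principal $G$-bundle. Under the model $BG=\varinjlim \partial(\D^{2nk})/G$, taking $EG=\varinjlim S^{2nk-1}$ with the diagonal action, the $G$-equivariant inclusion $S^{2n-1}\hookrightarrow EG$ descends to the inclusion $S^{2n-1}/G\hookrightarrow BG$ and is precisely the classifying map of the above bundle. Consequently one obtains a fibration
\[
S^{2n-1} \longrightarrow S^{2n-1}/G \longrightarrow BG,
\]
whose Serre spectral sequence
\[
E_2^{p,q}=H^p(BG;\mathcal H^q(S^{2n-1};A)) \Longrightarrow H^{p+q}(S^{2n-1}/G;A)
\]
is the main computational input, for any abelian coefficient group $A$.

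Next I will verify that the local system is constant. The fiber satisfies $H^q(S^{2n-1};A)=A$ for $q=0,2n-1$ and vanishes otherwise, while the monodromy action of $\pi_1(BG)=G$ on $H^{2n-1}(S^{2n-1};A)$ is induced by the $G$-action on $S^{2n-1}$; since $G\subset U(n)\subset {\rm SO}(2n)$, this action preserves orientation and therefore acts trivially on $H^{2n-1}$. The $E_2$-page thus has exactly two nonzero rows, both equal to $H^\ast(BG;A)$, and the sole possibly nontrivial differential is $d_{2n}\colon E_{2n}^{\ast,2n-1}\to E_{2n}^{\ast+2n,0}$.

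Finally, for $p<2n-1$ no differential can enter or exit the slot $E_r^{p,0}$ (an incoming $d_{2n}$ would require $p\ge 2n$, an outgoing $d_r$ would land in negative $q$), and the $q=2n-1$ row contributes nothing to total degree $<2n-1$. Hence $E_\infty^{p,0}=E_2^{p,0}=H^p(BG;A)$ is the only contribution to $H^p(S^{2n-1}/G;A)$, and the resulting isomorphism is the edge morphism, i.e.\ the pullback along $S^{2n-1}/G\hookrightarrow BG$. The only mildly delicate step is the triviality of the local coefficient system, which is exactly where the embedding $G\subset U(n)$ is genuinely used; the rest is a formal two-row collapse and should present no obstacle.
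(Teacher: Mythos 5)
Your argument is correct, but it follows a genuinely different route from the paper. The paper works entirely with the finite-dimensional approximations: it takes the $G$-invariant Morse--Bott function $f=\sum_{s} s\sum_j |z_{sn+j}|^2$ on $\partial(\D^{2nk})$, observes that the induced function on $S^{2nk-1}/G$ has $k$ critical submanifolds, each a copy of $S^{2n-1}/G$ with Morse--Bott indices $0,2n,\ldots,2(k-1)n$, and concludes by standard Morse--Bott theory that the inclusion is an isomorphism on $H^{*}$ for $*<2n-1$. You instead invoke the Borel construction: since the action on $S^{2n-1}$ is free, $S^{2n-1}/G\simeq S^{2n-1}\times_G EG$, the map to $BG$ is (up to this equivalence) the classifying map, i.e.\ your inclusion, and the two-row Serre spectral sequence of the sphere fibration collapses in the relevant range; your check that the local system is constant (orientation-preserving action, $G\subset U(n)\subset SO(2n)$) is exactly the point that needs care, and the edge-homomorphism identification gives that the isomorphism is indeed the pullback along the inclusion. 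One small precision: $S^{2n-1}/G\to BG$ is not literally a fibration with fiber $S^{2n-1}$; it is the projection $S^{2n-1}\times_G EG\to BG$ that is, and one uses the homotopy equivalence coming from freeness of the action — you essentially say this, so it is a matter of phrasing rather than a gap. As for what each approach buys: your spectral-sequence proof is shorter and more formal, and with the transgression $d_{2n}$ it also recovers the periodicity $H^{*}(BG;\Z)\cong H^{*+2n}(BG;\Z)$ for $*>0$; the paper's Morse--Bott proof stays finite-dimensional and, more importantly, produces the explicit geometric model (critical manifolds, moduli of trajectories, push-pull differential) that the subsequent Remark 2.2 relies on to compute the differential $\delta(\mathrm{vol})=|G|\cdot 1$ and related structure, which your argument does not provide.
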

\begin{proof}
On the unit sphere $\partial (\D^{2nk})$, we have a Morse-Bott function
\begin{equation}\label{eqn:gradient}
    f=\sum_{s=0}^{k-1} s\sum_{j=1}^n |z_{sn+j}|^2
\end{equation}
which is $G$ invariant. The critical submanifolds of the induced function on $S^{2nk-1}/G$ are $k$ copies of $S^{2n-1}/G$ with Morse-Bott indices $0,2n,\ldots,2(k-1)n$. By the standard Morse-Bott theory, for any $k\ge 1$, we have that  $S^{2n-1}/G\hookrightarrow \partial (\D^{2nk})/G$ induces an isomorphism on cohomology over any coefficient for degrees smaller than $2n-1$. Hence the claim follows.
\end{proof}

\begin{remark}\label{rmk:connecting}
A few remarks on the cohomology of $BG$ are in order.
\begin{enumerate}
    \item Let $\bm{z}_s$ denote $(z_{sn+1},\ldots,z_{(s+1)n})$ in $\C^{nk}$ for $0\le s \le k-1$. Then $$\phi_t:(\bm{z}_0,\ldots,\bm{z}_{k-1})\mapsto \sum_{s=0}^{k-1} e^{st}\bm{z}_s \left/ | \sum_{s=0}^{k-1} e^{st}\bm{z}_s|\right.$$
    defines the flow of a $G$-equivariant gradient like vector field for \eqref{eqn:gradient}. Then the moduli space of Morse trajectories on $\partial(\D^{2nk})/G$ connecting the critical submanifolds of Morse-Bott indices $2jn$ and $2(j+1)n$ is diffeomorphic to $(S^{2n-1}\times S^{2n-1})/G$, with the evaluation maps to the two critical submanifolds given by the two projections. To see this, note that the moduli space of Morse trajectories on $\partial(\D^{2nk})$ connecting the critical submanifolds of Morse-Bott indices $2jn$ and $2(j+1)n$ is diffeomorphic to $S^{2n-1}\times S^{2n-1}$ by the evaluation maps at the two ends (as it is direct to check that given two endpoints, there is a unique Morse trajectory). Then the claim follows from the fact that the evaluation maps are obviously $G$-equivariant. Then the Morse-Bott cochain complex of $BG$ is $\prod_{j=1}^{\infty} H^*(S^{2n-1}/G;\Z)[-2jn]$, i.e.\ the direct product of the cohomology of critical manifolds with degree shifted by the Morse-Bott indices. The differential $\delta$ is defined by the push-pull formula
    \begin{equation}\label{eqn:pp}
        \delta:H^*(S^{2n-1}/G;\Z)[-2jn]\to H^*(S^{2n-1}/G;\Z)[-2(j+1)n],\quad x \mapsto (ev_+)_*\circ(ev_-)^*(x)
    \end{equation}
    where $ev_{\pm}$ are the evaluation maps on the moduli spaces connecting the critical manifolds of Morse-indices $2jn$ and $2(j+1)n$, i.e.\ the natural projections  $(S^{2n-1}\times S^{2n-1})/G \to S^{2n-1}/G$. Then by degree reasons, the only non-trivial term of $\delta$ is $\delta(\mathrm{vol}[-2jn])$ for the generator $\mathrm{vol}\in H^{2n-1}(S^{2n-1}/G;\Z)$, which is a multiple of $1\in H^0(S^{2n-1}/H;\Z)$. The pushforward $(ev_+)_*$ is the pushforward of homology composed with Poincar\'e dualities, hence we can compute that $\delta(\mathrm{vol}[-2jn])=|G|\cdot 1[-2(j+1)n]$, $H^{2n-1+2jn}(BG;\Z)=0$, $H^{2(j+1)n}(BG;\Z)=\Z/|G|$. As a consequence, we have that $H^*(BG;\Z)=H^{*+2n}(BG;\Z)$ for $*>0$. This puts strong restrictions on which finite group $G$ is allowed to have an isolated singularity $\C^n/G$. 
    
    The push-pull formula \eqref{eqn:pp} is a special case of a more general statement, where the push-pull on cohomology is the differential between critical manifolds with critical values next to each other. In particular, the moduli space of Morse trajectories between them is a closed manifold, hence we have Poincar\'e duality and the pushforward map. In general, there are more components to the differential between critical manifolds with critical values not next to each other, which can be roughly interpreted as the correction terms from the homological perturbation lemma. However, in the special case here, we do not need to consider those by degree reasons. The real coefficient version of such theory was worked out in \cite{zhou2019morse}.

    Instead of working out the integer version of \cite{zhou2019morse}, we can prove \eqref{eqn:pp} directly. We start by considering the case on $S^{4n-1}/G=:X_2$ with two critical manifolds of Morse-Bott indices $0$ and $2n$. Let $X_1$ denote the sub-level set $f^{-1}((-\infty,1/2])$. Then $\partial X_1\simeq (S^{2n-1}\times S^{2n-1})/G$ can be identified with the moduli space of Morse trajectories. In view of the long exact sequence from the pair $(X_2,X_1)$, we have $H^j(X_2;\Z)=H^j(X_1;\Z)=H^j(S^{2n-1}/G;\Z)$ when $j<2n-1$ and $H^j(X_2;\Z)=H^j(X_2,X_1;\Z)=H^{j-2n}(S^{2n-1}/G;\Z)$ when $j>2n$, where the last identification is from excision and the Thom isomorphism. The remaining term is determined by the following exact sequence
    $$0\to H^{2n-1}(X_2;\Z)\to H^{2n-1}(X_1;\Z)\to H^{2n}(X_2,X_1;\Z) \mapsto H^{2n}(X_2)\to 0.$$
    Note that the middle map factors as 
    $$H^{2n-1}(X_1;\Z)\to H^{2n-1}(\partial X_1;\Z) \to  H^{2n}(\overline{X_2\backslash X_1},\partial X_1;\Z) \stackrel{\text{excision}}{=}H^{2n}(X_2,X_1;\Z)$$
    Since $H^{2n-1}(S^{2n-1}/G;\Z)=H^{2n-1}(X_1;\Z)$ and the pullback $H^{2n-1}(X_1;\Z) \mapsto H^{2n-1}(\partial X_1;\Z)$ can be understood as $(ev_-)^*$ in \eqref{eqn:pp}, it suffices to show that $H^{2n-1}(\partial X_1;\Z )\to H^{2n}(\overline{X_2\backslash X_1},\partial X_1;\Z) \stackrel{\text{Thom iso}}{=}H^0(S^{2n-1}/G;\Z)$ is the pushforward $(ev_+)_*$. To see this, note that we have a commutative diagram
    $$
    \xymatrix{
    H^{2n-1}(\partial X_1;\Z) \ar[r]\ar[d]^{\mathrm{PD}} &  H^{2n}(\overline{X_2\backslash X_1},\partial X_1;\Z)\ar[d]^{\mathrm{LD}} \ar[r]^{\text{Thom iso}} & H^0(S^{2n-1}/G;\Z)\ar[d]^{\mathrm{PD}}  \\
    H_{2n-1}(\partial X_1;\Z) \ar[r] & H_{2n-1}(\overline{X_2\backslash X_1};\Z)\ar[r]^{\pi_*} & H_{2n-1}(S^{2n-1}/G;\Z)
    }
    $$
    where $\mathrm{PD},\mathrm{LD}$ are Poincar\'e duality and Lefschetz duality, and $\pi_*$ is the pushforward by the projection of the normal bundle of $f^{-1}(1)$ and all of them are isomorphisms. Since the composition $H_{2n-1}(\partial X_1;\Z) \to  H_{2n-1}(\overline{X_2\backslash X_1};\Z) \stackrel{\pi_*}{\to}  H_{2n-1}(S^{2n-1}/G;\Z)$ is the pushforward of $ev_+$ on homology, the claim follows. The general case follows from looking at the pair of sub-level sets $f^{-1}((-\infty, j+\frac{1}{2}])$ and $f^{-1}((-\infty, j+\frac{3}{2}])$, which up to homotopy is $(X_2,X_1)$ after $2jn$ suspensions.
    \item The above discussion can be generalized to the topological setting. Assume $G$ acts $S^{2n-1}$ freely. Then the diagonal $G$ action on the join $S^{4n-1}\simeq S^{2n-1}\star S^{2n-1}:=S^{2n-1}\times S^{2n-1}\times [0,1]/\sim$ is again free. We can inductively apply the argument above to obtain a model for $BG$. One can chop up $S^{4n-1}/G$ along the middle slice $(S^{2n-1}\times S^{2n-1}\times \{1/2\})/G$ (the moduli space of ``Morse trajectories") to compute the cohomology using the long exact sequence of pairs as before. Similarly, we also have the periodicity on $H^*(BG;\Z)$.
    \item The above discussion is an interpretation of a classical result on free actions on spheres using Morse theory. Smith \cite{MR10278} showed that if a finite group $G$ acts on $S^n$ freely\footnote{If $n$ is even, it is easy to show that $G$ has to be $\Z/2$ and reverses the orientation.}, then any abelian subgroup of $G$ must be cyclic. This turns out to be equivalent to the periodicity of the cohomology of $G$ \cite[Theorem 11.6]{homological} due to Artin and Tate.
    \item Groups that admit complex representations with isolated singularities are completely classified, c.f.\ \cite[Theorem 6.1.11,6.3.1]{MR2742530}.
\end{enumerate}
\end{remark}

\begin{proposition}\label{prop:abel}
Let $G$ be a finite abelian group acting on $\C^n$ with an isolated singularity, then $G=\Z/m$ and the action up to conjugation is given by sending the generator to multiplication of $\diag(e^{\frac{2\pi a_1 \i}{m}},\ldots, e^{\frac{2\pi a_n \i}{m}})$ for $1\le a_i<m$ coprime to $m$. 
\end{proposition}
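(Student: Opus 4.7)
The plan is to use the abelian hypothesis to diagonalize simultaneously and then translate the isolated-singularity condition into an injectivity statement on characters.

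First, since $G\subset U(n)$ is a finite abelian subgroup, all elements of $G$ are simultaneously diagonalizable. After conjugating by a unitary change of basis of $\C^n$, we may assume $G$ sits inside the standard maximal torus $T^n\subset U(n)$. Thus each $g\in G$ acts as $\diag(\chi_1(g),\ldots,\chi_n(g))$ for characters $\chi_i\colon G\to U(1)$, $i=1,\ldots,n$.

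Next I would reinterpret the isolated singularity condition. By definition, the action of every nontrivial $g\in G$ on $\C^n$ has only the origin as a fixed point. With the diagonal form above, the fixed set of $g$ is
\[
\Fix(g)=\{z\in\C^n : \chi_i(g)z_i=z_i\text{ for all }i\}.
\]
This equals $\{0\}$ if and only if $\chi_i(g)\ne 1$ for every $i$. Hence the isolated singularity hypothesis is equivalent to the assertion that each character $\chi_i\colon G\to U(1)$ has trivial kernel, i.e.\ is \emph{injective}.

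From this the cyclicity is immediate: any finite subgroup of $U(1)$ is cyclic, so $G$ embeds as a cyclic subgroup of $U(1)$ via, say, $\chi_1$. Writing $G=\Z/m$ with a chosen generator $\sigma$, each $\chi_i(\sigma)$ is an $m$-th root of unity, say $\chi_i(\sigma)=e^{2\pi a_i\i/m}$ with $0\le a_i<m$. Injectivity of $\chi_i$ means $\chi_i(\sigma)$ has order exactly $m$, which is equivalent to $\gcd(a_i,m)=1$; in particular $a_i\ne 0$, so $1\le a_i<m$. This produces precisely the diagonal action stated in the proposition. There is no real obstacle here; the only delicate point to be careful about is drawing the equivalence between ``isolated singularity at $0$'' and ``every nontrivial $g$ has $\chi_i(g)\ne 1$ for all $i$'' rather than the weaker condition $\bigcap_i\ker\chi_i=\{e\}$, which would only guarantee that $G$ acts freely off a codimension-two subvariety rather than off the origin.
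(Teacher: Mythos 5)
Your proof is correct and follows essentially the same route as the paper's (much terser) argument: simultaneous diagonalization of the abelian group, then using that every nontrivial element fixes only the origin to conclude cyclicity and the stated weights. Making explicit that the isolated-singularity condition forces each diagonal character $\chi_i$ to be injective (not merely the intersection of their kernels to be trivial) is exactly the right way to fill in the paper's one-line justification, and your deduction that $\gcd(a_i,m)=1$ from injectivity of $\chi_i$ is correct.
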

\begin{proof}
In our linear setting\footnote{Even if we only assume $G$ is a continuous free action on $S^{2n-1}$, $G$ must be cyclic by \Cref{rmk:connecting} as $H^*(BG;\Z)$ is not periodic if $G$ is a finite non-cyclic abelian group.}, $G$ being abelian implies that they can be simultaneously diagonalized. Since the fixed subspace of every nontrivial element of $G$ is $\{0\}$, $G$ must be cyclic with the action in the statement.  
\end{proof}
We denote such lens space quotient $S^{2n-1}/G$ in \Cref{prop:abel} by $L(m;a_1,\ldots,a_n)$. 

\begin{proposition}\label{prop:chern}
Under the ring isomorphism $H^*(L(m;a_1,\ldots,a_n);\Z)=H^*(B\Z/m;\Z)$ for $*<2n-1$, the total Chern class of $(L(m;a_1,\ldots,a_n),\xi_{\std})$ is given by $\prod_{i=1}^n(1+a_iu)$ where $u$ is a generator of $H^2(B\Z/m;\Z)=\Z/m$.
\end{proposition}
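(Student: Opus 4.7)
The plan is to exhibit the contact distribution as the restriction to $L(m;a_1,\ldots,a_n)$ of a natural complex vector bundle on $B\Z/m$ whose Chern class is easy to compute, using the standard splitting of $T\C^n$ along $S^{2n-1}$.

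First I would set up the equivariant splitting. Let $G=\Z/m$ act on $\C^n$ diagonally with weights $a_1,\ldots,a_n$. On $S^{2n-1}\subset \C^n$, let $\nu$ be the outward unit normal and $R=J\nu$ the standard Reeb vector field (so $\xi_{\std}$ is the complex orthogonal complement of $\nu$). Then as complex vector bundles over $S^{2n-1}$ one has the $G$-equivariant splitting
\begin{equation*}
T\C^n\big|_{S^{2n-1}} \;=\; \langle\nu,R\rangle_{\C} \,\oplus\, \xi_{\std}.
\end{equation*}
The field $\nu$, hence also $R$, is $G$-invariant since $G\subset U(n)$ commutes with $J$ and preserves the radial direction. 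Therefore, after passing to the quotient, the summand $\langle\nu,R\rangle_{\C}/G$ is a trivial complex line bundle on $L(m;a_1,\ldots,a_n)$.

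Next I would identify the other pieces as restrictions of bundles on $B\Z/m$. The left-hand side $T\C^n|_{S^{2n-1}}=S^{2n-1}\times\C^n$ is $G$-equivariantly trivial with fiber the representation $\C^n=\chi_{a_1}\oplus\cdots\oplus\chi_{a_n}$, where $\chi_a$ denotes the character sending the generator to $e^{2\pi\i a/m}$. Quotienting by $G$ gives the associated bundle $\bigoplus_{i=1}^n L_{a_i}\big|_{L(m;a_1,\ldots,a_n)}$, where $L_{a}\to B\Z/m$ is the line bundle associated to $\chi_a$ via the Borel construction. It is standard (and checked, e.g., via the classifying map $B\Z/m\to BU(1)=\C\P^\infty$ given by $\chi_a$) that $c_1(L_a)=au\in H^2(B\Z/m;\Z)$. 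Combining with the splitting above,
\begin{equation*}
\underline{\C}\,\oplus\,\xi_{\std} \;\cong\; \bigoplus_{i=1}^n L_{a_i}\big|_{L(m;a_1,\ldots,a_n)}
\end{equation*}
as complex vector bundles over $L(m;a_1,\ldots,a_n)$.

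Finally I would take total Chern classes and use the identification $H^{*}(L(m;a_1,\ldots,a_n);\Z)=H^{*}(B\Z/m;\Z)$ in degrees $*<2n-1$ from \Cref{prop:cohomology}. Multiplicativity of $c(\cdot)$ and triviality of $\underline{\C}$ give
\begin{equation*}
c(\xi_{\std}) \;=\; c(\underline{\C})\,c(\xi_{\std}) \;=\; \prod_{i=1}^n c(L_{a_i}) \;=\; \prod_{i=1}^n(1+a_i u),
\end{equation*}
which is the claimed formula. There is essentially no obstacle here beyond bookkeeping; the only point that deserves care is verifying that the trivial line bundle $\langle\nu,R\rangle_\C$ descends equivariantly to a trivial bundle, which follows from $G\subset U(n)$, and the standard identification $c_1(L_a)=au$ that fixes the sign convention for the generator $u$.
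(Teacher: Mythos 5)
Your proof is correct and follows essentially the same route as the paper: both identify $\xi_{\std}\oplus\underline{\C}$ with the quotient bundle $\underline{\C}^n/(\Z/m)=\bigoplus_i L_{a_i}|_{L(m;a_1,\ldots,a_n)}$, transfer the computation to $B\Z/m$ via the cohomology isomorphism of \Cref{prop:cohomology}, and use $c_1(L_a)=au$. The only difference is that you spell out the equivariant splitting $T\C^n|_{S^{2n-1}}=\langle\nu,R\rangle_\C\oplus\xi_{\std}$ and the triviality of the descended line bundle, which the paper takes as given.
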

Although the choice of the generator will be made clear in the proof, we do not need that level of precision as properties such as whether $c_k(\xi_{\std})=0$ and whether $c_k(\xi_{\std})$ generates $H^{2k}(L(m;a_1,\ldots,a_n);\Z)$ are independent of the choice of the generator.
\begin{proof}[Proof of \Cref{prop:chern}]
We use $\underline{\C}$ to denote the trivial complex bundle. Note that $\xi_{\std}\oplus\underline{\C}$ is isomorphic $\underline{\C}^n/(\Z/m)$, where $\Z/m$ acts diagonally as in \Cref{prop:abel}. Since $H^*(L(m;a_1,\ldots,a_n);\Z)=H^*(B\Z/m;\Z)$ for $*<2n-1$. We can compute the total Chern class of $\underline{\C}^n/(\Z/m)$ on $B\Z/m=\varinjlim \partial (\D^{2nk})/(\Z/m)$. Then the claim follows from the fact that $c(\underline{\C}/(\Z/m))=1+au$ for the $\Z/m$ action on $\C$ given by $z\mapsto e^{\frac{2\pi a \i}{m}}z$. Here $u$ is the first Chern class of $\underline{\C}/(\Z/m)$ for the $\Z/m$ action on $\C$ given by $z\mapsto e^{\frac{2\pi \i}{m}}z$, which is a generator of $H^2(B\Z/m;\Z)=H^2(\varinjlim \partial (\D^{2nk})/(\Z/m);\Z)$.
\end{proof}
\begin{proposition}\label{prop:EFM}
When $n\ge 3$,  $(S^{2n-1}/G,\xi_{\std})$ is not Weinstein fillable.
\end{proposition}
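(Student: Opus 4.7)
The plan is to reduce to the Eliashberg--Floer--McDuff theorem through a covering space argument. Suppose for contradiction that $W$ is a Weinstein filling of $(Y,\xi_{\std}) = (S^{2n-1}/G,\xi_{\std})$ with $n\ge 3$ and $G$ nontrivial.

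First I would show that the inclusion induces an isomorphism $\pi_1(Y)\cong\pi_1(W)$. The Weinstein structure realizes $W$ as a regular neighborhood of its isotropic skeleton $\Lambda$, which has dimension at most $n$, hence codimension at least $n\ge 3$ in $W$. Since $W\setminus\Lambda$ deformation retracts onto a collar of $Y$, and both loops $S^1\to W$ and their nullhomotopies $D^2\to W$ can be made transverse to $\Lambda$ (which they then miss for dimensional reasons, using $1+n<2n$ and $2+n<2n$ respectively), we obtain $\pi_1(Y)\cong \pi_1(W\setminus\Lambda)\cong \pi_1(W)$. In particular, $\pi_1(W)\cong G$ is finite.

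Next, I would pass to the universal cover $\pi\colon\widetilde{W}\to W$. This is a finite $|G|$-fold connected cover, and $\partial\widetilde{W}$ is the cover of $Y$ corresponding to the trivial subgroup of $\pi_1(Y)=G$, namely the universal cover $S^{2n-1}$. The symplectic form, Liouville vector field, and generalized Morse function all pull back under $\pi$, so $\widetilde{W}$ inherits the structure of a Weinstein filling of $(S^{2n-1},\xi_{\std})$, on which the deck group $G$ acts freely by Weinstein automorphisms.

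Finally, I would invoke the Eliashberg--Floer--McDuff theorem \cite{mcduff1990structure} to conclude that $\widetilde{W}$ is diffeomorphic to $D^{2n}$. But then the free action of any nontrivial element of $G$ gives a fixed-point-free continuous self-map of $D^{2n}$, contradicting the Brouwer fixed point theorem. The only step requiring the hypothesis $n\ge 3$ is the codimension bound establishing injectivity of $\pi_1(Y)\to\pi_1(W)$; this is the main technical point, after which the conclusion follows from standard covering space theory combined with the deep theorem of Eliashberg, Floer, and McDuff.
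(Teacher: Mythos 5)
Your proposal is correct and follows essentially the same route as the paper: identify $\pi_1(W)\cong G$ using that a Weinstein filling of dimension $2n\ge 6$ is built from handles of index at most $n$ (your skeleton/transversality argument is an equivalent way to see this), pass to the universal cover to get a Weinstein filling of $(S^{2n-1},\xi_{\std})$, apply the Eliashberg--Floer--McDuff theorem, and contradict the Brouwer fixed-point theorem via a free deck transformation. No substantive differences from the paper's proof.
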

\begin{proof}
If $W$ is a Weinstein filling of $(S^{2n-1}/G,\xi_{\std})$, since $n\ge 3$, we have $\pi_1(W)=\pi_1(S^{2n-1}/G)=G$. Let $\widetilde{W}$ be the universal cover of $W$. Then $\widetilde{W}$ is a Weinstein filling of $(S^{2n-1},\xi_{\std})$, which by the Eliashberg-Floer-McDuff theorem \cite{mcduff1990structure} is diffeomorphic to a ball in $\C^n$. Now a non-trivial deck transformation acts freely, contradicting the Brouwer fixed-point theorem.
\end{proof}
In fact, most of them are not even almost Weinstein fillable.
\begin{proposition}\label{prop:almost_Weinstein}
 $(S^{2n-1}/G,J_{\std})$ is not almost Weinstein fillable when $n\ge 4$.
\end{proposition}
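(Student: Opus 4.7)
My plan is to produce a topological obstruction in the spirit of \Cref{prop:EFM}, replacing the Eliashberg--Floer--McDuff input of that proof (which needs a genuine symplectic structure) by a characteristic-class calculation on the universal cover, following \cite[\S 6.2]{order}.

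First I would observe that the handle argument from the proof of \Cref{prop:EFM} applies verbatim in the almost Weinstein setting. For an almost Weinstein filling $W$, every handle has index $\leq n$, so seen from $\partial W$ the cohandles have index $\geq n$ with attaching spheres of dimension $\geq n-1\geq 3$; these cannot alter $\pi_{1}$. Hence $\pi_{1}(W)\cong G$ and the universal cover $\widetilde W$ is an almost Weinstein filling of $(S^{2n-1},J_{\std})$ carrying a free $G$-action by deck transformations. I would then cap off with the standard $2n$-disk to form the closed simply connected almost complex manifold $\widehat W:=\widetilde W\cup_{S^{2n-1}}D^{2n}$. Since $\widetilde W$ has the homotopy type of a CW complex of dimension $\leq n$, one has $H^{k}(\widetilde W;\Q)=0$ for $k>n$, and Mayer--Vietoris together with Poincar\'e duality on $\widehat W$ force $H^{*}(\widehat W;\Q)$ to be concentrated in degrees $0$, $n$, and $2n$. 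In particular the rational Chern classes of $\widehat W$ vanish outside of those degrees.

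The key numerical identity is then $\chi(\widehat W)=|G|\cdot\chi(W)+1$, from multiplicativity of the Euler characteristic under the covering $\widetilde W\to W$ combined with $\chi(D^{2n})-\chi(S^{2n-1})=1$. This already gives the congruence $\chi(\widehat W)\equiv 1\pmod{|G|}$. On the other hand, the integrality of the Todd genus of any closed almost complex manifold, together with (when $n$ is even) the Hirzebruch signature theorem, impose divisibility constraints on $c_{n}[\widehat W]=\chi(\widehat W)$ and on $c_{n/2}^{2}[\widehat W]$ that I would aim to show are incompatible with the above congruence once $n\geq 4$.

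The main obstacle — and the reason the threshold is $n\geq 4$ rather than $n\geq 3$ — is the borderline case $(\RP^{2n-1},\xi_{\std})$ with $n=2^{k}$, where $T\RP^{2n-1}$ is already trivial and the ordinary Chern classes of $\xi_{\std}$ all vanish, so the elementary Chern/Pontryagin obstruction collapses. Handling these degenerate examples requires finer invariants such as spin or spin$^{c}$ cobordism refinements and equivariant characteristic numbers, and is where the bulk of the argument in \cite[\S 6.2]{order} takes place; I would import that case analysis to complete the proof.
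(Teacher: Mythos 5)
Your sketch does not close the argument: the decisive step --- showing that Todd-genus integrality and the signature theorem are incompatible with $\chi(\widehat W)\equiv 1 \pmod{|G|}$ --- is only announced, and the cases you identify as hardest are explicitly outsourced to \cite[\S 6.2]{order}, so as written nothing is proved for $n\ge 4$. There is also a concrete gap earlier in the sketch: capping the universal cover $\widetilde W$ with $D^{2n}$ produces a closed \emph{smooth} manifold, but to speak of Chern numbers, the Todd genus, or $c_n[\widehat W]=\chi(\widehat W)$ you need $\widehat W$ to be almost complex, and extending the almost complex structure from a neighborhood of $\partial D^{2n}=S^{2n-1}$ over the disk is obstructed by a class in $\pi_{2n-1}(SO(2n)/U(n))$, which is not zero in general; you never address this. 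Finally, your diagnosis of the threshold $n\ge 4$ is off the mark for this particular statement: the difficulty is not the vanishing of $c(\xi_{\std})$ for $\RP^{2n-1}$ with $n=2^k$ (that issue is what forces the delicate Floer-theoretic and Bernoulli-number arguments for \emph{exact} fillings elsewhere in the paper), and in fact no characteristic classes are needed here at all.

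The paper's proof is purely a cup-product argument and is worth comparing against. Given a hypothetical almost Weinstein filling $W$, one passes not to the universal cover but to the intermediate cover $\widetilde W$ associated with a cyclic subgroup $\la g\ra\subset\pi_1(W)=G$, so that $\partial\widetilde W=S^{2n-1}/\la g\ra$ is a lens space; by \Cref{prop:cohomology} its cohomology ring below degree $2n-1$ is that of $B\Z/\ord(g)$, so there is $\alpha\in H^2(\partial\widetilde W;\Z)$ with $\alpha^{n-1}\ne 0$. Since $\widetilde W$ is built from handles of index at most $n$, Lefschetz duality gives $H^k(\widetilde W,\partial\widetilde W;\Z)=0$ for $k\le n-1$; when $n\ge 4$ this makes $H^2(\widetilde W;\Z)\to H^2(\partial\widetilde W;\Z)$ an isomorphism, so $\alpha=\beta|_{\partial\widetilde W}$ and $\beta^{n-1}\ne 0$ in $H^{2n-2}(\widetilde W;\Z)$, contradicting $H^{2n-2}(\widetilde W;\Z)=0$ (again from the handle indices). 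This is where the hypothesis $n\ge 4$ enters, it works uniformly for all $G$ including $\Z/2$, and it avoids both the capping problem and any index-theoretic input. I would recommend abandoning the characteristic-number strategy for this proposition and arguing along these lines.
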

\begin{proof}
Assume for contradiction that there is an almost Weinstein filling $W$. Let $g\in G$ a non-trivial element, we consider the covering space $\widetilde{W}$ of $W$ corresponding to the subgroup $\la g \ra \subset \pi_1(W)=\pi_1(S^{2n-1}/G)=G$. As we assume $n\ge 3$, $\widetilde{W}$ is also constructed using only handles up to dimension $n$ with  $\partial \widetilde{W}=S^{2n-1}/\la g \ra $. Since $H^*(\partial \widetilde{W};\Z)=H^*(B\Z/\ord(g);\Z)$ as a ring in degrees less than $2n-1$, we have a class $\alpha$ in $H^2(\partial \widetilde{W};\Z)$ such that $\alpha^{n-1}\ne 0$. Since $\widetilde{W}$ is almost Weinstein and $n\ge 4$, we know that $H^2(\widetilde{W};\Z)\to H^2(\partial\widetilde{W};\Z)$ is an isomorphism. Therefore $\alpha=\beta|_{\partial\widetilde{W}}$ for $\beta \in H^2(\widetilde{W};\Z)$ and $\beta^{n-1}\ne 0$ as $\beta^{n-1}|_{\partial\widetilde{W}} = \alpha^{n-1}\ne 0$. This contradicts with the fact that $H^{2n-2}(\widetilde{W};\Z)=0$ when $n\ge 4$, since $\widetilde{W}$ is built from handles of indexes at most $n$.
\end{proof}

\begin{example}
Another intersecting class of examples of isolated quotient singularities comes from finite subgroups of $SU(2)$, namely the $A,D,E$ types of subgroups. The associated quotient singularity $\C^2/G$ is canonical and has a minimal (also crepant) resolution whose exceptional locus has an intersection pattern described by the associated Dynkin diagram. Moreover, the contact link $(S^3/G,\xi_{\std})$ has a Weinstein filling from plumbings of $T^*S^2$ according to the associated Dynkin diagram. The $A_{m-1}$ type subgroup is $\Z/m$, which acts on $\C^2$ by sending the generator to $\diag (e^{\frac{2\pi \i}{m}},e^{\frac{2(m-1)\pi \i}{m}})$. Details of $D,E$ types of subgroups can be found in \cite{MR1957212}. Given any finite $G\subset SU(2)$, it acts diagonally on $(\C^2)^n$ giving rise to an isolated terminal quotient singularity if $n>1$, see \Cref{prop:ADE_terminal}. The group cohomology ring $H^*(BG;\Z)$ is computed, e.g.\ in \cite{MR2484716}. In particular, we have a ring isomorphism $\oplus_{i=0}^\infty H^{4i}(BG;\Z)=\Z[v]/\la |G|\cdot v \ra$ with $\deg(v)=4$.
\end{example}

\begin{proposition}\label{prop:ADE_Chern}
Let $G\subset SU(2)$ be a finite subgroup, then $G$ acts on $(\C^2)^n$ by the natural diagonal action. The total Chern class of $(S^{4n-1}/G,\xi_{\std})$ is given by
$$(1-v)^n\mod \la |G|, v^n \ra,$$
where $v\in H^4(S^{4n-1}/G;\Z)=H^4(BG,\Z)=\Z/|G|$ is a generator.
\end{proposition}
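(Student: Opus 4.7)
The plan is to mimic the proof of \Cref{prop:chern} adapted to the quaternionic/$SU(2)$ setting. First I would observe that the stabilized contact distribution is trivializable as a $G$-equivariant bundle over $S^{4n-1}$: explicitly, $\xi_{\std}\oplus \underline{\C}\cong \underline{\C}^{2n}/G$, where $G$ acts on $\C^{2n}=(\C^2)^{\oplus n}$ diagonally via $G\hookrightarrow SU(2)\hookrightarrow U(2n)$. Stabilizing does not change Chern classes, so it suffices to compute $c(\underline{\C}^{2n}/G)$ on $BG=\varinjlim \partial(\D^{4nk})/G$; by \Cref{prop:cohomology} this agrees with $c(\xi_{\std})$ in degrees $<4n-1$, which is the relevant range since $H^{4n-2}(BG;\Z)=0$ by the periodicity recalled in the preceding example.

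Next I would split the bundle as a Whitney sum of $n$ copies of the tautological rank-$2$ bundle $E:=\underline{\C}^2/G$ on $BG$ coming from the standard representation $\rho\colon G\hookrightarrow SU(2)$. Since $\rho$ lands in $SU(2)$, one has $c_1(E)=0$, so $c(E)=1+c_2(E)$ with $c_2(E)\in H^4(BG;\Z)=\Z/|G|$. Setting $v:=-c_2(E)$ and applying the Whitney formula gives
\begin{equation*}
c(\xi_{\std})=c(\underline{\C}^{2n}/G)=(1-v)^n,
\end{equation*}
with the understanding that terms of degree $\ge 4n$ (i.e.\ $v^k$ for $k\ge n$) are trivial in the cohomology of $S^{4n-1}/G$, yielding the claimed formula modulo $\la |G|,v^n\ra$.

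The one genuine content is to check that $v:=-c_2(E)$ is a \emph{generator} of $H^4(BG;\Z)=\Z/|G|$. For $G$ of type $A_{m-1}$, namely $G=\Z/m$ acting on $\C^2$ by $\diag(e^{2\pi\i/m},e^{2\pi(m-1)\i/m})$, this follows directly from \Cref{prop:chern}: the total Chern class is $(1+u)(1+(m-1)u)\equiv 1-u^2\pmod{m}$, so $c_2(E)=-u^2$, and $u^2$ generates $H^4(B\Z/m;\Z)=\Z/m$. For the general $G\subset SU(2)$, I would deduce the statement by restricting $E$ to a maximal cyclic subgroup $\Z/m\subset G$ containing the center $\{\pm\one\}$ (or any faithful cyclic subgroup), together with the observation that the restriction map $H^4(BG;\Z)\to H^4(B\Z/m;\Z)$ sends the generator of $\Z/|G|$ to an element of order $m$ (computable from the explicit $A,D,E$ presentations). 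Alternatively, one can argue via the transfer: $H^4(BG;\Q)=0$ forces all of $H^4(BG;\Z)$ to be $|G|$-torsion, and since $c_2(E)$ pulls back from the generator of $H^4(BSU(2);\Z)=\Z$, its order in $H^4(BG;\Z)$ is exactly $|G|$ by a standard index/transfer computation (see, e.g., the computation of the Euler class of the associated oriented rank-$4$ real bundle).

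The main obstacle is therefore the generator check in the last step; for each of the $A$, $D$, $E$ families an explicit comparison with a cyclic subgroup is available, but a uniform argument via the transfer map and the fact that the universal $c_2$ has infinite order on $BSU(2)$ seems cleanest and is how I would finally present it.
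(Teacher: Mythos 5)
Your argument is essentially correct, but it takes a genuinely different route from the paper's. The paper never passes to $BG$: viewing $\C^2$ as $\H$, it identifies $\xi_{\std}\oplus\underline{\C}$ with the pullback of $\oplus^n\Hom_{\H}(V,\underline{\H})\simeq\oplus^n\overline{V_{\C}}$ under the map $\pi:S^{4n-1}/G\to\HP^{n-1}$ induced by the quaternionic Hopf fibration ($V$ the tautological quaternionic line bundle), so that $c(\xi_{\std})=(1-v)^n$ with $v=\pi^*$ of the generator of $H^4(\HP^{n-1};\Z)$; the statement that $v$ generates $H^4(S^{4n-1}/G;\Z)$ is absorbed into the assertion that $\pi^*$ is reduction modulo $|G|$ on $H^{4i}(\HP^{n-1};\Z)$. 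You instead stabilize, transport the computation to $BG$ exactly as in \Cref{prop:chern}, split $\underline{\C}^{2n}/G$ as $n$ copies of the rank-two bundle $E$ of the standard representation, and use $c_1(E)=0$. The two computations are equivalent, and both hinge on the same key fact: that $c_2(E)$, equivalently the image of the generator of $H^4(BSU(2);\Z)$, generates $H^4(BG;\Z)\cong\Z/|G|$. Your version makes this dependence explicit, which is a fair trade: it isolates the only nontrivial input, whereas the paper's pullback-from-$\HP^{n-1}$ phrasing states it without proof.

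Two caveats about how you propose to close that key step. First, restricting to a maximal cyclic subgroup does not suffice for any non-cyclic $G\subset SU(2)$: the Sylow $2$-subgroup is generalized quaternion, hence non-cyclic, so the maximal cyclic subgroups have orders whose least common multiple is only $|G|/2$ (e.g.\ for $2I$ of order $120$ the maximal cyclic subgroups have orders $4,6,10$), and cyclic restrictions can therefore never certify that $c_2(E)$ has order $|G|$. Second, your preferred "transfer" argument is the right one but, as phrased ("$c_2(E)$ pulls back from an infinite-order class, hence has order exactly $|G|$"), it is not yet a proof; the clean argument is the Gysin sequence of the sphere bundle of $EG\times_G\H\to BG$: its total space is $EG\times_G S^3\simeq S^3/G$ because $G$ acts freely on $S^3$, and since $H^4(S^3/G;\Z)=0$ the map $H^0(BG;\Z)\to H^4(BG;\Z)$ given by cupping with the Euler class is surjective, i.e.\ $e=c_2(E)$ generates $H^4(BG;\Z)\cong\Z/|G|$. (This is also what lies behind the paper's "reduction mod $|G|$" claim.) Finally, your parenthetical claim that $H^{4n-2}(BG;\Z)=0$ is false in general (by periodicity it is $G^{\mathrm{ab}}$), but it is not needed: the vanishing of the odd-degree components of $c(\xi_{\std})$ follows from $c(E)=1+c_2(E)$, and \Cref{prop:cohomology} already covers all degrees at most $4n-2$.
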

\begin{proof}
We view $\C^2$ as the space of quaternions $\H$, then the $SU(2)\simeq S^3$ action on $\C^2$ is identified with the quaternion product by unit vectors in $\H$. Let $V$ denote the tautological quaternion line bundle over $\HP^{n-1}$, then $\underline{\H}^n/\H^{\times}=T\HP^{n-1}\oplus \underline \H = \oplus^n \Hom_{\H} (V,\underline{\H})$, where the first $\underline{\H}^n$ is the tangent bundle over $\H^n\backslash\{0\}$. Note that the underlying complex bundle $\left(\Hom_{\H} (V,\underline{\H})\right)_{\C}$ using $\i \in \H^{\times}$ is isomorphic to $\Hom_{\C}(V_{\C},\underline{\C})\simeq \overline{V_{\C}}$. We use $\pi$ to denote the map $S^{4n-1}/G\to \HP^{n-1}$ induced from the Hopf fibration $S^{4n-1}\to \HP^{n-1}$. Then the complex bundle $\xi_{\std}\oplus \underline{\C}$ is isomorphic to $\pi^*(\underline{\H}^n/\H^{\times})$ as a complex bundle, where  $\underline{\H}^n/\H^{\times}$ is equipped with complex structure from $\i\in \H^{\times}$. Since $c(V_{\C})=c(\overline{V_{\C}})=1-v$ for $v\in H^4(\HP^{n-1};\Z)$ is a generator and $\pi^*$ is reduction modulo $|G|$ on $H^{4i}(\HP^{n-1};\Z)$ for $1\le i\le n-1$, the claim follows.
\end{proof}

\begin{example}
Assume $G\subset O(n)$, such that the only fixed point of the $G$-action on $\R^n$ is $0$. If we view $G$ as in $O(n)\subset SU(n)$, the quotient singularity $\C^n/G$ is isolated. Moreover, since each $g\in G$ is represented by a real matrix, the eigenvalues must be conjugated complex numbers in pairs. Therefore $\age(g)=\frac{n}{2}$ (see \eqref{eqn:age}) for any $g\ne \Id$ and $\C^n/G$ is terminal if $n>2$ by \Cref{prop:terminal}. If $M$ is a closed orbifold with isolated singularities, then $T^*M$ is a symplectic orbifold with singularities of such type. 
\end{example}

\subsection{Reeb dynamics on $S^{2n-1}/G$}\label{ss:reeb}
Since $G\subset U(n)$, we have that $\alpha=\frac{1}{2}\sum (x_i\rd y_i-y_i\rd x_i)=-\frac{1}{2}r\rd r\circ I$ is $G$-invariant, where $I$ is the standard complex structure on $\C^n$.
Hence $\alpha$ descends to a contact form $\alpha_{\std}$ on $(S^{2n-1}/G,\xi_{\std})$.
We now recall the description of the Reeb dynamics of $\alpha_{\std}$ from \cite[\S 2.1]{McKay}. Since $c^{\Q}_1(\xi_{\std})=0$ and $\pi_1(S^{2n-1}/G)$ is finite, we can assign a well-defined rational generalized Conley-Zehnder index to every Reeb orbit following \cite[\S 4.1]{Reeb}, see also \cite[\S 3.3]{gironella2021exact}.

For $g\in U(n)$, we can diagonalize $g$ as $\diag(e^{\frac{2\pi a_1 \i}{\mathrm{o}(g)}},\ldots,e^{\frac{2\pi a_n \i}{\mathrm{o}(g)}})$, where $\mathrm{o}(g)$ is the order of $g$ and $1\le a_i<\mathrm{o}(g)$. Then we define 
\begin{equation}\label{eqn:age}
    \age(g):=\sum_{i=1}^n \frac{a_i}{\mathrm{o}(g)}\in \Q.
\end{equation}
$\age(g)$ only depends on the conjugacy class $(g)$.

For $l>0\in \R$ and $g\in V$, we define 
$$V_{g,l}=\left\{v\in \C^n\left|g(v)=e^{l\i}v\right.\right\}.$$
It is easy to check that $h(V_{g,l})=V_{hgh^{-1},l}$ for $h\in G$. 
Since the Reeb flow of $\alpha$ on $S^{2n-1}$ is given by $\phi_t(z)=e^{t\i}z$, we have the following properties of the Reeb dynamics of $\alpha_{\std}$ on $S^{2n-1}/G$. 

\begin{enumerate}
    \item Let $B_{g,l}=\pi_G(V_{g,l}\cap S^{2n-1})$, where $\pi_G$ is the quotient map $S^{2n-1}\to S^{2n-1}/G$. 
    Then, $B_{g,l}=(V_{g,l}\cap S^{2n-1})/C(g)$ is a submanifold of $S^{2n-1}/G$ of dimension $2\dim_{\C} V_{g,l}-1$, where $C(g)$ is centralizer of $g$.
    Moreover, $B_{g,l}=B_{hgh^{-1},l}$,
    hence, for $(g)\in \Conj(G)$, one can define $B_{(g),l}$ to just be $B_{g,l}$.
    \item $\displaystyle \sqcup_{(g),l} B_{(g),l}$ is the space of all \emph{parameterized} Reeb orbits of $\alpha_{\std}$. 
    For $z\in B_{(g),l}$, the time $l$ Reeb flow from $z$ is an orbit with free homotopy class $(g)\in \Conj(G)=\Conj(\pi_1(S^{2n-1}/G))$.
    \item  The generalized Conley-Zehnder index of the family $B_{(g),l}$ is given in \cite[Theorem 2.6]{McKay} by 
    $$\mu_{\CZ}(B_{(g),l})=n-2\age(g)+2\sum_{l'<l}\dim_{\C} V_{g,l'}+\dim_{\C}V_{g,l}.$$
    Then we have $\mu_{\CZ}(B_{(g),l+2k\pi})=\mu_{\CZ}(B_{(g),l})+2kn$. 
    \item\label{reeb} The space of \emph{unparameterized} Reeb orbits is the orbifold $B_{(g),l}/S^1$, which is Morse-Bott non-degenerate in the sense of \cite[Definition 1.7]{MR2703292}. By \cite[Lemma 2.3]{MR2703292}, we can find a small perturbation of $\alpha_{\std}$, such that all Reeb orbits with period smaller than any given $R\gg 0$ are non-degenerate and correspond to critical points of auxiliary Morse functions on  $B_{(g),l}/S^1$. The minimal rational Conley-Zehnder index is given by 
    $$\min_{(g)\in \Conj(G)}\left\{n-2\age(g)+1\right\}.$$
    After such a perturbation, the $\Z/2$ Conley-Zehnder index of the orbit in each conjugacy class with the minimal rational Conley-Zehnder index (or period) is $n+1$. One way to see this is through direct computation. There is a roundabout as follows: in the computation of the symplectic cohomology of $\C^n/G$ in \cite[Theorem B]{gironella2021exact}, there are differentials from those orbits with minimal rational Conley-Zehnder index to constant twisted sectors at the origin. Now by \cite[Example 3.7]{gironella2021exact}, those constant twisted sectors have $\Z/2$ Conley-Zehnder index $n$, which implies the claim.
\end{enumerate}

In \S \ref{s3}, we will consider Hamiltonians on hypothetical fillings of $(S^{2n-1}/G,\xi_{\std})$ that are in the form of $H(r)$, where $r\in [1,\infty)$ is the radical coordinate determined by the contact form $\alpha_{\std}$. We will assume that  with $H''(r)\ge 0$ for $r\ge 1$ and $H'(r)=a \notin \frac{2\pi \N}{|G|}$ for $x>1+\epsilon$ for some $\epsilon>0$. Then non-constant Hamiltonian orbits of $H$ are parameterized by $B_{(g),l}$ for $l<a$. In particular, the symplectic cohomology can be equivalently defined using the cascades construction, c.f.\ \cite{cascades,divisor}, i.e.\ we need to choose a Morse function $f_{(g),l}$ for each $B_{(g),l}$ and the generators from non-constant orbits of $H$ are replaced by critical points of $f_{(g),l}$. 
The Conley-Zehnder index of a critical point $x$ of $f_{(g),l}$ is given by 
\begin{equation}\label{eqn:CZ}
    \mu_{CZ}(x)=n-2\age(g)+2\sum_{l'<l}\dim_{\C}V_{g,l'}+1+\ind(x),
\end{equation}
where $\ind(x)$ is the Morse index of $x$. With such conventions, the cascade flow lines in $B_{(g),l}$ are \emph{negative} gradient flow lines of $f_{(g),l}$ in the cascade construction. On the other hand, if we use $f_{(g),l}$ to perturb the Hamiltonian following \cite{cascades}, then each critical point of $x$ gives rise to a non-degenerate Hamiltonian orbit $\gamma_x$, whose Conley-Zehnder index is given by \eqref{eqn:CZ}. The symplectic action of $\gamma_x$ is greater than that of $\gamma_y$ iff $f_{(g),l}(x)<f_{(g),l}(y)$. Moreover, we may choose $f_{(g),l}$ to have a unique minimum. Then for each conjugacy class $(g)$, there is a unique generator $x_{(g)}$ with the minimal rational Conley-Zehnder index $n-2\age(g)+1$, or equivalently minimal period, or maximal symplectic action. Those generators will have critical roles in \S \ref{s3}, \ref{s4}.

In general, given a $2n-1$ dimensional contact manifold $(Y,\xi)$ with $H_1(Y;\Z)$ is torsion and $c^\Q_1(Y)=0$, then every Reeb orbit can be assigned with a well-defined rational Conley-Zehnder index $\mu_{CZ}(\gamma)$\footnote{The Reeb orbit is not necessarily non-degenerate, the Conley-Zehnder index is defined in \cite{zbMATH00550345}.}. Then we define the lower SFT index by
\begin{equation}\label{eqn:lSFT}
\mathrm{lSFT}(\gamma):=\mu_{CZ}(\gamma)+(n-3)-\frac{1}{2}\dim \ker (\rD_{\gamma(0)}\phi_L|_{\xi}-\Id),
\end{equation}
where $\phi_L$ is the time $L$ flow of the Reeb vector field and $L$ is the period of $\gamma$. When $\gamma$ is non-degenerate, $\mathrm{lSFT}(\gamma)$ is the standard SFT grading. When $\gamma$ comes from a Morse-Bott family, then $\mathrm{lSFT}$ is the minimal SFT grading of the non-degenerate orbits after a small perturbation. Following \cite{Reeb}, the highest minimal SFT index is defined as
$$\mathrm{hmi}(Y,\xi):= \sup_{\alpha} \inf_{\gamma} \{\mathrm{lSFT}(\gamma)\}.$$

\subsection{Chen-Ruan cohomology of $\C^n/G$}
From a string theory point of view, the correct cohomology theory of an almost complex orbifold is the Chen-Ruan cohomology introduced in \cite{CheRua04}, which can be viewed as the constant part of the orbifold quantum product introduced in \cite{CheRua01}. The Chen-Ruan cohomology is the replacement of the ordinary cohomology in the definition of orbifold symplectic cohomology \cite{gironella2021exact}. We refer readers to \cite{Ruan07} for the definition and properties of the Chen-Ruan cohomology.

For orbifold $\C^n/G$, with $G\subset U(n)$ having only isolated singularity at $0$, we have 
\begin{equation}\label{eqn:CR_cohomology}
    H^*_{\CR}(\C^n/G;R)=\oplus_{(g)\in \Conj(G)} R[-2\age(g)].
\end{equation}

\subsubsection{Product on Chen-Ruan cohomology}\label{ss:coproduct}

To describe the product structure on $ H^*_{\CR}(\C^n/G;R)$, we denote by $[(g)]$ the (positive) generator in cohomology for the conjugacy class $(g)$. For $(g_1),(g_2)\ne (\Id)$, one can define 
	\[I_{g_1,g_2}=\{(h_1,h_2)|h_i\in (g_i), \age(h_1)+\age(h_2)=\age(h_1h_2)\}/(h_1,h_2)\sim (gh_1{g^{-1}},gh_2g^{-1}).\]
Then by \cite[Examples 4.26 and 4.27]{Ruan07}, we have 
	\begin{equation}
	\label{eqn:decomposition}
	[(g_1)]\cup [(g_2)]=\sum_{(h_1,h_2)\in I_{g_1,g_2}} \frac{\vert C(h_1h_2)\vert}{|C(h_1)\cap C(h_2)|} [(h_1h_2)],
	\end{equation}
	for $(g_1),(g_2)\ne (\Id)$, while $[(\Id)]$ is the ring unit in $H^*_{\CR}(\C^n/G;R)$. 

The formula \eqref{eqn:decomposition} can be explained through holomorphic curves as follows. Just like the manifold case, the cup product can be defined by counting moduli spaces of punctured holomorphic curves of zero energy with $2$ inputs (positive punctures) and $1$ output (negative puncture). But in the orbifold case at hand, there are constant holomorphic curves which, near punctures, are asymptotic to twisted sectors at the singularity. In fact, each $(h_1,h_2)\in I_{g_1,g_2}$ corresponds to a constant holomorphic curve with two positive punctures asymptotic to $(g_1),(g_2)$ and one negative puncture asymptotic to $(h_1h_2)$, which is cut out transversely (by the conditions on the $\age$ invariants). Moreover, such a holomorphic curve is an orbifold point in the space of curves in an orbifold with isotropy group $C(h_1)\cap C(h_2)\cap C(h_1h_2)=C(h_1)\cap C(h_2)$. In view of the pull-push definition of the structural constants in \cite[\S 2.1.3]{gironella2021exact}, the coefficient is given by  ${\vert C(h_1h_2)\vert}/{|C(h_1)\cap C(h_2)|}$.

\subsubsection{Coproduct on Chen-Ruan cohomology}
By the same philosophy, we can define the Chen-Ruan coproduct on the Chen-Ruan cohomology, which is dual to the Chen-Ruan cohomology through the Poincar\'e/Lefschetz duality on Chen-Ruan cohomology. In the case of $\C^n/G$, we define
	\[I_{g}=\{(h_1,h_2)|h\in (g), h_1h_2=h, \age(h_1)+\age(h_2)=\age(h)+n\}/(h_1,h_2)\sim (ph_1{p^{-1}},ph_2p^{-1}).\]
Then we define the Chen-Ruan coproduct to be  
\begin{equation}\label{eqn:coproduct}
    \Delta[(g)] = \sum_{(h_1,h_2)\in I_g} \frac{|C(h_1)|\cdot |C(h_2)|}{|C(h_1)\cap C(h_2)|}[(h_1)]\otimes [(h_2)].
\end{equation}
Here, as before, each element in $I_g$ corresponds to a constant holomorphic curve with two negative punctures asymptotic to $(h_1),(h_2)$ and one positive puncture asymptotic to $(g)$, which is cut out transversely with isotropy group $C(h_1)\cap C(h_2)$. The coefficient is again a consequence of the pull-push definition of the structural constants on orbifolds. It is indeed dual to the Chen-Ruan product on the \emph{compactly supported} Chern Ruan cohomology considering the Lefschetz duality \cite[\S 4.2]{Ruan07} on the inertia orbifold of $\C^n/G$ has coefficients from the isotropy groups. 

As a special case, we have
$$\Delta[(\Id)] = \sum_{(g)\ne (\Id)\in \Conj(G)} |C(g)| [(g)]\otimes [(g^{-1})].$$
In other words, the ``orbifold intersection form" is non-degenerate.

\subsection{Minimal discrepancy number of $\C^n/G$}
Let $A$ be a singularity, that is isolated at $0$. Let $\pi:\widetilde{A}\to A$ be a resolution (if $0$ is smooth, we need to blow up once), such that $\pi^{-1}(0)$ a union of smooth normal crossing divisors $E_i$. We view the canonical bundle $K_{\widetilde{A}}$ of $\widetilde{A}$ as a $\Q$-Cartier divisor. We say $A$ is numerically $\Q$-Gorenstein if there exists a $\Q$-Cartier divisor $K^{\rm{num}}_{\widetilde{A}}=\sum a_i E_i$ with the property that $C\cdot (K^{\rm{num}}_{\widetilde{A}}-K_{\widetilde{A}})=0$ for any projective algebraic curve $C$ in $\pi^{-1}(0)$. Here $a_i \in \Q$ is called the discrepancy of $E_i$. Following \cite[Lemma 3.3]{Reeb}, $A$ being numerically $\Q$-Gorenstein is equivalent to that $c_1^{\Q}(L_A,\xi)=0$, where $L_A$ is the link of the singularity and $\xi$ is induced contact structure \cite{MR570582}. Moreover, $c^{\Q}_1(\widetilde{A})$ viewed as a class in $H^2_c(\widetilde{A};\Q)=H^2(\widetilde{A},L_A;\Q)$ is the Lefschetz dual to $\sum a_i[E_i]$, where $a_i$ is the  discrepancy of $E_i$ above. Since $H^2(S^{2n-1}/G;\Q)=0$, isolated quotient singularities are numerically $\Q$-Gorenstein.

\begin{definition}[{\cite[Definition 3.4]{Reeb}}]\label{def:terminal}
The minimal discrepancy number $\mathrm{md}(A,0)$ is the given by infimum of $a_i$ over all resolutions. A singularity is called terminal if the minimal discrepancy number is positive. A singularity is called canonical if the minimal discrepancy number is non-negative.
\end{definition}
A beautiful theorem of McLean relates the algebraic geometry of a singularity with the contact geometry of the link, which applies to quotient singularities.
\begin{theorem}[McLean {\cite[Theorem 1.1]{Reeb}}]\label{thm:McLean}
Let $A$ have a normal isolated singularity at $0$ that is numerically $\Q$-Gorenstein with $H^1(L_A; \Q) = 0$. Then:
\begin{itemize}
    \item If $\mathrm{md}(A, 0) \ge 0$ then $\mathrm{hmi}(L_A, \xi_A) = 2\mathrm{md}(A, 0)$;
    \item If $\mathrm{md}(A, 0) < 0$ then $\mathrm{hmi}(L_A, \xi_A) < 0$.
\end{itemize}
\end{theorem}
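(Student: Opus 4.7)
The plan is to establish both directions of the equality by constructing an explicit Morse--Bott Reeb model from a good resolution of $A$ and then bounding the SFT indices in the reverse direction via pseudo-holomorphic curves in the resolution, following the general philosophy that algebraic discrepancies show up as symplectic rotation numbers.

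First I would fix a resolution $\pi\colon\widetilde A\to A$ whose exceptional divisors $E_i$ carry discrepancies $a_i$ realizing $\mathrm{md}(A,0)$, and identify a neighborhood of $L_A$ inside $\widetilde A$ with a symplectization-type collar. Using the numerical $\Q$-Gorenstein hypothesis together with the condition $H^1(L_A;\Q)=0$, one can construct a contact form $\alpha$ on $L_A$ whose Reeb flow is Morse--Bott: each $E_i$ contributes a family of short closed Reeb orbits obtained as $S^1$-fibers of the unit normal bundle of $E_i$, with additional families coming from the lower strata $E_i\cap E_j$. A linearized computation of the rotation numbers along such a normal circle, parallel in spirit to the weighted-cyclic calculation in \S\ref{ss:reeb}, expresses the generalized Conley--Zehnder index in terms of the discrepancy $a_i$, and after a Morse perturbation inside the Morse--Bott families the lower SFT index of the minimal orbit associated with $E_i$ equals exactly $2a_i$. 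Taking the infimum over $i$ yields $\inf_\gamma\mathrm{lSFT}(\gamma)\le 2\mathrm{md}(A,0)$ for this distinguished $\alpha$, which supplies the lower bound on $\mathrm{hmi}$ when $\mathrm{md}(A,0)\ge 0$, and the strict negativity when $\mathrm{md}(A,0)<0$.

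The matching upper bound $\mathrm{hmi}(L_A,\xi_A)\le 2\mathrm{md}(A,0)$ is the heart of the theorem and goes through pseudo-holomorphic curves. Given any other contact form $\alpha'$ on $L_A$, I would attach a Liouville cobordism from $\alpha'$ to the standard one on top of $\widetilde A$ and equip the completion with a compatible almost complex structure. For a component $E_i$ realizing the minimum discrepancy, an algebraic curve class inside $E_i$ (with a point constraint) provides pseudo-holomorphic representatives that persist under deformation. SFT compactness then yields a finite-energy punctured curve $u$ whose positive asymptote is a Reeb orbit $\gamma$ of $\alpha'$, and the Riemann--Roch formula combined with the $\Q$-Gorenstein identity $c_1^{\Q}(\widetilde A)\cdot [u]=-\sum a_i(E_i\cdot[u])$ gives $\mathrm{lSFT}(\gamma)\le 2a_i=2\mathrm{md}(A,0)$. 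The same argument, with the sign of $\mathrm{md}$ negative, produces an orbit of strictly negative lower SFT index for every contact form and hence $\mathrm{hmi}(L_A,\xi_A)<0$.

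The main obstacle is the pseudo-holomorphic curve step for an arbitrary $\alpha'$. One must guarantee existence of the relevant finite-energy curves through prescribed points of the exceptional locus inside the non-compact completion, control SFT compactness in the presence of Morse--Bott orbits (handling bubbles, multi-level buildings, and multiple covers), and propagate the $\Q$-Gorenstein index calculation all the way down to the surviving negative puncture. A secondary technical point is setting up the generalized Conley--Zehnder index in the Robbin--Salamon sense for possibly degenerate orbits and verifying that it interacts correctly with the Morse perturbation of Morse--Bott families used on the construction side, so that the two bounds meet exactly at $2\mathrm{md}(A,0)$.
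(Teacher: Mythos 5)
You should first note that the paper does not prove this statement at all: it is imported verbatim as McLean's theorem \cite[Theorem 1.1]{Reeb} and used as a black box (e.g.\ in the proof of \Cref{prop:terminal}), so there is no in-paper argument to compare yours against; what you have written is an attempt to re-prove McLean's result, and as such it has genuine gaps.

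The first gap is logical. Since $\mathrm{hmi}(L_A,\xi_A)=\sup_{\alpha}\inf_{\gamma}\mathrm{lSFT}(\gamma)$, the resolution-adapted contact form is useful only if you prove a \emph{lower} bound on the infimum, i.e.\ that \emph{every} Reeb orbit of that form (including orbits wrapping several exceptional divisors and their multiple covers) has $\mathrm{lSFT}\ge 2\,\mathrm{md}(A,0)$; your step instead asserts $\inf_\gamma \mathrm{lSFT}(\gamma)\le 2\,\mathrm{md}(A,0)$ for that one form, which gives no lower bound on $\mathrm{hmi}$ and no information when $\mathrm{md}(A,0)<0$ (the sup over \emph{all} forms could still be large). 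Moreover, restricting attention to the simple $S^1$-fibers of the unit normal bundles of the individual $E_i$ and "taking the infimum over $i$" misses the essential bookkeeping: $\mathrm{md}(A,0)$ is an infimum over all resolutions, detected on a fixed SNC resolution by divisors extracted over the strata $E_I$, and correspondingly by Reeb orbits with several nonzero wrapping numbers, whose indices involve the multiplicities in a Reid--Shepherd-Barron--Tai-type formula; this is exactly the computation your sketch elides. The second gap is that the half you correctly identify as "the heart of the theorem" --- producing, for an \emph{arbitrary} contact form $\alpha'$, a finite-energy punctured curve whose positive asymptote has $\mathrm{lSFT}\le 2\,\mathrm{md}(A,0)$ --- is left as an acknowledged obstacle rather than proved. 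In particular, the claim that "an algebraic curve class inside $E_i$ (with a point constraint) provides pseudo-holomorphic representatives that persist under deformation" is unjustified: an exceptional component need not contain rational curves in a usable class, and persistence after attaching the cobordism for $\alpha'$ and changing the almost complex structure requires a nonvanishing Gromov--Witten-type invariant together with an energy bound and an SFT-compactness/index-propagation argument; establishing this is the substantial content of McLean's proof, so as written your proposal is an outline of a plausible strategy rather than a proof.
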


As an application of McLean's theorem, we obtain a symplectic proof of the Reid--Shepherd-Barron--Tai criterion \cite[(3.1)]{canonical}, \cite[Proposition 3.2]{Kodaira} for isolated quotient singularities.

\begin{proposition}\label{prop:terminal}
The isolated singularity $\C^n/G$ is terminal (canonical) if and only if $\min_{g\ne \Id \in G}\{\age(g)\}>1$ ($\ge 1$). More precisely, we have $ \md(\C^n/G)= \min_{g\ne \Id \in G}\{\age(g)\}-1$, if the singularity is canonical.
\end{proposition}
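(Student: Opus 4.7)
The plan is to apply \Cref{thm:McLean}, which holds here because $\C^n/G$ is an isolated numerically $\Q$-Gorenstein singularity and $H^1(S^{2n-1}/G;\Q) = 0$ (since $\pi_1 = G$ is finite). It thus suffices to compute $\mathrm{hmi}(S^{2n-1}/G, \xi_{\std})$ from the Reeb dynamics of $\alpha_{\std}$ recalled in \S\ref{ss:reeb}, and translate via the equivalence $\mathrm{hmi} = 2\,\md$ when $\md \ge 0$, with $\mathrm{hmi} < 0$ otherwise.

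First I would perturb each Morse--Bott family $B_{(g), l}$ by a Morse function on $B_{(g),l}/S^1$; by \eqref{eqn:CZ}, each critical point $x$ produces a non-degenerate Reeb orbit with $\mu_{CZ}(x) = n - 2\age(g) + 2\sum_{l' < l}\dim_{\C}V_{g, l'} + 1 + \ind(x)$, and hence $\mathrm{lSFT} = \mu_{CZ} + n - 3$ since the orbit is non-degenerate. Minimizing first over $\ind(x)$ (so $\ind(x) = 0$ at the minimum of the Morse function), then over $l$ (take the minimal-period family in each conjugacy class, making $\sum_{l' < l}\dim_{\C}V_{g,l'} = 0$), and finally over $g \ne \Id$, one obtains
\[
\inf_{\gamma}\mathrm{lSFT}(\gamma) \;=\; 2n - 2 - 2\max_{g \ne \Id}\{\age(g)\}.
\]

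The next step uses the symmetry $\age(g^{-1}) = n - \age(g)$, which holds because $G$ acts freely on $S^{2n-1}$: every nontrivial $g$ has eigenvalue exponents $a_j \in [1, \mathrm{o}(g) - 1]$, so inverting replaces $a_j$ by $\mathrm{o}(g) - a_j$. Since inversion is a bijection of $G \setminus \{\Id\}$, this gives $\max\{\age(g)\} = n - \min\{\age(g)\}$, and therefore $\inf_\gamma \mathrm{lSFT}(\gamma) = 2\min_{g \ne \Id}\{\age(g)\} - 2$. Granting that $\alpha_{\std}$ realizes the supremum in the definition of $\mathrm{hmi}$ for quotient singularity links (as in McLean's framework of contact forms adapted to a resolution), one concludes $\mathrm{hmi}(S^{2n-1}/G, \xi_{\std}) = 2\min\{\age\} - 2$.

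Finally, \Cref{thm:McLean} finishes the argument: if $\min\{\age\} \ge 1$ then $\mathrm{hmi} \ge 0$, so $\md \ge 0$ (canonical) and $\md = \mathrm{hmi}/2 = \min\{\age\} - 1$; while if $\min\{\age\} < 1$ then $\mathrm{hmi} < 0$, which forces $\md < 0$ (otherwise the canonical case of McLean's theorem would give $\mathrm{hmi} = 2\md \ge 0$, a contradiction). Both iff statements and the formula follow at once, the terminal case $\md > 0$ versus $\min\{\age\} > 1$ being immediate. The main obstacle is the upper bound on $\mathrm{hmi}$: the Reeb computation above immediately yields only the lower bound $\mathrm{hmi} \ge 2\min\{\age\} - 2$, and to obtain the sharp value one must invoke McLean's construction matching $\alpha_{\std}$ with a contact form adapted to a resolution of $\C^n/G$.
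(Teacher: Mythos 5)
Your reduction to \Cref{thm:McLean} and your computation of $\inf_\gamma \lSFT(\gamma)=2\min_{g\ne \Id}\{\age(g)\}-2$ for the standard form $\alpha_{\std}$ (via $\age(g^{-1})=n-\age(g)$) match the paper, but this only yields the lower bound $\mathrm{hmi}(S^{2n-1}/G,\xi_{\std})\ge 2\min\{\age\}-2$, and that alone proves just one direction (e.g.\ $\min\{\age\}>1\Rightarrow \md>0$). The converse directions and the formula $\md(\C^n/G)=\min\{\age\}-1$ require the upper bound, i.e.\ that \emph{no} contact form for $\xi_{\std}$ does better than $\alpha_{\std}$, and this is exactly the step you leave as "granting that $\alpha_{\std}$ realizes the supremum." Appealing to "McLean's construction of a contact form adapted to a resolution" does not fill this: that construction enters McLean's proof that $\mathrm{hmi}=2\md$ and presupposes knowledge of $\md$, so using it here is either circular (you would be importing the Reid--Shepherd-Barron--Tai criterion, which is precisely what the proposition is meant to reprove symplectically) or simply does not show that $\alpha_{\std}$ attains the supremum in the definition of $\mathrm{hmi}$.

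The paper closes this gap with a Floer-theoretic invariance argument that is absent from your proposal: by \Cref{thm:quotient_SH}, $SH^*(\C^n/G;\Q)=0$, so the Gysin exact sequence gives $SH^{*-1}_{+,S^1}(\C^n/G;\Q)=H^*_{\CR}(\C^n/G;\Q)\otimes(\Q[u,u^{-1}]/\la u\ra)$, which is nontrivial in degree $2\age(g)-1$ for every $g\in G$, in particular in degree $2\max_g\{\age(g)\}-1$. If some contact form had $\inf_\gamma\lSFT(\gamma)>\min_g\{2n-2\age(g)-2\}$, then after the perturbation of \cite[\S 4.3]{Reeb} all orbits below any large period threshold are nondegenerate with SFT degree at least that infimum, and Gutt's result forces $SH^*_{+,S^1}$ to be supported in degrees strictly below $2\max_g\{\age(g)\}-1$ — a contradiction, since the equivariant invariant does not depend on the choice of contact form. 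This is the essential content of the proposition, so as written your argument has a genuine gap at its central step.
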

\begin{proof}
By the standard Reeb dynamics on $S^{2n-1}/G$ in \S \ref{ss:reeb}, we have $\mathrm{hmi}(S^{2n-1}/G,\xi_{\std})\ge \min_{g\in G}\{2n-2\age(g)-2\}$. On the other hand, by \cite[Theorem B]{gironella2021exact}, $SH^*(\C^n/G;\Q)=0$, which implies that $SH^*_{S^1}(C^n/G;\Q)=0$ by the Gysin exact sequence \cite[Theorem 1.1]{Gysin} (this holds in the exact orbifold setting from the discussion in \cite[\S 3.6, 3.7 ]{gironella2021exact}). As a consequence, we have $SH^{*-1}_{+,S^1}(\C^n/G;\Q)=H^*_{\CR}(\C^n/G;\Q)\otimes (\Q[u,u^{-1}]/\la u\ra)$. In particular, $SH^*_{+,S^1}(\C^n/G;\Q)$ is nontrivial in degree $2\age(g)-1$ for any $g\in G$. If $\mathrm{hmi}(S^{2n-1}/G,\xi_{\std})>  \min_{g\in G}\{2n-2\age(g)-2\}$, then there exists a contact form $\alpha$, with $\inf_{\gamma} \{\mathrm{lSFT}(\gamma) \}> \min_{g\in G}\{2n-2\age(g)-2\}$. By \cite[\S 4.3]{Reeb}, for arbitrarily large period threshold $K$, any small perturbation of $\alpha$ yields a contact form such that all Reeb orbits with period smaller than $R\gg 0$ is non-degenerate and the SFT degree is at least $\inf_{\gamma} \{\mathrm{lSFT}(\gamma) \}$. Then by \cite{gutt}, we have that $SH^*_{+,S^1}(\C^n/G;\Q)$ is supported in degrees at most $2n-3-\inf_{\gamma} \{\mathrm{lSFT}(\gamma) \}< 2n-3-\min_{g\in G}\{2n-2\age(g)-2\}=2\max_{g\in G} \{\age(g)\}-1$, contradiction. Therefore we have $\mathrm{hmi}(S^{2n-1}/G,\xi_{\std})= \min_{g\in G}\{2n-2\age(g)-2\}$. Finally note that $2n-2\age(\Id)-2=2n-2\ge 2$ and $\age(g)+\age(g^{-1})=2n$ for $g\ne \Id$. We have $\min_{g\in G}\{2n-2\age(g)-2\}>0(\ge 0)$ is equivalent to that $\min_{g\ne \Id\in G}\{\age(g)\}>1(\ge 1)$. Then the claim follows from \Cref{thm:McLean}.
\end{proof}

\section{Symplectic cohomology of fillings of $(S^{2n-1}/G,\xi_{\std})$}\label{s3}

\subsection{Symplectic cohomology}
We first give a very brief account of the definition and properties of symplectic cohomology to set up notations for later reference. More detailed discussions on various aspects of symplectic cohomology can be found in \cite{ES,biased}. 

Given an exact filling $(W,\lambda)$, roughly speaking, symplectic cohomology is the ``Morse cohomology" of the free loop space w.r.t.\ the symplectic action functional
\begin{equation}\label{eqn:action}
    \cA(x):=-\int x^*\widehat{\lambda} +\int_{S^1} (x^*{H})\rd t
\end{equation}
where $H:S^1\times \widehat{W}\to \R$ is a Hamiltonian on the completion $(\widehat{W},\widehat{\lambda}):=(W\cup \partial W \times (1,\infty)_r,\lambda\cup r\lambda|_{\partial W})$ such that ``the slope $\frac{\rd }{\rd r}H$" goes to infinity as $r$ goes to infinity. Let $R$ be any commutative ring, the cochain complex $C^*(H)$ is a free $R$-module generated by $1$-periodic orbits of $X_H$ and the differential is defined by counting rigid solutions to the Floer equation. Let $x,y$ be two generators in $C^*(H)$ represented by periodic orbits, we use $\cM_{x,y}$ to denote the compactified moduli space of Floer cylinders from $x$ to $y$, whose specific meaning depends on the construction of the symplectic cohomology as follows.
\begin{enumerate}
    \item If $H$ is a non-degenerate Hamiltonian, i.e.\ all $1$-periodic orbits are non-degenerate, then we have
    $$\cM_{x,y}=\overline{\left\{u:\R_s\times S^1_t\to \widehat{W}\left|\partial_su+J(\partial_ut-X_H)=0, \displaystyle \lim_{s\to \infty} u = x, \lim_{s\to-\infty} u=y\right. \right\}/\R }.$$
    \item If $H$ is Morse-Bott non-degenerate, including the case of $H=0$ on $W$ considered in \cite{divisor,ADC}, then $x,y$ are critical points of the auxiliary Morse functions on the Morse-Bott families of periodic orbits (which can be viewed as a submanifold in $\widehat{W}$ using the starting point of the orbit). In this case, $\cM_{x,y}$ is the compactified moduli space of cascades from $x$ to $y$, which can be pictorially described as 
    \begin{figure}[H]\label{fig:cascades}
	\begin{tikzpicture}
	\draw (0,0) to [out=90, in = 180] (0.5, 0.25) to [out=0, in=90] (1,0) to [out=270, in=0] (0.5,-0.25)
	to [out = 180, in=270] (0,0) to (0,-1);
	\draw[dotted] (0,-1) to  [out=90, in = 180] (0.5, -0.75) to [out=0, in=90] (1,-1);
	\draw (1,-1) to [out=270, in=0] (0.5,-1.25) to [out = 180, in=270] (0,-1);
	\draw (1,0) to (1,-1);
	\draw[->] (1,-1) to (1.25,-1);
	\draw (1.25,-1) to (1.5,-1);
	\draw (1.5,-1) to [out=90, in = 180] (2, -0.75) to [out=0, in=90] (2.5,-1) to [out=270, in=0] (2,-1.25)
	to [out = 180, in=270] (1.5,-1) to (1.5,-2);
	\draw[dotted] (1.5,-2) to  [out=90, in = 180] (2, -1.75) to [out=0, in=90] (2.5,-2);
	\draw (2.5,-2) to [out=270, in=0] (2,-2.25) to [out = 180, in=270] (1.5,-2);
	\draw (2.5,-1) to (2.5,-2);
	\draw[->] (2.5,-2) to (2.75,-2);
	\draw (2.75,-2) to (3,-2);
	\draw[->] (-0.5,0) to (-0.25,0);
	\draw (-0.25,0) to (0,0);
	\node at (0.5,-0.5) {$u_1$};
	\node at (2, -1.5) {$u_2$};
	\fill (-0.5,0) circle[radius=1pt];
	\node at (-0.7, 0) {$x$};
	\fill (3,-2) circle[radius=1pt];
	\node at (3.2, -2) {$y$};
	\end{tikzpicture}
	\begin{tikzpicture}
	\fill (-0.5,0) circle[radius=1pt];
	\node at (-0.7, 0) {$x$};
	\draw (0,0) to [out=90, in = 180] (0.5, 0.25) to [out=0, in=90] (1,0) to [out=270, in=0] (0.5,-0.25)
	to [out = 180, in=270] (0,0) to (0,-1);
	\draw[dotted] (0,-1) to  [out=90, in = 180] (0.5, -0.75) to [out=0, in=90] (1,-1);
	\draw (1,-1) to [out=270, in=0] (0.5,-1.25) to [out = 180, in=270] (0,-1);
	\draw (1,0) to (1,-1);
	\draw[->] (1,-1) to (1.25,-1);
	\draw (1.25,-1) to (1.5,-1);
	\draw (1.5,-1) to [out=90, in = 180] (2, -0.75) to [out=0, in=90] (2.5,-1) to [out=270, in=0] (2,-1.25)
	to [out = 180, in=270] (1.5,-1) to (1.5,-2);
	\draw (2.5,-2) to [out=270, in=0] (2,-2.5) to [out = 180, in=270] (1.5,-2);
	\draw (2.5,-1) to (2.5,-2);
	\draw[->] (-0.5,0) to (-0.25,0);
	\draw (-0.25,0) to (0,0);
	\draw[->] (2,-2.5) to (2.5,-2.5);
	\draw (2.5,-2.5) to (3,-2.5);
	\fill (3,-2.5) circle[radius=1pt];
	\node at (3.2,-2.5) {$y$};
	\node at (0.5,-0.5) {$u_1$};
	\node at (2, -1.5) {$u_2$};
	\end{tikzpicture}
	\caption{$2$ level cascades}
\end{figure}
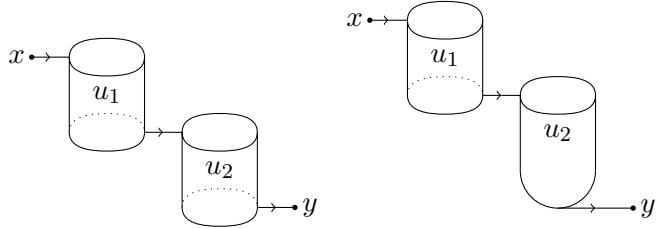
     \noindent
     Here the horizontal lines are \emph{negative gradient flow} of the auxiliary Morse function used in \eqref{eqn:CZ} except for the bottom line of the right of the figure above, which is the \emph{gradient flow} of an admissible Morse function \cite[Definition 2.1]{ADC} on $W$ when $H=0$ on $W$. For the more formal description of the moduli spaces see \cite[Definition 4.7]{divisor} or \cite[\S 2]{RP} for details. 
\end{enumerate}
In both cases, for a generic compatible (which is also cylindrically convex to guarantee the integrated maximum principle \cite{MR2602848,ES}) almost complex structure $J:S^1\to \End(T^*\widehat{W})$, $\cM_{x,y}$ is cut out transversely as a manifold with boundary for those with expected dimension $\virdim \cM_{x,y}\le 1$. Moreover, $\cM_{x,y}$ can be oriented in a coherent way such that 
$$\delta(x)=\sum_{y,\virdim \cM_{x,y}=0}(\# \cM_{x,y}) y$$
defines a differential on $C^*(H)$.

\begin{remark}
The symplectic cochain complex is graded by $n-\mu_{\CZ}(x)$. In general, since $\mu_{\CZ}$ is only well-defined in $\Z/2$, symplectic cohomology always has a $\Z/2$ grading. If $c^{\Z}_1(W)=0$, then $\mu_{\CZ}$ is well-defined in $\Z$ for any periodic orbits with a finite order homology class. Moreover, if $c^{\Q}_1(W)=0$, then  $\mu_{\CZ}$ is well-defined in $\Q$ (as in \S \ref{ss:reeb}) for any periodic orbits with finite order homology class and symplectic cohomology can be graded by $\Q$. For our situation, although $c^{\Q}_1(S^{2n-1}/G,\xi_{\std})=0$ and $\pi_1(S^{2n-1}/G)$ is of finite order, the Conley-Zehnder index in \S \ref{ss:reeb} is only well-defined using a trivialization of $\det_{\C}(\oplus^N\xi_{\std})$, which can not extend to the filling $W$ if $c^{\Q}_1(W)\ne 0$. Therefore typically, the symplectic cohomology of $W$ is not graded by $\Q$. However, the existence of the boundary grading indeed plays an important role in the proof.
\end{remark}

Symplectic cohomology $SH^*(W;R)$ has the following properties:
\begin{enumerate}
    \item If we choose $H$ to be $C^2$ small on $W$ and to be $h(r)$ on $\partial W \times (1,\infty)_r$ with $h''(r)>0$, then the periodic orbits of $X_H$ are either constant orbits on $W$ or non-constant orbits on $\partial W \times (1,\infty)$, which, in pairs after a small perturbation, correspond to Reeb orbits on $(\partial W,\lambda|_{\partial W})$.  The constant orbits generate a subcomplex corresponding to the cohomology of $W$ (i.e.\ the space of constant loops), and the non-constant orbits generate a quotient complex $C^*_+(H)$, whose cohomology is called the positive symplectic cohomology $SH^*_+(W;R)$. Then we have a tautological long exact sequence,
    \begin{equation}\label{eqn:LES}
        \ldots \to H^*(W;R)\to SH^*(W;R)\to SH^{*}_+(W;R)\to H^{*+1}(W;R)\to \ldots
    \end{equation}
    \item $SH^*(W;R)$ is a unital ring and $H^*(W;R)\to SH^*(W;R)$ is a unital ring map.
\end{enumerate}

Most of the above discussion (except for the ring structure) can be generalized to the orbifold setting using classical techniques, see \cite{gironella2021exact}. Since $\C^n/G$ is the natural exact orbifold filling of $(S^{2n-1}/G,\xi_{\std})$, the following result provides the natural reference to compare our hypothetical exact fillings to.
\begin{theorem}[{\cite[Theorem B]{gironella2021exact}}]\label{thm:quotient_SH}
Let $R$ be a commutative ring such that $|G|$ is invertible. Then we have $SH^*(\C^n/G;R)=0$ for the isolated quotient singularity $\C^n/G$. In particular, we have $SH^*_+(\C^n/G;R)=H^{*+1}_{\CR}(\C^n/G;R)$ (using the $\Q$-grading), which is generated by $x_{(g)}$ for $(g)\in \Conj(G)$. Here $x_{(g)}$ are those generators (Hamiltonian orbits) with minimal rational Conley-Zehnder index for conjugacy class $(g)$ defined after \eqref{eqn:CZ}.
\end{theorem}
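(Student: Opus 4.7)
The plan is to compute $SH^*(\C^n/G;R)$ directly from a Morse--Bott model by decomposing the orbifold Floer cochain complex into sectors indexed by conjugacy classes of $G$ and showing that each sector is acyclic. Take a radial Hamiltonian $H=h(r)$ on $\C^n/G$ with $h''(r)>0$ and slope going to infinity. The generators of $C^*(H)$ split into constant orbits supported at the origin, one per $(g)\in\Conj(G)$, which assemble into $H^*_{\CR}(\C^n/G;R)=\bigoplus_{(g)}R[-2\age(g)]$ as in \eqref{eqn:CR_cohomology}, and non-constant orbits from the Morse--Bott families $B_{(g),l}/S^1$ of \S\ref{ss:reeb}, perturbed by auxiliary Morse functions with a unique minimum to yield non-degenerate generators with Conley--Zehnder indices given by \eqref{eqn:CZ}. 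All such generators carry a natural conjugacy-class label preserved by the Floer differential, so the cochain complex splits as a direct sum of subcomplexes indexed by $\Conj(G)$.

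Within each sector, I would first compute the positive part by cascade analysis among the non-constant orbits. Using the explicit description of $B_{(g),l}/S^1$ as circle quotients of unit spheres in the eigenspaces $V_{g,l}$, the cascade differentials cancel every non-constant generator except the minimum-period orbit $x_{(g)}$ of rational Conley--Zehnder index $n-2\age(g)+1$; this parallels the classical computation of $SH^*_+$ for a ball in $\C^n$, applied one $g$-eigenspace at a time. Each sector thus contributes a single $R$-summand in degree $2\age(g)-1$ to $SH^*_+(\C^n/G;R)$, matching $H^{*+1}_{\CR}(\C^n/G;R)$ on the nose.

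The heart of the argument is the connecting differential from $x_{(g)}$ into the constant twisted sector $[(g)]\in H^*_{\CR}(\C^n/G;R)$. After a neck-stretching, this count reduces to $J$-holomorphic planes in $\C^n/G$ with a positive puncture asymptotic to the minimum-period Reeb orbit in $B_{(g),l_{\min}}/S^1$ and with a marked interior point converging to the origin. Such planes lift to straight complex lines $u(z)=zv$ in $\C^n$ with $v$ in the corresponding eigenspace of $g$, are cut out transversely, and form an orbifold moduli space whose isotropy is governed by $C(g)$. The orbifold pull--push weight is a unit in $R$ precisely because $|G|\in R^\times$ forces all the isotropy denominators (of the type appearing in \S\ref{ss:coproduct}) to be invertible. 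Hence $\delta x_{(g)}=c_{(g)}\cdot [(g)]$ with $c_{(g)}\in R^\times$.

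Finally, the tautological long exact sequence \eqref{eqn:LES} together with the above shows that the connecting map $SH^*_+(\C^n/G;R)\to H^{*+1}_{\CR}(\C^n/G;R)$ sends each $x_{(g)}$ to a unit multiple of $[(g)]$; since both sides are free $R$-modules with generators indexed by $\Conj(G)$ in matching degrees, the map is an isomorphism. This simultaneously yields $SH^*(\C^n/G;R)=0$ and the stated description of $SH^*_+(\C^n/G;R)$. The main obstacle is the rigorous treatment of orbifold transversality, coherent orientations, and the accounting of isotropy weights in the third step; verifying that the various isotropy denominators combine with numerators into a unit in $R$ is where the hypothesis $|G|\in R^\times$ is indispensable, and it is precisely here that one must work within the orbifold symplectic cohomology framework of \cite{gironella2021exact} rather than appealing naively to a cover-by-$\C^n$ argument.
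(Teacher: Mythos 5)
First, note that the paper does not actually prove this statement: it is imported verbatim from \cite[Theorem B]{gironella2021exact}, so there is no internal proof to compare against beyond the hints the paper gives (e.g.\ item \eqref{reeb} in \S\ref{ss:reeb}, which records that the cited computation does involve differentials from the minimal-index orbits $x_{(g)}$ to the constant twisted sectors at the origin). Measured against that, your overall architecture -- split the complex by conjugacy classes, compute the positive part, show the connecting differential sends $x_{(g)}$ to a unit multiple of $[(g)]$, and conclude via \eqref{eqn:LES} -- is a sensible route and its key mechanism is consistent with what the cited proof is reported to do.

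However, as written there are two genuine gaps. First, your step asserting that within a fixed sector the cascade differentials ``cancel every non-constant generator except $x_{(g)}$'' is exactly the bulk of the computation, and the justification offered (``parallels the ball, one $g$-eigenspace at a time'') is inaccurate: for fixed $(g)$ the Morse--Bott families are $B_{(g),l}=(V_{g,l}\cap S^{2n-1})/C(g)$, i.e.\ centralizer quotients of spheres in \emph{different} eigenspaces of varying dimension, each contributing many generators (per \eqref{eqn:CZ}) spread over many degrees and both parities, and the cascade differentials run both within a family and between consecutive families associated to different eigenvalues. There is no tensor-product or eigenspace-by-eigenspace decomposition of this subcomplex, so the acyclicity-except-$x_{(g)}$ claim needs an actual Morse--Bott/spectral-sequence computation (or some other argument), not an analogy. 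Second, the heart of your argument -- $\delta x_{(g)}=c_{(g)}[(g)]$ with $c_{(g)}\in R^\times$ -- is asserted rather than proved: invertibility of $|G|$ guarantees that the orbifold pull--push weights are defined and that isotropy factors are units, but it does not by itself rule out that the signed, weighted count of the relevant orbifold planes is zero or a non-invertible element; you must actually enumerate the caps (your lifted lines $z\mapsto zv$), check transversality and orientations, and compute the resulting weight. Relatedly, you conflate two different pictures: for $(g)\ne(\Id)$ the negative asymptotic is a constant \emph{twisted} orbit at the origin (an asymptotic/orbifold-marked-point condition), not a point constraint; the point-constraint count is the separate argument used for the untwisted sector (as in \Cref{prop:upper_bound}). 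Until those two steps are carried out, the proposal is a plausible outline of the cited computation rather than a proof of it.
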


\subsection{Symplectic cohomology of exact fillings of $(S^{2n-1}/G,\xi_{\std})$}
Using the Hamiltonian in \S \ref{ss:reeb}, we define the boundary grading of a non-constant periodic orbit (or a critical point of the auxiliary Morse function on a Morse-Bott family of non-constant period orbits) as 
$$|x|^{\partial}:=n-\mu_{\CZ}(x),$$
where $\mu_{\CZ}$ is the rational Conley-Zehnder index in \S \ref{ss:reeb}. With this rational boundary grading, we have a filtration $F^kC_+(H)\supset F^{k+1}C_+(H)$ for $k\in \Z$ defined by
\begin{equation}\label{eqn:filtration}
    F^kC_+(H):=\left\langle x\left| |x|^{\partial}\ge \frac{k}{|G|}\right.\right\rangle.
\end{equation}

\begin{proposition}\label{prop:SS}
When $\C^n/G$ is terminal, the differential is compatible with the filtration and the first $|G|+1$ pages of the associated spectral sequence are independent of exact fillings/augmentations. When  $|G|$ is invertible in the coefficient ring,  the $(|G|+1)$th page of the spectral sequence is spanned by $\{ x_{(g)}\}_{(g)\in \Conj(G)}$ and degenerates.
\end{proposition}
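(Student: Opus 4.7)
The plan is to compare, on any exact filling $(W,\lambda)$, the actual Conley-Zehnder index used in constructing $SH^*_+(W;R)$ with the rational boundary Conley-Zehnder index $|x|^\partial$, which is defined via trivializations of $\oplus^N\xi_{\std}$ extending over the standard orbifold filling $\C^n/G$. For a rigid Floer cylinder $u$ from $x$ to $y$ one obtains
\[
|y|^\partial - |x|^\partial \;=\; 1 - 2\bigl\langle c_1(W),\, [\Sigma(u)]\bigr\rangle \;\in\; 1 + \tfrac{2}{|G|}\Z,
\]
where $[\Sigma(u)]$ is the rational orbifold $2$-cycle obtained by capping each end of $u$ with an orbifold disk in $\C^n/G$; the $\tfrac{1}{|G|}$-denominator comes from the isotropy order at the orbifold point of $\C^n/G$, matching the filtration step of $\tfrac{1}{|G|}$ in \eqref{eqn:filtration}. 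Terminality implies that all non-constant orbits of $C_+(H)$ have strictly positive boundary grading, which together with positivity of area and the action inequality for Floer cylinders rules out negative boundary-grading jumps for rigid $u$, giving $\delta(F^k)\subset F^k$.

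For the invariance of the first $|G|+1$ pages, the $r$-th page differential $d_r$ is built from Floer cylinders for which $2\langle c_1(W), [\Sigma(u)]\rangle = 1 - r/|G|$. For $r\le|G|$ this Chern number lies in the bounded range $[0,1]$, and a neck-stretching argument in the spirit of SFT shows that the corresponding counts degenerate into pseudo-holomorphic buildings supported in the symplectization of $(S^{2n-1}/G,\xi_{\std})$, producing augmentation-independent counts that depend only on the contact data. Hence the pages $E_0,\ldots,E_{|G|}$ and the differential $d_{|G|}$ into $E_{|G|+1}$ are all independent of the exact filling.

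To identify $E_{|G|+1}$, I would evaluate the spectral sequence on the canonical orbifold exact filling $W=\C^n/G$, where $c_1$ vanishes and boundary grading equals actual grading. There the only allowed page-jump is $r=|G|$, so $d_0=\cdots=d_{|G|-1}=0$ and $d_{|G|}$ is the full symplectic cohomology differential; thus by \Cref{thm:quotient_SH}, when $|G|$ is invertible one has
\[
E_{|G|+1}\;=\;SH^*_+(\C^n/G;R)\;=\;H^{*+1}_{\CR}(\C^n/G;R)\;=\;\bigoplus_{(g)\in\Conj(G)} R\cdot x_{(g)}.
\]
Invariance from the previous step transports this identification to an arbitrary exact filling $W$. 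Degeneration at $E_{|G|+1}$ then follows by a parity argument: each $x_{(g)}$ sits in boundary grading $2\age(g)-1$, and any further $d_r$ with $r>|G|$ would have to respect both the rational quantization $r\in|G|+2\Z$ and the $\Z/2$-Conley-Zehnder parity recorded at the end of \S \ref{ss:reeb}, forcing all such potential counts to vanish.

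The main technical obstacle is the careful orbifold relative Chern-class bookkeeping required to pin down the jump formula precisely, and verifying that the neck-stretching argument cleanly identifies the low-page differentials with augmentation-independent counts in the symplectization uniformly for all $r\le|G|$, despite the possibly non-vanishing $c_1(W)$.
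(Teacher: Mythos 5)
Your outline of the later steps (comparison with the orbifold filling $\C^n/G$ via \Cref{thm:quotient_SH}, and the parity argument for degeneration at the $(|G|+1)$th page) matches the paper, but the first and most delicate step is argued incorrectly, and this is a genuine gap. You claim that ``terminality implies that all non-constant orbits of $C_+(H)$ have strictly positive boundary grading.'' This is false: orbits in the trivial free homotopy class $(\Id)$ lift to Reeb orbits on $S^{2n-1}$ and have boundary grading as low as $-1$ (indeed $|x_{(\Id)}|^{\partial}=n-(n+1)=-1$, which is exactly what is used later in \Cref{prop:upper_bound}). Terminality does not control the boundary gradings of Hamiltonian orbits at all; via \Cref{prop:terminal} it controls the lower SFT indices of the Reeb orbit families, $\lSFT(B_{(g),l})\ge 2\md(\C^n/G)>0$, and this enters only through a dimension count for the top level of a neck-stretched building, not through positivity of gradings.

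The second ingredient you invoke, ``positivity of area and the action inequality,'' also cannot close the argument: energy positivity controls the \emph{action} filtration, not the boundary Conley--Zehnder filtration, and since the filling is exact ($\omega=d\lambda$) there is no sign constraint relating $\langle c_1(W),[\Sigma(u)]\rangle$ to energy — this pairing can be negative, which is precisely the danger when $c_1(W)\neq 0$. (Your capping construction is also problematic as stated: capping the asymptotics by orbifold disks in $\C^n/G$ does not produce a cycle in $W$ against which $c_1(W)$ can be evaluated.) The paper's mechanism is different and is needed for both filtration-compatibility and the independence of the first $|G|+1$ pages: stretch the neck along $S^{2n-1}/G$; if the top level of the limit building has a negative puncture its virtual dimension is $|y|^{\partial}-|x|^{\partial}-\sum_i\lSFT(B_{(g_i),l_i})-1\le |y|^{\partial}-|x|^{\partial}-2\md(\C^n/G)-1$, so nonemptiness forces $|y|^{\partial}-|x|^{\partial}>1$; if there is no negative puncture the whole curve lies in the symplectization, where the boundary trivialization computes the index and nonemptiness forces $|y|^{\partial}-|x|^{\partial}\ge 1$. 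This simultaneously gives $\delta(F^k)\subset F^k$ and shows that all differentials with boundary-grading jump $\le 1$ (i.e.\ the differentials $d_0,\dots,d_{|G|}$) are computed by symplectization curves only, hence are filling-independent — your assertion that ``bounded Chern number forces the counts to degenerate into buildings in the symplectization'' has no justification without this dimension count and the terminality bound on $\lSFT$.
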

\begin{proof}
Let $x,y$ be two generators of the Hamiltonian-Floer cohomology, we can apply neck-stretching to the moduli space $\cM_{x,y}$ along the contact boundary $S^{2n-1}/G$, see \cite[\S 2.2]{RP} for more details on neck-stretching in a similar setting. In the fully-stretched picture, the top level consists of Hamiltonian-Floer cylinders possibly with negative punctures asymptotic to Reeb orbits in $B_{(g_1),l_1},\ldots,B_{(g_m),l_m}$. If there is at least one negative puncture, the virtual dimension of the corresponding moduli space is 
\begin{equation}\label{eqn:top}
|y|^{\partial}-|x|^{\partial}-\sum_{i=1}^m\lSFT(B_{(g_i),l_i})-1\le |y|^{\partial}-|x|^{\partial}-2\md(\C^n/G)-1.
\end{equation}
We point out here even though $|y|^{\partial},|x|^{\partial},\lSFT(B_{(g_i),l_i})$ are only rational, the virtual dimension, i.e.\ the left hand side of \eqref{eqn:top} is an integer. Since we use $S^1$-dependent almost complex structures, we can assume the transversality for the top level. Note that $\md(\C^n/G)>0$ by the terminality of the singularity, to have a top-level curve after the neck-stretching, we must have $|y|^{\partial}\ge |x|^{\partial}+1$, hence the differential is compatible with filtration. Moreover, the filtration is clearly Hausdorff, hence induces a convergent spectral sequence. 

Since the differential on the first $|G|-1$ pages is counting  $\cM_{x,y}$ with  $|y|^{\partial}-|x|^{\partial}< 1$.  In view of \eqref{eqn:top}, the expected dimension of the top level with at least one negative puncture is $<-2\md(\C^n/G)<0$. Therefore those differentials are from curves contained in the symplectization. However, $|y|^{\partial}-|x|^{\partial}< 1$, those moduli spaces in the symplectization have negative virtual dimensions and hence are empty. The differential on the $|G|$th page counts  $\cM_{x,y}$ with  $|y|^{\partial}-|x|^{\partial}\le 1$ (note that $|G||x|^{\partial}$ is always an integer),  which by the same argument, is from curves contained in the symplectization and is independent of fillings. Therefore the first $|G|+1$ pages of the induced spectral sequence are independent of augmentations/fillings (not necessary for the differential on the $(|G|+1)$th page).

From the above discussion, we see that moduli spaces $\cM_{x,y}$ with $|y|^{\partial}-|x|^{\partial}=1$ are the only non-empty moduli spaces appearing in the computation of the first $|G|+1$ pages of the spectral sequence.  Now, we can consider the orbifold filling $\C^n/G$ of $(S^{2n-1}/G,\xi_{\std})$, we have $c_1^{\Q}(\C^n/G)=0$. Therefore the filtration is just the natural filtration from the $\Q$-grading on the symplectic cohomology and the differential is solely contributed by those  $\cM_{x,y}$ with $|y|^{\partial}-|x|^{\partial}=1$. By the independence of fillings, the $(|G|+1)$th page of the spectral sequence is spanned by $x_{(g)}$,  as $SH_+^*(\C^n/G)$ is spanned by  $x_{(g)}$ by \Cref{thm:quotient_SH}. Finally, the degeneracy follows tautologically as the $\Z/2$ grading of $x_{(g)}$ is odd by the discussion after \eqref{eqn:CZ} and all differentials in the spectral sequence respect this $\Z/2$ grading.
\end{proof}

The following is a generalization of \cite[Proposition 2.12, 2.14]{RP}, which is another place where the terminality is crucial.
\begin{proposition}\label{prop:upper_bound}
If $\C^n/G$ is an isolated terminal singularity. Let $W$ be an exact filling of $(S^{2n-1}/G,\xi_{\std})$ and $R$ be a ring where $|G|$ is invertible, then $SH^*(W;R)=0$, $H^*(W;\R)$ is a free $R$-module with $\rank H^*(W;R) = \# \Conj(G)$ and $H^*(W;R)$ is supported in even degrees. 
\end{proposition}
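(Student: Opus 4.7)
The plan is to first pin down $SH^*_+(W;R)$ from the spectral sequence of \Cref{prop:SS}, then separately establish $SH^*(W;R)=0$, and finally read off the structure of $H^*(W;R)$ from the tautological long exact sequence \eqref{eqn:LES}. By \Cref{prop:SS}, the boundary filtration produces a spectral sequence converging to $SH^*_+(W;R)$ which degenerates at the $E_{|G|+1}$-page with $E_\infty$-term the free $R$-module on the minimal-period generators $\{x_{(g)}\}_{(g)\in \Conj(G)}$. The Reeb dynamics recap in \S\ref{ss:reeb} records that each $x_{(g)}$ has $\Z/2$ Conley--Zehnder index $n+1$, so in the cohomological convention $n-\mu_{\CZ}$ every such generator sits in odd $\Z/2$ degree. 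Hence $SH^*_+(W;R)$ is a free $R$-module of rank $\#\Conj(G)$ supported in odd $\Z/2$ degrees.

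Granting $SH^*(W;R)=0$, the long exact sequence \eqref{eqn:LES} collapses into isomorphisms $\delta: SH^{*-1}_+(W;R)\xrightarrow{\sim}H^*(W;R)$, and the first step then immediately yields that $H^*(W;R)$ is a free $R$-module of rank $\#\Conj(G)$ concentrated in even $\Z/2$ degrees. The crux is therefore to prove $SH^*(W;R)=0$. My approach would be to compare with the exact orbifold filling $\C^n/G$: since both fillings share the contact boundary $(S^{2n-1}/G,\xi_{\std})$, one can set up parallel Floer data in a neighbourhood of the boundary and use a neck-stretching argument to localize the computation of the unit $1\in SH^0(W;R)$ to curves supported in the symplectization. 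Because \Cref{thm:quotient_SH} gives $SH^*(\C^n/G;R)=0$ when $|G|$ is invertible in $R$, the unit already vanishes for the orbifold filling, and the comparison should transport this vanishing to $W$.

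The main obstacle is making this comparison rigorous while controlling all curves in $W$ (not just in the symplectization) that could contribute to the PSS representative of the unit. Terminality is again the decisive input: by the virtual dimension bound \eqref{eqn:top} used in the proof of \Cref{prop:SS}, any multilevel SFT-type degeneration costs at least $2\md(\C^n/G)>0$ per extra boundary level, so filling-dependent contributions appear only in strictly higher filtration and can be absorbed into the spectral sequence. Together with the invertibility of $|G|$ in $R$ needed to clear the orbifold isotropy denominators arising in the comparison, this should force $1=0$ in $SH^0(W;R)$ and hence $SH^*(W;R)=0$, finishing the proof.
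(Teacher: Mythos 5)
Your proposal follows essentially the same route as the paper: identify $SH^*_+(W;R)$ and its odd $\Z/2$-parity from the spectral sequence of \Cref{prop:SS}, kill the unit by a neck-stretching argument in which terminality ($\lSFT>0$) forces the relevant rigid configurations into the symplectization so that the count is filling-independent and comparable to $\C^n/G$, and then conclude via the tautological long exact sequence. The paper's implementation of your key step is to count the moduli space $\cM_{x_{(\Id)},q}$ with the point constraint $q$ placed on the contact boundary, showing by the strict dimension inequality that the higher-filtration correction terms contribute nothing and by the $|G|$-fold covering argument that the count is exactly $|G|$, so the connecting map sends the class of $x_{(\Id)}$ to an invertible element $|G|+A$ of $H^*(W;R)$.
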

\begin{proof}
Let $a$ represent the cohomology class in $SH^*_+(W)$ represented by $x_{(\Id)}$ in the spectral sequence in \Cref{prop:SS}. This $x_{(\Id)}$, when lifted to $S^{2n-1}$, is the Hamiltonian orbit with the minimal Conley-Zehnder index ($=n+1$) and kills the unit in $SH^*(\C^n)$. In the following, we will show that $x_{(\Id)}$ kills the unit for hypothetical exact fillings of  $S^{2n-1}/G$ as well. Since $a=x_{(\Id)}$ in the spectral sequence, we can only assume  $a=x_{(\Id)}+\sum_{i=1}^n c_ix_i$ for $c_i\in R$ and $|x_i|^{\partial}\ge |x_{(\Id)}|^{\partial}+\frac{1}{|G|}=-1+\frac{1}{|G|}$. Now we consider differentials from a non-constant orbit $x$ to $1$, or equivalently, the moduli space $\cM_{x,q}$ in \cite[Proposition 2.12]{RP} with $q\in S^{2n-1}/G$ a point constraint on the contact boundary\footnote{The point constraint can be chosen freely, typically from the interior of the filling (as the unique minimum of an auxiliary Morse function on the filling). Here we choose it to be on the boundary for the convenience of neck-stretching.}. We can apply neck-stretching for $\cM_{x_{(\Id)},q}$ as in \cite[Proposition 2.12 step 1]{RP}. The virtual dimension of the top SFT level is 
$$2n-|x_{(\Id)}|^{\partial}-\sum_{i=1}^l \lSFT(B_{(g_i),l_i})-2n-1\le 0,$$
where $B_{(g_i),l_i}$ are where the asymptotics of the negative punctures land. Since $ \lSFT(B_{(g_i),l_i})>0$ by the terminality assumption, the equality holds iff $l=0$. Then by the same covering argument in \cite[Proposition 2.12 step 3]{RP}, we have $\#\cM_{x_{(\Id),q}}=|G|$. Next for  $\cM_{x_i,q}$, the virtual dimension of the top SFT level after neck-stretching is 
$$2n-|x_i|^{\partial}-\sum_{i=1}^l \lSFT(B_{(g_i),l_i})-2n-1<0.$$
Hence we may assume $\cM_{x_i,q}=\emptyset$. As a consequence, we have $SH^*_+(W;R)\to H^{*+1}(W;R)\to H^0(W;R)$ is surjective by $a$, i.e.\ $a$ is mapped to $|G|+A$ through $SH^*_+(W;R)\to H^{*+1}(W;R)$ for $A\in \oplus_{j>0}H^j(W;R)$. Note that $|G|+A$ is a unit in $H^*(W;R)$, the long exact sequence \eqref{eqn:LES} implies that $SH^*(W;R)=0$ and $SH_+^*(W;R)=H^{*+1}(W;R)$ in the $\Z/2$-grading sense. Since the $(|G|+1)$th page of the spectral sequence in \Cref{prop:SS} consists of free finitely generated $R$-modules, there is no non-trivial extension, i.e.\ $SH_+^*(W;R)$ is a free $R$-module of rank $\# \Conj(G)$. Hence the claim follows as the $\Z/2$ grading of $x_{(g)}$ is odd.
\end{proof}
\begin{remark}\label{rmk:grading}
Under a non-canonical isomorphism $ \oplus_{(g)\in \Conj(G)} \la x_{(g)} \ra \simeq SH_+^*(W;R)\simeq H^{*+1}(W;R)$ in \Cref{prop:upper_bound}, $x_{(g)}$ for $g\ne \Id$ corresponds to an element in $\oplus_{i>0} H^{2i}(W;R)$. For otherwise, $x_{(g)}$ will be mapped to the generator of $H^0(S^{2n-1}/G;R)$ in the projection $H^*(W;R)\to H^0(S^{2n-1}/G;R)$, which shall not happen by the argument in in \Cref{prop:upper_bound}. We use $y_{(g)}$ to denote the image of $x_{(g)}$ in $H^*(W;\R)$ under the identification above. 
\end{remark}
\begin{remark}\label{rmk:orbifold}
Assume the theory of symplectic cohomology for exact orbifolds is established, i.e.\ in addition to the definition of the group level in \cite{gironella2021exact}, we also need various structures like the ring structure, which will be constructed using polyfolds in a future work. Then \Cref{prop:upper_bound} holds for exact orbifold fillings and $R=\Q$ (this is due to the nature of polyfolds).
\end{remark}

\section{The secondary coproduct on symplectic cohomology}\label{s4}

\subsection{Secondary coproduct and intersection form}
The ordinary coproduct on symplectic cohomology, i.e.\ counting solutions to perturbed Cauchy-Riemann equations on a pair of pants with one positive end and two negative ends, as explained by Seidel in \cite{biased}, is highly degenerate due to degeneration of the Riemann surface (pinching a circle near a negative end). In particular, the coproduct on the positive symplectic cohomology is necessarily trivial. From such vanishing result, considering solutions to perturbed Cauchy-Riemann equations on the family of Riemann surfaces connecting the two pinchings yields the following secondary coproduct on the positive symplectic cohomology, which was introduced by Ekholm and Oancea in \cite{dga},
$$\bm{\lambda}:SH^*_+(W;\bm{k}) \to \left(SH^*_+(W;\bm{k})\otimes SH^*_+(W;\bm{k})\right)[2n-1].$$
Here $\bm{k}$ is some field coefficient\footnote{The requirement of field coefficient is for the sake of the K\"unneth formula.}. 
\begin{enumerate}
    \item A secondary coproduct on $SH^*_+(T^*M)$ was defined by Abbondandolo and Schwarz \cite{MR3289852}. The corresponding coproduct in string topology, through the Viterbo isomorphism, was defined by Goresky and Hingston \cite{MR2560110} in its dual form, also see \cite{MR2079379}.
    \item Another (extended) secondary coproduct appears in the recent work of Cieliebak and Oancea \cite{cieliebak2020multiplicative}. For the case of cotangent bundles, the role of the secondary coproduct and the equivalence between those secondary coproducts were studied in recent works of Cieliebak, Hingston and Oancea \cite{cieliebak2020loop}.
\end{enumerate}
The secondary coproduct can be defined pictorially by counting perturbed Floer equations on the following family of surfaces
 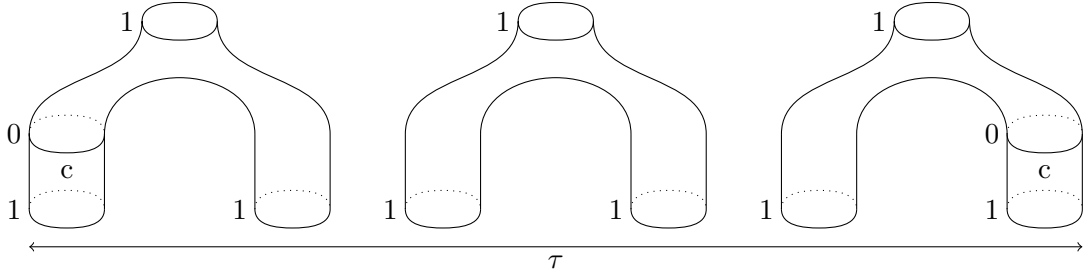
\begin{figure}[H]
	\begin{tikzpicture} 
	\draw[dotted] (0,0) to [out=90, in = 180] (0.5, 0.25) to [out=0, in=90] (1,0);
	\draw (1,0) to [out=270, in=0] (0.5,-0.25) to [out = 180, in=270]  (0,0) to (0,-1);
	\draw[dotted] (0,-1) to  [out=90, in = 180] (0.5, -0.75) to [out=0, in=90] (1,-1);
	\draw (1,-1) to [out=270, in=0] (0.5,-1.25) to [out = 180, in=270] (0,-1);
	\draw (1,0) to (1,-1);
	\draw (0,0) [out=90, in=270] to (1.5,1.5)  to [out=90, in = 180] (2, 1.75) to [out=0, in=90] (2.5,1.5)  [out=270, in=0] to (2,1.25) to [out = 180, in=270] (1.5,1.5);
	\draw (1,0) [out=90, in=180] to (2,0.75)  [out=0, in=90] to (3,0);
	\draw (3,0) to (3,-1) [out=270,in=180] to (3.5,-1.25) [out=0,in=270] to (4,-1) [out=90, in=270] to (4,0) [out=90, in=270] to (2.5,1.5);
	\draw[dotted] (3,-1) [out=90, in=180] to (3.5,-0.75) [out=0,in=90] to (4,-1);
	\node at (1.3,1.5) {$1$};
	\node at (-0.2,0) {$0$};
	\node at (-0.2,-1) {$1$};
	\node at (2.8 ,-1) {$1$};
	\node at (0.5,-0.5) {c};
		
    \draw (5,0) to (5,-1);
	\draw[dotted] (5,-1) to  [out=90, in = 180] (5.5, -0.75) to [out=0, in=90] (6,-1);
	\draw (6,-1) to [out=270, in=0] (5.5,-1.25) to [out = 180, in=270] (5,-1);
	\draw (6,0) to (6,-1);
	\draw (5,0) [out=90, in=270] to (6.5,1.5)  to [out=90, in = 180] (7, 1.75) to [out=0, in=90] (7.5,1.5)  [out=270, in=0] to (7,1.25) to [out = 180, in=270] (6.5,1.5);
	\draw (6,0) [out=90, in=180] to (7,0.75)  [out=0, in=90] to (8,0);
	\draw (8,0) to (8,-1) [out=270,in=180] to (8.5,-1.25) [out=0,in=270] to (9,-1) [out=90, in=270] to (9,0) [out=90, in=270] to (7.5,1.5);
	\draw[dotted] (8,-1) [out=90, in=180] to (8.5,-0.75) [out=0,in=90] to (9,-1);
	\node at (6.3,1.5) {$1$};
	\node at (4.8,-1) {$1$};
	\node at (7.8 ,-1) {$1$};
	
    \draw (10,0) to (10,-1);
	\draw[dotted] (10,-1) to  [out=90, in = 180] (10.5, -0.75) to [out=0, in=90] (11,-1);
	\draw (11,-1) to [out=270, in=0] (10.5,-1.25) to [out = 180, in=270] (10,-1);
	\draw (11,0) to (11,-1);
	\draw (10,0) [out=90, in=270] to (11.5,1.5)  to [out=90, in = 180] (12, 1.75) to [out=0, in=90] (12.5,1.5)  [out=270, in=0] to (12,1.25) to [out = 180, in=270] (11.5,1.5);
	\draw (11,0) [out=90, in=180] to (12,0.75)  [out=0, in=90] to (13,0);
	\draw (13,0) to (13,-1) [out=270,in=180] to (13.5,-1.25) [out=0,in=270] to (14,-1) [out=90, in=270] to (14,0) [out=90, in=270] to (12.5,1.5);
	\draw[dotted] (13,-1) [out=90, in=180] to (13.5,-0.75) [out=0,in=90] to (14,-1);
	\draw (13,0) [out=270,in=180] to (13.5,-0.25) [out=0,in=270] to (14,0);
	\draw[dotted] (13,0) [out=90,in=180] to (13.5,0.25) [out=0,in=90] to (14,0);
	\node at (11.3,1.5) {$1$};
	\node at (12.8,0) {$0$};
	\node at (9.8,-1) {$1$};
	\node at (12.8 ,-1) {$1$};
	\node at (13.5,-0.5) {c};
	\draw[<->] (0,-1.5) to (14,-1.5);
	\node at (7,-1.7) {$\tau$};
	\end{tikzpicture}
	\caption{Secondary coproduct $\bm{\lambda}$, numbers are weights at the puncture}\label{fig:coproduct}
\end{figure}

In a more formal way, the secondary coproduct can be defined as follows \cite[\S 4.1]{cieliebak2020loop}. Let $\Sigma$ be a $3$-punctured Riemann sphere with one positive puncture and two negative punctures. A $1$-form $\beta$ has weight $A$ near a puncture if $\beta = A\rd t$ in the cylindrical coordinate $[0,\infty)_s \times S^1_t$ (or $(-\infty,0]_s \times S^1_t$). Let $H$ be a Hamiltonian as in \S \ref{s3}. We choose a $1$-parameter family of $1$-forms $\beta_{\tau}$ for $\tau\in (0,1)$ with the following properties:
\begin{enumerate}
    \item $\rd\beta_{\tau}\le 0$ for all $\tau$;
    \item $\beta_\tau$ equals $\rd t$ near all three punctures;
    \item as $\tau\to 0$, $\beta_\tau$ equals $0\rd t$ near the first negative puncture whose length goes to infinity, so that $\beta_0$ consists of a $1$-form with weights $1,0,1$ on $\Sigma$ and $0,1$ on a cylinder attached to the first negative puncture; hence ``c" in Figure \ref{fig:coproduct} stands for the continuation map corresponding to the cylinder with weights $0,1$.
    \item as $\tau\to 0$, $\beta_\tau$ equals $0\rd t$ near the second negative puncture whose length goes to infinity, so that $\beta_1$ consists of a $1$-form with weights $1,1,0$ on $\Sigma$ and $0,1$ on a cylinder attached to the second negative puncture.
\end{enumerate}
The algebraic count of rigid solutions to 
$$(\rd u - X_H\otimes \beta_{\tau})^{0,1}=0,\quad \tau\in (0,1)$$
defines a secondary coproduct $C^*_+(H)\to (C^*_+(H)\otimes C^*_+(H))[2n-1]$. It is a cochain map as the extra boundary corresponding to $\tau=0,1$ will not appear since we only consider the positive cochain complex. Passing to the direct limit, we get the secondary coproduct $\bm{\lambda}$ on cohomology.

The significance of the secondary coproduct for our purpose is the following result due to Eliashberg, Ganatra and Lazarev \cite{EGL}. 
\begin{theorem}[\cite{EGL}]\label{thm:EGL}
The following diagram is commutative
$$
\xymatrix{
SH^*_+(W) \ar[rr]^{\bm{\lambda}\hspace{1.5cm}}\ar[d]^{\delta} & &(SH^{*}_+(W)\otimes SH^{*}(W)[2n-1]\ar[d]^{\delta\otimes \delta}\\
H^{*}(W)[1] \ar[rr]^{\Delta\hspace{1.5cm}} & & (H^{*}(W)\otimes H^{*}(W))[2n+1]
}
$$
where $\delta:SH^{*}(W)\to H^{*+1}(W)$ is the connecting map in the tautological exact sequence and the coproduct $\Delta:H^{*}(W)\to (H^{*}(W)\otimes H^{*}(W))[2n]$ is the dual of the cup product $H^{i}_{c}(W)\otimes H^{j}_c(W) \to H^{i+j}_c(W)$.
\end{theorem}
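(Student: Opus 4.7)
I would prove the theorem by a TQFT/chain--homotopy argument in which both compositions $(\delta\otimes\delta)\circ\bm{\lambda}$ and $\Delta\circ\delta$ arise as boundary strata of a single parametrized moduli space of perturbed Cauchy--Riemann equations on a pair of pants with varying $1$-form weights. The strategy is to realize all four operations $\bm{\lambda}$, $\delta$, $\Delta$, and the cup product geometrically as counts on (families of) three-punctured spheres with prescribed weights, and then to interpolate via a two-parameter family.

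First I would give a Floer-geometric description of $\delta$. Using the cascades model, $\delta:SH^{*}_{+}(W)\to H^{*+1}(W)$ is computed by counting Floer cylinders, or equivalently by gluing a cylinder $C$ with weight $1\,\mathrm{d}t$ near one end and weight $0\cdot\mathrm{d}t$ near the other, i.e.\ a cylinder that converts a Hamiltonian orbit at the positive end to a critical point of a Morse function on $W$ at the negative end. Similarly, the classical coproduct $\Delta$ is computed by a pair of pants $\Sigma_0$ carrying a $1$-form of weight zero at all three punctures (so $X_H\otimes\beta\equiv 0$, and the count reduces to the Morse-theoretic coproduct dual to cup product, as in the pull--push construction in \S\ref{ss:coproduct}).

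Next I would set up an interpolating family. Let $\Sigma$ be the three-punctured sphere with one positive puncture $p_+$ and two negative punctures $p_-^1,p_-^2$. Consider a two-parameter family $\beta_{\tau,\sigma}$, $(\tau,\sigma)\in [0,1]^2$, of sub-closed $1$-forms on $\Sigma$ such that
\begin{itemize}
\item at $\sigma=0$, the family $\beta_{\tau,0}$ is the family defining $\bm{\lambda}$ (weight $1$ at $p_+$, weights varying between $(1,0)$ and $(0,1)$ at the negative punctures);
\item at $\sigma=1$, the weight at $p_+$ is pinched to $0$, so $\beta_{\tau,1}$ defines the operation $\Delta$ precomposed with the continuation from Hamiltonian Floer to Morse, i.e.\ with $\delta$.
\end{itemize}
Then I would analyze the boundary of the associated parametrized moduli space, restricted to the positive subcomplex $C^{*}_{+}(H)$.

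The key identifications of boundary strata:
\begin{itemize}
\item The $\sigma=0$ edge reproduces $\bm{\lambda}$; the breakings at the two negative punctures along $(0,1)$-cylinders sitting inside $\Sigma$ compose $\bm{\lambda}$ with $\delta\otimes\delta$.
\item The $\sigma=1$ edge gives $\Delta\circ\delta$ by the description of $\Delta$ above and the identification of the attached $(1,0)$-cylinder at $p_+$ with the representative of $\delta$.
\item The $\tau=0,1$ edges correspond to breakings off a zero-weight cylinder at one of the negative punctures; the orbit at the broken end lies in full $SH^*$, but after projecting to the positive quotient $C^{*}_{+}$ these contributions vanish, as in the original construction of $\bm{\lambda}$ in \cite{dga}.
\item Interior boundary contributions (Floer breaking at the positive/negative cylindrical ends) give exactly the terms $d\circ H+H\circ d$ of a chain homotopy.
\end{itemize}

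The main obstacle will be the analytic control of the parametrized moduli space: transversality for the two-parameter family, exclusion of sphere/disk bubbling (the exactness of $W$ eliminates sphere bubbling, and only cylindrical breakings occur at the ends), and the verification that no neck-stretching at the contact boundary contributes extra terms, which is ensured by the integrated maximum principle \cite{MR2602848,ES}. One must also be careful with signs and with orientations of the Lefschetz/Poincar\'e dualities identifying $\Delta$ with the dual of cup product on $H^{*}_{c}(W)$. Once these are in place, the count of rigid points in the two-parameter moduli yields the sought chain homotopy, proving the commutativity of the diagram on cohomology.
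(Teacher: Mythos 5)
Your overall strategy (realize everything as counts over families of weighted three-punctured spheres and extract a homotopy from the boundary of a parametrized moduli space) is in the right spirit, but the specific boundary analysis has a genuine gap, and it is exactly the point the paper's argument is built to avoid. Both compositions $(\delta\otimes\delta)\circ\bm{\lambda}$ and $\Delta\circ\delta$ land in $H^*(W)\otimes H^*(W)$, so in your interpolating family the two outputs are constant orbits (Morse generators), not elements of the positive quotient. Hence your claim that the $\tau=0,1$ edges vanish ``after projecting to the positive quotient'' is not available here: when a zero-weight neck pinches at a negative puncture whose final asymptotic is a \emph{constant} orbit, the broken-off piece is the continuation cylinder $\mathrm{c}$ from the zero Hamiltonian, which is essentially the inclusion of the constant orbits and contributes nontrivially. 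These non-vanishing strata are precisely why the proof recalled in the paper (following Eliashberg--Ganatra--Lazarev) is organized in two steps: it introduces the auxiliary operation $\Theta:C_+(1\cdot H)\to C(0\cdot H)\otimes C_+(1\cdot H)$ (one constant and one non-constant output) and proves the two one-parameter relations \eqref{eqn:r1} and \eqref{eqn:r2}, arranging in each case that the dangerous pinching either is one of the displayed terms or occurs at a puncture whose asymptotic is non-constant (and hence vanishes); combining the two relations gives the commutativity on cohomology. With your single two-parameter family and the stated boundary identifications the count does not close up: $\mathrm{c}$-type strata appear and are unaccounted for, and fixing this essentially forces you back to the two-relation structure of the paper.

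There are also two technical misstatements that matter. First, $\delta$ is not computed by a cylinder with weight $1$ at the positive end and weight $0$ at the negative end: Stokes' theorem for a sub-closed $1$-form ($\rd\beta\le 0$) forces the weight at the positive puncture to be at most the sum of the weights at the negative punctures, so such a ``wrong-way'' cylinder carries no a priori energy bound and does not define an operation; the correct chain-level model, used in Figure \ref{fig:EGL}, is the component of the Floer differential (weight $1$ at both ends) from non-constant to constant orbits. For the same reason the $\sigma=1$ edge of your family is ill-posed: with the weight at $p_+$ pinched to $0$ the input cannot be a non-constant orbit, and keeping weight $1$ at $p_+$ while both output weights tend to $0$ leaves the class of sub-closed forms; the configuration computing $\Delta\circ\delta$ only arises as a broken two-level limit, not as a $1$-form on the fixed pair of pants. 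Second, the paper's relations \eqref{eqn:r1}, \eqref{eqn:r2} are only asserted on cohomology (Morse/cascade correction terms are discarded), whereas your proposal claims a full chain homotopy, which would require handling those corrections as well.
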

The proof can be explained schematically by the following figures. In the following, we recall a sketch of Eliashberg, Ganatra and Lazarev's proof.
 \begin{figure}[H]
	\begin{tikzpicture}
	\node at (-0.5,0) {$\partial$};
	\draw (0,0) [out=270, in=180] to (0.5,-0.5) [out=0, in=270] to (1,0);
	\draw (0.5,-1) circle (0.5);
	\draw (0,0) [out=90, in=270] to (1,2) [out=90,in=180] to (1.5,2.25) [out=0, in=90] to (2,2) [out=270,in=0] to (1.5,1.75) [out=180,in=270] to (1,2);
	\draw (2,2) [out=270, in=90] to (3,0) to (3,-1) [out=270, in=0] to (2.5,-1.5) [out=180,in=270] to (2,-1) [out=90,in=270] to (2,0) [out=90,in=90] to (1,0) ;
	\node at (0.8,2) {$1$};
	\node at (0,-0.5) {$0$};
	\node at (0,-1.5) {$1$};
	\node at (2,-1.5) {$1$};
	\fill (0.5,-0.5) circle[radius=3pt];
	\fill (0.5,-1.5) circle[radius=3pt];
	\fill (2.5,-1.5) circle[radius=3pt];
	\node at (0.5,-1)  {$\Id$};
	
	\node at (3.5,0) {$=$};
	\draw (4,0) [out=270, in=180] to (4.5,-0.5) [out=0, in=270] to (5,0);
	\draw (4.5,-1) circle (0.5);
	\draw (4,0) [out=90, in=180] to (5.5,1.5);
	\draw (5,2) [out=90,in=180] to (5.5,2.25) [out=0, in=90] to (6,2) [out=270,in=0] to (5.5,1.75) [out=180,in=270] to (5,2);
	\draw (5,2) [out=270,in=170] to (5.5,1.5) [out=0,in=270] to (6,2);
	\draw (5.5,1.5) [out=0, in=90] to (7,0) [out=270,in=90] to (7,-1) [out=270, in=0] to (6.5,-1.5) [out=180,in=270] to (6,-1) [out=90,in=270] to (6,0) [out=90,in=90] to (5,0) ;
	\node at (7.5,0) {$\cup$};
	\node at (4.8,2) {$1$};
	\node at (4,-0.5) {$0$};
	\node at (4,-1.5) {$1$};
	\node at (6,-1.5) {$1$};
	\fill (4.5,-0.5) circle[radius=3pt];
	\fill (4.5,-1.5) circle[radius=3pt];
	\fill (6.5,-1.5) circle[radius=3pt];
	\fill (5.5, 1.5) circle[radius=3pt];
	\node at (4.5, -1) {$\Id$};
	\node at (5.5,1) {$\Delta$};
	\node at (6.2,2) {$\delta$};
	
	\draw (8,0) [out=270, in=180] to (8.5,-0.5) [out=0, in=270] to (9,0);
	\draw (8.5,-1) circle (0.5);
	\draw (8,0) [out=90, in=270] to (9,2) [out=90,in=180] to (9.5,2.25) [out=0, in=90] to (10,2) [out=270,in=0] to (9.5,1.75) [out=180,in=270] to (9,2);
	\draw (10,2) [out=270, in=90] to (11,0) to (11,-1) [out=270, in=0] to (10.5,-1.5) [out=180,in=270] to (10,-1) [out=90,in=270] to (10,0) [out=90,in=90] to (9,0);
	\draw[dotted] (10,-0.5) [out=90,in=180] to (10.5,-0.25) [out=0,in=90] to (11, -0.5);
	\draw (10,-0.5) [out=270,in=180] to (10.5,-0.75) [out=0,in=270] to (11, -0.5);
	\node at (8.8,2) {$1$};
	\node at (8,-0.5) {$0$};
	\node at (8,-1.5) {$1$};
	\node at (10,-1.5) {$1$};
	\fill (8.5,-0.5) circle[radius=3pt];
	\fill (8.5,-1.5) circle[radius=3pt];
	\fill (10.5,-1.5) circle[radius=3pt];
	\node at (8.5, -1) {$\Id$};
    \node at (10.5, -1) {$\delta$};
	\end{tikzpicture}

	\begin{tikzpicture}
	\draw (0,0) [out=270, in=180] to (0.5,-0.5) [out=0, in=270] to (1,0);
	\draw (0.5,-1) circle (0.5);
	\draw (0,0) [out=90, in=270] to (1,2) [out=90,in=180] to (1.5,2.25) [out=0, in=90] to (2,2) [out=270,in=0] to (1.5,1.75) [out=180,in=270] to (1,2);
	\draw (2,2) [out=270, in=90] to (3,0) to (3,-1) [out=270, in=0] to (2.5,-1.25) [out=180,in=270] to (2,-1) [out=90,in=270] to (2,0) [out=90,in=90] to (1,0);
	\draw[dotted] (3,-1) [out=90,in=0] to (2.5,-0.75) [out=190,in=90] to (2,-1);
	\node at (0.8,2) {$1$};
	\node at (0,-0.5) {$0$};
	\node at (0,-1.5) {$1$};
	\node at (2,-1.5) {$1$};
	\fill (0.5,-0.5) circle[radius=3pt];
	\fill (0.5,-1.5) circle[radius=3pt];
	\node at (0.5,-1)  {$\Id$};
	\node at (3.5,0) {$\cup$};
	
	\draw (4,0) [out=90, in=270] to (5,2) [out=90,in=180] to (5.5,2.25) [out=0, in=90] to (6,2) [out=270,in=0] to (5.5,1.75) [out=180,in=270] to (5,2);
	\draw (6,2) [out=270, in=90] to (7,0) to (7,-1) [out=270, in=0] to (6.5,-1.25) [out=180,in=270] to (6,-1) [out=90,in=270] to (6,0) [out=90,in=90] to (5,0);
	\draw[dotted] (7,-1) [out=90,in=0] to (6.5,-0.75) [out=190,in=90] to (6,-1);
	\draw (4,0) to (4,-1) [out=270,in=180] to (4.5,-1.25) [out=0,in=270] to (5,-1) [out=90,in=270] to (5,0);
	\draw[dotted] (5,-1) [out=90,in=0] to (4.5,-0.75) [out=190,in=90] to (4,-1);
	\draw[<->] (4,-1.5) to (7,-1.5);
	\node at (5.5, -1.25) {$\tau$};
	\node at (5.5,1) {$\bm{\lambda}$};
	
	\draw (5,-2) [out=90,in=0] to (4.5,-1.75) [out=190,in=90] to (4,-2) [out=270,in=180] to (4.5,-2.25) [out=0,in=270] to (5,-2);
	\draw (5,-2) [out=270, in=0] to (4.5,-2.5) [out=180,in=270] to (4,-2);
	
	\node at (4.8,2) {$1$};
	\node at (3.8,-1) {$1$};
	\node at (5.8,-1) {$1$};
	\node at (3.8,-2) {$1$};
	\node at (3.8,-2.5) {$1$};
	\node at (5.2,-2.25) {$\delta$};
	\fill (4.5,-2.5) circle[radius=3pt];
	
	\draw (-4,0) [out=90, in=270] to (-3,2) [out=90,in=180] to (-2.5,2.25) [out=0, in=90] to (-2,2) [out=270,in=0] to (-2.5,1.75) [out=180,in=270] to (-3,2);
	\draw (-2,2) [out=270, in=90] to (-1,0) to (-1,-1) [out=270, in=0] to (-1.5,-1.25) [out=180,in=270] to (-2,-1) [out=90,in=270] to (-2,0) [out=90,in=90] to (-3,0);
	\draw[dotted] (-1,-1) [out=90,in=0] to (-1.5,-0.75) [out=190,in=90] to (-2,-1);
	\draw (-4,0) to (-4,-0.75) [out=270,in=180] to (-3.5,-1.25) [out=0,in=270] to (-3,-0.75) [out=90,in=270] to (-3,0);
	\draw[<->] (-4,-1.5) to (-1,-1.5);
	\node at (-2.5, -1.25) {$\tau$};
	\fill (-3.5,-1.25) circle[radius=3pt];
	\node at (-3.2,2) {$1$};
	\node at (-4.2,-1) {$1$};
	\node at (-2.2,-1) {$1$};
	\node at (-0.5,0) {$=$};
	\node at (-4.5,0) {$\partial$};
	\end{tikzpicture} 
	\caption{Pictorial proof of \Cref{thm:EGL}}\label{fig:EGL}
\end{figure}
The numbers stand for weights as before. By $\bullet$, we mean that the end is asymptotic to a constant orbit. By a circle, we mean the end is asymptotic to a non-constant orbit, i.e.\ a generator of the positive cochain complex. The configuration equipped with $\tau$ is the moduli space in Figure \ref{fig:coproduct}. The configuration marked with $\Delta$ defines the coproduct on $H^*(W)$. The boundary components correspond to the differential on the positive cochain complex and zero cochain complex (Morse cochain complex) are omitted, which will correspond to the homotopy in the commutative diagram in \Cref{thm:EGL}. More precisely, we insert $x$ that is closed in the positive cochain complex (i.e.\ a representative of a class in $SH^*_+(W)$) as the input in the top row of Figure \ref{fig:EGL}, then we can get 
\begin{equation}\label{eqn:r1}
    (\mathrm{c} \otimes \Id)\circ \Delta \circ \delta\pm (\mathrm{c}\otimes \delta)\circ \Theta = 0\in H^*(W)\otimes H^*(W) 
\end{equation}
Here $\Theta:C_+(1\cdot H)\to C(0\cdot H )\otimes C_+(1\cdot H)$ is defined by counting the top level of the top right curve in Figure \ref{fig:EGL}, $\mathrm{c}$ is the continuation map $C(0\cdot H) \to C(1\cdot H)$, i.e.\ identity onto the constant orbits part of $C(H)$. \eqref{eqn:r1} only holds on cohomology, as we omit the Morse breakings in a cascades model or breakings appearing below a constant orbit in a $C^2$-small perturbed model. Now we look at the second row of Figure \ref{fig:EGL},  the first configuration in the union is the left end of Figure \ref{fig:coproduct}. A priori, there is another boundary from pinching the second negative end in $\tau$-family, but it will not appear since the asymptotic of the second negative puncture is a non-constant orbit in Figure \ref{fig:EGL}. Therefore, the second row of Figure \ref{fig:EGL} reads
\begin{equation}\label{eqn:r2}
    (\mathrm{c}\otimes \Id)\circ \Theta\pm (\delta\otimes \Id)\circ \bm{\lambda}=0\in H^*(W)\otimes SH_+^*(W).
\end{equation}
As before, this relation holds on cohomology only as we omitted other more standard boundaries. Combining \eqref{eqn:r1} and \eqref{eqn:r2}, we see that $\Delta\circ \delta=(\delta\otimes \delta)\circ \bm{\lambda}$ after choosing appropriate signs and orientation conventions.

\subsection{The secondary coproduct for $\C^n/G$}

We have the following special case of the orbifold version of \Cref{thm:EGL}.
\begin{proposition}\label{prop:orb_EGL}
The secondary coproduct is well-defined on $SH^*_+(\C^n/G;\bm{k})$. When $|G|$ is invertible in $\bm{k}$, the following is commutative,
$$
\xymatrix{
SH^*_+(\C^n/G;\bm{k}) \ar[rr]^{\bm{\lambda}\hspace{1.5cm}}\ar[d]^{\delta} & &(SH^{*}_+(\C^n/G;\bm{k})\otimes SH^{*}(\C^n/G;\bm{k}))[2n-1]\ar[d]^{\delta\otimes \delta}\\
H^{*}_{\CR}(\C^n/G;\bm{k})[1] \ar[rr]^{\Delta \hspace{1.5cm}} & & (H^{*}_{\CR}(\C^n/G;\bm{k})\otimes H^{*}_{\CR}(\C^n/G;\bm{k}))[2n+1]
}
$$
where the bottom row is the Chen-Ruan coproduct in \S \ref{ss:coproduct} and the grading is the rational grading. 
\end{proposition}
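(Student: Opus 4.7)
The plan is to follow the pictorial Eliashberg--Ganatra--Lazarev argument recalled in Figure \ref{fig:EGL}, adapted to the exact orbifold $\C^n/G$. Assuming the orbifold symplectic cohomology package (moduli spaces of Floer cylinders, continuation maps, pants products defined by push-pull with isotropy weights) is in place as in \cite{gironella2021exact}, the one-parameter family of pairs of pants $\beta_\tau$ in Figure \ref{fig:coproduct} immediately produces a chain map $\bm\lambda$ on the positive orbifold symplectic cochain complex. Well-definedness on cohomology amounts to standard boundary analysis of the compactified $\tau$-family: the interior breakings give the Floer-cochain boundary term and the $\tau = 0, 1$ limits factor through the full (non-positive) symplectic cochain complex, which for the \emph{positive} theory contributes $0$ since we quotient out the constant-orbit subcomplex. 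Invertibility of $|G|$ in $\bm k$ is used so that the push-pull weights from orbifold isotropies of configurations in $\C^n/G$ descend to well-defined maps on the (Chen--Ruan) cohomology.

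To prove the commutative diagram, I would insert a positive-cochain-level cocycle $x$ into the top two configurations of Figure \ref{fig:EGL} and rerun the two bordism arguments verbatim in the orbifold setting. The first configuration provides a cochain homotopy equivalent to
\begin{equation*}
\tilde\Delta \circ \delta \simeq (\mathrm{c}\otimes \delta)\circ \Theta
\end{equation*}
where $\tilde\Delta$ is the ``coproduct on cohomology'' obtained by counting rigid curves with one positive puncture and two negative punctures asymptotic to constant orbits in $\C^n/G$, and $\Theta$ is as in equation \eqref{eqn:r1}. The second configuration yields $(\mathrm c\otimes \Id)\circ \Theta \simeq (\delta\otimes \Id)\circ \bm\lambda$. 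Combining these two relations and using that $\mathrm c$ is essentially the identity on the constant-orbit part produces $\tilde\Delta\circ \delta = (\delta\otimes\delta)\circ \bm\lambda$, which is the desired commutative square once we identify $\tilde\Delta$ with the Chen--Ruan coproduct $\Delta$.

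The key identification $\tilde\Delta = \Delta$ is where the orbifold nature really enters. The rigid curves contributing to $\tilde\Delta$ with \emph{non-constant} positive asymptotic lift, after neck-stretching along $S^{2n-1}/G$, to configurations whose bottom level is a constant orbifold holomorphic sphere at the singularity with two negative and one positive punctures, exactly of the type enumerated in \S \ref{ss:coproduct}. The orbifold structure at such a constant curve is $C(h_1)\cap C(h_2)$, and the push to the cohomology of the inertia orbifold (Lefschetz dual of Chen--Ruan cohomology) carries the factor $|C(h_1)|\cdot |C(h_2)|/|C(h_1)\cap C(h_2)|$. This is precisely the definition of $\Delta$ given in \eqref{eqn:coproduct}. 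Hence $\tilde\Delta$ coincides with the Chen--Ruan coproduct and the diagram commutes as stated.

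The main obstacle is analytic/foundational rather than conceptual: rigorously setting up the moduli spaces of Floer solutions in $\C^n/G$, with constant orbifold spheres at the singularity arising as boundary strata, and verifying that the orientation and weight conventions match those producing \eqref{eqn:coproduct}. This is the same polyfold-type issue flagged in \Cref{rmk:orbifold} for the ring structure; granting that input, the cobordism arguments above go through without essential changes from the smooth EGL proof.
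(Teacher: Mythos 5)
There is a genuine gap: your argument ultimately defers the crucial difficulty to ``polyfold-type'' foundations (``granting that input, the cobordism arguments above go through''), but the whole point of this proposition is that for $\C^n/G$ one can avoid virtual perturbations altogether, since the orbifold package of \cite{gironella2021exact} only provides the group-level theory and \Cref{rmk:orbifold} explicitly flags the product-type structures as future work. As the paper's proof explains, the obstruction to running the EGL homotopies classically is the possible appearance of constant (multiply-punctured) holomorphic curves at the singular point that are not somewhere injective and may have negative virtual dimension; in the top row of Figure \ref{fig:EGL} one could a priori see a positive-dimensional moduli space for the $\delta$-piece glued to a constant thrice-punctured curve of negative virtual dimension, a configuration that cannot be handled by classical perturbations. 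Your proposal never rules this out: you assert that the constant curves with isotropy $C(h_1)\cap C(h_2)$ contribute exactly the weights in \eqref{eqn:coproduct}, but without a transversality statement for these non-somewhere-injective curves that assertion is precisely the unproved foundational input.

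The missing idea is the reduction to the distinguished generators together with the grading/homotopy constraints. By \Cref{thm:quotient_SH}, $SH^*_+(\C^n/G;\bm{k})$ is spanned by the classes $x_{(g)}$ and carries a global $\Q$-grading, so it suffices to verify the square on these generators. Feeding $x_{(g)}$ into the input of the top row of Figure \ref{fig:EGL}, the homotopy class forces the intermediate breaking to land on the twisted sector $(g)$, and the rational grading forces the outputs $(h_1),(h_2)$ to satisfy $\age(h_1)+\age(h_2)=\age(g)+n$; for a constant thrice-punctured curve this is exactly the condition that the Fredholm index vanishes while the kernel of the linearized operator is zero-dimensional, so these curves are automatically cut out transversely and no curves of negative virtual dimension arise. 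This is the step that makes the classical proof of \Cref{thm:EGL} go through for $\C^n/G$, and it is absent from your argument; as written, your proposal establishes only a conditional statement contingent on machinery the paper does not have.
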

\begin{proof}
In view of \cite[\S 3.7.2]{gironella2021exact}, the difficulty of generalizing \Cref{thm:EGL} to the orbifold setting is the appearance of constant holomorphic curves with more than $3$ punctures of negative virtual dimension. Those curves are not somewhere injective and can not be perturbed away by the enforcement of the symmetry, hence we need the machinery of virtual perturbations to deal with them. However, in the special case of $\C^n/G$, such a problem will not appear and the proof sketched above for \Cref{thm:EGL} goes through. 

First note that the secondary coproduct is defined on $SH^*_+(W;\bm{k})$ for a general exact orbifold filling $W$, as the relevant moduli spaces do not involve constant curves in the compactification. Now by \Cref{thm:quotient_SH} (\cite[Theorem B]{gironella2021exact}), $SH^*_+(\C^n/G;\bm{k})$ is spanned by classes represented by $x_{(g)}$ and is equipped with a global $\Q$-grading. Moreover, $x_{(g)}$ is sent to the constant twisted sector corresponding to the conjugacy class $(g)$ in the connecting map  $SH^*_+(\C^n/G;\bm{k})\to H_{\CR}^{*+1}(\C^n/G;\bm{k})$. With a slight abuse of notation, we use $(g)$ to denote the constant twisted sector. Therefore it suffices to prove the diagram on those generators (instead of the whole cochain complex). In the moduli spaces in Figure \ref{fig:EGL}, the only place to see constant holomorphic curves with more than $2$ punctures is from the right side of the first row. Note that $x_{(g)}$ has rational grading $2\age(g)-1$, hence $\bm{\lambda}(x_{(g)})$ has rational grading $2\age(g)+2n-2$ and $(\delta\otimes \delta)\circ \bm{\lambda} (x_{(g)})$ has rational grading $2\age(g)+2n$. Therefore to prove the commutative diagram, it is sufficient to plug in $x_{(g)}$ into the input end and constant twisted sectors $(h_1),(h_2)$ into the output ends of the top row of Figure \ref{fig:EGL}, subjected to the following two constraints:
\begin{enumerate}
    \item $(h_1h_2)=(g)\in Conj(G)$, this is the constraint from the homotopy classes.
    \item $\age(h_1)+\age(h_2)=\age(g)+n$, this is the constraint from the rational grading above.
\end{enumerate}
With such asymptotics, in the top row of Figure \ref{fig:EGL}, the breaking from the input end must end in $(g)$ by the homotopy class. Therefore the constant thrice-punctured curve must have asymptotics $(g),(h_1),(h_2)$, which, due to the second constraint on $\age$, is cut out transversely\footnote{The kernel of the linearized operator is always zero-dimensional for any constant curve. Since $2n+2\age(g)-2\age(h_1)-2\age(h_2)$ is the Fredholm index, $\age(h_1)+\age(h_2)=\age(g)+n$ implies transversality.}. Therefore the proof of \Cref{thm:EGL} goes through for $\C^n/G$. 
\end{proof}
\begin{remark}
    To prove \Cref{thm:EGL} for general exact orbifolds with contact manifold boundary,  we must check the diagram on general generators instead of the special collection $\{x_{(g)}\}$ in the special case of \Cref{prop:orb_EGL}.  Then in the top row of Figure \ref{fig:EGL}, we may run into a situation where the top curve for the operator $\delta$ has a moduli space of positive dimension, and the constant curve for $\Delta$ has a negative virtual dimension, with a correct total virtual dimension. Such a configuration can not resolved by classical perturbations. 
\end{remark}

We endow the tensor product $C_+(H)\otimes C_+(H)$ with a filtration
$$F^kC_+(H)\otimes C_+(H) = \left\langle x\otimes y\left| |x|^{\partial}+|y|^{\partial}-2n+1\ge \frac{k}{|G|}\right. \right\rangle.$$
By the same argument of \Cref{prop:SS}, the induced spectral sequence also degenerates at $(|G|+1)$th page, which is spanned by $\{ x_{(g)}\otimes x_{(h)}\}_{(g),(h)\in \Conj(G)}$, whenever $\C^n/G$ is terminal and $|G|$ is invertible in $\bm{k}$. The extra degree shift by $-2n+1$ is for $\bm{\lambda}$ capturing the non-trivial leading terms (i.e.\ on the associate graded) in the proposition below.
\begin{proposition}\label{prop:intersection_SS}
Assume $\C^n/G$ is terminal. Let $W$ be an exact filling of $(S^{2n-1}/G;\xi_{\std})$. The secondary coproduct on $SH^*_+(W;\bm{k})$ is compatible with the boundary grading filtration. In particular, it induces a map on the associated graded, which is isomorphic to the secondary coproduct on $SH^*_+(\C^n/G;\bm{k})$.
\end{proposition}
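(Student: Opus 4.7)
The plan is to mimic the strategy of \Cref{prop:SS}, now applied to the moduli spaces of $\tau$-family perturbed three-punctured spheres from Figure \ref{fig:coproduct} that define the secondary coproduct $\bm{\lambda}$. I would neck-stretch these moduli spaces along the contact boundary $(S^{2n-1}/G,\xi_{\std})$. In the fully stretched limit, a top-level configuration is a $\tau$-family of perturbed pair-of-pants curves in the symplectization, possibly carrying additional negative punctures asymptotic to Morse-Bott families $B_{(g_1),l_1},\ldots,B_{(g_m),l_m}$ of Reeb orbits on $S^{2n-1}/G$.

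The next step is an index calculation analogous to \eqref{eqn:top}. For generators $x,y_1,y_2$, the virtual dimension of the top-level moduli space takes the form
\[
|y_1|^{\partial} + |y_2|^{\partial} - (2n-1) - |x|^{\partial} - \sum_{i=1}^{m} \lSFT(B_{(g_i),l_i}).
\]
Since $\C^n/G$ is terminal, each $\lSFT(B_{(g_i),l_i}) \geq 2\md(\C^n/G) > 0$ by \Cref{thm:McLean} and \Cref{prop:terminal}, so any extra negative puncture strictly lowers the virtual dimension. In particular, whenever $|y_1|^{\partial}+|y_2|^{\partial}-(2n-1) < |x|^{\partial}$, all top-level configurations have strictly negative virtual dimension and thus the coefficient of $y_1\otimes y_2$ in $\bm{\lambda}(x)$ vanishes. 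This establishes the filtration compatibility $\bm{\lambda}(F^k) \subset F^k$.

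To identify the induced map on the associated graded, I would then focus on the critical matrix coefficients, namely those with $|y_1|^{\partial}+|y_2|^{\partial}-(2n-1) = |x|^{\partial}$. By the dimension formula, these can only be contributed by top-level curves with no extra negative punctures, i.e.\ by curves living entirely in the symplectization of $(S^{2n-1}/G,\xi_{\std})$. Such counts depend only on the contact boundary and not on the filling. Running the same neck-stretching argument on the standard exact orbifold filling $\C^n/G$ yields precisely the same symplectization-level counts; and for that filling the boundary-grading filtration coincides with the honest $\Q$-grading (since $c_1^{\Q}(\C^n/G)=0$), so the associated graded there is literally $SH^*_+(\C^n/G;\bm{k})$ equipped with its secondary coproduct from \Cref{prop:orb_EGL}. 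Matching the two sides then gives the desired isomorphism.

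The main obstacle I expect is the bookkeeping at the ends $\tau \to 0,1$ of the parameter family, where the domain degenerates and a continuation cylinder splits off. As in the proof of \Cref{thm:EGL}, those degenerations produce chain homotopies whose compositions vanish on the positive cochain complex, but here I must also verify that these homotopies respect the boundary-grading filtration so they do not contaminate the associated-graded identification. This should follow by applying the same virtual-dimension-plus-$\lSFT$ estimate to the degenerate configurations, but is the step that requires the most care; a secondary technical point is checking that the neck-stretching is uniform in $\tau\in(0,1)$, which is standard SFT compactness.
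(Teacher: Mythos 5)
Your proposal is correct and follows essentially the same route as the paper's own proof: neck-stretch the $\tau$-family of pair-of-pants curves, use terminality to get $\lSFT(B_{(g),l})\ge 2\md(\C^n/G)>0$ so that any extra negative puncture strictly drops the top-level virtual dimension, conclude that only terms with $|y_1|^{\partial}+|y_2|^{\partial}-(2n-1)\ge |x|^{\partial}$ survive (equality forcing symplectization curves, hence filling-independence), and identify the associated graded by comparing with $\C^n/G$, where the filtration is the honest $\Q$-grading and the coproduct is given by \Cref{prop:orb_EGL}. Two small remarks: your inequality is stated in the direction that actually matches the filtration on $C_+(H)\otimes C_+(H)$ as defined before the proposition, and the $\tau\to 0,1$ degenerations you flag as the main obstacle are already ruled out in the definition of $\bm{\lambda}$ on the positive complex, so no extra filtration check is needed there.
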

\begin{proof}
Since $\md(\C^n/G)>0$, the neck-stretching argument shows that $\la \bm{\lambda}(x),y\otimes z \ra = 0$ whenever $|x|^{\partial}+2n-1-|y|^{\partial}-|z|^{\partial}<0$. Note that the boundary rational gradings are multiplies of $\frac{1}{|G|}$, this implies that $\bm{\lambda}$ is compatible with filtration. The induced map of $\bm{\lambda}$ on the first page of the spectral sequence only depends on moduli spaces defining $\bm{\lambda}$ with input asymptotic $x$ and output asymptotic $y,z$ such that $|x|^{\partial}+2n-1-|y|^{\partial}-|z|^{\partial}=0$.  Neck-stretching and $\md(\C^n/G)>0$ imply that this map is independent of fillings. Note that the induced map of $\bm{\lambda}$ on the associated graded is induced from the induced map on the first page. Since for $\C^n/G$, the filtration is the rational grading and the induced map on the associated graded is identified with the secondary coproduct $\bm{\lambda}$ on $\C^n/G$ (determined by \Cref{prop:orb_EGL} and \eqref{eqn:coproduct}). The claim for a general filling follows from the independence of filling and a comparison with $\C^n/G$.
\end{proof}

Recall that the cup product length of a topological space $W$ is defined to be
$$\mathrm{cl}(W):=\max \left\{ n\left|\exists \alpha_i \in \oplus_{*>0}H^*(W;\Q), i\le n, \text{ such that } \cup \alpha_i\ne 0 \right.\right\}.$$
A consequence of \Cref{prop:intersection_SS} is the following upper bound.
\begin{corollary}\label{cor:cuplength}
Assume $\C^n/G$ is terminal. Let $W$ be an exact filling of $(S^{2n-1}/G;\xi_{\std})$. Then we have 
$$\mathrm{cl}(W)\le \left \lfloor \frac{n-1-\md(\C^n/G)}{1+\md(\C^n/G)}\right \rfloor.$$
In particular, we always have $\mathrm{cl}(W)\le n-2$.
\end{corollary}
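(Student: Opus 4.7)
The plan is to read numerical degree bounds off the filtered Floer-theoretic picture of $H^*(W;\Q)$ and then run a cup product degree count.

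First, by \Cref{prop:upper_bound}, $H^*(W;\Q)$ is a free $\Q$-module of rank $|\Conj(G)|$, supported in even degrees, with basis $\{1\}\cup\{y_{(g)}\}_{(g)\neq(\Id)}$ where $y_{(g)}:=\delta(x_{(g)})$. Transporting the boundary grading filtration of \Cref{prop:SS} through the isomorphism $\delta$ (with the degree-one shift that puts the unit at level $0$), the class $y_{(g)}$ sits at filtration level $2\age(g)$. By the terminality hypothesis and \Cref{prop:terminal}, $\age(g)\geq 1+m$ for every $g\neq\Id$, with $m:=\md(\C^n/G)>0$; together with the identity $\age(g)+\age(g^{-1})=n$ this confines every non-trivial generator to a filtration level in the interval $[2(1+m),\,2(n-1-m)]$.

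The heart of the argument is to upgrade this filtration to a multiplicative one on the ring $H^*(W;\Q)$, i.e.\ to establish $F^{k_1}\cup F^{k_2}\subset F^{k_1+k_2}$. Since $SH^*(W;\Q)=0$, one cannot simply pull the cup product back from a ring structure on $SH^*$; instead one reads the compatibility off the Lefschetz-dual coproduct on $H^*(W;\Q)$. By \Cref{prop:intersection_SS}, $\bm{\lambda}$ respects the tensor boundary filtration, and the EGL commutative diagram (\Cref{thm:EGL}) translates this into filtration compatibility of $\Delta$; dualizing $\Delta$ through the Poincar\'e-Lefschetz pairing then delivers the multiplicativity.

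Granting multiplicativity, suppose $\alpha_1\cup\cdots\cup\alpha_k\neq 0$ with each $\alpha_i\in\oplus_{*>0}H^*(W;\Q)$. After decomposing into filtered pieces we may assume $|\alpha_i|^F\geq 2(1+m)$, whence $\alpha_1\cup\cdots\cup\alpha_k\in F^{2k(1+m)}H^*(W;\Q)$. Since no non-zero class has filtration level above $2(n-1-m)$, non-vanishing forces $2k(1+m)\leq 2(n-1-m)$, i.e.\ $k\leq(n-1-m)/(1+m)$. For the ``in particular'' claim, $(n-1-m)/(1+m)<n-1$ is equivalent to $nm>0$, which holds for $m>0$; hence $\lfloor(n-1-m)/(1+m)\rfloor\leq n-2$.

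I expect the filtration multiplicativity to be the main obstacle: the ring structure on $H^*(W;\Q)$ is invisible through $SH^*(W;\Q)=0$, so it must be recovered by dualizing the secondary-coproduct information of \Cref{prop:intersection_SS} and \Cref{thm:EGL}. Keeping track of the degree-one shift from $\delta$, the sign of the filtration shift in $\Delta$, and the interaction of the boundary filtration with the Poincar\'e-Lefschetz pairing are the technical points requiring most care.
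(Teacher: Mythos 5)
Your overall strategy is the paper's: use \Cref{prop:upper_bound} and \Cref{rmk:grading} to get the basis $\{y_{(g)}\}$ with rational levels $2\age(g)$, feed \Cref{prop:intersection_SS} through the EGL diagram, dualize, and count degrees. But the step you yourself flag as the main obstacle is a genuine gap as written. By \Cref{thm:EGL}, the coproduct $\Delta$ on $H^*(W;\Q)$ is dual to the cup product on \emph{compactly supported} cohomology $H^i_c(W)\otimes H^j_c(W)\to H^{i+j}_c(W)$, not to the cup product on $H^*(W;\Q)$. So dualizing the filtration statement of \Cref{prop:intersection_SS} yields super-additivity of the dual gradings $2n-2\age(g)$ (assigned to the dual basis $y^\vee_{(g)}$ of $H^*_c(W;\Q)$) under the product on $H^*_c$; it does not deliver your asserted multiplicativity $F^{k_1}\cup F^{k_2}\subset F^{k_1+k_2}$ for the cup product on $H^*(W;\Q)$ with the levels $2\age(g)$. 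There is no reason the comparison map $H^*_c(W;\Q)\to H^*(W;\Q)$ should respect the two bases or the two filtrations, so you cannot transport the $H^*_c$ statement to $H^*(W)$ basis-by-basis; your central claim is therefore unproved by the proposed route (and is not needed).

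What closes the gap — and is exactly what the paper does — is a transfer argument rather than a transported filtration: since $S^{2n-1}/G$ is a rational homology sphere, $H^*_c(W;\Q)\to H^*(W;\Q)$ is an isomorphism in degrees $0<*<2n$ and is compatible with products, so $\mathrm{cl}(W)$ equals the cup product length computed in $H^*_c(W;\Q)$ modulo $H^{2n}_c(W;\Q)$; by \Cref{rmk:grading} the latter is the cup product length of $\oplus_{(g)\ne(\Id)}\la y^\vee_{(g)}\ra$, whose gradings $2n-2\age(g)$ lie in $[2(1+\md(\C^n/G)),\,2n-2(1+\md(\C^n/G))]$ and are super-additive under the product. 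The numerical count then goes through exactly as in your last paragraph (the ranges coincide with yours because $\age(g)+\age(g^{-1})=n$), and your verification of the ``in particular'' clause is fine. So the final inequality is right, but the argument must run on $H^*_c$ with the dual gradings and then be converted to a statement about $\mathrm{cl}(W)$ via the rational homology sphere property, rather than by claiming a multiplicative filtration on $H^*(W;\Q)$ itself.
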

\begin{proof}
By \Cref{prop:upper_bound} and \Cref{rmk:grading}, we can write $H^*(W;\Q)\simeq \oplus_{(g)\in \Conj(G)} \la y_{(g)}\ra$, where $y_{(g)}$ is the image of $x_{(g)}$ under a non-canonical isomorphism $ \oplus_{(g)\in \Conj(G)} \la x_{(g)} \ra \simeq SH_+^*(W;\bm{k})\simeq H^{*+1}(W;\bm{k})$. We grade $y_{(g)}$ by the rational number $2\age(g)$. Under such isomorphism, \Cref{prop:intersection_SS} implies that the coproduct $\Delta(y_{(g)})$ has a minimal rational grading at least $2n+2\age(g)$. In view of the Lefschetz duality $H^*_c(W;\Q)=\Hom(H^{2n-*}(W;\Q),\Q)$, we may write $H^*_c(W;\Q)\simeq \oplus_{(g)\in \Conj(G)} \la y^{\vee}_{(g)}\ra$, where $y^{\vee}_{(g)}$ is the linear dual of $y_{(g)}$ in that basis. We define a rational grading of $y^{\vee}_{(g)}$ to be $2n-2\age(g)$ (i.e.\ $2\age(g^{-1})$). Under such an isomorphism, the dual of \Cref{prop:intersection_SS} implies that the cup product $\cup :H^*_c(W;\Q)\otimes H^*_c(W;\Q) \to H^*_c(W;\Q)$ has the property that the minimal rational grading of $y^{\vee}_{(g)}\cup y^{\vee}_{(h)}$ is at least $4n-2\age(g)-2\age(h)$. 

Since $S^{2n-1}/G$ is a rational homology sphere, we have that $H^*_c(W;\Q)\to H^*(W;\Q)$ is an isomorphism for $0<*<2n$ and the cup product length of $W$ is precisely the cup product length of the monoid $\Ima(H^*_c(W;\Q)\to H^*(W;\Q))$ or equivalently $H^*_c(W;\Q)/H^{2n}_c(W;\Q)$. By \Cref{rmk:grading}, $y_{(g)}$ in the isomorphism $H^*(W;\Q)\simeq \oplus_{(g)\in \Conj(G)} \la y{(g)}\ra$ does not have component in $H^0(W;\Q)$ when $g\ne \Id$, while $y_{(\Id)}$ does. As a consequence, the cup length product of $H^*_c(W;\Q)/H^{2n}_c(W;\Q)$ is same as the cup product length on $\oplus_{(g)\ne (\Id)\in \Conj(G)} \la y^{\vee}_{(g)}\ra$. The cup product length of the latter can be estimated by the rational grading. Since the minimal and maximal rational gradings of $y^{\vee}_{(g)}$ for $g\ne \Id$ are $2(1+\md(\C^n/G))$ and $2n-2(1+\md(\C^n/G))$ respectively. Hence the claim follows, as the product does not decrease the rational grading.
\end{proof}
By \Cref{prop:upper_bound}, we have that $H^*(W)$ is supported in even degrees, hence the cup product length is tautologically smaller than $n$. Although \Cref{cor:cuplength} only improves the upper bound by $1$ if we only know that $\md(\C^n/G)>0$, it is crucial in the proof of \Cref{thm:U(n)}.

In some cases, the map on the associated graded can be identified with the map on the original space. Then \Cref{prop:intersection_SS} can recover the coproduct on cohomology. The following is the simplest case when it happens. 
\begin{corollary}\label{cor:intersection_SS}
Let $\C^n/G$ be a terminal singularity of the following cases, then any exact filling of $(S^{2n-1}/G,\xi_{\std})$ has the same coproduct as $\C^n/G$ rationally. 
\begin{enumerate}
    \item Diagonal actions on $\C^{n=2m}$ by finite subgroups of $SU(2)$.
    \item $G\subset O(n)\subset SU(n)$ with isolated singularity.
\end{enumerate}
\end{corollary}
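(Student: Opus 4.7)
The plan is to combine the explicit orbifold computation, the filtration match from \Cref{prop:intersection_SS}, and the cup-length bound from \Cref{cor:cuplength}, then rule out any residual corrections by an integer-degree argument.

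First I would compute the age function in both cases. In case (1), every non-identity $g \in SU(2)$ has eigenvalues $\lambda, \lambda^{-1}$ on $\C^2$, so the diagonal action on $(\C^2)^m = \C^n$ with $n = 2m$ yields $\age(g) = m = n/2$. In case (2), every non-identity $g \in O(n)$ without a $+1$-eigenvalue has eigenvalues in complex-conjugate pairs together with $-1$-eigenvalues, and in every configuration $\age(g) = n/2$. Hence in both cases $\age$ is constant with value $n/2$ on $G \setminus \{\Id\}$. By \eqref{eqn:coproduct}, the indexing set $I_g$ is empty for $g \ne \Id$ (the constraint $\age(h_1) + \age(h_2) = 3n/2$ exceeds the maximum $n$), while $I_{\Id} = \{(h, h^{-1}) : h \ne \Id\}/\sim$ gives
\[
\Delta_{\C^n/G}(y_{(\Id)}) = \sum_{(g) \ne (\Id)} |C(g)|\, y_{(g)} \otimes y_{(g^{-1})}, \qquad \Delta_{\C^n/G}(y_{(g)}) = 0 \text{ for } g \ne \Id.
\]

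For the non-identity sector I would appeal to \Cref{cor:cuplength}: since $\md(\C^n/G) = n/2 - 1$, the bound forces $\mathrm{cl}(W) \le 1$, so any two positive-degree classes in $H^*(W;\Q)$ cup to zero. Because $\partial W$ is a rational homology sphere, $H^k(W;\Q) \cong H^k_c(W;\Q)$ for $0 < k < 2n$, so the cup product on $H^*_c(W;\Q)$ vanishes on every 2-fold positive-degree product except possibly the Poincar\'e pairing at total degree $2n$. Dualizing as in \Cref{thm:EGL}, the secondary coproduct $\Delta_W$ vanishes on every class of strictly positive integer degree; in particular $\Delta_W(y_{(g)}) = 0$ for all $g \ne \Id$, matching the orbifold formula.

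For $\Delta_W(y_{(\Id)})$, \Cref{prop:intersection_SS} identifies the leading part (at boundary rational grading $2n$) with $\sum |C(g)|\, y_{(g)} \otimes y_{(g^{-1})}$. In cases (1) and (2) the only terms of strictly lower rational grading involve $y_{(\Id)}$ as at least one tensor factor. Since $\Delta_W$ shifts integer cohomological degree by $+2n$ by \Cref{thm:EGL} and $H^{2n}(W;\Q) = 0$ because $W$ is a compact manifold with non-empty boundary, the integer-degree-$2n$ component of any correction $y_{(\Id)} \otimes y_{(h)}$ or $y_{(h)} \otimes y_{(\Id)}$ would require a factor in $H^{2n}(W;\Q)$ and thus vanishes, while $y_{(\Id)} \otimes y_{(\Id)}$ lies in integer degree $0 \ne 2n$. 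Hence no corrections survive and $\Delta_W(y_{(\Id)}) = \Delta_{\C^n/G}(y_{(\Id)})$.

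The main obstacle is keeping track of two grading structures simultaneously: the boundary rational grading controlled by \Cref{prop:intersection_SS} and the integer cohomological degree preserved with a $+2n$ shift by $\Delta_W$. The cup-length bound disposes of all positive-degree inputs in one stroke, while the vanishing of top cohomology handles the identity class by an integer-degree projection that is robust to $y_{(h)}$ possibly having components in several integer degrees.
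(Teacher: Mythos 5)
Your proposal is correct, but it takes a longer route than the paper, whose entire proof is the observation that $\age(g)=n/2$ for all $g\ne\Id$ forces the rational gradings of the generators to be only $0$ and $n$: since \Cref{prop:intersection_SS} only leaves undetermined the components of the image lying in strictly \emph{higher} filtration, and the leading term of $\Delta(y_{(\Id)})$ already sits at the maximal possible total grading $2n$ (while that of $\Delta(y_{(g)})$, $g\ne\Id$, would sit at $3n>2n$), there is simply no room for corrections and the identification is immediate. You instead dispose of the non-identity sectors by combining the cup-length bound $\mathrm{cl}(W)\le 1$ from \Cref{cor:cuplength} with Lefschetz duality (this is valid: for a homogeneous class of even degree $0<d<2n$ the dual pairing only sees products of two positive-degree compactly supported classes, which die in $H^{2n-d}_c(W;\Q)\hookrightarrow H^{2n-d}(W;\Q)$; you do implicitly need \Cref{rmk:grading} to know $y_{(g)}$ has no degree-$0$ component), and you handle $y_{(\Id)}$ by the associated-graded identification at grading $2n$ together with an integer-degree projection argument using $H^{2n}(W;\Q)=0$ and the fact that only $y_{(\Id)}$ has a degree-$0$ part. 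One conceptual wobble: you describe the possible residual terms as sitting in \emph{lower} rational grading, whereas the filtration compatibility in \Cref{prop:intersection_SS} (as used in \Cref{cor:cuplength}) places corrections in strictly higher grading — which is exactly why the paper's proof is one line; your independent integer-degree argument nonetheless eliminates the grading-$0$ and grading-$n$ tensors, and since no basis tensor exceeds grading $2n$ nothing else can appear, so your proof closes all cases. What your route buys is robustness (it does not hinge on the direction of the filtration statement, at the cost of invoking \Cref{cor:cuplength} and \Cref{thm:EGL} again); what the paper's buys is brevity, reading the conclusion directly off the constancy of the age.
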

\begin{proof}
The missed information of the induced map on the associated graded is the component in the higher filtration of the image. In both cases, we have $\age(g)=n/2$ for any $g\ne \Id$, then there is no higher filtration for $\Delta(y_{(\Id)})$ or $\Delta(y_{(g)})$. 
\end{proof}
Since the meaning of higher filtration depends on the stride length ($\frac{1}{|G|}$) of the filtration in \eqref{eqn:filtration}. Therefore, one can generalize \Cref{cor:intersection_SS} to other terminal singularities as long as $\age(g)$ is concentrated around $n/2$ for $g\ne \Id$. Moreover, one can adjust the stride length so that results in \S \ref{s3} and \Cref{prop:intersection_SS} still hold if we know more about $\md(\C^n/G)$, which will lead to different requirement of the concentration. We will not pursue the precise statements here.

	
\section{Fundamental groups}\label{s5}
In this section, we discuss fundamental groups for \emph{strong fillings} of contact links of quotient singularities. This requires the definition of (positive) symplectic cohomology for strong fillings. The existence of positive symplectic cohomology and tautological long exact sequence is guaranteed by the asymptotic behaviour lemma \cite[Lemma 2.3]{ES} instead of the usual action separation. The transversality issues from sphere bubbles require the deployment of virtual techniques, for example, Pardon's setup for Hamiltonian-Floer cohomology for a general closed symplectic manifold \cite{pardon} suffices for the setup for symplectic cohomology used in this paper. If one wishes to stay within the realm of geometric perturbations, then strong fillings should be replaced by semi-positive fillings \cite[Definition 6.4.1]{bible} in the following results. 

\begin{proposition}\label{prop:fundamental}
Let $W$ be a strong filling of $(S^{2n-1}/G,\xi_{\std})$. We write $G_0$ as the kernel of $\pi_1(S^{2n-1}/G)\to \pi_1(W)$.  If $\C^n/G_0$ is terminal, then $\pi_1(S^{2n-1}/G)\to \pi_1(W)$ is surjective.
\end{proposition}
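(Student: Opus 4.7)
The plan is to argue by contradiction: suppose that $H := \iota_{*}\pi_{1}(\partial W) \subsetneq \pi_{1}(W)$. Then $H \cong G/G_{0}$ is finite and $[\pi_{1}(W):H] \geq 2$. The strategy is to construct a compact connected strong filling whose boundary is a disjoint union of copies of $S^{2n-1}/G_{0}$, so as to obtain a co-filling of $S^{2n-1}/G_0$ and contradict \Cref{prop:B} under the terminality hypothesis on $\C^{n}/G_{0}$.

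The construction proceeds in two steps. First, I would form the connected cover $\widetilde{W} \to W$ associated with $H$, endowed with the pulled-back symplectic structure, which is again a strong filling. By the lifting criterion the inclusion $\partial W \hookrightarrow W$ lifts, realizing $\partial W = S^{2n-1}/G$ itself as one boundary component of $\widetilde W$; the remaining components are indexed by the non-trivial double cosets in $H \backslash \pi_{1}(W)/H$, and each is a cover of $S^{2n-1}/G$ whose $\pi_{1}$-image $\iota_{*}^{-1}(gHg^{-1})$ contains $G_{0}$. Second, I would take the finite cover $\widehat W \to \widetilde W$ corresponding to the trivial subgroup of $\pi_{1}(\widetilde W)=H$, i.e.\ the universal cover of $\widetilde W$, of degree $|G/G_{0}|$. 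A direct computation with the deck-transformation action on fibers shows that for each boundary component $Y$ of $\widetilde{W}$, the preimage in $\widehat{W}$ consists of $[G:\pi_{1}(Y)]$ copies of $S^{2n-1}/G_{0}$, since the kernel of $\pi_{1}(Y) \to H$ is precisely $G_{0}$; summing over components gives that $\partial \widehat W$ is a disjoint union of exactly $[\pi_{1}(W):H]$ copies of $S^{2n-1}/G_{0}$.

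If $[\pi_{1}(W):H]$ is finite, then $\widehat W$ is compact, and hence a connected strong filling of at least two copies of $S^{2n-1}/G_{0}$. This is a co-filling of $S^{2n-1}/G_{0}$, which directly contradicts \Cref{prop:B} since $\C^{n}/G_{0}$ is terminal. The main obstacle of this plan is the infinite-index case $[\pi_{1}(W):H] = \infty$: then $\widehat{W}$ is non-compact and the bare co-filling obstruction does not apply. I would address this case either by descending to a finite-index normal subgroup of $\pi_{1}(W)$ that still exposes $S^{2n-1}/G_{0}$ as a boundary component of a compact cover, or—failing that—by upgrading the positive symplectic cohomology machinery of \S\ref{s3} to the non-compact cover $\widehat W$ (where the Reeb dynamics on each lifted copy of $S^{2n-1}/G_{0}$ is governed by the same boundary-graded filtration of \Cref{prop:SS}) to rule out the existence of $W$ directly; this is where the main technical effort lies.
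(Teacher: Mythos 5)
Your finite-index case is fine: pulling back the strong filling structure to the universal cover and invoking the non-co-fillability of $(S^{2n-1}/G_0,\xi_{\std})$ (Proposition \ref{prop:B}, whose proof via the point-constrained holomorphic curve does not depend on the present proposition, so there is no circularity) does give a contradiction when $\pi_1(W)$ is finite. But the case $[\pi_1(W):H]=\infty$ is a genuine gap, not a technicality: nothing in the hypotheses bounds $\pi_1(W)$, and your two proposed escapes do not close it. Passing to ``a finite-index normal subgroup of $\pi_1(W)$'' is not available in general — an infinite group need not have any proper finite-index subgroup (e.g.\ infinite simple groups), so there is no reason a compact cover exposing several copies of $S^{2n-1}/G_0$ exists. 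And ``upgrading the positive symplectic cohomology machinery to the non-compact cover'' is precisely the unexecuted technical content: it is exactly what the paper does, and it is the whole proof, not a patch.

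For comparison, the paper's argument never splits into cases. It works directly with the symplectic cohomology of the universal cover $\widetilde W$ (possibly an infinite cover, following the covering-space setup of \cite[\S 3.3]{ADC}), over the Novikov field since the filling is only strong. The boundary $\partial\widetilde W$ is $\#Q$ copies of $(S^{2n-1}/G_0,\xi_{\std})$ with $Q$ the coset space, $\#Q\ge 2$; terminality of $\C^n/G_0$ lets the neck-stretching/filtration argument of \Cref{prop:SS} compare $\widetilde W$ with the disjoint union $\bigsqcup_Q \C^n/G_0$, giving $SH^*_+(\widetilde W;\Lambda)\simeq \prod_Q H^{*+1}_{\CR}(\C^n/G_0;\Lambda)$, and the point-constraint argument of \Cref{prop:upper_bound} shows the composition $SH^*_+(\widetilde W;\Lambda)\to H^{*+1}(\widetilde W;\Lambda)\to H^0(\partial\widetilde W)\otimes\Lambda\simeq\prod_Q\Lambda$ is surjective. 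Since this map factors through $H^0(\widetilde W)\otimes\Lambda\simeq\Lambda$ ($\widetilde W$ is connected), one gets a contradiction for any $\#Q\ge 2$, finite or infinite. So to complete your proof you would either have to carry out this covering-space Floer theory yourself (at which point the detour through Proposition \ref{prop:B} is unnecessary), or find an independent argument forcing $\pi_1(W)$ to be finite, which you do not have.
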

\begin{proof}
Assume $\pi_1(S^{2n-1}/G)\to \pi_1(W)$ is not surjective. We write $Q=\pi_1(W)/\Ima(\pi_1(S^{2n-1}/G)\to \pi_1(W))\ne 0$. We can consider the symplectic cohomology for the universal cover $\widetilde{W}$ as in \cite[\S 3.3]{ADC}. Under our assumption, $\partial \widetilde{W}$ is $\# Q$ many copies of $(S^{2n-1}/G_0,\xi_{\std})$. Similarly to \Cref{prop:SS}, by comparing to the natural filling $\bigsqcup_{Q} \C^n/G_0$ of $\partial \widetilde{W}$, we have that $SH^*_+(\widetilde{W};\Lambda) \simeq \prod_{Q} H^{*+1}_{\CR}(\C^n/G_0;\Lambda)$ where $\Lambda$ is the Novikov field. Moreover, by the same proof of \Cref{prop:upper_bound}, the composition $SH_+^*(\widetilde{W};\Lambda)\to H^{*+1}(\widetilde{W};\Lambda)\to H^{*+1}(\partial\widetilde{W};\Lambda)\to H^0(\partial \widetilde{W})\otimes \Lambda$ is
$$SH^*_+(\widetilde{W};\Lambda) \stackrel{\simeq}{\to} \prod_{Q} H^{*+1}_{\CR}(\C^n/G_0;\Lambda) \twoheadrightarrow \prod_{Q} H^{0}_{\CR}(\C^n/G_0)\otimes\Lambda=  H^0(\partial \widetilde{W})\otimes \Lambda = \prod_Q \Lambda$$
hence surjective. On the other hand, the map above factors through $H^0(\widetilde{W})\otimes \Lambda \to  H^0(\partial \widetilde{W})\otimes \Lambda$, which is clearly not surjective, contradiction.
\end{proof}
In some sense, the argument above hints that $(S^{2n-1}/G_0,\xi_{\std})$ should not be the boundary of a connected strong domain with multiple boundary components, i.e.\ not co-fillable, see \Cref{rmk:cofill} for explanations. 

If we only consider exact fillings, the un-deformed ring structure on quantum cohomology allows us to say the following without the terminal assumption.
\begin{proposition}\label{prop:injective}
Let $W$ be an exact filling of $(S^{2n-1}/G,\xi_{\std})$ for $n\ge 2$. Then $\pi_1(S^{2n-1}/G)\to \pi_1(W)$ is not injective.
\end{proposition}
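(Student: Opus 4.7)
The plan is to assume for contradiction that the boundary map $\phi\colon G = \pi_1(S^{2n-1}/G) \to \pi_1(W)$ is injective, pass to the universal cover $p\colon \widehat{W}\to W$ with its pulled back exact Liouville structure, and derive a contradiction from the structure of $\partial \widehat{W}$. Under injectivity, each component of $p^{-1}(\partial W)$ is a copy of the universal cover $S^{2n-1}$ of $\partial W = S^{2n-1}/G$, and the number of such components equals $N := [\pi_1(W):\phi(G)]$. The argument naturally splits according to whether $N = 1$ or $N > 1$.

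When $N = 1$, $\phi$ is also surjective, so $\pi_1(W) = G$ is finite and $\widehat{W}$ is a compact exact filling of a single copy of $(S^{2n-1},\xi_{\std})$. By the Eliashberg--Floer--McDuff theorem \cite{mcduff1990structure}, $\widehat{W}$ is diffeomorphic to $\D^{2n}$. The group $G \simeq \pi_1(W)$ then acts freely on $\widehat{W} \simeq \D^{2n}$ by deck transformations, but Brouwer's fixed-point theorem forces each non-trivial $g \in G$ to have a fixed point in $\D^{2n}$, contradicting the standing assumption $G \ne 1$ built into $\C^n/G$ being an isolated singularity. This is essentially the argument of \Cref{prop:EFM}.

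When $N > 1$, I plan to rerun the proof of \Cref{prop:fundamental} verbatim with $G_0 = \ker\phi = \{e\}$. The terminality hypothesis in \Cref{prop:fundamental} is only invoked to make the filtration spectral sequence of \Cref{prop:SS} identify $SH^*_+(\widehat{W};\Lambda)$ with the positive symplectic cohomology of the corresponding standard filling; when $G_0 = \{e\}$ and $n\ge 2$, the analogous identification $SH^*_+(\widehat{W};\Lambda) \simeq \prod_{Q} H^{*+1}(\C^n;\Lambda)$ still holds, since the classical vanishing $SH^*(\C^n;\Lambda) = 0$ for $n\ge 2$ replaces the role of terminality. As in \Cref{prop:fundamental}, the composition $SH^*_+(\widehat{W};\Lambda) \to H^0(\partial\widehat{W})\otimes\Lambda \simeq \prod_Q \Lambda$ is surjective, yet it factors through $H^0(\widehat{W})\otimes\Lambda \simeq \Lambda$ since $\widehat{W}$ is connected, which is impossible when $|Q|>1$.

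The main obstacle should be the bookkeeping in the second case when $\pi_1(W)$ is infinite and $\widehat{W}$ is non-compact: the symplectic cohomology of the universal cover must be set up as in \cite[\S 3.3]{ADC} using Novikov coefficients, and the neck-stretching and spectral-sequence arguments of \Cref{prop:SS,prop:upper_bound} need to be verified in this non-compact setting. Modulo that technical check, both cases deliver the required contradiction and force $\phi$ to have non-trivial kernel.
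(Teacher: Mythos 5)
Your argument is correct, but it is not the route the paper takes. The paper's own proof never passes to a cover: assuming injectivity, it observes that any Reeb orbit capped off in $W$ must be contractible in $S^{2n-1}/G$, and contractible orbits lift to $S^{2n-1}$ and hence have positive SFT degree, so the filtration/neck-stretching arguments of \Cref{prop:SS} and \Cref{prop:upper_bound} go through for $W$ itself \emph{without} any terminality assumption on $\C^n/G$; this gives $SH^*_+(W;\Q)\simeq H^{*+1}(W;\Q)$ with classes represented by $x_{(g)}$ plus lower-order terms in the same free homotopy class, and for $(g)\ne(\Id)$ these classes are non-contractible in $W$ (by injectivity) and so must map to zero in $H^{*+1}(W;\Q)$, contradicting the isomorphism. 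Your proof instead splits on $N=[\pi_1(W):\phi(G)]$: for $N=1$ you use the Eliashberg--Floer--McDuff theorem plus Brouwer (the \Cref{prop:EFM} argument), and for $N>1$ you invoke \Cref{prop:fundamental} with $G_0=\ker\phi=\{e\}$; note that here you need not ``replace'' terminality by the vanishing $SH^*(\C^n)=0$ — the smooth point $\C^n$ \emph{is} terminal precisely when $n\ge 2$ (its minimal discrepancy is $n-1$), so \Cref{prop:fundamental} applies verbatim, and the geometric input it uses is exactly that all Reeb orbits on $S^{2n-1}$ have positive SFT degree. The trade-off: your route is shorter to state and its final contradictions (Brouwer; connectedness of $H^0$) are elementary, but it imports the EFM theorem and the symplectic cohomology of possibly non-compact universal covers (the technical check you flag, which the paper only needs for \Cref{prop:fundamental}), whereas the paper's direct argument stays on $W$, needs neither EFM nor covers for this proposition, and along the way establishes the filling-independent spectral-sequence description of $SH^*_+(W;\Q)$ under the injectivity hypothesis, which is the structural statement the surrounding section is organized around.
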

\begin{proof}
Assume $\pi_1(S^{2n-1}/G)\to \pi_1(W)$ is injective. In the neck-stretching process, those negative punctures of the top-level curve must be asymptotic to contractible Reeb orbits on $S^{2n-1}/G$ by the injectivity assumption, as those orbits are eventually capped off by holomorphic planes in $W$. Note that all contractible orbits have positive integer SFT degrees, as they correspond to Reeb orbits on $(S^{2n-1},\xi_{\std})$ with the same Conley-Zehnder index. Similar to \Cref{prop:SS}, the first $|G|+1$ pages of the spectral sequence are independent of such $W$. Note that $\pi_1(S^{2n-1}/G)\to \pi^{\orb}_1(\C^n/G)$ is an isomorphism, we can compare the hypothetical $W$ with the orbifold $\C^n/G$ and the arguments of \Cref{prop:SS,prop:upper_bound} go through and the same conclusions hold for $W$ without the terminal assumption. That is we have
\begin{enumerate}
    \item the $(|G|+1)$th page of $SH_+^*(W;\Q)$ is generated by $x_{(g)}$ (in \Cref{thm:quotient_SH}), and the spectral sequence degenerates there;
    \item $SH_+^*(W;\Q)\to H^{*+1}(W;\Q)$ is an isomorphism. 
\end{enumerate}
Therefore elements in  $SH_+^*(W;\Q)$ are represented by $x_{(g)}+a$ for $(g)\in \Conj(G)$ and $a$ has a lower filtration order compared to $x_{(g)}$. Here the homotopy class of $a$ must be the same as $x_{(g)}$ as  $\pi_1(S^{2n-1}/G)\to \pi_1(W)$ is injective, which is non-trivial when $(g)\ne (\Id)$. This contradicts the fact that $SH_+^*(W;\Q)\to H^{*+1}(W;\Q)$ is an isomorphism,  as $x_{(g)}+a$ must map to zero in $H^{*+1}(W;\Q)$ by the restriction of homotopy classes.
\end{proof}

\begin{corollary}\label{cor:simply_connected}
Let $\C^n/G$ be a terminal singularity, then any exact filling of $(S^{2n-1}/G,\xi_{\std})$ is simply connected.
\end{corollary}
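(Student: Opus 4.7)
The plan is to combine the surjectivity statement of \Cref{prop:fundamental} with a mild upgrade of the contradiction argument in the proof of \Cref{prop:injective}, now under the full terminal hypothesis on $\C^n/G$.

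First I set $G_0 := \ker\bigl(\pi_1(S^{2n-1}/G)\to\pi_1(W)\bigr)$. Since the $\age$-function on $G_0 \subset G$ is just the restriction from $G$, the criterion in \Cref{prop:terminal} immediately implies that $\C^n/G_0$ is also terminal, so \Cref{prop:fundamental} applies and gives $\pi_1(W)\cong G/G_0$; the remaining task is to show $G_0=G$. Next, using terminality of $\C^n/G$ itself, \Cref{prop:upper_bound} yields $SH^*(W;\Q)=0$, and hence the tautological long exact sequence \eqref{eqn:LES} collapses into an isomorphism $\delta\colon SH^*_+(W;\Q)\xrightarrow{\,\cong\,}H^{*+1}(W;\Q)$. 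By \Cref{prop:SS}, the $(|G|+1)$-th page of the boundary-grading spectral sequence is freely spanned by the classes $\{[x_{(g)}]\}_{(g)\in\Conj(G)}$, so in particular every $[x_{(g)}]$ is nonzero in $SH^*_+(W;\Q)$.

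The key additional input, already used in the proof of \Cref{prop:injective}, is that $SH^*_+(W;\Q)$ splits as a direct sum indexed by free homotopy classes in $\pi_1(W)$, because Floer cylinders preserve such classes. The orbit $x_{(g)}$ represents the free homotopy class $(g)\in\Conj(\pi_1(S^{2n-1}/G))$, which maps to $gG_0\in\Conj(\pi_1(W))$. Any cochain representative $x_{(g)}+a$ (with $a$ of strictly lower filtration order, as read off from the $E_{|G|+1}$-page) must lie entirely in this same $\pi_1(W)$-component. Since $\delta$ is given by counting Floer cylinders asymptotic to constant loops, it annihilates every component with nontrivial free homotopy class in $W$. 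Hence $\delta([x_{(g)}])=0$ whenever $gG_0\ne 1$. Combined with the fact that $\delta$ is an isomorphism and that each $[x_{(g)}]$ is nonzero, this forces $g\in G_0$ for every $g\in G$, i.e.\ $G_0=G$, and therefore $\pi_1(W)=1$.

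The main subtle point, as in the proof of \Cref{prop:injective}, is verifying that a lower-filtration correction $a$ in a representative $x_{(g)}+a$ cannot alter the $\pi_1(W)$-component; this relies on the direct sum decomposition of the full Floer cochain complex by $\pi_1(W)$-conjugacy classes and the fact that the differential respects this decomposition. Once this is in place, the argument collapses to a one-line comparison against the isomorphism $\delta$.
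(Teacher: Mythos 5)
Your proposal is correct and follows essentially the same route as the paper: terminality of $\C^n/G$ descends to $\C^n/G_0$ so that \Cref{prop:fundamental} gives surjectivity, and then the isomorphism $\delta\colon SH^*_+(W;\Q)\to H^{*+1}(W;\Q)$ from \Cref{prop:upper_bound}, together with the nonvanishing classes with leading terms $x_{(g)}$ from \Cref{prop:SS}, is played off against the vanishing of $\delta$ on non-contractible components. Your explicit use of the splitting of the positive Floer complex by conjugacy classes of $\pi_1(W)$ to justify choosing a representative $x_{(g)}+a$ within a single component is exactly the point the paper leaves implicit, so this is an elaboration rather than a different argument.
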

\begin{proof}
Let $W$ be an exact filling. Since $\C^n/G$ is terminal, by \Cref{prop:terminal}, $\C^n/G_0$ must be terminal as well, where $G_0=\ker(\pi_1(S^{2n-1}/G)\to \pi_1(W))$. Therefore \Cref{prop:fundamental} implies that $\pi_1(S^{2n-1}/G)\to \pi_1(W)$ is surjective. If $\pi_1(W)\ne 0$, we must have a non-trivial conjugacy class $(g)$, such that $x_{(g)}$ is not contractible in $W$. However, by \Cref{{prop:upper_bound}}, the element in $SH_+^*(W;\Q)$ represented by $x_{(g)}$ plus lower order terms must map to a non-trivial class in $H^*(W;\Q)$, contradicting the fact that $x_{(g)}$ is not contractible in $W$.
\end{proof}

\begin{proposition}\label{prop:simply_connected}
Let $m\in \N$ such that there is no non-trivial square that divides $m$. Assume $\C^n/(\Z/m)$ is terminal, then any strong filling of $(S^{2n-1}/(\Z/m),\xi_{\std})$ is simply connected.
\end{proposition}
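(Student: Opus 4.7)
My plan is to combine \Cref{prop:fundamental} with a covering-space dimension estimate, using the Novikov-coefficient generalization of \Cref{prop:upper_bound} to strong fillings. First I would note that every subgroup $\Z/d\subset \Z/m$ still acts on $\C^n$ with an isolated terminal quotient: the age of a unitary matrix depends only on its eigenvalues, not on which cyclic group one views it inside, so $\min_{h\ne \Id\in \Z/d}\age(h)\ge \min_{h\ne \Id\in \Z/m}\age(h) > 1$, and $\C^n/(\Z/d)$ is terminal by \Cref{prop:terminal}. In particular, setting $G_0:=\ker(\pi_1(\partial W)\to \pi_1(W))\cong \Z/d$, $\C^n/G_0$ is terminal and \Cref{prop:fundamental} supplies a surjection $\Z/m\twoheadrightarrow \pi_1(W)$. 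Equivalently, $\pi_1(W)=\Z/q$ with $qd=m$.

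Suppose for contradiction that $q>1$, and let $\widetilde W \to W$ be the universal cover, whose deck group $\Z/q$ acts freely. Since $\pi_1(\partial W)\to \pi_1(W)$ is surjective, $\partial\widetilde W$ is connected and equals the cover of $S^{2n-1}/(\Z/m)$ corresponding to $G_0$, namely $S^{2n-1}/(\Z/d)$. Thus $\widetilde W$ is itself a strong filling of $(S^{2n-1}/(\Z/d),\xi_{\std})$, and is simply connected by construction. Applying the Novikov-coefficient version of \Cref{prop:upper_bound} to both $W$ and $\widetilde W$ should give
\begin{equation*}
\dim_\Lambda H^*(W;\Lambda)=|\Conj(\Z/m)|=m, \qquad \dim_\Lambda H^*(\widetilde W;\Lambda)=|\Conj(\Z/d)|=d.
\end{equation*}
The transfer for the $q$-fold cyclic cover $\pi\colon\widetilde W \to W$ satisfies $\pi_!\circ\pi^*=q\cdot\Id$; since $q$ is a unit in the characteristic-zero Novikov field $\Lambda$, the pullback $\pi^*\colon H^*(W;\Lambda)\hookrightarrow H^*(\widetilde W;\Lambda)$ is injective, so $m\le d$. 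This contradicts $m=qd>d$, forcing $q=1$ and $\pi_1(W)=0$.

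The main obstacle is the extension of \Cref{prop:upper_bound} to strong (non-exact) fillings. However, the boundary-grading filtration of \Cref{prop:SS} is energy-independent and lifts verbatim to the Novikov-filtered model; the $(m+1)$-page degeneration is a parity argument independent of exactness; and the ``$x_{(\Id)}$ kills the unit'' step only counts curves with controlled SFT index and survives with a Novikov factor that is a unit in $\Lambda$. This is essentially the extension already invoked in the proof of \Cref{prop:fundamental}, so it amounts to careful bookkeeping rather than new geometric input. The squarefree hypothesis is used mainly to guarantee $\gcd(d,q)=1$, so that the CRT splitting $\Z/m\cong \Z/d\oplus \Z/q$ makes the identification of $G_0$ and of the deck-group action on $\widetilde W$ transparent.
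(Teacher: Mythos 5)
The step that breaks is the one you dismiss as ``careful bookkeeping'': extending \Cref{prop:upper_bound} (i.e.\ $SH^*(W;\Lambda)=0$ and $\dim_\Lambda H^*(W;\Lambda)=|\Conj(G)|$) from exact to strong fillings. That extension is false, and the paper is explicit that this is precisely the dividing line between exact and strong fillings: in the proof of \Cref{thm:CY} it is pointed out that for a strong filling the class $|G|+A$ lives in the \emph{deformed} quantum cohomology $QH^*(W)$, where it need not be a unit, so one cannot run the long-exact-sequence argument that kills $SH^*(W;\Lambda)$ and pins down $\dim H^*(W)$. What does survive for strong fillings (and is all that \Cref{prop:fundamental} uses) is the surjectivity of $SH^*_+\to QH^{*+1}(W)\to H^0(\partial W)\otimes\Lambda$, not the vanishing of $SH^*$ nor the dimension count. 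A concrete counterexample to your claimed formula: the disc bundle of $\cO(-m)\to\CP^{n-1}$ (the resolution of $\C^n/(\Z/m)$ acting with equal weights, $m<n$) is a strong filling of $L(m;1,\dots,1)$ that is homotopy equivalent to $\CP^{n-1}$, so $\dim_\Lambda H^*=n\ne m=|\Conj(\Z/m)|$; if your dimension claim held, the same reasoning would also rule out strong fillability of these links altogether, contradicting the fact that resolutions always provide strong fillings. A related warning sign is that your argument never genuinely uses the squarefree hypothesis: as written it would prove simple connectivity (and much stronger rigidity) for strong fillings of \emph{all} terminal cyclic quotient links, which overshoots what the symplectic input can deliver.

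For comparison, the paper's proof only takes the surjectivity $\Z/m\twoheadrightarrow\pi_1(W)$ from \Cref{prop:fundamental} and then argues purely topologically: if $\pi_1(W)=\Z/p\ne 0$ with $p\mid m$, squarefreeness of $m$ (so $p^2\nmid m$) together with the ring structure $H^*(B\Z/m;\Z/m)=(\Z/m)[a,b]$, $\deg a=1$, $\deg b=2$, produces classes $\alpha\in H^1(W;\Z/m)$, $\beta\in H^2(W;\Z/m)$ with $(\alpha\cup\beta^{n-1})|_{\partial W}\ne 0$; the long exact sequence of the pair then forces $H^{2n}(W,\partial W;\Z/m)$ to be a proper nontrivial quotient of $\Z/m$, contradicting orientability. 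No symplectic cohomology of covers, Novikov transfer, or dimension count is needed beyond \Cref{prop:fundamental}. If you want to salvage a covering-space strategy, you would have to replace your appeal to \Cref{prop:upper_bound} by input that genuinely holds for strong fillings (e.g.\ the Calabi--Yau case as in \Cref{thm:CY}), or find a different topological contradiction in the cover.
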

\begin{proof}
Let $W$ be a strong filling. By \Cref{prop:fundamental}, we have that $\Z/m=\pi_1(S^{2n-1}/(\Z/m))\to \pi_1(W)$ is surjective. If $\pi_1(W)\ne 0$, then $\pi_1(W)=\Z/p$, where $p$ divides $m$. Therefore, by the universal coefficient theorem, we have $H^1(W;\Z/m)\to H^1(S^{2n-1}/(\Z/m);\Z/m)$ and  $H^2(W;\Z/m)\to H^2(S^{2n-1}/(\Z/m);\Z/m)$  are both non-trivial and contain $\Z/p\subset \Z/m$ in the image.  From \Cref{rmk:connecting}, we have that $H^*(B\Z/m;\Z/m)=H^*(S^{2n-1}/(\Z/m);\Z/m)$ for $*\le 2n-1$ and the projection is an algebra map.  Since $H^*(B\Z/m;\Z/m)=(\Z/m)[a,b]$ as an algebra for $\deg(a)=1$ and $\deg(b)=2$ and $m$ is not divided by $p^2$, we get $\alpha\in H^1(W;\Z/m), \beta \in H^2(W;\Z/m)$, such that $(\alpha \cup \beta^{n-1})|_{\partial W} \ne 0$, i.e.\ $H^{2n-1}(W;\Z/m)\to H^{2n-1}(S^{2n-1}/(\Z/m);\Z/m)$ is nontrivial. Hence  $H^{2n}(W,S^{2n-1}/(\Z/m);\Z/m)$ is a non-trivial quotient of $\Z/m$ from the long exact sequence, this is impossible when $W$ is orientable.
\end{proof}

\Cref{thm:A} is assembled from \Cref{prop:fundamental,prop:injective,prop:simply_connected} and \Cref{cor:simply_connected}

\begin{remark}
The $m=2$ case of \Cref{prop:simply_connected} was proven by Ghiggini and Niederkr\"{u}ger \cite[Theorem 1.2]{MR4406869}. Unlike the $m=2$ case, when $m\ge 3$, there are multiple $\Z/m$ actions such that $\C^n/(\Z/m)$ is terminal. There are also many non-terminal examples $\C^2/(\Z/m)$, whose contact links admit Weinstein fillings that are not simply connected, see e.g.\ \cite{lens}.
\end{remark}

\begin{remark}[Proposition \ref{prop:B}]\label{rmk:cofill}
If $\C^n/G$ is a terminal singularity, the arguments in \S \ref{s3} imply that there is always a holomorphic curve passing through a point constraint independent of the point and strong fillings. We claim that $(S^{2n-1}/G,\xi_{\std})$ is not co-fillable. Assume otherwise,  $(S^{2n-1}/G,\xi_{\std})\sqcup Y$ can be filled by a connected strong domain $W$. We can set up our Hamiltonian to be standard on the cylindrical end of $(S^{2n-1}/G,\xi_{\std})$ as in \S \ref{s3} and $0$ on the cylindrical end on $Y$. By moving the point constraint to the cylindrical end of $Y$, we then obtain a contradiction with the maximum principle. However, we  expect that $(S^{2n-1}/G,\xi_{\std})$ is not co-fillable without any constraint on the isolated singularity $\C^n/G$. In dimension $3$, by \cite{MR2371183}, $(S^{3}/(\Z/m),\xi_{\std})$ supports a planar open book, hence is not co-fillable by \cite{MR2126827}. In higher dimensions, it is interesting to see whether $(S^{2n-1}/G,\xi_{\std})$ supports an iterated planar open book \cite{MR4391886}. If the answer to this question is affirmative, then $(S^{2n-1}/G,\xi_{\std})$ is not co-fillable by \cite{MR4391886}. However, one should not expect the (iterated) planar property to hold in general, as there are already counterexamples in dimension $3$ \cite{MR2597306}. On the other hand, from the symplectic cohomology point of view in this paper, when the terminality condition is removed, we expect fillings of  $(S^{2n-1}/G,\xi_{\std})$ to have $k$-semi-dilations \cite{zhou2019symplectic}, see \cite{2D} for the special case in complex dimension $2$. The existence of $k$-semi-dilations, which is also an existence of holomorphic curves with a point constraint,   leads to obstructions to co-fillablity by a similar argument. 
\end{remark}

\section{Non-existence of exact fillings for real projective spaces}\label{s6}
The main result in this section is 
\begin{theorem}\label{thm:2_power}
For $n=2^k\ge 4$, $(\RP^{2n-1},\xi_{\std})$ is not exactly fillable.
\end{theorem}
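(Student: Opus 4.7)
The plan is to derive a topological contradiction from a hypothetical exact filling $W$ by combining the cohomological rigidity from \S\ref{s3}--\S\ref{s5} with Hirzebruch-style characteristic-number formulae. Suppose $W$ is an exact filling of $(\RP^{2n-1},\xi_{\std})$ with $n = 2^k$, $k\ge 2$. Since $\C^n/(\Z/2)$ is terminal (\Cref{prop:terminal}), the preceding sections apply: $W$ is simply-connected by \Cref{cor:simply_connected}; $\dim_\Q H^*(W;\Q) = 2$ with the nontrivial class $y$ in positive even degree by \Cref{prop:upper_bound} and \Cref{rmk:grading}; and by \Cref{cor:intersection_SS}(1) the rational coproduct on $H^*(W;\Q)$ matches that of $\C^n/(\Z/2)$, so the Chen--Ruan identity $\Delta[(\Id)] = 2[(g)]\otimes[(g)]$ forces $y\cdot y \ne 0$, whence $\deg y = n$ and $\sigma(W) = \pm 1$. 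The universal coefficient theorem combined with $\pi_1(W) = 0$ and $H^2(W;\Q) = 0$ gives $H^2(W;\Z) = 0$, so $c_1(W) = 0$; since $n$ is even, $W$ is therefore spin.

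For $k\ge 3$, I cap off $W$ by the standard exact orbifold filling to form the closed symplectic $2n$-orbifold $X := W\cup_{\RP^{2n-1}}\overline{\D^{2n}/(\Z/2)}$. A Mayer--Vietoris computation yields $H^*(X;\Q) = \Q[0]\oplus\Q[n]\oplus\Q[2n]$ and $\sigma(X) = \sigma(W) = \pm 1$. Since $X$ has no rational cohomology in other even degrees, only $c_{n/2}(TX)$ and $c_n(TX)$ can be nonzero rationally, and the only Chern numbers are $a := c_{n/2}^2[X]$ and $b := c_n[X]$. I then apply the Atiyah--Singer $G$-signature theorem to $X$; its local defect at the isolated $-\Id$-fixed point vanishes because each factor $(e^{i\pi}+1)/(e^{i\pi}-1)$ is zero, so $\sigma(X) = \int_X L(TX)$ becomes an explicit $\Q$-linear combination of $a$ and $b$ with coefficients drawn from the Bernoulli numbers $B_{2j}$ appearing in $L_n$. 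Coupling this with an orbifold Hirzebruch--Riemann--Roch identity for the $\chi_y$-genus of $X$ gives a second such equation. A $2$-adic analysis, ultimately reducing to the $2$-factor of the numerators of the $B_{2j}$ through the Clausen--von Staudt-type identity proved in \Cref{app}, shows that no value of $a$ is compatible with $\sigma(X) = \pm 1$. This delivers the contradiction for $k\ge 3$; the main obstacle will be the $2$-adic Bernoulli arithmetic of the appendix.

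For $n = 4$ this estimate is vacuous, as only $B_2$ enters $L_2$ and its $2$-adic numerator valuation is $0$. Instead I exploit the spin structure: $W$ is a spin $8$-manifold with spin boundary $\RP^7$, and since $\Omega_7^{\Spin} = 0$ any closed spin $8$-manifold $V$ with $\partial V = \RP^7$ produces a closed spin $Y := W \cup_{\RP^7} (-V)$. Atiyah--Singer for the Dirac operator forces $\hat A(Y) = \tfrac{1}{5760}(7p_1(Y)^2 - 4p_2(Y))[Y] \in \Z$, and modifying $V$ by a closed spin manifold changes $\hat A(Y)$ only by an integer, so $\hat A(Y)\bmod\Z$ is an invariant of $W$ alone. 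Expressing it via the APS formula with $c_1(W) = 0$ (so $p_1(W) = -2c_2(W)$ and $p_2(W) = c_2(W)^2 + 2c_4(W)$), using the constraints $c_4(W)[W,\partial W] = \chi(W,\partial W) = 2$ and $\sigma(W) = \pm 1$, together with the known reduced $\eta$-invariant of the Dirac operator on the spin $\RP^7$, yields a non-integer value, the desired contradiction.
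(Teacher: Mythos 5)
Your plan for $k\ge 3$ breaks down at the very first step: the object $X=W\cup_{\RP^{2n-1}}\overline{\D^{2n}/(\Z/2)}$ is not a symplectic (or almost complex) orbifold. The orbifold ball $\D^{2n}/(\Z/2)$ is a \emph{convex} exact filling of $(\RP^{2n-1},\xi_{\std})$, just like $W$; reversing its orientation makes the oriented gluing possible but destroys any compatible almost complex structure, so the ``Chern numbers'' $c_{n/2}^2[X]$, $c_n[X]$ and the proposed ``orbifold Hirzebruch--Riemann--Roch identity for the $\chi_y$-genus'' are simply not defined (and even on an honest almost complex orbifold the Riemann--Roch-type formulas carry Kawasaki contributions at the singular point which you never address -- only the signature defect vanishes for $\pm\Id$, cf.\ \Cref{prop:orb_sig}). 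With only the oriented orbifold structure you get exactly one equation, the orbifold signature theorem, in the two unknown Pontryagin numbers $p_{n/4}^2[X]$ and $p_{n/2}[X]$, and there is no second equation: the Euler characteristic is not a Pontryagin number and only enters through $c_n$ once an almost complex structure exists. This is precisely what the genuine symplectic cap buys in the paper: gluing the disk bundle $C$ of the degree-$2$ line bundle over $\CP^{n-1}$ to $W$ and to $\C^n/(\Z/2)$ gives two closed symplectic objects, all mixed Chern numbers are computable on $C$, orbifold Gauss--Bonnet supplies $\int c_n$, and comparing the two signature formulas produces the single relation \eqref{eqn:relation}. Even granting such a repair, your ``$2$-adic analysis'' is an assertion, not an argument: the contradiction in the paper needs the mod-$4$ divisibility of the free part of $c_{n/2}(W)$ (\Cref{prop:mod4}, proved via integral Wu classes and the mod-$8$ refinement of the signature), the mod-$64$ statement about Bernoulli numerators (\Cref{lemma:numerator_Bernoulli}, together with \Cref{lemma:trivial}), and separate explicit computations at $n=8$ and $n=16$ where the general congruence fails; none of these appear in your outline.

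The $n=4$ part is a plausible APS/$\eta$-invariant repackaging of the paper's argument (which instead computes $\int_W c_2^2(W)=8$ from the signature theorem on the capped manifold $W\cup C$, doubles $W$ along an orientation-reversing diffeomorphism of $\RP^7$, proves the double is spin, and finds $\hat A=\tfrac{1}{16}$), but as written the quantitative core is missing. The constraints you list -- $c_1(W)=0$, $c_4[W,\partial W]=2$, $\sigma(W)=\pm1$ -- do not by themselves determine $\hat A(Y)\bmod \Z$: the unknown $\int_W c_2^2(W)$ must be extracted from the APS signature theorem, which requires the signature $\eta$-invariant of $\RP^7$, and the Dirac defect requires the reduced $\eta$-invariant for the \emph{particular} spin structure on $\RP^7$ induced by $W$ (there are two, with different $\eta$); moreover you never resolve the sign ambiguity $\sigma(W)=\pm1$ (the paper pins down the positive intersection form via the coproduct comparison, and the sign changes the numerics), nor do you verify that the resulting value is actually non-integral. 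So the route could in principle work, but the computation that constitutes the proof has not been carried out.
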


Combining with \cite[Theorem 1.1]{RP}, we have
\begin{theorem}[\Cref{thm:RP}]
For any $n\ge 3$, $(\RP^{2n-1},\xi_{\std})$ is not exactly fillable.
\end{theorem}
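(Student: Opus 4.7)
My strategy is to combine the non-power-of-two case already proved in \cite{RP} with the new content of \Cref{thm:2_power}. Specifically, \cite[Theorem 1.1]{RP} establishes non-fillability whenever $n\ge 3$ is not a power of $2$, so the outstanding range is $n=2^k$ with $k\ge 2$, which is exactly \Cref{thm:2_power}. The remainder of the proposal sketches how I would prove that case.

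Assume for contradiction that $W^{2n}$ is an exact filling with $n=2^k\ge 4$. The Floer-theoretic results of Sections \ref{s3}--\ref{s5} severely constrain $W$. Since $\age(-\Id)=n/2>1$, \Cref{prop:terminal} makes $\C^n/(\Z/2)$ terminal; \Cref{prop:upper_bound} then forces $SH^*(W;\Q)=0$ with $H^*(W;\Q)$ of rank $|\Conj(\Z/2)|=2$ supported in even degrees, and \Cref{cor:simply_connected} gives $\pi_1(W)=1$. Because $\Z/2=\{\pm\Id\}\subset O(n)$, \Cref{cor:intersection_SS} identifies the rational coproduct on $H^*(W;\Q)$ with the Chen-Ruan coproduct of $\C^n/(\Z/2)$, which by \eqref{eqn:coproduct} is $\Delta[(\Id)]=2[(-\Id)]\otimes[(-\Id)]$. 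Combined with \Cref{rmk:grading} and the vanishing $c_1(\xi_{\std})=n\cdot u\equiv 0\pmod 2$ from \Cref{prop:chern}, this pins the non-trivial generator $y_{(-\Id)}$ in degree exactly $n$ and realises its square as a non-zero multiple of the Lefschetz-dual top class. Thus $W$ has the rational cohomology ring of a ``projective plane with generator in degree $n$''.

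Next I translate this into numerical obstructions via characteristic classes. Since $n=2^k$, the identities $\binom{2^k}{i}\equiv 0\pmod 2$ for $0<i<2^k$ (Kummer's theorem) applied to \Cref{prop:chern} make every Chern class $c_i(\xi_{\std})$ vanish in $H^{*}(\partial W;\Z)$. Hence each $c_i(W)$ restricts trivially to $\partial W$ and lifts to a relative class $\tilde c_i(W)\in H^{2i}(W,\partial W;\Q)$. The sparse cohomology of $W$ (non-trivial only in degrees $0$ and $n$) then forces $\tilde c_i(W)=0$ for $i\neq n/2$, so the only surviving top-degree Chern monomial is $\tilde c_{n/2}^{\,2}$. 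Hirzebruch's signature theorem applied to $W$ (whose intersection form on $H^n(W,\partial W;\Q)$ has signature $\pm 1$) reduces to a single equation $\pm 1 = \lambda_k\,\tilde c_{n/2}(W)^{\,2}[W,\partial W]$, where $\lambda_k\in\Q$ is explicitly computed from the $L$-polynomial with $c_1=\dots=c_{n/2-1}=0$. The main obstacle is an arithmetic lemma, deferred to Appendix \ref{app} and modelled on the Clausen-von Staudt proof: the $2$-adic valuation of the numerator of the relevant Bernoulli number is positive, forcing $\lambda_k$ to have an even denominator (after all cancellation) while the right-hand intersection number is an integer, yielding the desired contradiction for $k\ge 3$.

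For the exceptional dimension $n=4$, the relevant Bernoulli number misbehaves at $2$ and the signature argument fails. The plan is instead to leverage $H^2(W;\Q)=0$ and $\pi_1(W)=1$ to put a spin structure on $W$, then form the closed simply-connected spin $8$-manifold $DW$ by doubling $W$ along $\partial W=\RP^7$. Atiyah-Singer demands the $\hat A$-genus of $DW$ be an integer, but the formula $\hat A_2=\tfrac{1}{5760}(7p_1^2-4p_2)[DW]$ combined with the tight control on the Pontryagin numbers imposed by the rational cohomology of $DW$ forces a non-integer value, completing the proof.
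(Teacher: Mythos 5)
Your global strategy (quote \cite[Theorem 1.1]{RP} for $n\ne 2^k$, then handle $n=2^k$ with the Floer-theoretic constraints of \S\ref{s3}--\S\ref{s5} plus a characteristic-class/Bernoulli contradiction, with a separate spin/$\hat A$ argument at $n=4$) is the same as the paper's, and the topological profile you extract for $W$ is right. The gap is in how you extract numbers from it. You apply the Hirzebruch signature theorem directly to the manifold-with-boundary $W$, writing $\pm1=\lambda_k\,\tilde c_{n/2}(W)^2[W,\partial W]$. That identity is not valid: for a manifold with boundary the signature is the $L$-integral only up to the Atiyah--Patodi--Singer $\eta$-invariant of $\partial W=\RP^{2n-1}$, and the "relative $L$-number" you write down is not a well-defined topological substitute. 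The paper circumvents this by gluing the symplectic cap $C$ (the disk bundle of the degree-$2$ line bundle over $\CP^{n-1}$) to get a \emph{closed} symplectic manifold $X$ with $\sigma(X)=2$, and comparing with the orbifold $X'=C\cup\C^n/(\Z/2)$ via the orbifold signature theorem (whose singular contribution vanishes for $\Z/2$); the fractional intersection number $\int_X\mu^2=\tfrac12$ (\Cref{prop:pair}) and the resulting relation \eqref{eqn:relation} are what feed the Bernoulli arithmetic. Two further points your sketch gets wrong even granting a signature formula: the top Chern class does \emph{not} drop out (the relative Euler class of $W$ evaluates to $\chi(W)=2$, and the $a_{n/2}c_n$ term is exactly why $\chi(X)=n+2$ versus $\chi^{\orb}(X')=n+\tfrac12$ enters \eqref{eqn:relation}); and the numerator of the relevant Bernoulli number is \emph{odd} (von Staudt--Clausen), so "positive $2$-adic valuation of the numerator" is false as stated — the paper instead needs the mod $64$ congruences of \Cref{lemma:numerator_Bernoulli} together with the divisibility $c_{n/2}(W)\equiv 0\bmod 4$ (\Cref{prop:mod4}, proved via the modified signature $\sigma(X)-\int_X v_n^2\equiv 0\bmod 8$), an ingredient absent from your outline.

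The $n=4$ endgame also does not close as written. The standard double $W\cup_{\mathrm{id}}\overline W$ has all Pontryagin numbers and signature equal to zero (the two halves cancel), so the $\hat A$-genus computation produces an integer and no contradiction; the paper instead glues two \emph{identically oriented} copies of $W$ along an orientation-reversing diffeomorphism $\phi$ of $\RP^7$, so that $\sigma(Y)=2$ and $\int_Y p_1^2=2\int_W p_1^2=64$, and then verifies spin-ness of $Y$ by an explicit argument on generators of $H_2(Y;\Z/2)$. Moreover the input $\int_W p_1^2(W)=32$ is itself computed by applying the signature theorem to the capped closed manifold $X$ (giving $\int_X c_2^2(X)=106$, hence $\int_W c_2^2(W)=8$); "tight control from the rational cohomology" alone does not determine these Pontryagin numbers, and without the cap you have no legitimate way to obtain them.
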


\subsection{Properties of a hypothetical exact filling}\label{ss:property}
Assume $W$ is a hypothetical exact filling of $(\RP^{2n-1},\xi_{\std})$ for $n=2^{k}>2$. We first list the properties of $(\RP^{2n-1},\xi_{\std})$ and $W$ from previous sections.
\begin{enumerate}
    \item By \cite[Remark 2.3]{RP}, the total Chern class of $(\RP^{2n-1},\xi_{\std})$ is trivial.
    \item By \Cref{cor:simply_connected}, we have $\pi_1(W)=0$. As a consequence, we have $H_1(W;\Z)=\Tor H^2(W;\Z)=0$.
    \item By \Cref{cor:intersection_SS}, we have $H^*(W;\Q)=\Q\oplus \Q[-2^k]$ and $H^*_c(W;\Q)=\Q[-2^k]\oplus \Q[-2^{k+1}]$ with a positive intersection form. Therefore we have $H^2(W;\Z)=0$ and hence $c_1(W)=0$.
    \item Let $C$ be the disk bundle of the  degree $2$ line bundle over $\CP^{n-1}$, which is the natural symplectic cap of $(\RP^{2n-1},\xi_{\std})$. Then we can glue $C$ to $W$ along $\RP^{2n-1}$ to obtain a closed symplectic manifold $X$. Using the rational cohomology, it is easy to see that Euler characteristic $\chi(X)=n+2$ and signature $\sigma(X)=2$.
\end{enumerate}

\begin{proposition}\label{prop:middle}
We have that $\Tor H^{n}(W;\Z)\to H^{n}(\RP^{2n-1};\Z)$ is zero but $ H^{n}(W;\Z)\to H^{n}(\RP^{2n-1};\Z)$ is surjective.
\end{proposition}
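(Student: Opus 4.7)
The plan is to analyze the long exact sequence (LES) of the pair $(W,\partial W)$ in cohomology around degree $n$, combining Lefschetz duality with the rational structure $H^*(W;\Q)=\Q\oplus\Q[-n]$ recorded in \S\ref{ss:property}. Lefschetz duality and the universal coefficient theorem give $H^n(W,\partial W;\Z)\cong H_n(W;\Z)=\Z\oplus T_1$ with $T_1:=\Tor H_n(W;\Z)$, $H^{n+1}(W,\partial W;\Z)\cong H_{n-1}(W;\Z)=T_2$ (pure torsion), $H^n(W;\Z)\cong \Z\oplus T_2$, and $H^{n+1}(W;\Z)\cong T_1$. Since $n=2^k\ge 4$ is even and $n\pm 1<2n-1$ are odd, the cohomology of $\RP^{2n-1}$ in degrees $n-1,n,n+1$ is $0,\Z/2,0$, so the relevant portion of the LES collapses to
\[
0 \to \Z\oplus T_1 \xrightarrow{j^*} \Z\oplus T_2 \xrightarrow{r} \Z/2 \xrightarrow{\delta} T_2 \xrightarrow{j^*} T_1 \to 0.
\]

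Surjectivity of $r$ (the second statement) follows by contradiction: if $r=0$, exactness forces $j^*\colon \Z\oplus T_1\to\Z\oplus T_2$ to be an isomorphism (it is injective, with image equal to $\ker r=H^n(W;\Z)$), giving $T_1\cong T_2$; meanwhile $\ker\delta=\Ima r=0$ makes $\delta\colon \Z/2\hookrightarrow T_2$ injective, and $\ker(j^*\colon T_2\to T_1)=\Ima\delta=\Z/2$ forces $|T_2|=2|T_1|$, contradicting $T_1\cong T_2$. Hence $r$ surjects, $\delta=0$, and $j^*\colon T_2\to T_1$ becomes an isomorphism. The cokernel of the remaining $j^*\colon \Z\oplus T_1\to \Z\oplus T_2$ is therefore $\Z/2$, so writing $k$ for the free-part multiplier of $j^*$ and $m=[T_2:j^*(T_1)]$, one gets $k\cdot m=2$, whence $(k,m)\in\{(1,2),(2,1)\}$. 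The torsion condition (the first statement) is equivalent to $k=2$, since only then does the quotient $\Z/2$ lie entirely in the free part and kill the torsion summand $T_2\subset H^n(W;\Z)$.

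To force $k=2$, I invoke the self-intersection form. Pick integer generators $u\in H^n(W,\partial W;\Z)/T_1$ and $v\in H^n(W;\Z)/T_2$ normalized so that the perfect Poincar\'e--Lefschetz pairing satisfies $u\cdot v=1$. The identity $u\cup u=u\cup j^*(u)$ in $H^{2n}(W,\partial W;\Z)=\Z$, combined with the vanishing of cup products between free classes and torsion classes in the torsion-free target, gives $u\cup u=\pm k$. On the other hand, \Cref{cor:intersection_SS} (applicable to $\Z/2\subset O(n)$) matches the rational cup product with the Chen-Ruan cup on $\C^n/(\Z/2)$, and dualizing $\Delta[(\Id)]=2[(g)]\otimes[(g)]$ yields $y^\vee_{(g)}\cup y^\vee_{(g)}=2y^\vee_{(\Id)}$. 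Writing $u=p\cdot y^\vee_{(g)}$ with $p\in\Q_{>0}$, the integrality of $u\cup u=2p^2$ together with the lowest-terms analysis ($p=a/b$, $\gcd(a,b)=1$, $b^2\mid 2a^2$) forces $b=1$, so $p\in\Z_{\ge 1}$. The constraint $k=2p^2\in\{1,2\}$ then admits only $p=1$, $k=2$, yielding the torsion condition. The main technical subtlety lies in this integrality argument linking the rational Chen-Ruan self-intersection to the integer lattice, which relies on the perfect integer Poincar\'e--Lefschetz pairing.
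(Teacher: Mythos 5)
Your reduction to the five-term sequence and your proof of surjectivity are essentially the paper's own argument: the paper rewrites the long exact sequence of the pair via Lefschetz duality and universal coefficients as \eqref{eqn:exact_RP''} and excludes $r=0$ by the same two order counts (an isomorphism $\Z\oplus T_1\cong\Z\oplus T_2$ from the left-hand terms versus $|T_2|=2|T_1|$ from the right-hand terms). That half is correct.

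For the torsion statement, your detour through the intersection form is both unnecessary and, as written, contains a genuine gap. Unnecessary: once $r$ is onto you have already recorded that $\delta=0$ and that $j^*\colon T_2\to T_1$ is an isomorphism, hence $|T_2|=|T_1|$; since the other map $j^*\colon\Z\oplus T_1\to\Z\oplus T_2$ is injective on torsion, $m=[T_2:j^*(T_1)]=|T_2|/|T_1|=1$, which is exactly your criterion for the torsion statement. This one-line count is the paper's proof (run there as a contradiction: $r|_{T_2}\neq 0$ would force both $T_2\cong T_1$ and $|T_2|\ge 2|T_1|$). The gap: to exclude $k=1$ you need the self-intersection of the integral generator of $H^n(W,\partial W;\Z)$ to lie in $2(\Q^{\times})^2$, and you extract the constant $2$ by implicitly treating $y_{(\Id)}$ as the unit and taking $\int_W y^{\vee}_{(\Id)}=1$. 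Neither is justified: the identification $SH^*_+(W;\Q)\simeq H^{*+1}(W;\Q)$ is non-canonical, and by \Cref{prop:upper_bound} and \Cref{rmk:grading} the class $y_{(\Id)}$ is $|G|\cdot 1$ plus a positive-degree class, so with the paper's conventions $\int_W y^{\vee}_{(\Id)}=\tfrac12$; moreover the structure constant of $\bm{\lambda}$ in the $x_{(g)}$-basis for $\C^n/(\Z/2)$ depends on the (nowhere pinned down) normalization of the orbifold connecting map. These bookkeeping factors of $2$ are precisely what separates ``$u\cup u\in 2(\Q^{\times})^2$'' from ``$u\cup u\in(\Q^{\times})^2$'', i.e.\ $k=2$ from $k=1$, so the step $u\cup u=2p^2$ does not follow from \Cref{cor:intersection_SS} as quoted and your argument could just as well return the opposite verdict. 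The paper sidesteps this entirely: it proves the proposition by pure counting, and only afterwards determines $\int_X\mu^2=\tfrac12$ in \Cref{prop:pair} from unimodularity of the glued closed manifold---an argument which itself uses the present proposition, so deriving the torsion statement from the precise intersection number also risks circularity unless you keep to the Floer-theoretic input alone.
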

\begin{proof}
Note that we have the following long exact sequence
\begin{equation}\label{eqn:exact_RP}
    0\to H^{n}(W,\RP^{2n-1};\Z)\to H^{n}(W;\Z)\to H^{n}(\RP^{2n-1};\Z) \to  H^{n+1}(W,\RP^{2n-1};\Z) \to  H^{n+1}(W;\Z)\to 0.
\end{equation}
By Lefschetz duality, \eqref{eqn:exact_RP} can be rewritten as 
\begin{equation}\label{eqn:exact_RP'}
    0 \to H_{n}(W;\Z)\to H^{n}(W;\Z)\to \Z/2 \to H_{n-1}(W;\Z)\to H^{n+1}(W;\Z)\to 0.
\end{equation}
By the universal coefficient theorem, we have
$$H_{n}(W;\Z)=\Z\oplus  H^{n+1}(W;\Z),\quad H_{n-1}(W;\Z)=\Tor H^{n}(W;\Z).$$
Hence \eqref{eqn:exact_RP'} can be written as 
\begin{equation}\label{eqn:exact_RP''}
    0 \to \Z\oplus  H^{n+1}(W;\Z)\to \Z\oplus \Tor H^{n}(W;\Z)\to \Z/2 \to \Tor H^{n}(W;\Z) \to H^{n+1}(W;\Z)\to 0.
\end{equation}
If $\Tor H^{n}(W;\Z)\to H^{n}(\RP^{2n-1};\Z)= \Z/2 $ is nonzero, then we have $\Tor H^{n}(W;\Z) = H^{n+1}(W;\Z)$. On the other hand, \eqref{eqn:exact_RP''} implies that the torsion group $H^{n+1}(W;\Z)$ injects into $\Tor H^{n}(W;\Z)$. Hence we have $|\Tor H^{n}(W;\Z)|\ge 2|H^{n+1}(W;\Z)|$, which is a contradiction. Hence $\Tor H^{n}(W;\Z)\to H^{n}(\RP^{2n-1};\Z)= \Z/2 $ is zero.

On the other hand, if $ H^{n}(W;\Z)\to H^{n}(\RP^{2n-1};\Z)$ is zero. We have $|\Tor H^{n}(W;\Z)|= 2|H^{n+1}(W;\Z)|$ from the right three terms of \eqref{eqn:exact_RP''}, while $|\Tor H^{n}(W;\Z)|= |H^{n+1}(W;\Z)|$ from the left two terms of \eqref{eqn:exact_RP''}, contradiction.
\end{proof}

Note that the rank of $H^{n}(X;\Z)$ is two. Let $\mu$ denote a generator of the free part of $H^n(W;\Z)$\footnote{Strictly speaking, there is no canonical free part as $0\to\Tor H^n(W;\Z)\to H^n(W;\Z)\to \Z \to 0$ does not have a natural splitting. But since we only discuss the intersection number, such imprecision does not matter.}. In view of the Mayer-Vietoris  sequence for $X=W\cup C$, the free part of $H^{n}(X;\Z)$ is generated by $\mu+u^{n/2}$ and $2u^{n/2}$, where $u\in H^2(C;\Z)=H^2(\CP^{n-1};\Z)$ is the generator represented by the first Chern class of $\cO(1)$. On the other hand, since $H^*(\RP^{2n-1};\Q)=H^*(S^{2n-1};\Q)$, we have $H^n(X;\Q)=H^n(C;\Q)\oplus H^n(W;\Q)=H^n_c(C;\Q)\oplus H^n_c(W;\Q)$, the later is from the $\Q$-isomorphism $c_W:H_c^n(W;\Q)\to H^n(W;\Q), c_C:H_c^n(C;\Q)\to H^n(C;\Q)$ (which is non-trivial on the integer lattice). Therefore it makes sense to write $\int_X (u^{n/2})^2 := \int_X 0\oplus (u^{n/2})^2=\int_C (c_C^{-1}(u^{n/2}))^2=\frac{1}{2}$ as $c_C$ is multiplying by $2$ on the integer lattice. Similarly, we write $\int_X \mu^2$ to mean $\int_X \mu_2^2\oplus 0=\int_W c_W^{-1}(\mu)^2$. 
\begin{proposition}\label{prop:pair}
We have
$$\int_X \mu^2=\frac{1}{2}.$$
\end{proposition}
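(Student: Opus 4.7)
The plan is to pin down $\int_X \mu^2$ by combining three constraints on the intersection form on $H^n(X;\Z)/\mathrm{torsion}$: an explicit integer basis obtained from Mayer--Vietoris, the unimodularity coming from Poincar\'e duality on the closed manifold $X$, and the positive-definiteness forced by $\sigma(X)=2$.

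First I would extract a basis for the free part of $H^n(X;\Z)$. Since $n=2^k$ is even, $H^{n-1}(\RP^{2n-1};\Z)=0$, so the Mayer--Vietoris sequence reduces to
\[
0 \to H^n(X;\Z) \to H^n(W;\Z)\oplus H^n(C;\Z) \to H^n(\RP^{2n-1};\Z)=\Z/2.
\]
The class $u^{n/2}$ restricts to the nonzero element of $\Z/2$, and by \Cref{prop:middle} the same holds for $\mu$, while $\Tor H^n(W;\Z)$ restricts to zero. Hence the free part of $H^n(X;\Z)$ is the rank-two lattice
\[
\Z\, e_1 \oplus \Z\, e_2, \qquad e_1:=\mu+u^{n/2},\qquad e_2:=2u^{n/2}.
\]

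Next I would compute the intersection matrix in this basis. Representatives of $\mu$ and $u^{n/2}$ may be chosen supported in $W$ and $C$ respectively and vanishing near the separating hypersurface, so $\mu \cdot u^{n/2}=0$; together with the normalization $(u^{n/2})^2=1/2$ already recorded in \S\ref{ss:property}, this gives
\[
e_1\cdot e_1 = \mu^2+\tfrac12,\qquad e_1\cdot e_2 = 1,\qquad e_2\cdot e_2 = 2,
\]
so the determinant of the intersection form on $H^n(X;\Z)/\mathrm{torsion}$ equals $2\mu^2$.

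The main step is then Poincar\'e duality: because $X$ is a closed oriented $2n$-manifold, the intersection form on $H^n(X;\Z)/\mathrm{torsion}$ is unimodular, so $2\mu^2=\pm 1$. Finally, since $\sigma(X)=2$ and the form has rank $2$, it is positive definite; hence $\mu^2>0$, forcing $\mu^2=\tfrac12$. The one subtle point—and the place where \Cref{prop:middle} is indispensable—is verifying that $\{e_1,e_2\}$ spans the whole free part of $H^n(X;\Z)$ rather than a proper finite-index sublattice; the vanishing of $\Tor H^n(W;\Z)\to\Z/2$ is precisely what prevents torsion in $H^n(W;\Z)$ from enlarging the kernel in the Mayer--Vietoris sequence.
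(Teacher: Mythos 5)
Your proof is correct and takes essentially the same route as the paper: the basis $\mu+u^{n/2}$, $2u^{n/2}$ of the free part of $H^n(X;\Z)$ coming from Mayer--Vietoris and \Cref{prop:middle}, the computation of the three pairings $(\mu+u^{n/2})^2=\mu^2+\tfrac12$, $(\mu+u^{n/2})\cdot 2u^{n/2}=1$, $(2u^{n/2})^2=2$, and then unimodularity plus positivity to force $\mu^2=\tfrac12$. The only cosmetic difference is that you obtain positivity of $\mu^2$ from $\sigma(X)=2$ together with the rank being $2$, whereas the paper invokes the positivity of the intersection form on $W$ recorded in \S\ref{ss:property} (from \Cref{cor:intersection_SS}) directly --- the same underlying input, since $\sigma(X)=2$ was itself computed from it.
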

\begin{proof}
Since $\int_X (2u^{n/2})^2=2$ and $\int_X (2u^{n/2})\cup (\mu+u^{n/2})=1$ and $\int_X (\mu+u^{n/2})^2= \int_X \mu^2+\frac{1}{2}>\frac{1}{2}$. To have the intersection being unimodular, we must have $\int_X \mu^2=\frac{1}{2}$.
\end{proof}

\subsection{Proof of \Cref{thm:2_power}}
The proof of \Cref{thm:2_power} seeks contradictions by finding non-integer Chern numbers using tools like the Hirzebruch signature theorem. As a warm-up, we first look at the $n=8$ case by direct computations.

\subsubsection{The $n=8$ case}

\begin{proof}[Proof of \Cref{thm:2_power} for $n=8$]
In this case, the $L$-genus is
\begin{eqnarray*}
L_4(X) & = & \frac{1}{14175}(381p_4(X)-71p_1(X)p_3(X)-19p_2^2(X)+22p_1^2(X)p_2(X)-3p_1^4(X)),\\
p_1(X) & = & c_1^2(X)-2c_2(X), \\
p_2(X) & = & c_2^2(X)-2c_1(X)c_3(X)+2c_4(X),\\
p_3(X) & = & c_3^2(X)-2c_2(X)c_4(X)+2c_1(X)c_5(X)-2c_6(X),\\
p_4(X) & = & c_4^2(X)-2c_3(X)c_5(X)+2c_2(X)c_6(X)-2c_1(X)c_7(X)+2c_8(X).
\end{eqnarray*}
We also have 
$$c(C)=1+10u+44u^2+112u^3+182u^4+196u^5+140u^6+64u^7, u\in H^2(C;\Z),$$
and $\int_X u^8 =\frac{1}{2}$. We also have $\int_X c_8(X)=\chi(X)=10$. Using $c^{\Q}_i(X)=c^{\Q}_i(W)\oplus c^{\Q}_i(C)$, any Chern number does not involve $c_4$ can be computed in $C$ and $\int_X c_I(X)c_4(X)=\int_X c_I(C)c_4(C)$ for $c_I=\prod_{j=1} c^{i_j}_j$ if $i_4=0$, hence we can get specific numbers. The only remaining term is $\int_X c_4^2(X)$, which involves the unknown $c_4(W)$.  Then the Hirzebruch signature theorem reads
$$2=\sigma(X)=\int_X L_4(X)=\frac{1}{14175}(305 \int_Xc_4^2(X)-5036092).$$
Hence we have that $\int_X c_4^2(X)=\frac{5064442}{305}\notin \Z$, contradiction.
\end{proof}

\subsubsection{The general case of $n=2^k>8$}
In principle, one can continue the computation above for larger $n$.  However, it is unrealistic to carry out the specific computations, as the complexity grows exponentially w.r.t.\ $k$ both in coefficients and Chern numbers. The computation is simplified significantly if we compare the discrepancy between the Hirzebruch signature formulae for $X,X'$, where $X'$ is the orbifold obtained from gluing $C$ to $\C^n/(\Z/2)$ and only look at the factor of $2$. We first set up some modulo-$2$ properties for later purposes. 

\begin{proposition}\label{prop:mod4}
For $n=2^k$ and $k\ge 2$, we have that the free part of $c_{n/2}(W)=0 \mod 4$ for a hypothetical exact filling $W$.
\end{proposition}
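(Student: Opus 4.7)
The plan is to apply the Hirzebruch signature theorem to the closed symplectic manifold $X=W\cup C$ and compare with the reference closed orbifold $X':=(\C^n/(\Z/2))\cup_{\RP^{2n-1}} C$ in order to extract a $2$-adic divisibility for $a$, the integer defined by the free part of $c_{n/2}(W)$ being $a\mu$.

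First I would organize the Chern classes of $X$ using the rational ring structure coming out of Section~6.1. Mayer-Vietoris together with $\int_X\mu^2=\int_X u^n=1/2$ and $\int_X\mu u^{n/2}=0$ gives $H^*(X;\Q)\cong\Q[u,\mu]/(u^{n+1},\mu u^{n/2},\mu^2-u^n)$, and, from $c(C)=(1+u)^n(1+2u)$ together with $\chi(X)=n+2$, the Chern classes of $X$ read
\[
c_i(X)=\alpha_i u^i\ (0<i<n,\ i\ne n/2),\qquad c_{n/2}(X)=a\mu+\alpha_{n/2}u^{n/2},\qquad c_n(X)=2(n+2)u^n,
\]
with $\alpha_i=\binom{n}{i}+2\binom{n}{i-1}$. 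Expanding $L_{n/2}(X)$ in these Chern classes and separating by the power of $c_{n/2}$ (only degrees $0,1,2$ contribute to the top piece), substituting, and integrating via the pairings above, the identity $\sigma(X)=L_{n/2}(X)[X]=2$ becomes the quadratic relation
\[
2=\tfrac12\bigl(M_L+\ell_2\,a^2\bigr),
\]
where $\ell_2$ is the coefficient of $c_{n/2}^2$ in $L_{n/2}$ and $M_L$ depends only on $n$.

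To isolate $\ell_2\,a^2$ from the rest, I would identify $X'$ with the weighted projective space $\CP(1^n,2)$ via the involution $[z_0:\cdots:z_{n-1}:z_n]\mapsto[-z_0:\cdots:-z_{n-1}:z_n]$ on $\CP^n$. Since $H^n(X';\Q)$ is one-dimensional with self-intersection $1/2$, the orbifold signature theorem gives $\sigma(X')=1$, and the analogous formula on $X'$ contains no $a$-contribution. Taking the difference,
\[
\ell_2\,a^2=2\bigl(\sigma(X)-\sigma(X')\bigr)+2\Delta=2+2\Delta,
\]
where $\Delta$ is the orbifold $L$-correction at the isolated $\Z/2$-fixed point $[0:\cdots:0:1]$, computable from the Atiyah-Bott $G$-signature formula applied to the involution on $\CP^n$.

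Finally I would analyze $\ell_2$ and $2+2\Delta$ $2$-adically. The coefficient $\ell_2$ equals $s_{n/2}+4\beta$, where $s_{n/2}=2^n(2^{n-1}-1)|B_n|/n!$ is the coefficient of $p_{n/2}$ and $\beta$ that of $p_{n/4}^2$ in $L_{n/2}$; both are Bernoulli-type. The orbifold correction $\Delta$ is governed by cotangent factors coming from the $-\mathrm{Id}$ action on the normal $\C^n$ at the fixed point, so it is likewise Bernoulli-type. For $n=2^k$, Kummer's theorem yields $v_2(\binom{n}{n/2})=1$ and $v_2(\binom{n}{n/2-1})=k$, pinning down the $2$-adic valuations of the $\alpha_j$'s that feed into $M_L$. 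Combining these with the extension of the von Staudt-Clausen theorem proved in Appendix~\ref{app} gives $v_2(a^2)\ge 4$, hence $a\equiv0\pmod4$. The main obstacle is the $2$-adic bookkeeping for $\Delta$ and $\ell_2$, both of which are rational numbers with intricate Bernoulli-number denominators; the weighted projective space realization of $X'$ is crucial, as it reduces $\Delta$ to a single isolated fixed-point contribution amenable to Atiyah-Bott.
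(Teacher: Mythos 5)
Your route is genuinely different from the paper's. The paper proves this proposition with a short argument: it invokes the refinement $\sigma(X)\equiv\int_X v_n^2\pmod 8$ for an integral lift $v_n$ of the middle Wu class of the almost complex manifold $X=W\cup C$, computes $v_n|_C\equiv 2u^{n/2}\pmod 4$ from $TC|_{\CP^{n-1}}\oplus\underline{\C}=\cO(1)^n\oplus\cO(2)$, identifies the free part of $v_n|_W$ with that of $c_{n/2}(W)$ via \Cref{lemma:product}, and uses \Cref{prop:middle} to know $c_{n/2}(W)$ is even; no Bernoulli numbers enter. What you propose instead is essentially the engine of the paper's later proof of \Cref{thm:2_power} (the $L$-genus comparison between $X$ and $X'=(\C^n/(\Z/2))\cup C$), run before \Cref{prop:mod4} is available. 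That is not circular in itself, but it forces you to redo the entire $2$-adic analysis with a general integer $a$, which is much heavier than the Wu-class argument and, for $n=2^k>8$, actually proves non-fillability outright, so the proposition would only come out en passant.

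As written there are two concrete gaps. First, your comparison identity $\ell_2 a^2=2(\sigma(X)-\sigma(X'))+2\Delta$ is false: the $a$-independent parts of the two signature formulas do not cancel, because $\int_X c_n(X)=\chi(X)=n+2$ while $\int_{X'}c_n(X')=\chi^{\orb}(X')=n+\tfrac12$, so the difference contains the extra term $\tfrac32 a_{n/2}$ (the coefficient of $c_n$ in the $L$-genus); the correct relation is \eqref{eqn:relation} with $8l^2$ replaced by $a^2/2$, namely $\tfrac{a^2}{2}\,b_{n/2}=1-\tfrac32 a_{n/2}$. The fixed-point correction $\Delta$ cannot absorb this term, since all rotation angles at the isolated $\Z/2$ point are $\pi$ and hence $\Delta=0$ (cf.\ \Cref{prop:orb_sig}); with your identity one would get $a^2=30$ for $n=4$, whereas the correct relation gives $a^2=16$. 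Second, the decisive claim ``$v_2(a^2)\ge 4$'' is only asserted. It is exactly here that one needs the mod-$64$ congruences $N_m\equiv 1$, $D_m\equiv -1$ and $(2m)!/2^{2m-1}\equiv 11$ of \Cref{lemma:numerator_Bernoulli} and \Cref{lemma:trivial}, which are proved only for $m=n/2\ge 16$; the cases $n=4,8,16$ lie inside your stated range $k\ge 2$ and need separate explicit computations (as the paper does for $n=8$ and $n=16$). With the corrected relation the valuation count does close for $n\ge 32$: writing $a=2b$, the two sides of the integral identity have $2$-adic valuations $1+2v_2(b)$ and $3$, forcing $v_2(a)=2$ and in particular $4\mid a$. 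So the approach can be repaired, but the displayed identity must be fixed, the small cases handled, and the Bernoulli congruences actually carried out; the paper's mod-$8$ Wu-class proof avoids all of this.
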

\begin{proof}
Following \cite{MR2192936}, we have
$\sigma(X)-\int_X v_n^2 = 0\mod 8,$ where $v_n$ is an integral lift of the middle Wu class. Since $X$ has an almost complex structure, there are integral lifts of the Wu classes in terms of the Chern classes by \cite[E 1.2]{MR2192936}. Namely, the total integral Wu classes have characteristic series ($x$ is the Chern root)
$$1+x+x^3+\ldots+x^{2^m-1}+\ldots.$$
Let $v_n$ denote the integral lift of the middle Wu class in $H^n(X;\Q)$. As $H^n(X;\Q)=H^n(W;\Q)\oplus H^n(C;\Q)$, we will compute the two components of $v_n$.

For the component on $C$, as $TC|_{\CP^{n-1}}\oplus \underline{\C}=\cO(1)^{n}\oplus \cO(2)$, using the characteristic series, we have 
$$v_n|_C   =   \text{ the } u^{2^{k-1}} \text{ coefficient of }(1+u+u^3+\ldots)^{2^k}(1+2u+8u^3+\ldots ).$$
Note that $\binom{2^k}{i}=0\mod 4$ if $1\le i <2^{k-1}$ and $\binom{2^k}{2^{k-1}}=2\mod 4$. Therefore, modulo $\la 4, u^{2^{k-1}+1}\ra$, we have
\begin{eqnarray*}
(1+u+u^3+\ldots)^{2^k}(1+2u+8u^3+\ldots ) & = &  (1+2(u+u^3+\ldots)^{2^{k-1}})(1+2u)  \\
& = & 1+2u+2u^{2^{k-1}}
\end{eqnarray*}
In particular, we have $v_n|_C=2u^{2^{k-1}} \mod 4$. On the other hand, by \Cref{lemma:product} below,  we have $v_n|_W =  c_{n/2}(W)$. By \Cref{prop:middle} and $c_{n/2}(\RP^{2n-1},\xi_{\std})=0$, we have $c_{n/2}(W) = 0 \mod 2$. If we write $c_{n/2}(W)=2m\mu$ (the free part only), then by \Cref{prop:pair}, we have 
$$\int_X v_n^2 =2+2m^2 \mod 8.$$
Since $\sigma(X)=2$, we must have $m=0\mod 2$, i.e.\ the free part of $c_{n/2}(W)=0 \mod 4$ for the hypothetical exact filling $W$.

\end{proof}

\begin{lemma}\label{lemma:product}
The coefficient of $c_{n/2}$ in the integral Wu class $v_n$ is $1$. 
\end{lemma}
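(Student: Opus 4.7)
My plan is to extract the coefficient of $c_{n/2}$ in $v_n$ by a specialisation of the Chern roots that isolates this single monomial. Since $v_n$ is the weighted-degree-$n/2$ piece of the multiplicative characteristic class with series $f(x) = \sum_{m \geq 0} x^{2^m - 1}$, it is a universal polynomial in $c_1, \ldots, c_{n/2}$ of weighted degree $n/2$, and the goal is to pin down the scalar multiplying $c_{n/2}$ itself.

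First I will set up the specialisation. Take $x_1, \ldots, x_{n/2}$ to be the roots of $z^{n/2} + 1 \in \C[z]$, and let all remaining Chern roots be zero. Because $n/2 = 2^{k-1}$ is even (as $k \geq 2$), Vieta's formulas yield $e_j = 0$ for $0 < j < n/2$ and $e_{n/2} = 1$. At this specialisation every weighted-degree-$n/2$ monomial in the Chern classes other than $c_{n/2}$ itself vanishes, so $v_n|_{\mathrm{spec}}$ equals the desired coefficient.

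Next I will decompose $v_n$ combinatorially. Expanding $\prod_i f(x_i)$ and collecting degree $n/2$ gives
\begin{equation*}
v_n \;=\; \sum_{\substack{\lambda \vdash n/2 \\ \lambda_i \in \{2^m - 1 \,:\, m \geq 1\}}} m_\lambda ,
\end{equation*}
where the diagonal partition $(1^{n/2})$ contributes $m_{(1^{n/2})}|_{\mathrm{spec}} = e_{n/2}|_{\mathrm{spec}} = 1$, supplying the base term. For each remaining partition (necessarily with a part $\geq 3$) I would convert $m_\lambda$ to the power-sum basis via Newton's identities: the vanishing of $e_1, \ldots, e_{n/2-1}$ on the specialisation forces $p_k|_{\mathrm{spec}} = 0$ for $0 < k < n/2$ and $p_{n/2}|_{\mathrm{spec}} = -n/2$, so that $m_\lambda|_{\mathrm{spec}} = -(n/2) \cdot [p_{(n/2)}]\,m_\lambda$, where $[p_{(n/2)}]\,m_\lambda$ denotes the coefficient of the single-part power sum in the power-sum expansion of $m_\lambda$.

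The crux of the argument, and the main obstacle, is the combinatorial identity asserting that the non-diagonal contributions cancel, which is precisely where the constraint $n = 2^k$ enters. I plan to establish this by collapsing the product $\prod_{i=1}^{n/2} f(x_i)$ directly: the relation $x_i^{n/2} = -1$ lets one truncate $f(x_i)$ modulo $z^{n/2} + 1$ to a polynomial of degree $n/2 - 1$, and the degree-$n/2$ part of the resulting $(n/2)$-fold product can then be evaluated as a residue at the roots of $z^{n/2} + 1$. The interaction between the Mersenne exponents $2^m - 1$ and the $2$-adic order of $n/2 = 2^{k-1}$ should force the non-trivial summands to cancel, leaving exactly the ``$+1$'' contributed by the diagonal partition.
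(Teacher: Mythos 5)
Your setup is fine as far as it goes: the specialisation at the roots of $z^{n/2}+1$ does kill $e_1,\ldots,e_{n/2-1}$, set $e_{n/2}=1$, and hence evaluates exactly the coefficient you want; the decomposition of the degree-$n/2$ part into monomial symmetric functions $m_\lambda$ with parts in $\{2^m-1\}$ is correct; and your reduction $m_\lambda|_{\mathrm{spec}}=-(n/2)\,[p_{(n/2)}]m_\lambda$ (using $p_k|_{\mathrm{spec}}=0$ for $0<k<n/2$ and $p_{n/2}|_{\mathrm{spec}}=-n/2$) is also correct. The genuine gap is the final cancellation claim, which you only sketch (``the interaction between the Mersenne exponents and the $2$-adic order of $n/2$ should force the non-trivial summands to cancel''): no argument is given, and in fact the cancellation fails. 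Already for $n=8$ the only non-diagonal partition is $(3,1)$, and $m_{(3,1)}=p_3p_1-p_4$, so at your specialisation (roots of $z^4+1$, where $p_1=p_3=0$ and $p_4=-4$) one gets $m_{(3,1)}|_{\mathrm{spec}}=4\neq 0$; the specialised value is therefore $1+4=5$, not $1$. Equivalently, your own correct steps show that the coefficient of $c_{n/2}$ equals $(-1)^{n/2-1}\tfrac{n}{2}\,[x^{n/2}]\log f(x)$ (only the single-part power sum survives in $\log\prod_i f(x_i)=\sum_r c_r p_r$), which gives $1,5,29,\ldots$ for $n=4,8,16$. So the planned residue computation cannot produce the claimed ``$+1$'', and the proposal cannot be completed as written.

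For comparison, the paper argues differently: it never specialises, but instead uses that the expansion in elementary symmetric polynomials is stable under reducing the number of variables (the difference of the products is divisible by the top elementary symmetric polynomial), reducing the computation of the $e_{n/2}$-coefficient to the case of exactly $n/2$ variables, where the coefficient is then read off. Note, however, that in the $e$-monomial basis the coefficient of $e_{n/2}$ is not the same as the coefficient of the square-free monomial $x_1\cdots x_{n/2}$ (other $e_\lambda$ also contain that monomial), which is precisely the distinction your computation above makes visible; so the value of this coefficient is the delicate point on either route, and your specialisation, evaluated honestly, yields $5$ rather than $1$ for $n=8$. In short: the key step of your proposal is missing and, as stated, false, so the approach breaks down at its crux rather than at the setup.
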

\begin{proof}
We write $n=2m$. Let $f(x)=1+x+x^3+\ldots$ be the characteristic series. We want to write 
$$f(x_1)\cdot \ldots \cdot f(x_{2m}) = g_{2m}(e_1,\ldots,e_{2m})$$
where $e_1,\ldots,e_{2m}$ are the elementary symmetric polynomials. Then the coefficient we are looking for is the coefficient of the monomial $e_m$ in $g_{2m}$. Note that $e_{2m}$ divides $f(x_1)\cdot \ldots \cdot f(x_{2m})-g_{2m-1}(e_1,\ldots,e_{2m-1})$ as $x_{2m}$ divides it. We have
$$f(x_1)\cdot \ldots \cdot f(x_{2m})-g_{2m-1}(e_1,\ldots,e_{2m-1})=e_{2m}s$$
for some symmetric polynomial $s$. As a consequence, the $e_m$ coefficient of $g_{2m}$ is the same as the $e_{m}$ coefficient of $g_{2m-1}$. We can keep applying the argument for $m-1$ more times and conclude that $e_m$ coefficient of $g_{2m}$ is the same as the $e_{m}$ coefficient of $g_m$, which is clearly $1$.
\end{proof}

\begin{proof}[Proof of Theorem \ref{thm:2_power}]
Let $X'$ be the orbifold obtained from gluing $\C^n/(\Z/2)$ to $C$. With the orbifold Hirzebruch signature theorem in \cite{orbifold_signature}, we have 
$$2=\sigma(X)=\int_X L(TX),\quad 1=\sigma(X')=\int_{X'} L(TX'),$$
Here $\sigma(X')$ is the signature on the \emph{de Rham cohomology} of the orbifold $X'$, which is isomorphic to the real singular cohomology of the orbit space,  \emph{not the Chen-Ruan cohomology}. The $L$-genus of $TX'$ is defined through the orbifold Chern-Weil theory, which can be expressed in terms of (orbifold) Pontryagin classes and Chern classes with the same coefficients as in the manifold case. It is important to note that the orbifold Hirzebruch signature theorem usually involves integrating characteristic classes over the singular locus. However, it vanishes in our case, as the corresponding characteristic is given by \cite[\S 6, page 581]{MR236952} (also see \Cref{prop:orb_sig}) over the isolated singular point. Note that the only difference between $H^*(X;\R),H^*(X';\R)$ is in the middle degree from the difference between $W$ and $\C^n/(\Z/2)$. Hence we know that the difference of $\int_X L(TX)$ and $\int_{X'} L(TX')$ is given by a multiple of the integration of $c_n$ and a multiple of the integration of $c^2_{n/2}(W)$. For even $m$, we can write the $L$-genus of a complex bundle in terms of Chern classes as
$$L_m = a_m c_{2m}+b_m c_m^2+\text{other combinations of Chern classes.}$$
Then we have
\begin{eqnarray}
1 & = & \int_X L(TX) - \int_{X'} L(TX') \nonumber \\
  & = & \int_X a_{n/2}c_n(X)+\int_X b_{n/2} c_{n/2}^2(W) - \int_{X'} a_{n/2} c_n(X') \label{eqn:error}.
\end{eqnarray}
Now by the Gauss-Bonnet theorem for orbifolds in \cite{MR95520} and the identification between the Pfaffian polynomial and the top Chern class on a complex vector space, we have
 $$\int_{X'} c_n(X')=\chi^{\orb}(X')=n+\frac{1}{2},$$
 where the singularity is counted as $1/2$. Since $\int_X c_n(X)=\chi(X)=n+2$, and we assume $c_{n/2}(W)=4l\mu$ by \Cref{prop:mod4}, \eqref{eqn:error} reads
 \begin{equation}\label{eqn:relation}
     \frac{3}{2}a_{n/2}+8l^2b_{n/2}=1.
 \end{equation}
Therefore we need to figure out the coefficient $a_{n/2}$ and $b_{n/2}$. First for even $m$, we write $L_m$ as 
$$\alpha_m p_m +\beta_m p^2_{m/2}+\text{other combinations of the Pontryagin classes.}$$
Note that 
$$p_m=2c_{2m}+c_m^2+\text{other quadratic terms of Chern classes},$$
$$p_{m/2}=\pm 2c_m+\text{other quadratic terms of Chern classes}, $$
we have $a_m=2\alpha_m$ and $b_m=\alpha_m+4\beta_m$. Hence \eqref{eqn:relation} becomes
\begin{equation}\label{eqn:relation'}
3\alpha_{n/2}+8l^2(\alpha_{n/2}+4\beta_{n/2})=1.
\end{equation}
By \cite[(5)]{MR3856808}, 
we have 
\begin{eqnarray*}
    \alpha_m & = & \frac{2^{2m}(2^{2m-1}-1)}{(2m)!}B_m, \\
    \beta_m & = & \frac{2^{2m-1}}{\pi^{2m}}(-\zeta^*(2m)+\zeta^*(m)^2) \\
      & = & -\frac{2^{2m-1}(2^{2m-1}-1)}{(2m)!}B_m+\frac{2^{2m-1}(2^{m-1}-1)^2}{m!m!}B^2_{m/2},
\end{eqnarray*}
where $B_*$ are the Bernoulli numbers with a positive sign, see \Cref{app}. For $m=n/2=2^{k-1}$, \eqref{eqn:relation'} becomes
$$
3\cdot 2^{2m}(2^{2m-1}-1)B_m+2^{2m+3}\left(-(2^{2m-1}-1)B_m+2\cdot (2^{m-1}-1)^2\binom{2m}{m} B_{m/2}^2\right)l^2=(2m)!
$$
By \Cref{thm:CS}, $B_m$ in its lowest fraction form can be written as $N_m/(2D_m)$ for an odd number $D_m$. Hence we have 
$$
3\cdot 2^{2m-1}(2^{2m-1}-1)N_mD^2_{m/2}+2^{2m+2}\left(-(2^{2m-1}-1)N_mD_{m/2}^2+(2^{m-1}-1)^2\binom{2m}{m} N_{m/2}^2D_m\right)l^2=(2m)!D_mD_{m/2}^2.
$$
Note that $2^{2m-1}$ divides $(2m)!$ as $2m$ is a power of $2$.\footnote{This holds only if $m$ is a power of $2$, hence it yields a new proof of the main result in \cite{RP}.} We have
\begin{equation}\label{eqn:mod64}
    8\left((2^{m-1}-1)^2\binom{2m}{m} N_{m/2}^2D_m-(2^{2m-1}-1)N_mD_{m/2}^2\right)l^2 = D^2_{m/2}\left(\frac{(2m)!}{2^{2m-1}}D_m-3(2^{2m-1}-1)N_m\right)
\end{equation}
By \Cref{lemma:numerator_Bernoulli,lemma:trivial} for $m=2^{k-1}>8$, the right side of \eqref{eqn:mod64} is $-8 \mod 64$. While the left side of \eqref{eqn:mod64} is $8(1-\binom{2m}{m})l^2 \mod 64$ by \Cref{lemma:numerator_Bernoulli}. By \Cref{lemma:trivial}, 
$$\binom{2m}{m} = 6 \mod 8, \quad l^2 = 0,1,4 \mod 8,$$
hence 
$$8(1-\binom{2m}{m})l^2 = 0,24,32 \mod 64,$$
contradiction!

When $m=8$, we have $N_8=3617,N_4=1, D_8= 255, D_4=15$. Then \eqref{eqn:mod64} becomes
$$ 8\cdot 26266354875 \cdot l^2=  15^2\cdot 162465228408.$$
In particular, $l^2$ is not an integer, contradiction.

\end{proof}

\begin{remark}
There is a more direct approach to obtain the key relation \eqref{eqn:relation} without using the orbifold  Hirzebruch signature theorem and Gauss-Bonnet theorem. Note the total Chern class of the symplectic cap is in the product form
$$c(C)=(1+u)^n(1+2u), \mod u^{n+1}.$$
Roughly speaking, the $L$-genus $L(TX)$ should be ``close" to that of $C$, i.e.\ $(\frac{u}{\tanh u})^n \frac{2u}{\tanh 2u}$. More precisely, $c(C)=c(X)$ except for $c_{n/2}$ and $c_n$. Using that $\int_X u^n = \frac{1}{2}$, we have
\begin{eqnarray*}
\int_X (\frac{u}{\tanh u})^n \frac{2u}{\tanh 2u}  & =  & \frac{1}{2} \cdot \text{ the coefficient of $u^n$ in }(\frac{u}{\tanh u})^n \frac{2u}{\tanh 2u} \\
& = & \frac{1}{4\pi \i }\oint (\frac{u}{\tanh u})^n \frac{2u}{\tanh 2u} \frac{1}{u^{n+1}} \rd u \\
& = &  \frac{1}{4\pi \i }\oint \frac{(1+\tanh^2 u)}{\tanh^{n+1} u}  \rd u \\
& =  &  \frac{1}{4\pi \i }\oint \frac{(1+z^2)}{z^{n+1}(1-z^2)} \rd z =1, \qquad (z=\tanh u)
\end{eqnarray*}
Since the $u^n$ coefficient of $c(C)$ is $2n+1$, by that $\int_X u^n = \frac{1}{2}$, the above computes $\int_X L(TX)$ using ``$\int_X c_n(X)=n+\frac{1}{2}$ and $c_{n/2}(X) = 0\oplus c_{n/2}(C)$". If we use the correct $c_{n/2}$ and that $\int_X c_n(X)=n+2$, the  Hirzebruch signature theorem becomes
$$2=\frac{3}{2}a_{n/2}+b_{n/2}\int_X c^2_{n/2}(W)+\int_X (\frac{u}{\tanh u})^n \frac{2u}{\tanh 2u},$$
that is precisely \eqref{eqn:relation}.

However, the orbifold approach generalizes to a general quotient singularity without any reference to a particular symplectic cap $C$, while the direct computation above requires knowledge of $C$.
\end{remark}

\subsubsection{The $n=4$ case}
To prove the $n=4$ case, the Hirzebruch signature theorem alone (or with the modulo $8$ property of the modified signature in \cite{MR2192936}) will not give us a contradiction. We also need to use that the $\hat{A}$-genus is an integer for spin manifolds.

\begin{proof}[Proof of \Cref{thm:2_power} for $n=4$]
The second $L$-genus is given by 
$$L_2(X)=\frac{1}{45}(7p_2(X)-p^2_1(X)).$$
Since $X$ is an almost complex manifold, hence the Pontryagin classes can be expressed using Chern classes as 
$$p_1(X)=c^2_1(X)-2c_2(X),\quad p_2(X)=c_2^2(X)-2c_1(X)c_3(X)+2c_4(X).$$
As a consequence, we have
$$L_2(X)=\frac{1}{45}(3c_2^2(X)-14c_1(X)c_3(X)+14c_4(X)-c_1^4(X)+4c_1^2(X)c_2(X)).$$
On one hand, we have 
$$c(C)=(1+u)^4(1+2u) \mod u^4=1+6u+14u^2+16u^3, u\in H^2(C;\Z)$$
We have $\int_X u^4=\frac{1}{2}$ when we view $u\in H^2(C;\Z)\subset H^2(C;\Q)\simeq H^2_c(C;\Q)$. Finally, we have $\int_X c_4(X)=6$ and $\int_{X}c^2_1(X)c_2(X)=\int_X c_1^2(C)c_2(C)$ using the natural inclusion from $c^{\Q}_i(X)=c^{\Q}_i(W)\oplus c^{\Q}_i(C)$. Then by the Hirzebruch signature theorem, we have
$$\int_X L_2(X)=\sigma(X)=2.$$
Hence we can compute that $\int_X c_2^2(X)=106$. As a consequence, we have $\int_W c_2^2(W)=8$. Since $p_1(W)=2c_2(W)$, we have $\int_W p_1^2(W)=32$. 

Now we consider a new closed manifold $Y=W\cup_{\phi} W$, where $\phi:\RP^7\to \RP^7$ is the orientation reversing diffeomorphism induced from $(z_1,z_2,z_3.z_4)\to (\overline{z}_1,z_2,z_3,z_4)$. Then we have $\sigma(Y)=2$ and $\int_Y p_1^2(Y)=2\int_W p_1^2(W)=64$. Hence from  the Hirzebruch signature theorem
$$\frac{1}{45} \int_Y 7p_2(Y)-p_1^2(Y) = 2,$$
we have $\int_Y p_2(Y)=22$. 

We claim that $Y$ is a spin manifold. To prove $w_2(Y)=0$, it is sufficient to prove $\la w_2(Y),A\ra = 0$ for any $A\in H_2(Y;\Z/2)$. Since $W$ is simply connected, we have that $Y$ is also simply connected. Then by the universal coefficient theorem, Hurewicz theorem, and Whitney embedding theorem, we have that $H_2(Y;\Z/2)$ is generated by embedded spheres. Therefore we have that $H^2(Y;\Z/2)$ is generated by embedded spheres contained in one of the two copies of $W$ and the glued sphere from two disks in each $W$ whose boundary is nontrivial in $\pi_1(\RP^7)$. Assume $A\in H_2(Y;\Z/2)$ is represented by a map $f:S^2\to Y$, then $\la w_2(Y),A \ra =0$ is equivalent to that $f^*TY$ is trivial. If $f$ is contained in one of $W$, since $c_1^{\Z}(W)=0$, we have $f^*TY$ is trivial. If $f$ is the glued sphere, we may assume the separating circle $C$ is the circle in the $z_1$ complex line. We can fix a trivialization of $TY|_C$, which is derived from a trivialization of the normal bundle of $C$ in $\RP^7$, a chosen orientation of $C$, and outward normal vectors of the first copy of $W$. Since $\phi$ maps $C$ to $C$ in an orientation reversing manner, $\phi$ push the trivialization to the same (up to homotopy as it is multiplying the constant map $S^1\to \diag(-1,-1,1,\ldots,1)$) trivialization on the second copy of $W$. Therefore the second Stiefel-Whitney class of $f^*TY$ is measured by the sum of the discrepancy of the trivialization of $f^*TY$ on the two disks with the above trivialization, which is clearly a multiple of $2$, hence zero. Therefore $Y$ is a spin manifold. Now by the Atiyah-Singer index theorem for Dirac operators, we have 
$$\Z \ni \int_Y \hat{A}(Y) = \frac{1}{5760}\int_Y -4p_2(Y)+7p_1^2(Y) = \frac{1}{16}$$
which is a contradiction.
\end{proof}

\subsection{Unique exact orbifold fillings}
\begin{proposition}\label{prop:h-cob}
Let $M$ be an exact cobordism from $(S^{2n-1},\xi_{\std})$ to itself for $n\ge 2$, then $M$ is an $h$-cobordism.
\end{proposition}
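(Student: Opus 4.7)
The plan is to reduce the problem to the Eliashberg--Floer--McDuff theorem by capping off the negative end of $M$ with the standard exact filling of $(S^{2n-1},\xi_{\std})$, namely the unit ball $(D^{2n},\lambda_{\std})$ in $\C^n$. After rescaling the Liouville form on $M$ so that the induced contact form on $\partial_- M$ matches the one on $\partial D^{2n}$, the glued manifold $W := M \cup_{\partial_- M} D^{2n}$ inherits a global Liouville structure and is an exact filling of $(S^{2n-1},\xi_{\std}) = \partial_+ M$. By the Eliashberg--Floer--McDuff theorem, $W$ is diffeomorphic to $D^{2n}$; in particular, $W$ is contractible.

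From here I would extract the topological consequences. Since $n \ge 2$, both $\partial_\pm M \cong S^{2n-1}$ are simply connected, and van Kampen applied to the decomposition $W = M \cup_{\partial_- M} D^{2n}$ yields $\pi_1(M) \cong \pi_1(W) = 0$. The Mayer--Vietoris sequence for the same decomposition (both the cap and $W$ being contractible) immediately gives that the inclusion $\partial_- M \hookrightarrow M$ is an isomorphism on all homology groups; equivalently, $H_*(M,\partial_- M)=0$ by the long exact sequence of the pair.

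To deal with the other boundary, I invoke Lefschetz duality on the oriented cobordism $M$:
$$H^k(M,\partial_+ M) \cong H_{2n-k}(M,\partial_- M) = 0 \quad \text{for all } k.$$
Together with the universal coefficient theorem this forces $H_*(M,\partial_+ M)=0$, and the long exact sequence of $(M,\partial_+ M)$ shows that $\partial_+ M \hookrightarrow M$ is likewise a homology isomorphism. Since $M$, $\partial_- M$, and $\partial_+ M$ are simply connected CW complexes, the Whitehead theorem upgrades both boundary inclusions to homotopy equivalences, which is precisely the $h$-cobordism condition.

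The main obstacle is essentially nonexistent once Eliashberg--Floer--McDuff is at our disposal; the only point requiring any care is verifying that the Liouville gluing along $\partial_- M$ genuinely produces an exact filling of $(S^{2n-1},\xi_{\std})$ (as opposed to merely a strong one), but this is standard once the contact forms are matched by rescaling.
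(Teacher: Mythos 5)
Your argument is correct and is essentially the paper's own proof: cap $\partial_-M$ with the standard ball, apply Eliashberg--Floer--McDuff to the resulting exact filling, deduce $\pi_1(M)=0$ and $H_*(M,\partial_-M;\Z)=0$, and then transfer the vanishing to $(M,\partial_+M)$ via Lefschetz duality and the universal coefficient theorem. The only (cosmetic) differences are that you use Mayer--Vietoris and Whitehead where the paper uses the long exact sequence of the pair with excision and the Hurewicz theorem.
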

\begin{proof}
Since $M$ glued with $\D^{2n}$ is an exact filling of  $(S^{2n-1},\xi_{\std})$, which is diffeomorphic to $\D^{2n}$. Hence we have $\pi_1(M)=0$ and $H^*(M,\partial_-M;\Z)=0$ by the tautological long exact sequence of pairs and excision. Then by Lefschetz duality and the universal coefficient theorem, we have $H_*(M,\partial_-M;\Z)=H_*(M,\partial_+M;\Z)=0$. Then the Hurewicz theorem implies that $M$ is an $h$-cobordism.
\end{proof}

\begin{theorem}[\Cref{thm:unique}]\label{thm:2_unique}
Let $W$ be an exact orbifold filling of $(\RP^{2n-1},\xi_{\std})$ for $n\ge 3$ odd. Then $W$ is diffeomorphic to $\D^{2n}/(\Z/2)$. 
\end{theorem}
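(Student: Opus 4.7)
\emph{Proof plan.}
The strategy is to use the rigidity of orbifold symplectic cohomology to pin down the orbifold structure of $W$, and then to pass to the orbifold universal cover and invoke the Eliashberg--Floer--McDuff theorem.

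I would first apply the orbifold analogue of \Cref{prop:upper_bound} (see \Cref{rmk:orbifold}) to obtain $\dim_{\Q}H^{*}_{\CR}(W;\Q)=|\Conj(\Z/2)|=2$, with the two generators in Chen--Ruan degrees $0$ and $n$. Decomposing Chen--Ruan cohomology over the inertia orbifold and matching twisted-sector ranks against this total rank (the generator in Chen--Ruan degree $n$ additionally forces $\age$ of any nontrivial isotropy element to equal $n/2$, which for $\Z/2\subset U(n)$ only happens when the action is by $-\Id$ and the fixed locus is isolated), I would conclude that the orbifold locus of $W$ consists of a single point $p$ with isotropy $\Z/2$ acting as $-\Id$ on $T_pW$, and that the underlying space $|W|$ is $\Q$-acyclic. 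Any extra orbifold point or higher-order isotropy group would contribute additional twisted sectors incompatible with total Chen--Ruan rank $2$.

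Writing $W^{\circ}:=W\setminus\{p\}$, the orbifold Seifert--van Kampen theorem gives $\pi_1^{\orb}(W)\cong\pi_1(W^{\circ})$. The next step, which I expect to be the main obstacle, is to show that $\pi_1^{\orb}(W)=\Z/2$ with the boundary inclusion $\pi_1(\partial W)\to\pi_1^{\orb}(W)$ an isomorphism. I would proceed by a case analysis over finite-index subgroups $N\subset\pi_1^{\orb}(W)$ and their orbifold covers $W_N\to W$: if $W_N$ unfolds the local $\Z/2$ at $p$ but not the generator of $\pi_1(\partial W)$ (or vice versa), then $W_N$ is a connected strong filling of more than one copy of $\RP^{2n-1}$, contradicting the non-co-fillability of $(\RP^{2n-1},\xi_{\std})$ from \Cref{prop:B} (valid since $\C^n/(\Z/2)$ is terminal for $n\ge 3$); if $W_N$ unfolds both, then $W_N$ is a smooth connected exact filling of one or several copies of $S^{2n-1}$, so non-co-fillability of $(S^{2n-1},\xi_{\std})$ forces a single boundary component, and the Eliashberg--Floer--McDuff theorem \cite{mcduff1990structure} identifies $W_N\cong\D^{2n}$, forcing $N$ trivial and $[\pi_1^{\orb}(W):N]=2$. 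The residual case $\pi_1^{\orb}(W)=\{e\}$ (a bad orbifold) is ruled out by combining \Cref{thm:RP} with the orbifold Eliashberg--Floer--McDuff theorem of \cite{gironella2021exact}, so that $W$ cannot carry nontrivial isotropy in this scenario.

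Granted these, the orbifold universal cover $\widetilde W\to W$ is a smooth double cover of manifolds that unfolds $p$, with $\partial\widetilde W\cong S^{2n-1}$, and it inherits a smooth exact filling structure on $(S^{2n-1},\xi_{\std})$; Eliashberg--Floer--McDuff therefore gives $\widetilde W\cong\D^{2n}$ diffeomorphically. The deck involution $\tau$ is a smooth orientation-preserving involution of $\D^{2n}$ with a unique fixed point at which it acts by $-\Id$, restricting to the antipodal map on $\partial\D^{2n}=S^{2n-1}$. The equivariant slice theorem identifies $\tau$ with $-\Id$ on a small $\tau$-invariant ball around the fixed point; the complement is a free equivariant $h$-cobordism from $S^{2n-1}$ to $S^{2n-1}$, which for $n\ge 3$ is smoothly trivial by the equivariant $s$-cobordism theorem together with $\mathrm{Wh}(\Z/2)=0$. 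Hence $\tau$ is smoothly conjugate to the antipodal involution, and $W\cong\D^{2n}/(\Z/2)$ as orbifolds with boundary.
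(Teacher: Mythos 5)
There is a genuine gap, and it sits exactly where the paper has to work hardest. Your covering-space case analysis silently assumes that the singular point can be unfolded in some orbifold cover, i.e.\ that the local isotropy group $\Z/2$ at $p$ injects into $\pi_1^{\orb}(W)$. Since $\pi_1(\partial W)=\Z/2$ surjects onto $\pi_1^{\orb}(W)$ (by the Floer-theoretic argument of \Cref{prop:fundamental}), the residual possibility is $\pi_1^{\orb}(W)=0$ (or, relatedly, the local group dying in $\pi_1^{\orb}(W)$), and in that situation $W$ admits \emph{no} nontrivial orbifold cover: the orbifold point persists in every cover, so one can never reduce to a smooth filling and quote Eliashberg--Floer--McDuff. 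Your one-sentence dismissal of this case ``by combining \Cref{thm:RP} with the orbifold Eliashberg--Floer--McDuff theorem of \cite{gironella2021exact}'' does not work: \Cref{thm:RP} excludes smooth exact fillings only, and the orbifold results of \cite{gironella2021exact} (Theorems C and G) concern fillings of $(S^{2n-1},\xi_{\std})$ and the structure of singular points, neither of which rules out a simply connected (as an orbifold) exact orbifold filling of $\RP^{2n-1}$ with one $\Z/2$ point. This is precisely the case the paper attacks by a different route: writing $W$ as the cobordism $M$ (complement of the cone point), observing $H^2(M;\Z)$ is torsion-free while $c_1(\RP^{2n-1},\xi_{\std})\ne 0$ \emph{because $n$ is odd}, stacking copies of $M$ to force $\operatorname{rank} H^2(M;\Z)\ge 3$, and then running a pair-of-pants holomorphic-curve argument to get $\dim SH_+^*(W;\Q)\ge 3$, contradicting \Cref{prop:SS}. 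Tellingly, your proposal never uses the hypothesis that $n$ is odd; if your argument were complete it would prove the statement for all $n\ge 3$, which is more than the paper can establish.

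Two secondary problems. First, your opening step derives the structure of the singular locus from an orbifold version of \Cref{prop:upper_bound}, but \Cref{rmk:orbifold} makes clear that this requires orbifold symplectic cohomology machinery (ring structure, polyfold perturbations) not established here; the paper instead simply cites \cite[Theorem G]{gironella2021exact} for ``exactly one orbifold point modelled on $\C^n/(\Z/2)$'', and you should do the same. Second, in the subcase where a cover unfolds the boundary but not the orbifold point(s), removing neighborhoods of the singular points produces \emph{concave} boundary components, so the result is a Liouville cobordism rather than a strong filling of a disjoint union of convex boundaries; \Cref{prop:B} (co-fillability) therefore does not apply as stated. The paper's fix is to glue the orbifold cones back in and contradict \cite[Theorem C]{gironella2021exact} instead. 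Your endgame (deck involution on $\D^{2n}$, slice theorem, $s$-cobordism with $\mathrm{Wh}(\Z/2)=0$) is essentially the paper's final step and is fine, but the proof as proposed is incomplete without a genuine argument for the trivial-$\pi_1^{\orb}$ case.
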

\begin{proof}
Let $W$ be an exact orbifold filling, by \cite[Theorem G]{gironella2021exact}, there is exactly one orbifold point modelled on $\C^n/(\Z/2)$. Then, outside a small neighborhood of the singularity,  we get an exact cobordism $M$ with both concave boundary $\partial_-M$ and convex boundary $\partial_+M$ are $(\RP^{2n-1},\xi_{\std})$. By the same argument of \Cref{prop:fundamental}, we know that $\pi_1(\RP^{2n-1})\to \pi^{\orb}_1(W)$ is surjective. Since $\pi_1(\RP^{2n-1})\to \pi_1^{\orb}(\C^n/(\Z/2))$ is an isomorphism, by the van-Kampen theorem, we have $\pi^{\orb}_1(W)=\pi_1(M)$. Therefore we have $\pi^{\orb}_1(W)=\pi_1(M)=0$ or $\pi^{\orb}_1(W)=\pi_1(M)=\Z/2$. If $\pi^{\orb}_1(W)=\pi_1(M)=0$, then $H^2(M;\Z)$ has no torsion. Since $c_1(\RP^{2n-1},\xi_{\std})\ne 0$ when $n$ is odd, we must have $\rank H^2(M;\Z)\ge 1$. We can stack several copies of $M$ together to get an orbifold filling of $(\RP^{2n-1},\xi_{\std})$, such that the cobordism $M$ from deleting the orbifold point satisfying $\rank H^2(M;\Z) \ge 3$. 

Now we use the homotopy argument in the proof of \cite[Theorem B]{gironella2021exact}, namely we consider the moduli space of thrice-punctured sphere solving the perturbed Cauchy-Riemann equation with two positive punctures and one negative puncture, i.e.\ the moduli space defining the pair-of-pants product. By \Cref{prop:upper_bound}, $x_{(\Id)}+a$ represents a class in $SH_+^*(W;\Q)$ such that $x_{(\Id)}+a$ is sent to $|G|$ in the composition $SH_+^*(W;\Q)\to H^{*+1}_{\CR}(W;\Q)\to H^0_{\CR}(W;\Q)$. We plug in $x_{(\Id)}+a$ as one of the inputs and a constant orbit $y$ representing a class in $H^2(M;\Q)\subset H^2_{\CR}(W;\Q)$ as another input. Then by analyzing the degeneration, the output is a cochain $z$ from the symplectic cohomology cochain complex, such that 
\begin{equation}\label{eqn:prod}
    \delta(z)=m(\delta_{+,0}(x_{(\Id)}+a),y)
\end{equation}
where $\delta_{+,0}$ is the cochain map defining $SH_+^*(W;\Q)\to H_{\CR}^{*+1}(W;\Q)$ and $m$ is the map defining the pair-of-pants product. By our assumption, $\delta_{+,0}(x_{(\Id)}+a)$ is $|G|$ plus constant orbits representing a class $A$ in the kernel of $H^*_{\CR}(W;\Q)\to H^0_{\CR}(W;\Q)$. As both $\delta_{+,0}(x_{(\Id)}+a)$, $y$ are constant orbits, and $m(\delta_{+,0}(x_{(\Id)}+a),y)$ also consists of constant orbits. Moreover, since $y$ is from $M$, no constant holomorphic curve concentrated at the singularity appears in the above analysis. We write $z=z_++z_0$, where $z_+$ is the positive cochain part and $z_0$ is the Morse theory part.  Then \eqref{eqn:prod} implies that 
$$\delta_+(z_+)=0, \quad \delta_{+,0}(z_+) = m(|G|+A,y) \in H^*_{\CR}(W;\Q)$$
where $\delta_+$ is the differential on the positive cochain complex. Since $m(|G|+A,y)$ does not involve constant holomorphic curves concentrated at the singularity, $m(|G|+A,y)$ is the cup-product of $y$ and $|G|+A'$ in the cohomology $H^*(W;\Q)\subset H_{\CR}^*(W;\Q)$, where $A'$ is the $H^*(W;\Q)$-component of $A$. In particular, $m(|G|+A,\cdot)$ is an injective map from $H^*(M;\Q)$. Therefore we have that $\dim SH^*_+(W;\Q)$ is at least $\dim H^*(M;\Q)\ge 3$ by $z_+$ above. This contradicts with \Cref{prop:SS}.\footnote{We get an instant contradiction if  \Cref{prop:upper_bound} holds for exact orbifolds, this boils down to establishing the pair-of-pants product for exact orbifolds. }

When  $\pi^{\orb}_1(W)=\pi_1(M)=\Z/2$. In this case $\pi_1(\RP^{2n-1})\to \pi^{\orb}_1(W)=\pi_1(M)=\Z/2$ is an isomorphism. We claim that $\Z/2=\pi_1(\partial_-M)\to \pi_1(W)$ is also an isomorphism. Otherwise, the map is zero. We can consider the universal cover of $M$, which an exact cobordism with convex boundary $(S^{2n-1},\xi_{\std})$ and concave end $(\RP^{2n-1},\xi_{\std})\sqcup (\RP^{2n-1},\xi_{\std})$. This would yield an exact orbifold filling of $(S^{2n-1},\xi_{\std})$ with  two orbifold points, contradicting \cite[Theorem C]{gironella2021exact}. Now since $\Z/2=\pi_1(\partial_-M)\to \pi_1(W)$ is an isomorphism, the universal cover $\widetilde{M}$ is an $h$-cobordism by \Cref{prop:h-cob}. In particular, $M$ itself an $h$-cobordism. Since the Whitehead group of $\Z/2$ is trivial, $M$ is an $s$-cobordism, hence a trivial cobordism as $n\ge 3$. As a consequence, we have that $W$ is diffeomorphic to $\D^n/(\Z/2)$.
\end{proof}

\section{More non-existence results}\label{s7}
\Cref{prop:upper_bound,prop:intersection_SS} provide evidences for the conjecture that exact orbifold fillings of $(S^{2n-1}/G,\xi_{\std})$ might be unique, namely $\C^n/G$, when it is a terminal singularity. If this conjecture is true, then in particular,  $(S^{2n-1}/G,\xi_{\std})$ has no exact filling when  $\C^n/G$ is terminal. In addition to the case of real projective spaces discussed in the previous section, we give various but non-exhaustive lists of cases where we can establish the non-existence of exact fillings using the topological information obtained in previous sections. The following are some of the arguments we use to arrive at a contradiction assuming there is an exact filling $W$:
\begin{enumerate}
    \item From enough non-vanishing maps $H^*(W;\Z)\to H^*(S^{2n-1}/G;\Z)$, we obtain a contradiction with $\dim H^*(W;\Q)=|\Conj(G)|$ using Lefschetz duality and the universal coefficient theorem. This can be achieved by finding enough non-vanishing Chern classes of $(S^{2n-1}/G,\xi_{\std})$, which is the method used in 
     \cite{RP} or by exploiting the non-trivial product structure on $H^*(S^{2n-1}/G;\Z)$ as well as the cup product length estimate in \S \ref{s4},  e.g.\ \Cref{thm:U(n)}.
    \item Let $X$ be the compact symplectic manifold obtained by gluing a symplectic cap of dimension $4k$ and consider its signature $\sigma(X)$. Then a potential contradiction of having non-integral characteristic classes may arise using the the Hirzebruch signature theorem. Moreover, by \cite{MR2192936}, we have $\sigma(X)-\int_X v^2_{2k}=0\mod 8$, where $v^2_{2k}$ is an integral lift of the middle Wu class, which always exists for almost complex manifolds. This also provides a place to look for a contradiction.
    \item If $c_1^{\Q}(W)=0$, then the boundary rational grading is a global rational grading and the isomorphism $SH^*_+(W;\Q)\simeq H^{*+1}(W;\Q)$ is a graded isomorphism. We can get a contradiction if $SH^*_+(W;\Q)$ is not supported in integer degrees.
\end{enumerate}

\subsection{Preliminary facts}
\begin{proposition}\label{prop:dual}
Assume $\C^n/G$ is an isolated quotient singularity. Let $W$ be a compact oriented manifold such that $\partial W = S^{2n-1}/G$. Then $ H^i(W;\Q)= H^{2n-i}(W;\Q)$ for $0<i<2n$. 
\end{proposition}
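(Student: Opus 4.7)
The plan is to reduce the statement to Poincaré--Lefschetz duality by first establishing that $\partial W = S^{2n-1}/G$ is a rational homology sphere.

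Since $\C^n/G$ has an isolated singularity at the origin, every non-trivial element of $G$ acts without fixed points on $S^{2n-1}$, so the action is free and $S^{2n-1}/G$ is a closed oriented $(2n-1)$-manifold (orientability follows from $G\subset U(n)\subset SO(2n)$). I would then invoke the transfer map for the finite covering $S^{2n-1}\to S^{2n-1}/G$ to identify $H^*(S^{2n-1}/G;\Q)\cong H^*(S^{2n-1};\Q)^G$; since $G$ preserves orientation, it acts trivially on $H^*(S^{2n-1};\Q)$, so $H^i(\partial W;\Q)=0$ for $0<i<2n-1$.

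Next, assuming without loss of generality that $W$ is connected (any closed components would already satisfy Poincaré duality), Lefschetz duality together with the universal coefficient theorem over $\Q$ gives
\[ H^i(W,\partial W;\Q) \;\cong\; H_{2n-i}(W;\Q) \;\cong\; H^{2n-i}(W;\Q) \]
for every $i$. The cohomology long exact sequence of the pair reads
\[ \cdots \to H^{i-1}(\partial W;\Q) \to H^i(W,\partial W;\Q) \to H^i(W;\Q) \to H^i(\partial W;\Q) \to \cdots. \]
For $2\le i\le 2n-2$ both flanking terms vanish by the rational homology sphere property, so the middle map is an isomorphism and $H^i(W;\Q)\cong H^{2n-i}(W;\Q)$ follows directly.

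The endpoint cases $i=1$ and $i=2n-1$ remain; by symmetry it suffices to treat $i=2n-1$. There the relevant segment becomes
\[ 0 \to H^{2n-1}(W,\partial W;\Q) \to H^{2n-1}(W;\Q) \to H^{2n-1}(\partial W;\Q) \xrightarrow{\;\delta\;} H^{2n}(W,\partial W;\Q) \to 0, \]
using $H^{2n}(W;\Q)=0$ since $\partial W\ne\emptyset$. Under Lefschetz duality the connecting map $\delta$ corresponds to the inclusion-induced map $H_0(\partial W;\Q)\to H_0(W;\Q)$, which is an isomorphism $\Q\xrightarrow{\sim}\Q$. Hence $H^{2n-1}(W,\partial W;\Q)\to H^{2n-1}(W;\Q)$ is an isomorphism, and combining with $H^{2n-1}(W,\partial W;\Q)\cong H^1(W;\Q)$ yields the remaining case. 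The argument is entirely formal; the only mild bookkeeping lies in identifying the connecting homomorphism at the endpoint, and no deeper ingredient is needed.
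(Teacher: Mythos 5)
Your proof is correct and is essentially the paper's argument: the paper likewise notes that $S^{2n-1}/G$ is a rational homology sphere, feeds this into the long exact sequence relating $H^*_c(W;\Q)=H^*(W,\partial W;\Q)$, $H^*(W;\Q)$ and $H^*(\partial W;\Q)$, and concludes via Lefschetz duality plus the universal coefficient theorem over $\Q$. The only difference is cosmetic — you spell out the transfer argument for the boundary and the endpoint cases $i=1,2n-1$, which the paper treats tersely.
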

\begin{proof}
Note that we have the long exact sequence 
$$\ldots \to H^*_c(W;\Q)\to H^*(W;\Q) \to H^*(\partial W; \Q) \to \ldots$$
Since $H^*(S^{2n-1}/G;\Q)=H^*(S^{2n-1};\Q)$, we have $H^{i}_c(W;\Q)=H^{i}(W;\Q)$ when $0<i<2n$. Then by Lefschetz duality, we have $H^i(W;\Q)=H^{2n-i}_c(W;\Q)=H^{2n-i}(W;\Q)$ for $0<i<2n$.
\end{proof}

\begin{proposition}\label{prop:restriction'}
Assume $\C^n/G$ is an isolated quotient singularity. Suppose that both $H^*(W;\Q)$ and $H^{*}(BG;\Z)$ are supported in even degrees. Let $\phi^{*}:H^*(W;\Z)\to H^*(S^{2n-1}/G;\Z),\phi^*_{\Tor}:\Tor H^*(W;\Z)\to H^*(S^{2n-1}/H;\Z)$ denote the maps given by fullbacks along the inclusion of the boundary. For $0<i<n$, we have
$$|\Ima \phi^{2n-2i}_{\Tor}|=|\coker \phi^{2i}|.$$
\end{proposition}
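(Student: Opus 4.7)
My plan is to show both sides equal $|T_{2n-2i}|/|T_{2i+1}|$, where $T_k := \Tor H^k(W;\Z)$, via the long exact sequence of the pair $(W,\partial W)$, Lefschetz duality, and the universal coefficient theorem. The two even-degree hypotheses will force enough vanishing that the two quantities appear as orders of the same torsion quotient.

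Because $H^*(W;\Q)$ sits in even degrees, $\rank H^k(W;\Z) = 0$ for $k$ odd, so the universal coefficient theorem together with Lefschetz duality $H^k(W,\partial W;\Z)\cong H_{2n-k}(W;\Z)$ give $H^{2i+1}(W;\Z) = T_{2i+1}$ and $H^{2i+1}(W,\partial W;\Z) \cong H_{2n-2i-1}(W;\Z) = T_{2n-2i}$. The hypothesis that $H^*(BG;\Z)$ sits in even degrees, combined with the identification $H^k(\partial W;\Z) = H^k(BG;\Z)$ for $0 < k < 2n-1$ from \Cref{prop:cohomology}, kills every odd-degree cohomology group of $\partial W$ except possibly $H^{2n-1}(\partial W;\Z) = \Z$.

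To compute $|\Ima \phi^{2n-2i}_{\Tor}|$, the LES at degree $2n-2i$ reads, using $H^{2n-2i-1}(\partial W;\Z) = 0$,
$$0 \to H^{2n-2i}(W,\partial W;\Z) \hookrightarrow H^{2n-2i}(W;\Z) \stackrel{\phi^{2n-2i}}{\to} H^{2n-2i}(\partial W;\Z).$$
Thus $\ker \phi^{2n-2i} \cong H^{2n-2i}(W,\partial W;\Z) \cong H_{2i}(W;\Z)$, whose torsion subgroup is $T_{2i+1}$. Since $\ker \phi^{2n-2i}_{\Tor} = \ker \phi^{2n-2i}\cap T_{2n-2i}$ is precisely that torsion subgroup, I get $|\Ima \phi^{2n-2i}_{\Tor}| = |T_{2n-2i}|/|T_{2i+1}|$. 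For $|\coker \phi^{2i}|$, the LES at degree $2i$ becomes
$$H^{2i}(W;\Z) \stackrel{\phi^{2i}}{\to} H^{2i}(\partial W;\Z) \to T_{2n-2i} \stackrel{\psi}{\to} T_{2i+1} \to H^{2i+1}(\partial W;\Z),$$
after the identifications above. For $1\le i \le n-2$ the last term vanishes, so $\psi$ is surjective. For $i = n-1$, the last term is $\Z$, but the continuation $H^{2n-1}(W;\Z) \to H^{2n-1}(\partial W;\Z) = \Z \to H^{2n}(W,\partial W;\Z) = \Z \to H^{2n}(W;\Z) = 0$ forces the map $\Z \to \Z$ to be an isomorphism, so $H^{2n-1}(W;\Z)\to \Z$ is zero and $\psi$ is surjective in this case as well. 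In either case $|\coker \phi^{2i}| = |\ker \psi| = |T_{2n-2i}|/|T_{2i+1}|$, matching the computation of $|\Ima \phi^{2n-2i}_{\Tor}|$ and completing the proof.

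The argument is essentially bookkeeping with orders of finite abelian groups, with the universal coefficient theorem and Lefschetz duality doing all the heavy lifting; the only delicate point is the top-degree correction for $i = n-1$ ensuring that $\psi$ remains surjective despite $H^{2n-1}(\partial W;\Z) = \Z$ being nonzero.
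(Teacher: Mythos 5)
Your proof is correct and follows essentially the same route as the paper: the long exact sequence of the pair $(W,\partial W)$ combined with Lefschetz duality and the universal coefficient theorem, with both $|\Ima \phi^{2n-2i}_{\Tor}|$ and $|\coker \phi^{2i}|$ identified with the common quantity $|\Tor H^{2n-2i}(W;\Z)|/|H^{2i+1}(W;\Z)|$. Your explicit handling of the $i=n-1$ edge case, where $H^{2n-1}(\partial W;\Z)=\Z$ is nonzero, is a small point the paper leaves implicit (there one can also just note that $H^{2n-1}(W;\Z)$ is torsion, so it maps trivially to $\Z$), but the argument is otherwise the same.
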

\begin{proof}
By \Cref{prop:cohomology}, $H^*(S^{2n-1}/G;\Z)=H^*(BG;\Z)$ whenever $*<2n-1$. Therefore, by our assumption that $H^{*}(BG;\Z)$ is supported in even degrees,  we have exact sequences
$$0 \to  H^{2i}(W,\partial W;\Z) \to  H^{2i}(W;\Z)\to H^{2i}(S^{2n-1}/G;\Z)  \to H^{2i+1}(W,\partial W;\Z) \to H^{2i+1}(W;\Z) \to 0,    $$
$$0 \to  H^{2n-2i}(W,\partial W;\Z) \to  H^{2n-2i}(W;\Z)\to H^{2n-2i}(S^{2n-1}/G;\Z)  \to H^{2n-2i+1}(W,\partial W;\Z) \to H^{2n-2i+1}(W;\Z) \to 0.    $$
Let $k=\dim H^{2i}(W;\Q)=\dim H^{2n-2i}(W;\Q)$. By Lefschetz duality and the universal coefficient theorem, we have 
$$H^{2i}(W,\partial W;\Z) = H_{2n-2i}(W;\Z) = \Z^k\oplus \Tor H^{2n-2i+1}(W;\Z)=\Z^k\oplus H^{2n-2i+1}(W;\Z),$$
$$H^{2n-2i}(W,\partial W;\Z) = H_{2i}(W;\Z) = \Z^k\oplus \Tor H^{2i+1}(W;\Z)=\Z^k\oplus H^{2i+1}(W;\Z),$$
$$H^{2i+1}(W,\partial W;\Z) = H_{2n-2i-1}(W;\Z) =  \Tor H^{2n-2i}(W;\Z),$$
$$H^{2n-2i+1}(W,\partial W;\Z) = H_{2i-1}(W;\Z) =  \Tor H^{2i}(W;\Z).$$
Hence the two long exact sequences become
$$0 \to \Z^k\oplus H^{2n-2i+1}(W;\Z) \to  \Z^k\oplus \Tor H^{2i}(W;\Z)\to H^{2i}(S^{2n-1}/G;\Z) \to \Tor H^{2n-2i}(W;\Z), \to H^{2i+1}(W;\Z) \to 0,    $$
$$0 \to \Z^k\oplus  H^{2i+1}(W;\Z)\to  \Z^k\oplus \Tor H^{2n-2i}(W;\Z)\to H^{2n-2i}(S^{2n-1}/G;\Z) \to \Tor H^{2i}(W;\Z) \to H^{2n-2i+1}(W;\Z) \to 0.    $$
From the first exact sequence, we have
$$|\Tor H^{2n-2i}(W;\Z)|=|H^{2i+1}(W;\Z)|\cdot |\coker \phi^{2i}|.$$
From the second exact sequence, we have a short exact sequence,
$$0\to  H^{2i+1}(W;\Z) \to \Tor H^{2n-2i}(W;\Z) \to \Ima \phi^{2n-2i}_{\Tor}\to 0,$$
hence 
$$|\Tor H^{2n-2i}(W;\Z)|=|H^{2i+1}(W;\Z)|\cdot |\Ima \phi^{2n-2i}_{\Tor}|,$$
which implies the claim.
\end{proof}

That $H^*(W;\Q)$ is supported in even degrees holds for our hypothetical fillings if the singularity is terminal by \Cref{prop:upper_bound} and that $H^*(BG;\Z)$ is supported in even degrees holds for cyclic or $A,D,E$ type groups. A special case of \Cref{prop:restriction'} is when $n$ is even and $i=n/2$. If $|H^{n}(S^{2n-1}/G;\Z)|$ is not a square, then we have $|\Ima \phi^{n}_{\Tor}|\ne |\Ima \phi^{n}|$, i.e.\ $H^n(W;\Q)\ne 0$. If one can argue that $\phi^{2i},\phi^{2n-2i}$ are both surjective and $H^{2i}(S^{2n-1}/G;\Z)=H^{2n-2i}(S^{2n-1}/G;\Z) \ne 0$,\footnote{The first equality always holds by Poincar\'e duality and the universal coefficient theorem.} then we must have $H^{2i}(W;\Q)= H^{2n-2i}(W;\Q)\ne 0$. This is the argument for \cite[Theorem 1.1]{RP}. More generally, we have

\begin{proposition}\label{prop:restriction}
Under the same assumptions of \Cref{prop:restriction'}. Suppose we have $H^{2i}(S^{2n-1}/G;\Z)=H^{2n-2i}(S^{2n-1}/G;\Z)=\Z/m$ where $1<i<n$. Let $p$ be a prime number that divides $m$. If we know that both $\Ima \phi^{2i},\Ima \phi^{2n-2i}$ contain elements which are non-zero modulo $p$, then $H^{2i}(W;\Z),H^{2n-2i}(W;\Z)$ can not  be torsion.
\end{proposition}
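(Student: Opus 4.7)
The plan is to reason arithmetically using the key identity from \Cref{prop:restriction'}, namely
\[
|\Ima\phi^{2n-2i}_{\Tor}| = |\coker\phi^{2i}| \quad \text{and} \quad |\Ima\phi^{2i}_{\Tor}| = |\coker\phi^{2n-2i}|,
\]
and extract a contradiction from the assumption that either $H^{2i}(W;\Z)$ or $H^{2n-2i}(W;\Z)$ is torsion. The symmetric second identity here follows from \Cref{prop:restriction'} applied with $i$ replaced by $n-i$.

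First I will translate the mod-$p$ hypothesis into divisibility statements about orders. Every subgroup $H \le \Z/m$ is cyclic of order $k \mid m$, and such an $H$ sits inside $p(\Z/m)$ if and only if $p \mid m/k$. Equivalently, $H$ contains an element nonzero modulo $p$ if and only if $p \nmid |(\Z/m)/H|$, which is also equivalent to $p \mid |H|$ (since then $H$ surjects onto $\Z/p$ under the projection $\Z/m \twoheadrightarrow \Z/p$). Applying this to $H = \Ima\phi^{2i}$ and $H = \Ima\phi^{2n-2i}$, the hypothesis yields that both $|\coker\phi^{2i}|$ and $|\coker\phi^{2n-2i}|$ are coprime to $p$, while simultaneously both $|\Ima\phi^{2i}|$ and $|\Ima\phi^{2n-2i}|$ are divisible by $p$.

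Now I assume for contradiction that $H^{2n-2i}(W;\Z)$ is torsion. Then $\phi^{2n-2i} = \phi^{2n-2i}_{\Tor}$, and \Cref{prop:restriction'} gives
\[
|\Ima\phi^{2n-2i}| = |\Ima\phi^{2n-2i}_{\Tor}| = |\coker\phi^{2i}|,
\]
which is coprime to $p$ by the previous step. This contradicts $p \mid |\Ima\phi^{2n-2i}|$. The same argument with the roles of $2i$ and $2n-2i$ interchanged rules out $H^{2i}(W;\Z)$ being torsion. All of the genuine work is already embedded in \Cref{prop:restriction'}; the remaining content is the purely group-theoretic unpacking above, so there is no real obstacle, only bookkeeping of which subgroup of $\Z/m$ does or does not sit inside the index-$p$ subgroup.
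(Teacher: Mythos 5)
Your proof is correct and takes essentially the same route as the paper: both arguments reduce to the counting identity $|\Ima \phi^{2n-2i}_{\Tor}|=|\coker \phi^{2i}|$ (and its symmetric version) from \Cref{prop:restriction'}, note that the mod-$p$ hypothesis forces $p\nmid|\coker\phi^{2i}|,|\coker\phi^{2n-2i}|$ while $p\mid|\Ima\phi^{2i}|,|\Ima\phi^{2n-2i}|$, and derive a divisibility contradiction once one of the groups is assumed torsion. One harmless slip: your parenthetical claim that containing an element nonzero mod $p$ is \emph{equivalent} to $p\mid |H|$ fails when $p^2\mid m$ (e.g.\ $H=2\Z/4\subset \Z/4$), but you only invoke the valid implications ($p\nmid |(\Z/m)/H|$, hence $p\mid |H|$), so the argument is unaffected.
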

\begin{proof}
Under the assumption, we know that $\coker \phi^{2i},\coker \phi^{2n-2i}$ can only be $\Z/k$, where $k$ is a factor of $m$ and is coprime to $p$. If $H^{2n-2i}(W;\Z)$ is torsion, by \Cref{prop:restriction'}, we know that $|\Ima \phi^{2n-2i}|=|\Ima \phi^{2n-2i}_{\Tor}|=|\coker \phi^{2i}|$ is coprime to $p$, contradicting with $|\Ima \phi^{2n-2i}|\cdot |\coker \phi^{2n-2i}|=m$, which is divisible by $p$.
\end{proof}

Finally, we recall the orbifold Hirzebruch signature theorem for orbifolds with isolated singularities following \cite{MR236952,orbifold_signature}. Assume $G\subset U(n)$, such that $\C^n/G$ is an isolated singularity. For each $g\in G\backslash \{\Id\}$, $g$ can be diagonalized as $\diag(\theta^1_g,\ldots,\theta^n_g)$ for $0<\theta^j_g<2\pi$. Then we define
$$L_{\C^n/G}:=\frac{1}{|G|}\sum_{\substack{g\in G\backslash\{\Id\}\\ g\ne -\Id }} \prod_{j=1}^n(\i \tan(\theta^j_g/2))^{-1}=\sum_{\substack{(g)\in \Conj(G)\backslash\{(\Id)\}\\ (g)\ne (-\Id) }} \frac{1}{|C_g|}\prod_{j=1}^n(\i \tan(\theta^j_g/2))^{-1}.$$
Here $\tan(\pi/2)$ is defined to be $\infty$ (this also explains why we drop $g=-\Id$). In particular, when $G=\Z/2=\{\Id,-\Id\}$, we define $L_{\C^n/G}=0$.
\begin{proposition}[\cite{orbifold_signature}]\label{prop:orb_sig}
Assume $X$ is $4n$ dimensional oriented orbifold with only isolated singularities modeled on $\C^{2n}/G_i$ with the complex orientation for $1\le i \le k$. Then we have
$$\sigma(X)=\int_X L(TX)+\sum_{i=1}^k L_{\C^{2n}/G_i}.$$
\end{proposition}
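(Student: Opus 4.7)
My plan is to realize $\sigma(X)$ as the index of the signature operator on the orbifold $X$ and then apply Kawasaki's orbifold index theorem, which expresses the index as an integral over the inertia orbifold. Since the singularities of $X$ are isolated, the inertia orbifold decomposes as the union of $X$ itself (the identity sector) with finitely many point sectors, one for each nontrivial conjugacy class at each singular point $p_i$. The identity sector contributes the usual Atiyah--Singer integrand, which for the signature operator is the Hirzebruch $L$-class evaluated on $TX$ (built via orbifold Chern--Weil theory), and this produces $\int_X L(TX)$. The remaining work is to identify the point contributions with $L_{\C^{2n}/G_i}$. An equivalent route, which I would use to double-check, is to replace a small neighborhood of $p_i$ with the quotient of a $G_i$-invariant ball $B \subset \C^{2n}$ and invoke the classical Atiyah--Singer $G$-signature theorem on $B$, then average.

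First I would verify that $\sigma(X)$, defined via the intersection form on the middle degree de Rham (equivalently singular) cohomology of the underlying space $|X|$, coincides with the index of the signature operator $D^+:\Omega^+(X)\to \Omega^-(X)$ acting on orbifold differential forms. This is Satake--Kawasaki Hodge theory for compact oriented orbifolds and gives $\sigma(X)=\mathrm{ind}(D^+)$. Writing the Kawasaki formula
\begin{equation*}
\mathrm{ind}(D^+)=\int_X L(TX)+\sum_i\sum_{(g)\in\Conj(G_i)\setminus\{(\Id)\}} \nu(g,p_i),
\end{equation*}
the identity sector produces the first summand, and the second accounts for the orbifold defect.

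Next I would compute $\nu(g,p_i)$ for a nontrivial $g\in G_i$ acting diagonally on the tangent space $\C^{2n}$ with eigenvalues $e^{\i\theta_g^j}$. By the isolated-fixed-point specialization of the $G$-signature theorem,
\begin{equation*}
\nu(g,p_i)=\frac{1}{|C_g|}\prod_{j=1}^{2n}\frac{e^{\i\theta_g^j/2}+e^{-\i\theta_g^j/2}}{e^{\i\theta_g^j/2}-e^{-\i\theta_g^j/2}}=\frac{1}{|C_g|}\prod_{j=1}^{2n}\frac{1}{\i\tan(\theta_g^j/2)},
\end{equation*}
where the factor $1/|C_g|$ arises from summing over a conjugacy class inside the averaged Kawasaki formula (equivalently, from the isotropy of the twisted sector). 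Observe that when $g=-\Id$ each $\theta_g^j=\pi$, so each factor vanishes and the contribution is automatically $0$; this is why that term is excluded from the definition of $L_{\C^{2n}/G_i}$. Summing these contributions reproduces exactly $\sum_i L_{\C^{2n}/G_i}$ as defined.

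The step I expect to be most delicate is the rigorous identification of the orbifold signature with the analytic index, and the verification that Kawasaki's theorem, applied to the signature complex, truly yields the local $G$-signature defect at each singular point (rather than, say, a contribution from a strictly positive-dimensional stratum of the inertia orbifold). With only isolated singularities the inertia stratification is maximally simple, so the defect computation is just the classical one of Atiyah--Singer; the main care is in handling signs and the $\mathbf{i}\tan$ convention so that the output matches the normalization in the definition of $L_{\C^n/G}$. As an independent cross-check, I would specialize to $G_i=\Z/2$ (where $L_{\C^{2n}/G_i}=0$) and to the global quotient examples built from the symplectic caps in \S\ref{ss:property}, where both sides can be evaluated explicitly.
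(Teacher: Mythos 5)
The paper offers no proof of this proposition: it is recalled verbatim from Kawasaki's orbifold (V-manifold) signature theorem, cited as \cite{orbifold_signature}, with the local defect at an isolated singular point taken from Atiyah--Singer III. Your outline correctly reconstructs the standard argument behind that citation --- Satake--Kawasaki Hodge theory identifying $\sigma(X)$ with the index of the signature operator, Kawasaki's index theorem whose identity sector gives $\int_X L(TX)$, and the isolated twisted sectors contributing $\frac{1}{|C_g|}\prod_{j}\bigl(\i\tan(\theta^j_g/2)\bigr)^{-1}$, which matches the paper's normalization of $L_{\C^{2n}/G_i}$ including the vanishing contribution at $g=-\Id$. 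The only caveat is your proposed cross-check of ``applying the classical $G$-signature theorem on a $G_i$-invariant ball and averaging'': the closed-manifold $G$-signature theorem does not apply directly to a manifold with boundary (one needs the Atiyah--Patodi--Singer eta-invariant refinement, the defect being essentially the eta invariant of $S^{4n-1}/G_i$), but your primary Kawasaki route does not depend on this heuristic.
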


\subsection{General results}
Before moving to the special cases of cyclic groups and finite subgroups of $SU(2)$, we first state some general results on the non-existence of exact fillings. 

\begin{theorem}[\Cref{thm:J}]\label{thm:2_conj}
Assume $\C^n/G$ is an isolated terminal singularity with $n$ odd and $|\Conj(G)|$ even, then the contact link $(S^{2n-1}/G,\xi_{\std})$ has no exact filling.
\end{theorem}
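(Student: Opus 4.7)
The plan is to derive a parity contradiction for the total Betti number $\dim H^*(W;\Q)$ of a hypothetical exact filling $W$, using only the dimension count from \Cref{prop:upper_bound} together with the Poincar\'e-type duality of \Cref{prop:dual}.

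First, I would suppose for contradiction that $W$ is an exact filling of $(S^{2n-1}/G,\xi_{\std})$. Since $\C^n/G$ is terminal and $|G|$ is automatically invertible in $\Q$, \Cref{prop:upper_bound} gives that $H^*(W;\Q)$ is a finitely generated $\Q$-vector space of total dimension $|\Conj(G)|$, supported entirely in even degrees. Set $b_i := \dim_{\Q} H^i(W;\Q)$. By \Cref{prop:dual}, $b_i = b_{2n-i}$ for $0<i<2n$; moreover $b_0 = 1$ since $W$ is connected and $b_{2n}=0$ since $\partial W \ne \emptyset$.

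Next I carry out the key parity count. Because $H^*(W;\Q)$ is concentrated in even degrees, the only possibly nonzero Betti numbers are $b_0, b_2, b_4, \ldots, b_{2n-2}$. The duality identifies $b_{2i}$ with $b_{2(n-i)}$ for $1 \le i \le n-1$, so the involution $i \mapsto n-i$ on the index set $\{1,\ldots,n-1\}$ governs the pairing. A fixed point of this involution would require $i = n/2$, which is impossible precisely because $n$ is odd. Thus the $n-1$ indices pair up freely and
\[
\sum_{i=1}^{n-1} b_{2i} \;=\; 2 \sum_{i=1}^{(n-1)/2} b_{2i},
\]
an even number. Adding $b_0 = 1$ yields that $\dim H^*(W;\Q)$ is odd.

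Finally, this contradicts $\dim H^*(W;\Q) = |\Conj(G)|$, which is even by hypothesis, completing the proof. There is essentially no analytic obstacle here: the whole argument is a short parity observation built on the earlier Floer-theoretic dimension formula and Lefschetz duality. The one point worth stating cleanly is that the duality involution on the nonzero even-degree indices is fixed-point free when $n$ is odd, which is elementary.
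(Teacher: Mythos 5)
Your proof is correct and is essentially the paper's own argument: the paper likewise combines \Cref{prop:upper_bound} (total dimension $|\Conj(G)|$, even-degree support) with \Cref{prop:dual} and the oddness of $n$ to conclude that $\dim H^*(W;\Q)$ is odd, a contradiction. Your write-up just makes explicit the bookkeeping ($b_0=1$, $b_{2n}=0$, and the fixed-point-free pairing of the even degrees) that the paper leaves implicit.
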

\begin{proof}
By \cref{prop:upper_bound}, $H^*(W;\Q)$ is supported in even degrees. By \Cref{prop:dual} and $n$ being odd, we have $\dim H^*(W;\Q)$ is odd, contradicting with \Cref{prop:upper_bound}, i.e.\ $\dim H^*(W;\Q)=|\Conj(G)|$.
\end{proof}

\begin{theorem}\label{thm:CY}
For any isolated terminal singularity $\C^n/G$ with $G\not\subset SU(n)$, we have $(S^{2n-1}/G,\xi_{\std})$ has no Calabi-Yau filling, namely there is no strong filling $W$ with $c_1^{\Q}(W)=0$.
\end{theorem}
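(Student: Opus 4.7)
The plan is to argue that a hypothetical Calabi-Yau filling $W$ would force a generator of positive symplectic cohomology to sit in a non-integral rational degree, contradicting the identification of $SH^{*}_+(W;\Q)$ with the ordinary cohomology of $W$. The assumption $G\not\subset SU(n)$ is used precisely to produce such a fractional degree: it guarantees some $g\in G$ with $\det(g)\ne 1$, equivalently with $\age(g)\notin \Z$.

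First I would observe that when $c_1^{\Q}(W)=0$, the trivialization of $\det_{\C}(\oplus^N \xi_{\std})$ used in \S\ref{ss:reeb} to define the rational boundary Conley-Zehnder index extends over $W$. Hence the boundary grading $|{-}|^{\partial}=n-\mu_{\CZ}$ of \S\ref{s3} is promoted from a filtration to an honest $\Q$-grading on $SH^{*}(W;\Q)$, and both the differential and the tautological exact sequence \eqref{eqn:LES} preserve this grading (with $H^*(W;\Q)$ sitting in integer degrees). In the strong filling setting the necessary transversality issues for positive symplectic cohomology and for the point-constraint moduli spaces used in \Cref{prop:upper_bound} are handled by virtual perturbations in the sense of \cite{pardon}, exactly as discussed in the preamble to \S\ref{s5}.

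Next I would run the arguments of \Cref{prop:SS} and \Cref{prop:upper_bound} verbatim in this setting. The terminality assumption makes the boundary filtration (now a $\Q$-grading) compatible with the differential, the first $|G|+1$ pages of the spectral sequence are filling-independent, and comparison with $\C^n/G$ via \Cref{thm:quotient_SH} gives $SH^{*}(W;\Q)=0$ together with an isomorphism of $\Q$-graded spaces
\begin{equation*}
SH^{*}_+(W;\Q)\;\simeq\;\bigoplus_{(g)\in \Conj(G)}\Q\cdot [x_{(g)}]\;\simeq\;H^{*+1}(W;\Q).
\end{equation*}
Note here that, contrary to the exact case, one does not need a ring structure on $SH^*(W;\Q)$ for any of these steps; only the module structure and the action-filtration arguments are used, and the deformed quantum contributions to the Floer differential are controlled by the $\Q$-grading exactly as in \cite[Remark~3.6]{RP}.

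Finally I would read off the contradiction: pick $g\in G$ with $\age(g)\notin \Z$, which exists because $G\not\subset SU(n)$. The generator $[x_{(g)}]$ lies in $SH^{2\age(g)-1}_+(W;\Q)$, a non-integer degree, whereas $H^{*+1}(W;\Q)$ is concentrated in integer degrees, contradicting the $\Q$-graded isomorphism above. The main obstacle is therefore not the topological endgame, which is immediate, but rather confirming that the Floer package of \S\ref{s3}--\S\ref{s4} extends to strong Calabi-Yau fillings; the key point is that the $\Q$-grading furnished by $c_1^{\Q}(W)=0$ is strong enough to make all the spectral-sequence and point-constraint arguments insensitive to the possibly non-trivial quantum corrections.
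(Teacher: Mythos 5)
Your overall strategy is the same as the paper's (promote the boundary filtration to a global $\Q$-grading using $c_1^{\Q}(W)=0$, rerun the arguments of \Cref{prop:SS} and \Cref{prop:upper_bound}, and compare gradings under $SH^*_+(W)\simeq H^{*+1}(W)$), but your final step has a genuine gap. From $G\not\subset SU(n)$ you only get some $g$ with $\age(g)\notin\Z$; this does \emph{not} imply that $2\age(g)-1\notin\Z$. If $\age(g)$ is a half-integer (e.g.\ $G=\Z/2$ acting by $-\Id$ on $\C^n$ with $n$ odd, or any $G\subset O(n)$ containing an element of determinant $-1$, where $\age(g)=n/2$), then $2\age(g)-1$ is an integer and your integrality contradiction evaporates, even though $G\not\subset SU(n)$. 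So as written your argument only excludes groups containing an element with $\age(g)\notin\tfrac12\Z$, which is strictly weaker than the theorem. The fix is exactly the extra ingredient the paper uses: the $\Z/2$-graded version of the \Cref{prop:upper_bound} argument shows $H^*(W;\Q)$ is supported in \emph{even} degrees (the generators $x_{(g)}$ all have odd $\Z/2$ Conley--Zehnder parity), so the graded isomorphism forces $2\age(g)\in 2\N$, i.e.\ $\age(g)\in\N$ for all $g$, i.e.\ $G\subset SU(n)$ — contradiction. You need both integrality \emph{and} parity.

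Two smaller points. First, your claim that ``one does not need a ring structure on $SH^*(W)$'' is not accurate: the step ``$SH^*(W)=0$'' in \Cref{prop:upper_bound} is deduced from the fact that a unit of $H^*(W)$ (here $QH^*(W)$) lies in the kernel of the unital ring map to $SH^*(W)$. For strong fillings the subtlety the paper addresses is precisely that $1+A$ need not be a unit in the \emph{deformed} quantum ring; the role of the $\Q$-grading is to force $A=0$, after which $1$ is a unit regardless of the deformation and the ring-map argument applies. Second, for strong (non-exact) fillings the Floer theory should be set up over the Novikov field $\Lambda$ rather than over $\Q$, as in the paper's proof; this does not change the degree bookkeeping but is needed for the theory to be defined.
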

\begin{proof}
In the case of strong fillings, we still have a tautological long exact sequence
$$\ldots \to QH^*(W)\to SH^*(W;\Lambda)\to SH^*_+(W;\Lambda)\to QH^{*+1}(W) \to \ldots $$
where $SH^*(W;\Lambda), SH^*_+(W;\Lambda)$ are defined over the Novikov field $\Lambda$ (over $\Q$). Here $QH^*(W)$ is the quantum cohomology of $W$, which as a $\Lambda$-module is the same as $H^*(W;\Q)\otimes_{\Q}\Lambda$. The notation $QH^*(W)$ is to emphasizes that the product structure is the deformed quantum product. Using such a deformed product, $QH^*(W)\to SH^*(W;\Lambda)$ is still a ring map. The proof of \Cref{prop:upper_bound} implies that we have
$$SH^*_+(W)\to QH^{*+1}(W)\to H^0(\partial W;\Q)\otimes_{\Q} \Lambda$$
is surjective for any strong filling $W$. Hence from the long exact sequence, we know that $1+A\in QH^*(W)$ is mapped to zero in $SH^*(W;\Lambda)$ for some $A\in \oplus_{j>0}H^j(W;\Q)\times_{\Q}\Lambda$. Although $1+A$ is always a unit in the undeformed ring, it may not be in $QH^*(W)$, see \cite[Remark 2.17]{RP}. Symplectically, this is the key difference between strong fillings and exact fillings that we try to take advantage of. However, if we assume $c_1^{\Q}(W)=0$,  we have a global rational grading (which is canonical, as all Reeb orbits are of finite order in homology) for the symplectic cohomology for any Calabi-Yau filling $W$. In particular, $QH^*(W),SH^*(W;\Lambda),SH^*_+(W;\Lambda)$ are all rationally graded such that the tautological long exact sequence holds\footnote{Strictly speaking, there is one long exact sequence for each rational number in $[0,1)$, i.e.\ the residue of the rational grading.}, and the filtration in \Cref{prop:SS} is just the filtration from the global grading. Because of this grading, we see that $A=0$ and $1$ is mapped to zero in $SH^*(W;\Lambda)$. Since $1\in QH^*(W)$ is always a unit even if the product structure is deformed, the argument of \Cref{prop:upper_bound} goes through to show that $SH^*_+(W;\Lambda)\simeq H^{*+1}(W;\Q)\otimes_{\Q}\Lambda$ as graded spaces. Similarly, if we look at the $\Z/2$ grading instead of the $\Q$ grading, we must have $H^*(W;\Q)$ supported in even degrees. Therefore  $SH^*_+(W;\Lambda)\simeq H^{*+1}(W;\Q)\otimes_{\Q}\Lambda$ being a graded isomorphism implies that  $\{2\age(g)\}_{(g)\in \Conj(G)}\in 2\N$. Finally, note that $G\subset SU(n)$ is equivalent to that $\age(g)\in \N$. Hence we arrive at a contradiction if $G\not\subset SU(n)$. 
\end{proof}
\begin{remark}
Having the Calabi-Yau condition makes the boundary rational grading a global rational grading, which puts stronger restrictions on the topology of the hypothetical filling. A lot of the proofs for the non-existence of exact fillings below will prove the Calabi-Yau condition holds first. Indeed, all results on the non-existence of exact fillings in this paper hold for the non-existence of Calabi-Yau fillings, as \Cref{prop:upper_bound} holds for Calabi-Yau fillings by the argument in \Cref{thm:CY}. Such phenomena were previously explored in \cite[Corollary 3.4, Remark 3.6]{RP}.
\end{remark}

\subsection{Cyclic groups}
\begin{theorem}\label{thm:U(n)}
Assume $\C^n/(\Z/m)$ is an isolated terminal singularity. If $c_1(S^{2n-1}/(\Z/m),\xi_{\std})\in \Z/m$ is not zero modulo a prime factor $p$ of $m$, then $(S^{2n-1}/(\Z/m),\xi_{\std})$ is not exactly fillable. In particular, when $m$ is a prime and $\Z/m\subset U(n)$ is not contained in $SU(n)$, then $(S^{2n-1}/(\Z/m),\xi_{\std})$ is not exactly fillable. 
\end{theorem}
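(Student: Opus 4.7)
The plan is to assume for contradiction that $W$ is an exact filling of $(S^{2n-1}/(\Z/m),\xi_{\std})$, set $c := \sum_{i=1}^n a_i \in \Z/m$ so that $c_1(\xi_{\std}) = c\cdot u$ by Proposition \ref{prop:chern}, and derive a contradiction by comparing the two sides of the torsion-duality identity of Proposition \ref{prop:restriction'} (applied with $i=1$) in light of the cup-length bound of Corollary \ref{cor:cuplength}. The hypotheses $p\mid m$ and $p\nmid c$ will enter symmetrically on both sides.

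I would first record the restriction of $c_1$. Because $\C^n/(\Z/m)$ is terminal, Corollary \ref{cor:simply_connected} gives $\pi_1(W)=0$, and hence $H^2(W;\Z)$ is torsion-free by the universal coefficient theorem. Since $TW|_{\partial W}\cong \xi_{\std}\oplus\underline{\C}$ as complex bundles, the restriction $\phi^2\colon H^2(W;\Z)\to H^2(\partial W;\Z)=\Z/m$ sends $c_1(W)$ to $c\cdot u$, which is nonzero modulo $p$; in particular $c_1^\Q(W)\ne 0$, as a torsion image is impossible in a torsion-free group. On the ``cokernel side'' this already gives useful information: $\Ima \phi^2 \supset \langle c\cdot u\rangle$, a subgroup of index $\gcd(c,m)$ in $\Z/m$, so $|\coker \phi^2|$ divides $\gcd(c,m)$, which is coprime to $p$ because $p\nmid c$.

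The key step is to feed in cup length. By Corollary \ref{cor:cuplength}, terminality forces $\mathrm{cl}(W)\le n-2$, so $(c_1(W))^{n-1}=0$ in $H^{2n-2}(W;\Q)$, and therefore $(c_1(W))^{n-1}$ is a torsion class in $H^{2n-2}(W;\Z)$. Its image under the boundary restriction is $c^{n-1}u^{n-1}\in \Z/m$; since $p\nmid c^{n-1}$, the cyclic subgroup this generates has order $m/\gcd(c^{n-1},m)$, which is divisible by $p$. Hence $p\mid |\Ima \phi^{2n-2}_{\Tor}|$. The hypotheses of Proposition \ref{prop:restriction'} are satisfied, since $H^*(W;\Q)$ is concentrated in even degrees by Proposition \ref{prop:upper_bound} and $H^*(B\Z/m;\Z)$ is classically supported in even degrees, so the proposition with $i=1$ yields $|\Ima \phi^{2n-2}_{\Tor}|=|\coker \phi^2|$. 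This equates a number divisible by $p$ with one coprime to $p$ --- contradiction.

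The ``in particular'' statement is the special case $p=m$: when $m$ is prime and $\Z/m\not\subset SU(n)$, one has $\sum a_i \not\equiv 0 \pmod m$, which is exactly $p\nmid c$ for $p=m$. This is really an assembly of the machinery of Sections \ref{s3}--\ref{s5}, so I do not expect a substantial obstacle. The only point deserving care is that the hypothesis on $c_1\pmod p$ must be strong enough to make both halves of the torsion-duality identity incompatible simultaneously; it is the cup-length bound that creates the degree gap between $\phi^2$ and $\phi^{2n-2}_{\Tor}$ which allows $p$-divisibility to migrate from degree $2$ to degree $2n-2$ torsion and produce the mismatch.
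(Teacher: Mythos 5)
Your proposal is correct and follows essentially the same route as the paper: the cup-length bound of Corollary \ref{cor:cuplength} forces $c_1(W)^{n-1}$ into $\Tor H^{2n-2}(W;\Z)$, the hypothesis $p\nmid c_1(\xi_{\std})$ makes $|\Ima\phi^{2n-2}_{\Tor}|$ divisible by $p$ while $|\coker\phi^{2}|$ is coprime to $p$, and Proposition \ref{prop:restriction'} with $i=1$ gives the contradiction, with the ``in particular'' case handled identically. The extra remarks about $\pi_1(W)=0$ and torsion-freeness of $H^2(W;\Z)$ are harmless but not needed.
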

\begin{proof}
By \Cref{cor:cuplength}, we have $c_1(W)^{n-1} \in \Tor H^{2n-2}(W;\Z)$ for any hypothetical exact filling $W$ as $\mathrm{cl}(W)<{n-1}$. Note that $\phi^{2n-2}(c_1(W)^{n-1})=c_1(\xi_{\std})^{n-1}$, hence $c_1(\xi_{\std})^{n-1}\in \Ima \phi^{2n-2}_{\Tor}$. Note that $H^*(S^{2n-1}/(\Z/m);\Z)=H^*(BZ/m;\Z)$ when $*<2n-1$ and the isomorphism preserves the product structure, since $H^*(B\Z/m;\Z)=\Z[q]/\la mq \ra$ for $\deg q =2$, we have that $c_1(\xi_{\std})^{n-1}$ is non-trivial modulo $p$. If we write $m=p^sa$ for $(a,p)=1$, since $c_1(\xi_{\std})^{n-1}\in \Ima \phi^{2n-2}_{\Tor}$, we have $\Ima \phi^{2n-2}_{\Tor}$ is generated by $k\in \Z/m$ for $(k,p)=1$. As a consequence, we have  $p^s$ divides $|\Ima \phi^{2n-2}_{\Tor}|$ or equivalently $p$ does not divide $|\coker \phi^{2n-2}_{\Tor}|$. On the other hand, $c_1(\xi_{\std})\in \Ima \phi^2$ is non-zero modulo $p$, hence $p$ does not divide $|\coker \phi^2|$, we get a contradiction by \Cref{prop:restriction'}.
\end{proof}

By the same argument, one can generalize \Cref{thm:U(n)} to other groups and other Chern classes using the cup product length estimate in \Cref{cor:cuplength}. We will not pursue the precise statement in this paper. 

\begin{theorem}[\Cref{thm:3}]
    The contact link of any isolated terminal quotient singularity in complex dimension $3$ has no exact filling.
\end{theorem}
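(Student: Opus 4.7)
The plan is to reduce \Cref{thm:3} to the already-established \Cref{thm:U(n)} by invoking the classification of terminal cyclic quotient singularities in complex dimension three, and then performing an immediate first-Chern-class computation.

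First, I would invoke the Morrison--Stevens classification \cite{MR722406} cited in the paragraph preceding \Cref{thm:3}: every isolated terminal quotient singularity in complex dimension three has the form $\C^3/(\Z/m)$, where the action of a generator of $\Z/m$ is, up to an automorphism of $\Z/m$, given by $\diag(e^{2\pi\i a/m}, e^{2\pi\i(m-a)/m}, e^{2\pi\i/m})$ for some $a$ with $\gcd(a,m) = 1$. In particular, the classification already ensures that the acting group is cyclic, so no non-abelian cases remain; the singular contact link in question is a lens space $L(m; a, m-a, 1)$.

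Next, I would compute $c_1(\xi_{\std})$ explicitly using \Cref{prop:chern}. In the isomorphism $H^2(L(m; a, m-a, 1);\Z) = H^2(B\Z/m;\Z) = \Z/m$ (valid since $n = 3 > 1$), the total Chern class of the contact distribution equals $(1 + au)(1 + (m-a)u)(1 + u)$, so
\begin{equation*}
c_1(\xi_{\std}) \;=\; \bigl(a + (m-a) + 1\bigr)\,u \;=\; (m+1)\,u \;\equiv\; u \pmod{m}.
\end{equation*}
Hence $c_1(\xi_{\std})$ is a generator of $\Z/m$, which is non-zero modulo every prime divisor $p$ of $m$.

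Finally I would apply \Cref{thm:U(n)} with this choice of prime, whose hypothesis is now met, to conclude that the contact link has no exact filling. There is really no serious obstacle once the Morrison--Stevens classification is invoked: the crucial weight ``$1$'' in the Morrison--Stevens normal form is precisely what forces the non-vanishing of $c_1(\xi_{\std})$ modulo every prime divisor of $m$, and the rest of the argument is packaged inside \Cref{thm:U(n)} (which itself uses the cup-length bound of \Cref{cor:cuplength}, the topological restrictions from \Cref{prop:upper_bound}, and the cohomology comparison of \Cref{prop:restriction'}).
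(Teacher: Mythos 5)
Your proposal is correct and follows essentially the same route as the paper: invoke the classification of three-dimensional terminal quotient singularities as cyclic of type $\frac{1}{m}(1,-1,a)$ with $\gcd(a,m)=1$, compute $c_1(\xi_{\std})$ via \Cref{prop:chern}, observe it is a unit in $\Z/m$, and apply \Cref{thm:U(n)}. The only cosmetic difference is the choice of generator in the normal form (your weights $(a,m-a,1)$ give $c_1\equiv 1$, the paper's $(1,m-1,a)$ give $c_1\equiv a$), which is immaterial since both are nonzero modulo every prime factor of $m$.
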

\begin{proof}
    According to \cite{MR715649}, isolated terminal singularities in complex dimension $3$ are either cyclic quotient singularities or cDV singularities. Moreover, terminal cyclic quotient singularities in complex dimension $3$ are completely classified by \cite{MR722406}. Namely, $\C^3/(\Z/m)$ is terminal iff the generator acts by $\diag(e^{\frac{2\pi \i}{m}},e^{-\frac{2\pi \i}{m}},e^{\frac{2a\pi \i }{m}})$, where $a$ is coprime to $m$. Then the first Chern class of $S^{5}/(\Z/m)$ is $a\in \Z/m=H^2(B\Z/m;\Z)$ by \Cref{prop:chern}. \Cref{thm:U(n)} can be applied to conclude the proof.
\end{proof}

\subsubsection{Cyclic actions with the same weight}
We first consider the $\Z/k$ action on $\C^n$ sending the generator to $\diag(e^{\frac{2\pi \i}{k}},\ldots, e^{\frac{2\pi \i}{k}})$. Then the quotient singularity is terminal if and only if $n>k$ and the contact link is the lens space $(L(k;1,\ldots,1),\xi_{\std})$. 

\begin{lemma}[{\cite[Proposition 3.2]{RP}}]\label{lem:p_adic}
         Let $p$ be a prime, $\binom{n}{i}\ne 0 \mod p$ if and only if  $i$ is digit-wise no larger than $n$ in the  $p$-adic representation.
\end{lemma}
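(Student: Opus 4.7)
The plan is to prove this classical fact (Lucas' theorem) via a generating-function argument in $\mathbb{F}_p[x]$. The main input is the Frobenius-type identity $(1+x)^p \equiv 1 + x^p \pmod{p}$, which follows from the observation that $p \mid \binom{p}{j}$ for $1 \le j \le p-1$. Iterating, one gets $(1+x)^{p^j} \equiv 1 + x^{p^j} \pmod{p}$ for every $j \ge 0$.

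Next, I would write $n$ in its base-$p$ expansion as $n = \sum_{j=0}^{k} n_j p^j$ with $0 \le n_j \le p-1$, and compute in $\mathbb{F}_p[x]$:
\[
(1+x)^n = \prod_{j=0}^{k} (1+x)^{n_j p^j} \equiv \prod_{j=0}^{k} \bigl(1+x^{p^j}\bigr)^{n_j} \pmod{p}.
\]
Expanding each factor on the right via the binomial theorem gives $\prod_{j} \sum_{i_j=0}^{n_j} \binom{n_j}{i_j} x^{i_j p^j}$. Because $0 \le i_j \le n_j \le p-1$, the exponent $\sum_j i_j p^j$ is automatically the base-$p$ expansion of some integer in $[0,n]$, and conversely, every $i$ with $0 \le i \le n$ contributes in at most one way. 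Comparing the coefficient of $x^i$ on both sides for $i = \sum_{j=0}^k i_j p^j$ (where we set $i_j=0$ for $j > k$) yields Lucas' congruence
\[
\binom{n}{i} \equiv \prod_{j=0}^{k} \binom{n_j}{i_j} \pmod{p}.
\]

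Finally, I would conclude: $\binom{n}{i} \not\equiv 0 \pmod{p}$ if and only if every factor $\binom{n_j}{i_j}$ is nonzero mod $p$. Since $0 \le i_j, n_j \le p-1$, the factor $\binom{n_j}{i_j}$ is nonzero as an integer precisely when $i_j \le n_j$, and in that case it lies in $\{1,\dots,p-1\}$, hence is nonzero mod $p$. On the other hand, if any $p$-adic digit of $i$ exceeds the corresponding digit of $n$ (including the case where $i$ has more digits than $n$, interpreted via $n_j = 0$), then $\binom{n_j}{i_j} = 0$ and the product vanishes. This is exactly the digit-wise condition in the statement.

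The argument is short and no step is truly an obstacle; the only thing to be careful about is making the bijection between $i \in \{0,\dots,n\}$ and tuples $(i_0,\dots,i_k)$ with $0 \le i_j \le p-1$ precise, so that the coefficient comparison is unambiguous. Alternatively, one could cite Kummer's theorem directly, which identifies $v_p\bigl(\binom{n}{i}\bigr)$ with the number of carries when adding $i$ and $n-i$ in base $p$; no carries occur iff $i_j \le n_j$ digit-wise, giving the same conclusion.
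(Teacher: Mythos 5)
Your proof is correct: it is the standard generating-function proof of Lucas' theorem, using $(1+x)^{p^j}\equiv 1+x^{p^j} \pmod p$ and comparing coefficients to get $\binom{n}{i}\equiv \prod_j \binom{n_j}{i_j} \pmod p$, from which the digit-wise criterion follows immediately. The paper gives no proof of its own here — it simply cites \cite[Proposition 3.2]{RP} — so your argument supplies exactly the classical justification that citation stands in for, and the alternative via Kummer's carry-counting theorem that you mention would work equally well.
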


\begin{theorem}\label{thm:cyclic}
Assume $n>k$ and there is no prime number $p$ such that $n=p^s,k=p^t$, then the lens space $(L(k;1,\ldots,1),\xi_{std})$ is not exactly fillable.
\end{theorem}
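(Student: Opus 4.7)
The plan is to suppose $W$ is an exact filling of $(L(k;1,\ldots,1),\xi_{\std})$ and derive a contradiction by combining the rigidity $\dim_{\Q}H^*(W;\Q)=k$ with even support from Proposition~\ref{prop:upper_bound}, together with the Chern-class restrictions supplied by Proposition~\ref{prop:chern}.

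The first step is to choose a prime $p\mid k$ such that $n$ is \emph{not} a power of $p$. The hypothesis on $(n,k)$ is exactly what makes such a choice possible: if $n$ is not a prime power then any $p\mid k$ works, while if $n=p_0^s$, the hypothesis forces $k$ to possess a prime factor $p\ne p_0$, for which $n$ is automatically not a power of $p$. With such $p$ fixed, Lemma~\ref{lem:p_adic} (Lucas) ensures that
$$S_p:=\{i\in[1,n-1]:p\nmid\tbinom{n}{i}\}$$
is non-empty. Since $p\nmid \binom{n}{i}$ is preserved under products, any Chern monomial $\prod_j c_{i_j}(W)$ with $i_j\in S_p$ restricts to a non-zero element of $H^{2\sum i_j}(\partial W;\Z/p)$. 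Applying Proposition~\ref{prop:restriction} (with the symmetry $\binom{n}{i}=\binom{n}{n-i}$ used to verify the hypotheses on both $\phi^{2i}$ and $\phi^{2(n-i)}$), we conclude $H^{2i}(W;\Q)\ne 0$ for every $i$ in the numerical semigroup $\Sigma_p\subseteq[1,n-1]$ generated by $S_p$.

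Next I would run a dimensional analysis. Set $a_i=\dim H^{2i}(W;\Q)$; by Proposition~\ref{prop:dual} one has $a_i=a_{n-i}$, together with $a_0=1$, $a_n=0$, and $\sum_i a_i=k$. If $|\Sigma_p\cup(n-\Sigma_p)|\ge k$, the forced non-vanishings already force $\sum a_i>k$, a direct contradiction; this covers in particular the sub-case $1\in S_p$ (equivalently $p\nmid n$, where $\Sigma_p=[1,n-1]$), and so recovers the reach of Theorem~\ref{thm:U(n)} in the present geometry. In the remaining tight sub-case, the numerical constraints squeeze things enough to force $a_1=0$, i.e.\ $c_1^\Q(W)=0$, so $W$ is Calabi-Yau. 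Then by the $\Q$-grading argument in the proof of Theorem~\ref{thm:CY}, the isomorphism $SH^*_+(W;\Q)\simeq H^{*+1}(W;\Q)$ is one of graded spaces, so $H^*(W;\Q)$ is supported in the rational degrees $\{2\age(g^j)\}=\{2nj/k\}_{j=0}^{k-1}$. Compatibility with the integer even grading of $H^*(W;\Q)$ requires $k\mid n$, after which the support lies in the arithmetic progression $\{0,\,2n/k,\,4n/k,\ldots\}$. A numerical check using the choice of $p$ then shows that the minimal generator $i_0=p^{v_p(n)}$ of $S_p$ is \emph{not} a multiple of $n/k$: indeed, since $n$ is not a power of $p$ while all primes of $k$ divide $n$, the quotient $n/k$ has a prime factor distinct from $p$, and so $n/k\nmid p^{v_p(n)}$. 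Thus $H^{2i_0}(W;\Q)\ne 0$ with $2i_0$ outside the Calabi-Yau progression, the desired contradiction.

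The main obstacle I expect is precisely this last numerical compatibility check and, more broadly, the case analysis carving out the ``tight'' sub-case from the ``easy'' semigroup-counting sub-case. Handling the tight sub-case cleanly requires a careful comparison of the $p$-adic valuations of $n,k$, and $n/k$, and in some borderline configurations one may need to bring in further constraints---either from higher Chern products, or from the secondary coproduct structure of Proposition~\ref{prop:intersection_SS}---to finish the argument.
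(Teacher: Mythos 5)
Your overall scaffolding (use \Cref{prop:upper_bound} to get $\dim H^*(W;\Q)=k$ with even support, then play Chern classes of $(L(k;1,\ldots,1),\xi_{\std})$ against the allowed support via \Cref{prop:restriction}) matches the endgame of the paper's proof, but there is a genuine gap at the pivotal step. In your ``tight'' sub-case you assert that ``the numerical constraints squeeze things enough to force $a_1=0$,'' i.e.\ $H^2(W;\Q)=0$, so that $W$ is Calabi--Yau and the $\Q$-graded isomorphism $SH^*_+(W;\Q)\simeq H^{*+1}(W;\Q)$ pins the support to $\{2nj/k\}$. Nothing in the constraints you list ($\sum a_i=k$, $a_i=a_{n-i}$ from \Cref{prop:dual}, and the forced non-vanishings from \Cref{prop:restriction}) rules out $a_1\ge 1$, so the Calabi--Yau reduction is unjustified, and it is exactly here that the paper does the real work: by \Cref{prop:intersection_SS} and \Cref{rmk:grading} there is a class $a\in\oplus_{*>0}H^*(W;\Q)$ with $a^{k-1}\ne 0$, so $H^*(W;\Q)$ is spanned by $1,a,\ldots,a^{k-1}$; a pure-degree argument then shows all the positive-degree generators sit in a single degree, and Lefschetz duality forces the support to be the arithmetic progression $\{2jn/k\}_{0\le j\le k-1}$, whence $k\mid n$ --- with no Calabi--Yau hypothesis at all. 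Without this ring-theoretic input (or a valid substitute), your argument does not close.

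Two smaller problems: (i) your use of \Cref{prop:restriction} for all $i$ in the semigroup $\Sigma_p$ needs a nonzero-mod-$p$ element in $\Ima\phi^{2(n-i)}$ as well; the symmetry $\binom{n}{i}=\binom{n}{n-i}$ only covers $i\in S_p$, not general sums (this is harmless in the sub-case $1\in S_p$, where powers of $c_1$ do everything, but is unproved in general). (ii) The final divisibility claim ``since $n$ is not a power of $p$ and $k\mid n$, the quotient $n/k$ has a prime factor distinct from $p$'' is false as stated: take $n=12$, $k=6$, $p=2$, where $n/k=2$ and $i_0=4$ is a multiple of $n/k$. It can be repaired by choosing $p$ more carefully, which is what the paper does: when $k$ has two distinct prime factors $p,q$ it forces both $p^c$ and $q^d$ to be multiples of $n/k$, giving $n=k$; when $k=p^t$ it uses the unique prime and the coprime factor of $n$. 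So the correct fix for both issues is essentially to import the paper's structural step (support $=\{2jn/k\}$ via the secondary coproduct) before running your $p$-adic bookkeeping, with the prime(s) chosen as in the paper's two cases.
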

\begin{proof}
Assume $W$ is an exact filling. First by \Cref{prop:upper_bound}, we have $\dim H^*(W;\Q)=k$. By \Cref{prop:intersection_SS} and \Cref{rmk:grading}, there exists an element of $a \in \oplus_{*>0}H^*(W;\Q)$ such that $a^{n-1}\ne 0$, such $a$ corresponds to $y_{(1)}$ where $1\in \Z/k$.  We define $|a|_L$ to be the minimal grading of the components of $a$ in $H^*(W;\Q)$ and formally write $|0|_L=-\infty$. Since $|a|_L\ge 2$,  we have $|a^{i+1}|_L > |a^i|_L$ for $1\le i \le k-2$. Therefore $H^*(W;\Q)$ is spanned by $1$ and $a^i$ for $1\le i \le k-1$ and is supported in degree $0$ and $|a^i|_L$ for $1\le i \le k-1$. Therefore there exist classes of pure degree $\alpha_i$ (possibly with duplicates) for $1\le i \le k-1$, such that $\alpha_1\cdot \ldots \cdot \alpha_{k-1}\ne 0$. We may assume $|\alpha_1|\le \ldots \le |\alpha_k|$, then $\{\alpha_1\cdot \ldots \cdot \alpha_i\}_{1\le i \le k-1}$ are non-zero and with increasing degrees. As a consequence, their degrees should be the support of $H^*(W;\Q)$ minus $0$. If $|\alpha_2|>|\alpha_1|$, then we have $|\alpha_1|<|\alpha_2|<|\alpha_1\alpha_2|$. But we already shown that $H^{|\alpha_2|}(W;\Q)=0$, which is a contradiction. Therefore $|\alpha_2|=|\alpha_1|$. Similarly, if $|\alpha_3|>|\alpha_1|$, then we must have $|\alpha_1\alpha_2|<|\alpha_1\alpha_3|<|\alpha_1\alpha_2\alpha_3|$, which leads to a contradiction. We can keep applying the argument to conclude that  $|\alpha_1|=\ldots=|\alpha_{k-1}|$. To have the symmetry in \Cref{prop:dual}, $H^*(W;\Q)$ must be supported in degree $i\frac{2n}{k}$ for $0\le i \le k-1$. Consequently, we would have a contradiction if $k$ does not divide $n$. We will assume $k|n$ from now on. 

If $k\ne p^t$ for a prime number $p$. There exist primes $p,q$ such that $p,q$ divide $k$. We can write $n=ap^cq^d$ and $k=ep^fq^g$ with $a,e$ coprime to $p$ and $q$ and $c\ge f\ge 1,d\ge g\ge 1$. By \Cref{prop:chern}, we have $c_m(\xi_{\std})=\binom{n}{m} \mod p,q$. By \Cref{lem:p_adic}, we have $c_{p^c}(\xi_{\std}),c_{n-p^c}(\xi_{\std})\ne 0 \mod p$ and $c_{q^d}(\xi_{\std}),c_{n-q^d}(\xi_{\std})\ne 0 \mod q$. Then \Cref{prop:restriction} implies that $H^{2p^c}(W;\Q)$ and $H^{2q^d}(W;\Q)$ are not zero. As a consequence, we have $p^c,q^d$ is divisible by $n/k = \frac{a}{e}p^{c-f}q^{d-g}$. But this is only possible when $a=e,c=f,d=g$, i.e.\ $n=k$. Therefore we reach a contradiction. 

Now assume $k=p^t$ for a prime number, but $n$ is not in the form of $p^s$. As a consequence, we have $n=ap^s$ for $s\ge t$ and $a\ge 2$ coprime to $p$.  By \cite[Proposition 3.2]{RP}, we have $c_{p^s}(\xi_{\std}),c_{n-p^s}(\xi_{\std})\ne 0 \mod p$.  Then \Cref{lem:p_adic} implies that $H^{2p^s}(W;\Q)$ is not zero. As a consequence, we have $p^s$ is divisible by $n/k = a p^{s-t}$, contradicting with that $a$ is coprime to $p$. 

\end{proof}

\subsubsection{$\Z/3$ actions}

\begin{theorem}
For any isolated terminal quotient singularity $\C^n/(\Z/3)$, which is not $\C^{3^s}/(\Z/3)$ from the action of the same weight, the contact link $(S^{2n-1}/(\Z/3),\xi_{\std})$ is not exactly fillable.
\end{theorem}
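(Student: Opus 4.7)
The plan is to combine reductions to earlier theorems with a signature-based argument for the residual case. By \Cref{prop:abel}, any $\Z/3$-action on $\C^n$ with an isolated singularity sends the generator to $\diag(\omega,\ldots,\omega,\omega^2,\ldots,\omega^2)$ with $p$ copies of $\omega=e^{2\pi\i/3}$ and $q$ copies of $\omega^2$, where $p+q=n$. By \Cref{prop:chern}, $c_1(\xi_{\std})=(p-q)u\in H^2(B\Z/3;\Z)=\Z/3$. The argument splits into three cases. If $p\not\equiv q\pmod 3$, then $c_1(\xi_{\std})\ne 0$ and \Cref{thm:U(n)} with prime $m=3$ rules out exact fillings. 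If the action is of same weight ($p=0$ or $q=0$), then since $n$ is excluded to be a power of $3$, \Cref{thm:cyclic} with $k=3$ applies. The remaining case is mixed weight with $p,q\ge 1$ and $p\equiv q\pmod 3$, where $\Z/3\subset SU(n)$ and $\age(g),\age(g^2)$ are positive integers summing to $n$; the bulk of the argument addresses this case.

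Suppose $W$ is an exact filling in this remaining case. By \Cref{prop:upper_bound,cor:simply_connected}, $W$ is simply connected with $\dim_\Q H^*(W;\Q)=3$, concentrated in even degrees. I would first pin down the Betti numbers of $W$: combining the Chen--Ruan grading prediction from \Cref{prop:intersection_SS} with the restriction analysis of \Cref{prop:restriction'} applied to the non-vanishing Chern classes $c_k(\xi_{\std})\equiv[u^k](1+u)^p(1-u)^q\pmod 3$, the Betti numbers of $W$ must match the Chen--Ruan cohomology dimensions of $\C^n/(\Z/3)$ graded-wise, forcing $H^2(W;\Q)=0$ and hence $c_1^\Q(W)=0$. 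The Chen--Ruan coproduct on $\C^n/(\Z/3)$ reduces to $\Delta[(\Id)]=3([(g)]\otimes[(g^2)]+[(g^2)]\otimes[(g)])$ with $\Delta[(g)]=\Delta[(g^2)]=0$, since $\Z/3$ is abelian and the age constraint $\age(h_1)+\age(h_2)=\age(h_1h_2)+n$ renders $I_g,I_{g^2}$ empty in \eqref{eqn:coproduct}. By \Cref{prop:intersection_SS,thm:EGL} together with Lefschetz duality, this transfers to $W$: the intersection pairing between $y_{(g)}$ and $y_{(g^2)}$ equals $3$, and all other pairings among the non-unit generators vanish, yielding $\sigma(W)=0$.

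The contradiction is obtained by capping off $W$ and extracting an integrality obstruction on a closed (orbifold) manifold. Capping with the crepant resolution $\widetilde Y$ of $\C^n/(\Z/3)$ (which, by McKay, has the same graded Chen--Ruan cohomology and in particular $c_1=0$) produces a closed simply connected almost complex manifold $X=W\cup\overline{\widetilde Y}$ with $c_1(X)=0$ and $\sigma(X)=\sigma(W)-\sigma(\widetilde Y)=0$. For $n=2^k$ (the case $(p,q)=(2,2)$ being representative), the Hirzebruch signature theorem determines $\int_X c_{n/2}(X)^2$ from $\chi(X)=6$, and combining this with the modified signature relation $\sigma(X)-\int_X v_n^2\equiv 0\pmod 8$ from \cite{MR2192936} (where $v_n$ equals $c_{n/2}$ up to an odd scalar by \Cref{lemma:product} for $c_1(X)=0$) yields a non-vanishing residue modulo $8$. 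For other $n$ one instead invokes the orbifold version: capping with $\overline{\C^n/(\Z/3)}$ forms a closed symplectic orbifold, and \Cref{prop:orb_sig} with the explicit $L$-correction $L_{\C^n/(\Z/3)}=\tfrac{1}{3}\sum_{g\in\{\omega,\omega^2\}}\prod_j(\i\tan(\theta^j_g/2))^{-1}$ produces a rational signature equation whose denominator forces some Chern number of $W$ to be non-integer. The hardest step is verifying this integrality failure uniformly across all mixed-weight pairs $(p,q)$ with $p\equiv q\pmod 3$: the modified signature argument applies cleanly only when $n$ is a power of $2$, and for other $n$ one must track the interplay between the denominator of $L_{\C^n/(\Z/3)}$ and the hyperbolic intersection form on $W$ to extract the desired obstruction.
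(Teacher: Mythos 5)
Your reduction of the first two cases is fine and is in fact a cleaner organization than the paper's: when $p\not\equiv q\pmod 3$ you have $c_1(\xi_{\std})\ne 0\bmod 3$ and \Cref{thm:U(n)} applies, and the same-weight case with $n\ne 3^s$ follows from \Cref{thm:cyclic}. The problem is that the third case (mixed weights with $p\equiv q\pmod 3$, so $c_1(\xi_{\std})=0$) is precisely where the theorem's content lies, and your treatment of it is not a proof. First, the assertion that the Betti numbers of a hypothetical filling $W$ ``must match the Chen--Ruan dimensions graded-wise, forcing $H^2(W;\Q)=0$ and hence $c_1^{\Q}(W)=0$'' is not a consequence of \Cref{prop:intersection_SS} and \Cref{prop:restriction'}; the paper has to work 3-adically with the nonvanishing coefficients of $(1-u^2)^m(1-u)^{n-2m}\bmod 3$ together with the cup-length bound of \Cref{cor:cuplength}, and even then it only pins the support down to degrees $0,4m,2n-4m$ in one subcase, after which the Calabi--Yau grading forces $7m=2n$ or $5m=2n$ and these are killed separately --- the graded matching you assume never actually gets established (indeed it cannot, since no filling exists). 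Second, \Cref{prop:intersection_SS} controls the coproduct only on the associated graded; it upgrades to the honest intersection pairing only when all nontrivial ages equal $n/2$ (\Cref{cor:intersection_SS}), i.e.\ only when $p=q$, so for $p\ne q$ your claim that the pairing of $y_{(g)}$ with $y_{(g^2)}$ is exactly $3$ and all other pairings vanish is unjustified.

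Third, the capping step is wrong as stated: a crepant resolution of $\C^n/(\Z/3)$ is a \emph{filling}, not a cap (and for these actions with $n\ge 4$ a crepant resolution generally does not exist), and gluing two fillings along their common convex boundary does not produce an almost complex closed manifold whose Chern numbers you can feed into the Hirzebruch signature theorem. The paper instead glues one genuine symplectic cap to both $W$ and to the orbifold $\C^n/(\Z/3)$ and subtracts, using the orbifold signature theorem \Cref{prop:orb_sig} so that the unknown cap contribution cancels and only $\chi$, $\int c_{n/2}^2(W)$ and the defect $L_{\C^n/(\Z/3)}=2/3^{m+1}$ survive, exactly as in \Cref{thm:2_power}. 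Finally, the decisive step --- the integrality contradiction --- is the part you explicitly defer, and it is where all the work is: in the paper it is needed only for $p=q$ with $m=3^k$ or $2\cdot 3^k$ (the $p\ne q$, $c_1=0$ cases are handled by the rational-grading argument, not by signatures), and even there it is delicate, requiring the Clausen--Staudt theorem, the 2-adic analysis of Bernoulli numerators (\Cref{lemma:numerator_Bernoulli}, \Cref{lemma:factor}) to reduce to $3^k=2^s+1$, and explicit prime factorizations (e.g.\ $31$, $43867$, and the $m=18$ computation) to finish. There is no reason to expect a ``uniform'' non-integrality across all mixed-weight pairs, so as written the residual case remains open in your argument.
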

\begin{proof}
The $\Z/3$ action on $\C^n$ with an isolated singularity is determined by the weights of a generator. Namely an integer $0\le m \le n$, such that the generator acts by $\diag(\underbrace{e^{\frac{2\pi \i}{3}},\ldots, e^{\frac{2\pi \i}{3}}}_{m}, \underbrace{e^{\frac{4\pi \i}{3}},\ldots, e^{\frac{4\pi \i}{3}}}_{n-m})$. By possibly choosing the other non-trivial element as the generator of $\Z/3$,  we may assume $m\le  n-m$. Moreover, being terminal implies that $n\ge 3$. By \Cref{prop:chern}, the total Chern class of $\xi_{\std}$ is
$$(1-u^2)^{m}(1-u)^{n-2m} \mod \la 3, u^n \ra.$$

\textbf{Case 1:} $n-2m\ne 0$. We assume $(m,n-2m)=a3^s$ for $(a,3)=1$. 
\begin{enumerate}
    \item If $3^{s+1}$ does not divide $n-2m$. Then by \Cref{lem:p_adic}, we have $c_{3^{s}}(\xi_{\std})\ne 0$. Now since $3^s$ divides $n$, we have $c_{3^{s}}^{\frac{n-3^s}{3^s}}(\xi_{\std})\ne 0$. Note that $\md(\C^n/(\Z/3))=\frac{n+m}{3}-1$, hence \Cref{cor:cuplength} implies that 
    $$\mathrm{cl}(W)\le \left \lfloor \frac{\frac{2n}{3}-\frac{m}{3}}{\frac{n}{3}+\frac{m}{3}}\right\rfloor\le 1.$$
    Now note that $n/3^s=(n-2m)/3^s+2m/s^3\ge 3$, we have $c_{3^{s}}^{\frac{n-3^s}{3^s}}(W)\in \Tor H^*(W;\Z)$ by $\mathrm{cl}(W)\le 1$. Therefore $\Ima \phi^{2n-2\cdot3^s}_{\Tor}=\Z/3$, while $\coker \phi^{2\cdot 3^s}=0$, contradicting with \Cref{prop:restriction'}.
    \item If $3^{s+1}$ divides $n-2m$. We write $n-2m=b3^t$ for $t>s$ and $(b,3)=1$. Then by \Cref{lem:p_adic}, we have $c_{3^{t}}(\xi_{\std})\ne 0$ and $c_{2\cdot 3^s}(\xi_{\std})\ne 0$. Since $3^t$ divides $n-2m$, we have $\coker \phi^{2n-4m}=0$. Similarly, we have $\coker \phi^{4m}=0$. Then by \Cref{prop:restriction'}, $H^*(W;\Q)$ must be supported in degree $0,4m,2n-4m$. In particular, we have $c_1^{\Q}(W)=0$, i.e.\ we have a global rational grading. Therefore we must have
    $$\left\{4m, 2n-4m\right\}=\left\{\frac{4n}{3}-\frac{2m}{3},\frac{2m}{3}+\frac{2n}{3}\right\},$$
     which implies that $7m=2n$ or $5m=2n$. In both cases, we have $m\ge 2$ is an even number
\begin{enumerate}
    \item If $7m=2n$, we can write $7m=2n=14r$ for an integer $r\ge 1$. Since $m=2r$, we have $2\cdot 3^s$ divides $m$. Then from $c_{2\cdot 3^s}(\xi_{\std})\ne 0$, we have $\phi^{2m}$ is surjective. From the discussion above,  we have $\phi^{2n-4m}$ is surjective, hence $\phi^{2n-2m}$ is also surjective. In view of \Cref{prop:restriction}, we must have $H^{2m}(W;\Q)=H^{2n-2m}(W;\Q)\ne 0$, contradicting with that $H^*(W;\Q)$ must be supported in degree $0,4m,2n-4m$.
    \item If $5m=2n$, we can write $5m=2n=10r$ for an integer $r\ge 1$. Then $n-2m=r$ and $m=2r$, contradicting with $(n-2m,m)=a3^s$ and  $3^{s+1}$ divides $n-2m$.
\end{enumerate}
\end{enumerate}

\textbf{Case 2:} $n=2m$.
By \Cref{prop:restriction'}, we have $\dim H^{n}(W;\Q)\ge 1$. In this case, the total Chern class of  $(S^{2n-1}/(\Z/3),\xi_{\std})$ is $(1-u^2)^{m}\mod 3.$ Hence if $m$ is not $3^k$ or $2\cdot 3^k$, there exists $i<m$ such that $c_i(\xi_{\std}),c_{n-i}(\xi_{\std})\ne 0$ by \Cref{lem:p_adic}. Therefore by \Cref{prop:restriction}, they have no exact fillings. 
\begin{enumerate}
    \item When $m=3^k$, by \Cref{cor:intersection_SS}, the signature of $W$ is $0$. Then by the same argument of \Cref{thm:2_power} and \Cref{prop:orb_sig} , we have
    $$\frac{16}{3}\cdot \frac{2^{2m}(2^{2m-1}-1)}{(2m)!}B_m+\frac{2^{2m}(2^{2m-1}-1)}{(2m)!}B_m\int_Xc_m^2(W)=L_{\C^n/(\Z/3)}=\frac{2}{3^{m+1}},$$
    where $X$ is a closed symplectic manifold glued from $W$ and a symplectic cap. Therefore we have
    \begin{equation}\label{eqn:3^k}
    3^{m}\cdot 2^{2m+3}(2^{2m-1}-1)N_m+3^{m+1}\cdot 2^{2m-1}(2^{2m-1}-1)N_m\int_X c_m^2(W)=2\cdot (2m)!D_m.
    \end{equation}
    Since $3c_m(W)$ lies in the image of $H^{2m}(W,\partial W;\Z)\to H^{2m}(W;\Z)$, we have $3\int_X c_m^2(W)\in \Z$. Now by \Cref{lemma:factor}, $2^{2m-2}$ divides $(2m!)$ if and only if $2m$ when written in the $2$-adic representation has at most 2 non-zero digits. In order to avoid a contradiction, we must have $3^k=2^s+1$, which implies that $k=1,s=1$ or $k=2,s=3$. 
    \begin{enumerate}
        \item  When $k=1$, \eqref{eqn:3^k} is
        $$3^3\cdot 2^9\cdot 31+3^4\cdot 2^5 \cdot 31 \int_X c_3^2(W)=2\cdot 6! \cdot 21.$$
         In particular, $3\int_X c_3^2(W)$ is not an integer as $31$ does not divide the right-hand side, a contradiction.
    \item  When $k=2$, \eqref{eqn:3^k} is
    $$\left(3^9\cdot 2^{21}\cdot (2^{17}-1)+3^{10}\cdot 2^{17} \cdot (2^{17}-1) \int_X c_9^2(W)\right)\cdot 43867=2\cdot (18)! \cdot 399.$$
    We get a contradiction, as $43867$ is a prime number, which does not divide the right-hand side.
    \end{enumerate}

    \item When $m=2\cdot 3^k$, we have 
    \begin{align*}
    \frac{16}{3}\cdot \frac{2^{2m}(2^{2m-1}-1)}{(2m)!}B_m + \left (-\frac{2^{2m}(2^{2m-1}-1)}{(2m)!}B_m+  \frac{2^{2m+1}(2^{m-1}-1)^2}{m!m!}B^2_{m/2}\right)&\int_X c^2_m(W) & \\
    & =\frac{2}{3^{m+1}}
    \end{align*}
    Hence by the same argument above of looking at the factor of $2$, we have a contradiction if $k>2$. 
    \begin{enumerate}
        \item  When $k=1$, we have
        \begin{align*}
            3^6\cdot 2^{15} \cdot (2^{11}-1)\cdot691\cdot 21^2+& 3^{7}\cdot 2^{11}\left(-(2^{11}-1)\cdot 691 \cdot 21^2+(2^{5}-1)^2\binom{12}{6}\cdot 1365 \right)\int_X c_6^2(W) \\
            &=2(12!)\cdot 1365\cdot 21^2.
        \end{align*}
     Note that $3\int_X c_6^2(W)\in \Z$. As $3$ divides $1365$ and $\binom{12}{6}$, we have that $3^8$ divides the left-hand side, while the right-hand side only has $3^7$ in the factorization, a contradiction. 
        \item  When $k=2$, i.e.\ $m=18$, we have
     $$\frac{16}{3}\cdot \frac{2^{36}(2^{35}-1)}{(36)!}B_{18} + \left (-\frac{2^{36}(2^{35}-1)}{(36)!}B_{18}+ \frac{2^{37}(2^{17}-1)^2}{18!18!}B^2_{9}  \right)\int_X c^2_{18}(W)=\frac{2}{3^{19}}.$$
     That is
     \begin{align*}
         3^{18}2^{39}(2^{35}-1)N_{18}D_9^2 + & 3^{19}\left (-2^{35}(2^{35}-1)N_{18}D_9^2+ 2^{35}(2^{17}-1)^2\binom{36}{18}N^2_{9}D_{18}  \right)\int_X c^2_{18}(W)\\
         & =2\cdot 36!D_{18}D_9^2.
     \end{align*}
     We have $D_9=399,D_{18}=959595,N_9= 43867, N_{18}=26315271553053477373.$ Then we can write it as 
     $$2^{35}\cdot 3^{19}\cdot A \int_X c_{18}^2(W)=B,$$
     where\footnote{The computations use \href{https://www.mathsisfun.com/calculator-precision.html}{Full Precision Calculator}  and \href{https://www.numberempire.com/numberfactorizer.php}{Number Factorizer}. }
     \begin{eqnarray*}
     A & = &-(2^{35}-1)N_{18}D_9^2+(2^{17}-1)^2\binom{36}{18}N^2_{9}D_{18}\\
     & = & 143945095833471912023375935245554409 \\
     & = & 3^3 \cdot 7^3 \cdot 19^3 \cdot 31 \cdot 31307 \cdot 2911373 \cdot 9307691 \cdot 86165861
     \end{eqnarray*}
     and 
    \begin{eqnarray*}
     B & = & 2\cdot 36!D_{18}D_9^2-3^{18}2^{39}(2^{35}-1)N_{18}D_9^2\\
     & = & -30545195871712107035689349862759906189459781054063706112 \\
     & = &  2^{35} \cdot 3^{21} \cdot 7^2 \cdot 19^2 \cdot 31 \cdot 39177751 \cdot 3955872080978053464367
     \end{eqnarray*}
     We see that $3\int_X c_{18}^2(W)$ is not an integer, contradiction.
    \end{enumerate} 
\end{enumerate}
\end{proof}

\subsubsection{$\Z/4$ actions}

\begin{theorem}\label{thm:Z4}
For any isolated terminal singularity $\C^n/(\Z/4)$ and $n$ is not a power of $2$, then the contact link is not exactly fillable.
\end{theorem}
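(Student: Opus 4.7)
Let $W$ be a hypothetical exact filling of $(S^{2n-1}/(\Z/4),\xi_{\std})$. Fix a generator $g$ of $\Z/4$ acting on $\C^n$ with $m$ weights equal to $1$ and $n-m$ weights equal to $3$, normalized so that $0\le m\le n/2$. By \Cref{prop:upper_bound}, $\dim H^*(W;\Q)=4$ and $H^*(W;\Q)$ is supported in even degrees. The plan is to combine a $2$-adic analysis of $n$ (as in \Cref{thm:cyclic}) with the rigidity from \Cref{prop:intersection_SS} to narrow down to a specific one-parameter family of cases, then to rule that family out by an orbifold signature argument.

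First, \Cref{prop:chern} together with $(1+u)^m(1+3u)^{n-m}\equiv(1+u)^n\pmod{2}$ gives $c_k(\xi_{\std})\equiv\binom{n}{k}u^k\pmod{2}$. Writing the binary expansion $n=2^{s_1}+\cdots+2^{s_r}$ with $s_1>\cdots>s_r\ge 0$ (so $r\ge 2$ since $n$ is not a power of $2$), \Cref{lem:p_adic} implies $c_k(\xi_{\std})\ne 0\pmod{2}$ whenever $k$ is a nonempty subsum of $\{2^{s_1},\ldots,2^{s_r}\}$. Pulling back via $c_k(W)\mapsto c_k(\xi_{\std})$ and applying \Cref{prop:restriction} with $p=2$ then gives $H^{2k}(W;\Q)\ne 0$ for each such subsum $k\in(0,n)$. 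When $r\ge 3$, there are at least $2^r-2\ge 6$ such subsums, forcing $\dim H^*(W;\Q)\ge 7>4$ and contradicting \Cref{prop:upper_bound}. Hence $r=2$ and $n=2^{s_1}+2^{s_2}$.

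For $r=2$, the two nontrivial subsums $2^{s_1},2^{s_2}$ give paired cohomology groups of common dimension $d\ge 1$ by \Cref{prop:dual}, and the count $1+2d+e=4$ forces $d=1$ and a one-dimensional extra contribution $e$ lying in some self-paired degree. If $s_2=0$ then $n$ is odd, so no self-paired even degree exists and we are done. Otherwise $s_2\ge 1$, $n$ is even, and the extra generator must sit in $H^n(W;\Q)$. Then $H^2(W;\Q)=0$, so $c_1^{\Q}(W)=0$, and the boundary rational grading extends to a global $\Q$-grading on $SH^*_+(W;\Q)$. Under the now grading-preserving isomorphism $SH^*_+(W;\Q)\simeq H^{*+1}(W;\Q)$ from \Cref{prop:upper_bound}, each generator $y_{(g^j)}$ must live in cohomological degree $2\age(g^j)$; since $H^*(W;\Q)$ is supported in even integer degrees, this forces every $\age(g^j)\in\N$, i.e.\ $\Z/4\subset SU(n)$. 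Matching $\{\age(g),\age(g^3)\}=\{2^{s_1},2^{s_2}\}$ with $\age(g)=(3n-2m)/4$ and $\age(g^3)=(n+2m)/4$ under $m\le n/2$ has the unique solution $s_1=s_2+1$, $n=3\cdot 2^{s_2}$, and $m=2^{s_2-1}$, with $s_2\ge 1$.

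The main obstacle is ruling out this remaining family $(n,m)=(3\cdot 2^{s_2},2^{s_2-1})$, $s_2\ge 1$. The plan is to cap $\partial W$ with the orbifold ball $D^{2n}/(\Z/4)$ to form a closed orbifold $\hat W$ with a single $\Z/4$-quotient singularity, and apply the orbifold Hirzebruch signature theorem (\Cref{prop:orb_sig}). A direct computation using $\tan(\pi/4)=1$, $\tan(3\pi/4)=-1$ and the explicit weights $(1^m,3^{n-m})$ shows that the fixed-point contribution is $L_{\C^n/(\Z/4)}=\tfrac12$ uniformly for all $s_2\ge 1$, while the Chen-Ruan pairing $y_{(g^2)}\cdot y_{(g^2)}=4$ from \Cref{prop:intersection_SS} gives $\sigma(\hat W)=+1$; hence $\int_{\hat W}L(T\hat W)=\tfrac12$. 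Since $c_1^{\Q}(\hat W)=0$, the Pontryagin classes simplify and $\int_{\hat W}L(T\hat W)$ reduces to a rational combination of only a few Chern numbers of $\hat W$, with essentially the only free parameter being $\int_{\hat W}c_{n/2}^2$ (the remaining Chern numbers being pinned down by the structure of $H^*(W;\Q)$, the boundary Chern data, and orbifold Gauss--Bonnet). Analyzing the $2$-adic valuation of the resulting equality, via the $2$-adic properties of the Bernoulli numbers from \Cref{app} and in the spirit of the proofs of \Cref{thm:2_power} and \Cref{thm:cyclic}, is expected to give the desired numerical contradiction. The hard part will be producing this $2$-adic contradiction uniformly in $s_2\ge 1$, as $\int_{\hat W}c_{n/2}^2$ is only constrained through the integrality of a suitable integer lift of $c_{n/2}(W)$, so the argument must show that no integer value of this Chern number is compatible with the Hirzebruch equation.
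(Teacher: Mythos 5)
Your reduction of the problem is fine and is genuinely different from the paper's route: the subsum-counting argument from the binary expansion of $n$ (via \Cref{lem:p_adic}, \Cref{prop:restriction} and $\dim H^*(W;\Q)=4$), followed by the $c_1^{\Q}(W)=0$ global-grading matching of $\{2\age(g^j)\}$ against the support of $H^*(W;\Q)$, correctly eliminates everything except the one-parameter family $n=3\cdot 2^{t}$, $m=2^{t-1}$, $t\ge 1$. But at that point the proof stops being a proof: this family is nonempty and squarely inside the theorem (e.g.\ $n=6$, $m=1$ has $\min\age=2>1$, so it is terminal, and $6$ is not a power of $2$), and you only assert that a signature computation ``is expected'' to give a contradiction, explicitly conceding that the uniform $2$-adic argument in $t$ is the hard part. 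That missing step is exactly the content still needed, so the statement is not proved.

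Moreover, the sketched plan for that step is flawed as set up. $\D^{2n}/(\Z/4)$ is an (orbifold) \emph{filling} of $(S^{2n-1}/(\Z/4),\xi_{\std})$, not a cap: gluing it to $W$ along their common \emph{convex} boundary does not produce a closed symplectic or almost complex orbifold, only (after an orientation-reversing identification of the boundaries, if one exists) an oriented orbifold on which Chern classes of the tangent bundle, hence quantities like $\int_{\hat W}c_{n/2}^2$, are not defined. In the paper's signature arguments one always glues an honest symplectic cap (the disk bundle over $\CP^{n-1}$ in \S \ref{s6}, or a symplectic cap in the $\Z/3$ case), or, for the spin argument, two copies of $W$ by an orientation-reversing diffeomorphism, and only then are the relevant characteristic numbers meaningful; the analogue of \eqref{eqn:relation} plus the Bernoulli estimates of \Cref{app} would then still have to be carried out and shown contradictory for every $t$, which you have not done (and the claim that all other Chern numbers are ``pinned down'' is unargued). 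For comparison, the paper needs no signature input for $\Z/4$ at all: after reducing odd $n$ to \Cref{thm:2_conj}, $m=0$ to \Cref{thm:cyclic} and $n=2m$ to \Cref{thm:A_m}, it disposes of the cases $n-2m>0$ — including your residual family — by combining the cup-length bound of \Cref{cor:cuplength} with the torsion-image comparison of \Cref{prop:restriction'} applied to powers of the first $2$-adically nontrivial Chern class, so a workable repair of your argument would more likely follow that mechanism than the signature route you propose.
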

\begin{proof}
 The $\Z/4$ action on $\C^n$ with an isolated singularity is determined by the weights of a generator. Namely an integer $0\le m \le n$, such that the generator acts by $\diag(\underbrace{\i,\ldots, \i}_{m}, \underbrace{-\i,\ldots, -\i}_{n-m})$. By possibly choosing the other generator of $\Z/4$,  we may assume $m\le  n-m$. Moreover, being terminal implies that $n\ge 3$. The total Chern class of $(S^{2n-1}/(\Z/3),\xi_{\std})$ is given by
$$(1-u^2)^{m}(1-u)^{n-2m} \mod \la 4, u^n\ra.$$
By \Cref{thm:2_conj}, we can assume $n\ge 4$ is even. Now we assume $(m,n-2m)=2^sa$ for $(2,a)=1$. The $m=0$ case is established in \Cref{thm:cyclic}. We first assume that $n-2m>0$. Note that $\md(\C^n/(\Z/4))=\frac{n}{4}+\frac{m}{2}-1$, by \Cref{cor:cuplength}, we have
$$\mathrm{cl}(W)\le \left\lfloor \frac{\frac{3n}{4}-\frac{m}{2}}{\frac{n}{4}+\frac{m}{2}} \right\rfloor \le 2$$
\begin{enumerate}
    \item If $m/2^s$ is even, then $(1-u^2)^{m}(1-u)^{n-2m}=1+u^{2^s}+\rm{h.o.t.} \mod 2$. Now since $2^s$ divides $n$, we have that $c_{2^s}(\xi_{\std})$ and $c_{2^s}^{n/2^s-1}(\xi_{\std})\ne 0, \mod 2$. Since $n/2^s=(n-2m)/2^s+2m/2^{s}\ge 1+4$, we must have  $c_{2^s}^{n/2^s-1}(W)\in \Tor H^*(W;\Z)$ as $\mathrm{cl}(W)\le 2$. Therefore $\Ima \phi_{\Tor}^{2n-2^{s+1}}=\Z/4$. On the other hand, we have $\coker \phi^{2^{s+1}} = 0$, we obtain a contradiction by \Cref{prop:restriction'}.
    \item If $m/2^s$ is odd, and $(n-2m)/2^s$ can be divided by $4$. Then $(1-u^2)^{m}(1-u)^{n-2m}=1+u^{2^{s+1}}+\rm{h.o.t.} \mod 2$. Since $2^{s+1}$ divides $n-2m$ and $2^{s}$ divides $m$, we have that $2^{s+1}$ divides $n$. We have that $c_{2^{s+1}}(\xi_{\std})$ and $c_{2^{s+2}}^{n/2^{s+1}-1}(\xi_{\std})\ne 0, \mod 2$.
    Since $(n-2m)/2^{s+1}+2m/2^{s+1}\ge 3$, we are done as before. 
    \item If both $m/2^s$ and  $(n-2m)/2^s$ are odd. Then $(1-u^2)^{m}(1-u)^{n-2m}=1+u^{2^{s}}+\rm{h.o.t.} \mod 2$. An obstruction can be derived similarly.
    \item  If both $m/2^s$ and  $(n-2m)/2^{s+1}$ are odd. Then we have 
    $$(1-u^2)^{m}(1-u)^{n-2m} = (1-u^2)^{m+\frac{n}{2}-m}=(1-u^2)^{\frac{n}{2}}\mod 2.$$
    As $n$ is not a power of $2$, then we can derive a contradiction similarly.
\end{enumerate}
When $n-2m$ is $0$, it is covered in \Cref{thm:A_m} below.
\end{proof}

\subsubsection{$\Z/m$ action of $A_{m-1}$ type}\label{SS:A_m}
A $\Z/m$ action on $\C^{2n}$ of $A_{m-1}$ type is given by sending a generator of $\Z/m$ to $\diag(e^{\frac{2\pi \i}{m}},e^{-\frac{2\pi \i }{m}},\ldots, e^{\frac{2\pi \i}{m}},e^{-\frac{2\pi \i }{m}})$. The associated quotient singularity is terminal if and only if $n\ge 2$. Moreover, we have $\age(g)=n$ for any $g\ne \Id \in \Z/m$.

\begin{theorem}\label{thm:A_m}
Assume $\Z/m$ ($m\ge 3$) acts on $\C^{2n}$ of $A_{m-1}$ type, then $(S^{4n-1}/(\Z/m),\xi_{\std})$ has no exact filling if one of the following holds.
\begin{enumerate}
    \item $m\ne p^s$ for a prime number $p$ and $n>2$.
    \item $m=p^s$, and $n\ne 2p^r,p^r$ for a prime number $p$.
\end{enumerate}
\end{theorem}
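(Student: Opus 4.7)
Assume for contradiction that $W$ is an exact filling of $(S^{4n-1}/(\Z/m),\xi_{\std})$. The strategy has two parts: first, use the secondary coproduct machinery to show $H^*(W;\Q)$ is concentrated in degrees $0$ and $2n$; then, use the Chern classes of $\xi_{\std}$ together with Proposition \ref{prop:restriction} to produce a non-trivial rational class in an intermediate degree, contradicting the concentration.

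For the concentration, Proposition \ref{prop:upper_bound} gives $\dim_{\Q}H^*(W;\Q)=m$ with $H^*(W;\Q)$ supported in even degrees. Since $\Z/m\subset SU(2)$ acts diagonally on $\C^{2n}=(\C^2)^n$, Corollary \ref{cor:intersection_SS}(1) identifies the Chen--Ruan coproduct $\Delta$ on $H^*(W;\Q)$ with that of $\C^{2n}/(\Z/m)$. Since $\age(g)=n$ for every $g\ne\Id$, formula \eqref{eqn:coproduct} gives $\Delta([(\Id)])=\sum_{g\ne\Id}m\,[(g)]\otimes[(g^{-1})]$, supported in $H^{2n}\otimes H^{2n}$, and $\Delta([(g)])=0$ for every $g\ne\Id$. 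By Theorem \ref{thm:EGL}, this coproduct is dual, via Poincar\'e--Lefschetz duality, to the cup-product pairing $H^a(W;\Q)\otimes H^{4n-a}(W;\Q)\to H^{4n}_c(W;\Q)=\Q$, which is non-degenerate because $\partial W$ is a rational homology sphere. As $\Delta([(\Id)])$ has zero component in $H^a\otimes H^{4n-a}$ for $a\ne 2n$, non-degeneracy forces $H^a(W;\Q)=0$ for $a\ne 0,2n,4n$, giving $H^*(W;\Q)=\Q[0]\oplus \Q^{m-1}[-2n]$.

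Next I would compute Chern classes via Proposition \ref{prop:ADE_Chern}: $c(\xi_{\std})=(1-v)^n$ modulo $\la m,v^n\ra$ with $v\in H^4(S^{4n-1}/(\Z/m);\Z)=\Z/m$, so $c_{2k}(\xi_{\std})=(-1)^k\binom{n}{k}v^k$. The target is to find $k$ with $0<k<n$, $k\ne n/2$, and $\binom{n}{k}\not\equiv 0\pmod{p}$ for some prime $p\mid m$. Once found, Proposition \ref{prop:restriction} applied to $\phi^{4k}$ and $\phi^{4n-4k}$ (both images contain $\pm\binom{n}{k}v^{\bullet}\not\equiv 0\pmod{p}$ since $\binom{n}{k}=\binom{n}{n-k}$) forces $H^{4k}(W;\Q)\ne 0$, contradicting the concentration because $4k\notin\{0,2n,4n\}$.

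The existence of such a $k$ reduces via Lucas's theorem (Lemma \ref{lem:p_adic}) to a base-$p$ combinatorial check: the admissible $k$ are those digit-wise dominated by $n$, and every admissible $k$ lies in $\{0,n/2,n\}$ exactly when $n\in\{p^r,2p^r\}$. Case (2) directly forbids this for the unique prime $p\mid m$. In case (1), for distinct primes $p,q\mid m$ the sets $\{p^r,2p^r:r\ge 0\}$ and $\{q^s,2q^s:s\ge 0\}$ intersect only at $n\in\{1,2\}$ — the identities $p^r=q^s$, $p^r=2q^s$, $2p^r=q^s$, $2p^r=2q^s$ all collapse to $n\le 2$ using distinctness of $p,q$ — so for $n>2$ at least one prime factor of $m$ produces the required $k$. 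The main obstacle will be justifying the concentration step: one must verify that Corollary \ref{cor:intersection_SS}'s identification of the coproduct, proven at the level of the secondary coproduct on $SH^*_+(W;\Q)$, transfers faithfully to the coproduct on $H^*(W;\Q)$ through the Eliashberg--Ganatra--Lazarev diagram of Theorem \ref{thm:EGL}, in particular via the connecting map $\delta: SH^*_+(W;\Q)\to H^{*+1}(W;\Q)$ which becomes the iso that propagates the structure.
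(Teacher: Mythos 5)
Your second half (computing $c(\xi_{\std})$, selecting a prime $p\mid m$ and an index $k=p^s$ with $\binom{n}{k}\not\equiv 0\bmod p$ via Lucas, and the observation that $\{p^r,2p^r\}\cap\{q^r,2q^r\}=\{1,2\}$ handles case (1)) is essentially the paper's combinatorics. The gap is in your concentration step, and it is genuine. You assert that $\Delta([(\Id)])$ "is supported in $H^{2n}\otimes H^{2n}$" and deduce $H^a(W;\Q)=0$ for $a\ne 0,2n,4n$ from non-degeneracy of the intersection pairing. But the identification of Corollary \ref{cor:intersection_SS} is through the non-canonical isomorphism $\oplus\la x_{(g)}\ra\simeq SH_+^*(W;\Q)\simeq H^{*+1}(W;\Q)$, which respects only the $\Z/2$-grading and the formal boundary filtration: since $c_1(W)$ need not vanish, the boundary rational grading $2\age(g)$ is bookkeeping and does not compute actual cohomological degrees. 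Remark \ref{rmk:grading} only places $y_{(g)}$, $g\ne\Id$, in $\oplus_{i>0}H^{2i}(W;\Q)$, possibly of mixed degree, so the identity $\Delta(y_{(\Id)})=\sum_{g\ne\Id}m\,y_{(g)}\otimes y_{(g^{-1})}$ carries no information about which bidegrees $H^a\otimes H^{4n-a}$ occur. In fact concentration cannot be extracted from the rational coproduct data at all: already for $m=3$, taking $H^{>0}(W;\Q)=H^{2a}\oplus H^{4n-2a}$ for an arbitrary $a$ with $y_{(g)},y_{(g^{-1})}$ dual generators in the two complementary degrees satisfies every constraint you have available ($\dim H^*=m$, even support, Lefschetz duality, $\Delta(y_{(g)})=0$, and $\Delta(y_{(\Id)})$ equal to the inverse-pairing tensor). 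So your final contradiction ("$4k\notin\{0,2n,4n\}$") rests on a statement that is not implied by the cited results.

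The paper's proof reaches the contradiction without any concentration claim: from Corollary \ref{cor:cuplength} (with $\md(\C^{2n}/(\Z/m))=n-1$ one gets $\mathrm{cl}(W)\le 1$), the power $(c_{2p^s}(W))^{n/p^s-1}$ is rationally zero, hence lies in $\Tor H^{4n-4p^s}(W;\Z)$ — this is why the condition $n/p^s>2$ (equivalently $n\ne p^r,2p^r$) is needed, to make the exponent at least $2$. Since this torsion class restricts to $(c_{2p^s}(\xi_{\std}))^{n/p^s-1}\ne 0\bmod p$, the image $\Ima\phi^{4n-4p^s}_{\Tor}$ has order divisible by $p$, while $c_{2p^s}(\xi_{\std})\ne 0\bmod p$ forces $|\coker\phi^{4p^s}|$ to be coprime to $p$; Proposition \ref{prop:restriction'} equates these two orders, a contradiction. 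If you want to keep your structure, replace the concentration argument by this torsion-order comparison (your use of Proposition \ref{prop:restriction} at a single degree would only suffice if concentration were known, which it is not).
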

\begin{proof}
By \Cref{prop:chern}, the total Chern class of $\xi_{\std}$ is
$$(1-u^2)^{n}, \mod \la m,u^{2n} \ra.$$
When $m\ne p^s$, $m$ is divisible by two different primes $p,q$. It is easy to check that $\{p^r,2p^r|r\in \N  \}\cap \{q^r,2q^r|r\in \N  \}=\{1,2\}$. Therefore when $n>2$, either we have $n\ne p^r, 2p^r$ or we have $n\ne q^r, 2q^r$. W.L.O.G., we assume $n\ne p^r, 2p^r$.  By \Cref{lem:p_adic}, there is $s$ such that $p^s|n$ and $n/p^s>2$, such that $c_{2p^s}(\xi_{\std})\ne 0 \mod p$. As a consequence, we have $\coker \phi^{4p^s} = \Z/l$, where $l$ divides $m$ and $l$ is coprime to $p$. Then $(c_{2p^s}(\xi_{\std}))^{\frac{n-p^s}{p^s}} \ne 0 \mod p$. By \Cref{cor:intersection_SS}, we have $(c_{2p^s}(\xi_{\std}))^{\frac{n-p^s}{p^s}}\in \Tor H^{4n-4p^s}(W;\Z)$. As a consequence, we have $\Ima \phi^{4n-4p^s}_{\Tor}=\Z/k$, where $k$ divides $m$ and $m/k$ is coprime to $p$. In particular, we have $p$ divides $k$. Therefore, we get a contradiction by \Cref{prop:restriction'}. When $m=p^s$, and $n\ne 2p^r,p^r$ for a prime number $p$. A contradiction can be derived similarly. 
\end{proof}

\subsection{ADE actions}
Let $G$ be a finite subgroup of $SU(2)$ throughout this subsection, i.e.\ $G$ is of $A_m(m\ge 1),D_m(m\ge 4),E_6,E_7,E_8$ types (of Dynkin diagrams). We will focus on $D,E$ types, as the $A_m$ type is the cyclic $\Z/(m+1)$ action considered in \Cref{thm:A_m}. We are interested in the (non-)existence of exact fillings of $(S^{4n-1}/G,\xi_{\std})$, where $G$ acts on $(\C^2)^n$ through the diagonal map $SU(2)\to \oplus^n SU(2)\to SU(2n)$. 

\begin{proposition}\label{prop:ADE_terminal}
For the diagonal action of $G$ on $\C^{2n}$, we have $\age(g)=n$ for any $g\ne \Id \in G$, in particular, $\C^{2n}/G$ is terminal if and only if $n\ge 2$. 
\end{proposition}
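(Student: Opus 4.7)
The plan is to reduce the computation of $\age(g)$ for the diagonal action on $\C^{2n}$ to the single-copy computation on $\C^2$, and then to exploit the crucial constraint $\det g = 1$ for $g \in SU(2)$.

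First I would show that age is additive under direct sums of representations: if $g$ acts on $V \oplus W$ as $g_V \oplus g_W$ (with $g_V, g_W$ having the same order as $g$, or trivially extended), then $\age(g, V \oplus W) = \age(g_V, V) + \age(g_W, W)$. This follows directly from the definition \eqref{eqn:age}: diagonalizing on each summand and summing the normalized weights over all eigenvalues. Applied to the diagonal action, this reduces the claim to showing $\age(g|_{\C^2}) = 1$ for every $g \neq \Id$ in $G \subset SU(2)$.

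Next I would handle the single-copy case. Let $g \in G \setminus \{\Id\}$ have order $\mathrm{o}(g)$. Diagonalize $g$ on $\C^2$ as $\diag(e^{2\pi\i a/\mathrm{o}(g)}, e^{2\pi\i b/\mathrm{o}(g)})$ with $1 \le a, b < \mathrm{o}(g)$; note that neither eigenvalue can equal $1$, since otherwise $\det g = 1$ would force the other eigenvalue to be $1$ as well, contradicting $g \neq \Id$ (here we use that $g$ is diagonalizable, which holds for any finite-order element of $U(2)$). The condition $g \in SU(2)$ gives $\det g = e^{2\pi\i(a+b)/\mathrm{o}(g)} = 1$, so $a + b \equiv 0 \pmod{\mathrm{o}(g)}$. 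Combined with $a, b \in \{1, \ldots, \mathrm{o}(g)-1\}$, this forces $a + b = \mathrm{o}(g)$, hence $\age(g|_{\C^2}) = (a+b)/\mathrm{o}(g) = 1$. By additivity, $\age(g) = n$ on $\C^{2n}$.

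Finally, terminality follows from Proposition~\ref{prop:terminal}, which states that $\C^n/G$ is terminal iff $\min_{g \neq \Id}\{\age(g)\} > 1$. Since every nontrivial $g \in G$ has $\age(g) = n$, this minimum equals $n$, which exceeds $1$ precisely when $n \ge 2$. The only minor subtlety is the trivial fact that the action is free on $S^{4n-1}$ (so that $\C^{2n}/G$ is indeed an isolated singularity): this is immediate since the action of $G$ on $\C^2$ is already free away from the origin, so the diagonal action on $\C^{2n}$ is as well.
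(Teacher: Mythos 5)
Your proposal is correct and follows essentially the same route as the paper: both reduce to the single copy $\C^2$, use $\det g=1$ together with the bound $1\le a_i<\mathrm{o}(g)$ to force $\age(g|_{\C^2})=1$ (the paper phrases this as ``$\age(g)\in\N$ and $0<\age(g)<2$,'' which is your $a+b=\mathrm{o}(g)$ computation in different words), then add over the $n$ copies and invoke \Cref{prop:terminal}.
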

\begin{proof}
For $g\ne \Id \in G$ acts on $\C^2$, since $g\in SU(2)$, we have $\age(g)\in \N$. Note that $\age(g)<2$ as the complex dimension is $2$ and $\age(g)>0$ if we consider $g\ne \Id$. We must have $\age(g)=1$ for $g\ne \Id$. Then for the diagonal action of $g$ on $\C^{2n}$, it is clear that $\age(g)=n$. The last claim follows from \Cref{prop:terminal}. 
\end{proof}

\begin{remark}
When $n=1$, $\C^2/G$ is canonical. Moreover, the contact link $(S^3/G,\xi_{\std})$ is exactly fillable by the plumbing of $T^*S^2$ according to the corresponding Dynkin diagram.
\end{remark}
The $D_{m+2}$ Dynkin diagram corresponds to the binary dihedral group $2D_{2m}$ of order $4m\ge 8$. The $E_6,E_7,E_8$ Dynkin diagrams correspond to the binary tetrahedral group $2T$, the binary octahedral group $2O$, and the binary icosahedral group $2I$ respectively. We have $|2T|=24,|2O|=48,|2I|=120$. We refer readers to \cite{MR1957212} for details of those groups.

\begin{theorem}\label{thm:ADE}
We have the following:
\begin{enumerate}
    \item\label{ADE:1} If $G=2D_{2m}$ for $m\ge 2$, then $(S^{4n-1}/G,\xi_{\std})$ has no exact filling when $n>2$ and $m,n$ are not powers of $2$  simultaneously.
    \item\label{ADE:2} If $G=2T$, then $(S^{4n-1}/G,\xi_{\std})$ has no exact filling when $n>2$.
    \item\label{ADE:3} If $G=2O$, then $(S^{4n-1}/G,\xi_{\std})$ has no exact filling when $n>2$.
    \item\label{ADE:4} If $G=2I$, then $(S^{4n-1}/G,\xi_{\std})$ has no exact filling when $n>2$.
\end{enumerate}
\end{theorem}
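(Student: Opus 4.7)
Fix $G$ of $D$- or $E$-type and $n$ as in the hypothesis, and suppose $W$ is an exact filling of $(S^{4n-1}/G, \xi_{\std})$. Since every non-identity element of $G$ has $\age = n$ by \Cref{prop:ADE_terminal}, we have $\md(\C^{2n}/G) = n - 1$, and combining \Cref{prop:upper_bound} with \Cref{cor:cuplength} forces $\dim_{\Q} H^*(W;\Q) = |\Conj(G)|$ with $H^*(W;\Q)$ supported in even degrees, together with the sharp bound
$$\mathrm{cl}(W) \le \left\lfloor \frac{2n-1-(n-1)}{1+(n-1)} \right\rfloor = 1.$$
In particular, any Chern product $c_{j_1}(W) \cup \cdots \cup c_{j_s}(W)$ with $s \ge 2$ and $j_i \ge 1$ lands in the torsion subgroup of $H^{4(j_1 + \cdots + j_s)}(W;\Z)$.

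The key step is to pit these torsion products against individual boundary Chern classes through \Cref{prop:restriction'}. Writing $v$ for the generator of $H^4(S^{4n-1}/G;\Z) = \Z/|G|$, \Cref{prop:ADE_Chern} gives $c_k(\xi_{\std}) = (-1)^k \binom{n}{k} v^k$ modulo $\langle |G|, v^n\rangle$. The plan is to find a prime $p \mid |G|$, an integer $1 \le k \le n-2$, and a partition $n - k = j_1 + \cdots + j_s$ with $s \ge 2$ and $j_i \ge 1$, such that $\binom{n}{k}$ and each $\binom{n}{j_i}$ are non-zero modulo $p$. Letting $p^a$ be the $p$-primary part of $|G|$, the class $c_k(\xi_{\std}) \in \Ima \phi^{4k}$ then projects to a unit in $\Z/p^a$, so $|\coker \phi^{4k}|$ is coprime to $p$; whereas the torsion class $\alpha = c_{j_1}(W) \cup \cdots \cup c_{j_s}(W) \in \Tor H^{4(n-k)}(W;\Z)$ restricts to $\pm \prod_i \binom{n}{j_i} \cdot v^{n-k}$, an element of $\Ima \phi^{4(n-k)}_{\Tor}$ of order divisible by $p^a$. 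This contradicts \Cref{prop:restriction'}, which yields $|\Ima \phi^{4(n-k)}_{\Tor}| = |\coker \phi^{4k}|$ (noting that $H^*(BG;\Z)$ is supported in even degrees for the groups at hand).

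What remains is a Lucas-style combinatorial exercise (\Cref{lem:p_adic}) to produce the triple $(p, k, \{j_i\})$ in each case: the binomial conditions translate to requiring each part to have $p$-adic digits component-wise at most those of $n$. For $G = 2D_{2m}$ with $m, n$ not both powers of $2$: if $m$ carries an odd prime factor $p$, work with $p$ and take $k = 1$, $j_1 = \cdots = j_{n-1} = 1$ when $p \nmid n$, otherwise adapt to the $p$-adic expansion of $n$; if $m$ is a power of $2$ but $n$ is not, use $p = 2$ with a splitting along the binary digits of $n$. For $G = 2T, 2O, 2I$, the orders $|G| = 24, 48, 120$ are divisible by the primes $\{2, 3\}$ (and $5$ for $2I$), so for $n > 2$ at least one of these primes has the digit sum of $n$ at least two and admits the required splitting; edge cases where $n$ is a prime power at one small prime are resolved by switching to another prime divisor of $|G|$. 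The main technical obstacle is the $G = 2D_{2m}$ case, where the hypothesis on $(m, n)$ is precisely what carves out those pairs for which a suitable prime and splitting exist; a short finite case analysis handles the remaining small values of $n$.
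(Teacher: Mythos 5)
Your proposal is correct and takes essentially the same route as the paper's proof: restrict Chern classes to the boundary via \Cref{prop:ADE_Chern}, use \Cref{prop:upper_bound} together with the cup-length bound to force any product of at least two positive-degree Chern classes of $W$ to be torsion, and play such a product against a single Chern class in the complementary degree through \Cref{prop:restriction'} plus a Lucas-type digit analysis (\Cref{lem:p_adic}) modulo a prime dividing $|G|$ — the paper just takes powers of one class $c_{2i}$ with $i\mid n$, $i<n/2$, which is the special case of your partitions. Two small corrections: your formula should read $c_{2k}(\xi_{\std})=(-1)^k\binom{n}{k}v^k$ since $v$ has degree $4$, and in the dihedral case with an odd prime $p\mid m$ the phrase ``adapt to the $p$-adic expansion of $n$'' does not suffice when $n=p^t$ or $2p^t$ (no admissible partition exists mod $p$); there you must switch to the prime $2$, which is available because such an $n>2$ is never a power of $2$.
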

\begin{proof}
In view of \Cref{cor:intersection_SS} and the proof of \Cref{thm:A_m}, it suffices to find $i<\frac{n}{2}$ such that $i$ divides $n$ and $c_{2i}(\xi_{\std}) \ne 0 \mod p$, where $p$ is a prime factor of $|G|$. 

\textbf{Proof of \eqref{ADE:1}:} By \Cref{prop:ADE_Chern}, the total Chern class of the contact boundary is given by
$$(1-v)^n\mod \la 4m, v^n \ra.$$
If $m$ is not a power of $2$, then there is an odd prime $p$ dividing $m$. As $n$ can not be in the form of $2^s$ or $p^k$ (or $2p^k$) simultaneously,  there must be $i<\frac{n}{2}$ such that $i$ divides $n$ and $c_{2i}(S^{4n-1}/(2D_{2m}),\xi_{\std}) \ne 0 \mod p$ or $\ne 0 \mod 2$.  Now we assume $m=2^k\ge 2$. If $n$ is not a power of $2$, then there must be $i<\frac{n}{2}$ divining $n$, such that $c_{2i}(S^{4n-1}/(2D_{2m}),\xi_{\std}) \ne 0 \mod 2$ and a contradiction arises similarly.

\textbf{Proof of \eqref{ADE:2}:} If $n$ is not a power of $2$ and $i$th digit of the $2$-adic representation of $n$ is the lowest digit with non-zero value. We have $c_{2^{i}}(\xi_{\std})\ne 0 \mod 2$. If $n=2^k$ but $k\ge 2$, then we have $c_2(\xi_{\std})\ne 0 \mod 3$. 

\textbf{Proof of \eqref{ADE:3} and \eqref{ADE:4}:} The $n\ge 3$ case is similar to case \eqref{ADE:2}.
\end{proof}

The argument used in \Cref{thm:A_m,thm:ADE} can be abstractized as follows.
\begin{theorem}
Assume $G$ acts on $\C^n$ with isolated singularity for $n\ge 3$, such that $\age(g)=\frac{n}{2}$ for any $g\ne \Id \in G$. Hence $\C^n/G$ is terminal by \Cref{prop:terminal}. Then $(S^{2n-1}/G,\xi_{\std})$ has no exact filling if the following conditions are met.
\begin{enumerate}
    \item There exists $i$, which divides $n$ and $n/i\ge 2$, such that $\oplus_{j=0}^\infty H^{2ij}(BG;\Z)=H^*(B\Z/m;\Z)$ as a ring.
    \item $c_{i}(\xi_{\std}) \ne 0 \mod p$ for some prime factor of $m$. 
\end{enumerate}
\end{theorem}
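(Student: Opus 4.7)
The plan is to abstract the Chern-class restriction argument of \Cref{thm:A_m} and \Cref{thm:ADE}: combine the cup-length bound of \Cref{cor:cuplength} with hypothesis (1) and \Cref{prop:restriction'} to contradict the existence of an exact filling $W$.

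First, since $\age(g)=n/2$ for every nontrivial $g$, \Cref{prop:terminal} gives $\md(\C^n/G)=n/2-1$, so \Cref{cor:cuplength} yields
$$\mathrm{cl}(W)\le \left\lfloor \frac{n-1-(n/2-1)}{1+(n/2-1)}\right\rfloor=1.$$
Combined with \Cref{prop:upper_bound} and the age condition (which forces $H^*_{\CR}(\C^n/G;\Q)$ to be concentrated in degrees $0$ and $n$), the rational cohomology $H^*(W;\Q)$ is concentrated in degrees $0$ and $n$. In the main case $n/i\ge 3$ we then have $2i<n$, so $c_i(W)\in H^{2i}(W;\Q)=0$, which makes $c_i(W)$ and hence $c_i(W)^{n/i-1}$ torsion classes in integral cohomology. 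Next, hypothesis (1) presents $\bigoplus_{j\ge 0}H^{2ij}(BG;\Z)$ as $\Z[u]/\langle mu\rangle$, so any class nonzero modulo a prime $p \mid m$ has all its nontrivial powers nonzero modulo $p$; in particular $c_i(\xi_{\std})^{n/i-1}$ is nonzero modulo $p$ in $H^{2n-2i}(S^{2n-1}/G;\Z)$, using \Cref{prop:cohomology} in the range $2n-2i<2n-1$.

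To finish, let $\phi^*:H^*(W;\Z)\to H^*(S^{2n-1}/G;\Z)$ denote restriction to the boundary. Naturality of Chern classes gives $\phi^{2i}(c_i(W))=c_i(\xi_{\std})$, nonzero modulo $p$, so $|\coker\phi^{2i}|$ is coprime to $p$. On the other hand $\phi^{2n-2i}(c_i(W)^{n/i-1})=c_i(\xi_{\std})^{n/i-1}$ is also nonzero modulo $p$; since $c_i(W)^{n/i-1}$ is torsion, its image lies in $\Ima\phi^{2n-2i}_{\Tor}$, so $p$ divides $|\Ima\phi^{2n-2i}_{\Tor}|$. Applying \Cref{prop:restriction'} (whose even-degree hypotheses hold in the relevant degrees by hypothesis (1) and \Cref{prop:upper_bound}) yields $|\Ima\phi^{2n-2i}_{\Tor}|=|\coker\phi^{2i}|$, the desired contradiction.

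The main obstacle is the borderline case $n/i=2$: then $2i=n$ and the degree-based torsion argument for $c_{n/2}(W)$ breaks down, as $H^n(W;\Q)$ is generically nonzero. I would expect this case either to require strengthening the hypothesis to $n/i\ge 3$, or to be handled by a separate argument using \Cref{cor:intersection_SS} (which pins down the middle-dimensional intersection form on $W$ as that of $\C^n/G$) to constrain the free part of $c_{n/2}(W)$, possibly via an orbifold signature computation in the spirit of the $\Z/3$-action proofs above.
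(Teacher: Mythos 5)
Your proposal is essentially the paper's own proof: the paper gives no separate argument for this theorem, saying only that it abstracts the arguments of \Cref{thm:A_m,thm:ADE}, and that abstraction is exactly what you carried out --- the cup-length bound of \Cref{cor:cuplength}, nonvanishing of $c_i(\xi_{\std})^{n/i-1}$ modulo $p$ via hypothesis (1) and \Cref{prop:cohomology}, naturality $c_i(W)|_{\partial W}=c_i(\xi_{\std})$, and the counting identity $|\Ima \phi^{2n-2i}_{\Tor}|=|\coker\phi^{2i}|$ of \Cref{prop:restriction'} to reach a contradiction.

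Two caveats. First, your intermediate claim that $H^*(W;\Q)$ is concentrated in degrees $0$ and $n$ is not justified by the results you cite: \Cref{prop:upper_bound}, \Cref{rmk:grading} and \Cref{prop:dual} give the total rank, even support and the symmetry $H^j\simeq H^{2n-j}$, but the cohomological degrees of the classes $y_{(g)}$ are not pinned down (the rational ``grading'' in \Cref{cor:cuplength} is a filtration degree, not the degree in $H^*(W;\Q)$). Fortunately you do not need it: for $n/i\ge 3$ the class $c_i(W)^{n/i-1}$ is a product of at least two positive-degree classes, hence rationally zero by your own bound $\mathrm{cl}(W)\le 1$, hence torsion, and the rest of your argument runs unchanged. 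Second, your worry about the borderline case $n/i=2$ is well founded and in fact matches the paper: the concrete arguments being abstracted always use $2i<n$ (e.g.\ \Cref{thm:ADE} looks for $i<n/2$, and \Cref{thm:A_m} explicitly excludes $n=2p^r$, which is precisely the situation where only a middle-degree Chern class is nonzero mod $p$), so the intended hypothesis is $n/i\ge 3$ and no separate argument for $n/i=2$ appears anywhere in the paper; your suggestion that this case would require additional input (intersection form/signature-type arguments as in \S 6) is consistent with how the paper treats the analogous excluded cases. A final minor point, shared with the paper's own statement: \Cref{prop:restriction'} also needs $H^{\mathrm{odd}}(BG;\Z)$ to vanish in the relevant degrees, which hypothesis (1) does not literally supply and which is verified case by case (cyclic, $A,D,E$) in the applications.
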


\appendix
\section{Bernoulli numbers}\label{app}
Bernoulli numbers $B_1,B_2,\ldots$ can be defined as the coefficients which occur in the power series expansion
$$\frac{x}{\tanh x} = 1+\frac{B_1}{2!}(2x)^2-\frac{B_2}{4!}(2x)^4+\frac{B_3}{6!}(2x)^6-\ldots.$$
Hence they appear as coefficients in $L$-genus naturally. An alternative formula due to Worpitzky \cite{MR1579945} in 1883 for the Bernoulli numbers is given by
\begin{equation}\label{eqn:W}
    B_n=(-1)^{n-1}\sum_{r=0}^{2n}\frac{1}{r+1}\sum_{s=0}^r (-1)^s\binom{r}{s}s^{2n}
\end{equation}
One fundamental property of the Bernoulli numbers is the following theorem due to Clausen and Staudt back in 1840.
\begin{theorem}[{c.f.\ \cite[Theorem B.3]{MR0440554}}]\label{thm:CS}
$(-1)^n B_n$ is congruent modulo $1$ to $\sum \frac{1}{p}$, where the sum is over all primes $p$ such that $(p-1)|2n$. 
\end{theorem}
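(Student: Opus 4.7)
The plan is to deduce the congruence directly from Worpitzky's formula \eqref{eqn:W}, which is the classical $1883$ approach the author alludes to. Writing
\[
T_r := \frac{1}{r+1}\sum_{s=0}^{r}(-1)^s \binom{r}{s}\,s^{2n},
\]
formula \eqref{eqn:W} becomes $(-1)^n B_n = -\sum_{r=0}^{2n} T_r$. The core of the argument is to show that $T_r\in\Z$ unless $r+1$ is a prime $p$ with $(p-1)\mid 2n$, in which case $T_r\equiv -1/p\pmod 1$; summing over $r$ then yields the stated formula.

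First I will handle the prime case $r+1=p$. Fermat's little theorem gives $\binom{p-1}{s}\equiv(-1)^s\pmod p$ for $0\le s\le p-1$, so $(-1)^s\binom{p-1}{s}\equiv 1\pmod p$, and hence
\[
\sum_{s=0}^{p-1}(-1)^s\binom{p-1}{s}\,s^{2n}\equiv \sum_{s=0}^{p-1} s^{2n}\pmod p.
\]
If $(p-1)\mid 2n$ then $s^{2n}\equiv 1\pmod p$ for each $s\in\{1,\dots,p-1\}$, so the power sum is $\equiv -1\pmod p$; otherwise, choosing a generator $g$ of $(\Z/p)^\times$ reduces $\sum_{s=1}^{p-1} s^{2n}$ to the geometric sum $\sum_{k=0}^{p-2} g^{2nk}$, which vanishes modulo $p$ because $g^{2n}\not\equiv 1$. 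This gives exactly the claimed residue for $T_{p-1}$.

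Next I will show $T_r\in\Z$ in the remaining cases. The identity $\sum_{s=0}^{r}(-1)^s\binom{r}{s}s^{2n}=(-1)^r\, r!\,S(2n,r)$, where $S(2n,r)$ is a Stirling number of the second kind, reduces the question to the divisibility $(r+1)\mid r!\,S(2n,r)$. The term $r=0$ vanishes. For composite $r+1\ge 6$ one has the elementary fact $(r+1)\mid r!$: writing $r+1=ab$ with $1<a\le b<r+1$, either $a\ne b$ and $ab\mid r!$ directly, or $r+1=a^2$ with $a\ge 3$, in which case both $a$ and $2a$ lie in $\{2,\dots,r\}$ and again $a^2\mid r!$. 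The only remaining value is $r+1=4$, where a direct computation is needed: the closed form $S(2n,3)=(3^{2n-1}-2^{2n}+1)/2$, combined with $3^{2n-1}\equiv -1\pmod 4$ and $2^{2n}\equiv 0\pmod 4$ for $n\ge 1$, shows $S(2n,3)$ is even, and hence $T_3=-\tfrac{3}{2}S(2n,3)\in\Z$.

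Combining the three cases yields
\[
(-1)^n B_n \;=\; -\sum_{r=0}^{2n}T_r \;\equiv\; -\!\!\sum_{\substack{p\ \text{prime}\\ (p-1)\mid 2n}}\!\!\Bigl(-\tfrac{1}{p}\Bigr) \;=\; \sum_{(p-1)\mid 2n}\frac{1}{p}\pmod 1,
\]
as claimed. The main obstacle will be the exceptional case $r+1=4$: it is the unique composite value for which $r+1\nmid r!$, so the argument cannot rely on a purely structural divisibility lemma and must instead exploit the specific parity of $S(2n,3)$ above.
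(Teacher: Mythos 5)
Your proof is correct. The paper itself does not prove \Cref{thm:CS} (it is cited from Milnor--Stasheff), but your argument is precisely the classical Worpitzky-style proof that the appendix alludes to and then imitates: the same term-by-term analysis of \eqref{eqn:W}, splitting $r+1$ into primes $p$ with $(p-1)\mid 2n$ (contributing $-1/p$), other primes, the exceptional composite $r+1=4$, and composites $\ge 6$ with $(r+1)\mid r!$, is exactly the decomposition reused modulo $64$ in the paper's proof of \Cref{lemma:numerator_Bernoulli}.
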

An instant corollary of the theorem is that $B_n$, when written in the lowest fraction, is 
$$\frac{N_n}{2D_n}, \text{ where } D_n=\prod_{\substack{ \text{odd prime } p, \\(p-1)|2n}} p.$$
The property of Bernoulli numbers we need is the following, whose proof is motivated from Worpitzky's proof of the Clausen-Staudt Theorem explained in \cite{MR130397}.

\begin{lemma}\label{lemma:numerator_Bernoulli}
When $n=2^k>8$, we have $N_n=1 \mod 64$ and $D_n=-1 \mod 64$.
\end{lemma}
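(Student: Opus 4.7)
The plan is to prove the two assertions $D_n \equiv -1 \pmod{64}$ and $N_n \equiv 1 \pmod{64}$ separately; the first is elementary and the second is the substantive content.

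For $D_n$, I would observe that $D_n$ is the product of odd primes $p$ with $(p-1) \mid 2n = 2^{k+1}$, which forces $p$ to be a Fermat prime $F_i = 2^{2^i}+1$ with $2^i \le k+1$. For every $i \ge 3$ one has $2^{2^i} \ge 256$, so $F_i \equiv 1 \pmod{64}$, and therefore $D_n \equiv 3 \cdot 5 \cdot 17 = 255 \equiv -1 \pmod{64}$ regardless of which larger Fermat primes contribute.

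For $N_n$, writing $\mathcal{B}_{2n}$ for the standard Bernoulli number (so $B_n = -\mathcal{B}_{2n}$ when $n$ is even), I have $N_n = -2 D_n \mathcal{B}_{2n} = -D_n \alpha$ with $\alpha := 2\mathcal{B}_{2n} \in \mathbb{Z}_2$; the first step reduces the claim to $\alpha \equiv 1 \pmod{64}$. I would apply Worpitzky's identity $\mathcal{B}_{2n} = \sum_{r=0}^{2n} f(r)/(r+1)$ with $f(r) = \sum_{j=0}^r (-1)^j \binom{r}{j} j^{2n}$, together with the lifting-the-exponent formula $v_2(j^{2n}-1) = v_2(j-1) + v_2(j+1) + k \ge k+3$ for odd $j$ and the trivial $v_2(j^{2n}) \ge 2n$ for even $j \ge 2$. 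This yields the approximation $f(r) \equiv -\sum_{j \text{ odd}} \binom{r}{j} = -2^{r-1} \pmod{2^{k+3}}$ for every $r \ge 1$. Splitting $\alpha = \alpha_{\rm lead} + \alpha_{\rm err}$ with $\alpha_{\rm lead} := -\sum_{r=1}^{2n} 2^r/(r+1)$, only $r$ with $r - v_2(r+1) < 6$ survive in $\alpha_{\rm lead} \pmod{64}$, namely $r \in \{1,2,3,4,5,7\}$; using $1/3 \equiv 43$ and $1/5 \equiv 13 \pmod{64}$, a direct computation gives
\[
\alpha_{\rm lead} \equiv -1 + 20 + 62 + 48 + 16 + 48 = 257 \equiv 1 \pmod{64}.
\]

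It remains to show $\alpha_{\rm err} \equiv 0 \pmod{64}$, which is where the main work lies. The per-term bound $v_2(2 E_r/(r+1)) \ge k+4-v_2(r+1)$ (with $E_r := f(r) + 2^{r-1}$) handles every $r$ with $v_2(r+1) \le k-2$. For $v_2(r+1) \ge k$, i.e.\ $r+1 \in \{2^k, 2^{k+1}\}$, and for $r+1 = 3\cdot 2^{k-1}$, I would switch to the Stirling form $f(r) = (-1)^r r!\,S(2n,r)$ and use Legendre's $v_2(r!) = r - s_2(r)$, which is large enough that $2 f(r)/(r+1)$ itself has $v_2 \ge 6$. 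The delicate case is $r = 2^{k-1} - 1$; for $k \ge 5$ the $v_2(r!)$ argument still suffices, and for $k = 4$ (so $r = 7$) the leading term $-16$ is already part of $\alpha_{\rm lead}$, so what must be verified is the refined bound $v_2(E_{2^{k-1}-1}) \ge k+4$. The main obstacle is this one-bit refinement: the $v_2 = k+3$ contributions to $E_r$ come from $j \equiv \pm 3 \pmod 8$ weighted by $\binom{r}{j} c_j$ with $c_j := (j^{2n}-1)/2^{k+3}$ odd; since $r = 2^{k-1} - 1$ is a Mersenne number, all $\binom{r}{j}$ are odd by Lucas's theorem, so the sum modulo $2$ reduces to $\#\{j\le r : j \equiv \pm 3 \pmod 8\} = 2^{k-3}$, which is even for $k \ge 4$, yielding the required parity cancellation and hence the extra factor of $2$.
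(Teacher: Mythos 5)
Your proof is correct, and it runs on the same basic engine as the paper's — Worpitzky's identity plus a term-by-term $2$-adic analysis, with Stirling-number integrality and Legendre's formula controlling the factorially large terms — but you organize the estimate in a genuinely different way. The paper keeps the factor $D_n$ inside every term, and therefore splits the tail according to whether $r+1$ is a prime $p\ge 11$ with $(p-1)\mid 2n$ (dividing out the copy of $p$ inside $D_n$ and proving $\sum_s(-1)^s\binom{p-1}{s}s^{2n}\equiv 0\bmod 32$), a prime with $(p-1)\nmid 2n$, or composite; it also computes the seven initial terms in full via ad hoc congruences such as $3^{2n}\equiv 1\bmod 128$. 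You instead divide out $D_n$ at the start (reducing everything to $2\mathcal{B}_{2n}\equiv 1\pmod{64}$), extract the uniform leading term $-2^{r-1}$ from every $f(r)$ by lifting the exponent, and sum the leading series exactly modulo $64$; the prime/composite case division then disappears entirely, and only the four values of $r+1$ divisible by $2^{k-1}$ need the Stirling/Legendre valuation bound. Your Lucas-theorem parity argument at the borderline case $r=2^{k-1}-1$, $k=4$ is precisely the structural reason behind the cancellation that the paper dispatches with the remark that the $r=7$ term ``requires checking separately for $k=4$'' (there the cancellation is that $\binom{7}{3}+\binom{7}{5}=56$ is even, matching your count $2^{k-3}$), so your version isolates and explains that step instead of verifying it numerically. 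The only blemish is cosmetic: the literal sum of the residues you list for $\alpha_{\rm lead}$ is $193$, not $257$ (you evidently wrote $63$ for the first term), but both are $\equiv 1\pmod{64}$, so nothing is affected.
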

\begin{proof}
$D_n$ is the product of odd primes $p$, such that $p-1$ divides $2^{k+1}$. Hence the prime $p$ is in the form $2^m+1$, which starts with $3,5,17,257,\ldots$. Then the last claim follows from  that  $3\cdot 5 \cdot 17 = -1 \mod 64$.

Then we will prove the first claim for $k>3$. Note that we have 
\begin{eqnarray}
    N_n & = & -2D_n\cdot\sum_{r=0}^{2n}\frac{1}{r+1}\sum_{s=0}^r (-1)^s\binom{r}{s}s^{2n} \nonumber \\
    &= & -2D_n\cdot\sum_{r=0}^{2n}(-1)^r\frac{r!}{r+1} \frac{1}{r!} \sum_{s=0}^r (-1)^{r-s}\binom{r}{s}s^{2n} \label{eqn:sum}
\end{eqnarray}
\begin{enumerate}
    \item The $r=1$ term of \eqref{eqn:sum} is $D_n$, which is $-1 \mod 64$. 
    \item The $r=2$ term of \eqref{eqn:sum} is $- 2\frac{D_n}{3} \left(-2+2^{2n} \right) =  4 \frac{D_n}{3}=20 \mod 64$ as $n>8$.
    \item The $r=3$ term of \eqref{eqn:sum} is 
    \begin{equation}\label{eqn:r=4}
        -D_n \frac{1}{2}(3\cdot 2^{2n}-3-3^{2n}).
    \end{equation}
    Note that $3^{2n}=(9)^n = (81)^{2^{k-1}}=(33)^{2^{k-2}}=(65)^{2^{k-3}} = 1 \mod 128$ as $k\ge 4$, then \eqref{eqn:r=4} is $-2 \mod 64$. 
    \item The $r=4$ term of  \eqref{eqn:sum} is
    $$-2\frac{D_n}{5}(-4+6\cdot 2^{2n}-4\cdot 3^{2n}+4^{2n}),$$
    which modulo $64$ is $-16$ for $k>3$.
    \item The $r=5$ term of  \eqref{eqn:sum} is
    \begin{equation}\label{eqn:r=5}
        -2\frac{D_n}{6}(-5+10\cdot 2^{2n}-10\cdot 3^{2n}+5\cdot 4^{2n}-5^{2n}).
    \end{equation}
    Since $5^{2n}=(25)^n=(49)^{2^{k-1}}=(33)^{2^{k-2}} = 1 \mod 32$ for $k\ge 3$, \eqref{eqn:r=5} $\mod 64$ is $16$ when $k>3$.
    \item The $r=6$ term of  \eqref{eqn:sum} is
    \begin{equation}
        -2\frac{D_n}{7}(-6+15\cdot 2^{2n}-20\cdot 3^{2n}+15\cdot 4^{2n}-6\cdot 5^{2n}+6^{2n}), \nonumber
    \end{equation}
   which is $0 \mod 64$.
    \item The $r=7$ term of  \eqref{eqn:sum} is
    $$-2\frac{D_n}{8}(-7+21\cdot 2^{2n}-35\cdot 3^{2n}+35\cdot 4^{2n}-21\cdot 5^{2n}+7\cdot 6^{2n}-7^{2n}),$$
    which modulo $64$ is $-16$ (which requires checking separately for $k=4$).
\end{enumerate}
Now suppose $r+1=p\ge 11$ is a prime number. Then
\begin{equation}\label{eqn:A}
    A_{2n,p-1}:=\sum_{s=0}^{p-1}(-1)^s \binom{p-1}{s}s^{2n}=\sum_{s=0}^{p-1}s^{2n} \mod p.
\end{equation}
Since $r=p-1$ is even, the $r$-th term of \eqref{eqn:sum} is
$-2D_n\frac{(p-1)!}{p}\frac{A_{2n,p-1}}{(p-1)!}$.

\begin{enumerate}
    \item If $p-1$ does not divide $2n$, then $p$ divides \eqref{eqn:A}. Since  $\frac{A_{2n,p-1}}{(p-1)!}$ is an integer by \cite[(2.2)]{MR130397} and $2(p-1)!$ is divisible by $2\cdot 10!$, we know that the $r=p-1$ term of \eqref{eqn:sum} is $0\mod 64$.
    \item If $p-1$ divides $2n$, then $\eqref{eqn:A}$ is $-1$ mod $p$. Then this $p$ appears in $D_n$. Note that $p=2^m+1$ for $m\ge 4$ and $n> 8$. Hence we have
    $$A_{2n,p-1}=\sum_{s=0}^{2^m}(-1)^s\binom{2^m}{s}s^{2n}=-\sum_{s=1}^{2^{m-1}} \binom{2^m}{2s-1} (2s-1)^{2n} \mod 32.$$
    We claim that $(2s-1)^{2n}=1 \mod 32$. Then above is $-\sum_{s=1}^{2^{m-1}} \binom{2^m}{2s-1} = -2^{2^m-1} = 0 \mod 32$. Therefore the $r=p-1$ term of \eqref{eqn:sum}, i.e.\ $-2D_n\frac{A_{2n,p-1}}{p}$, is $0\mod 64$. To prove the claim  $(2s-1)^{2n}=1 \mod 32$, we first have
    $$(2s-1)^{2n} = 1^{2n}-2s\binom{2n}{1}+4s^2\binom{2n}{2}-8s^3\binom{2n}{3}+16s^4\binom{2n}{4} \mod 32.$$
    Since $n=2^k>8$, we have $\binom{2n}{1}=0 \mod 16, \binom{2n}{2} = 0 \mod 8,\binom{2n}{3}=0 \mod 4, \binom{2n}{4} =0 \mod 2$, hence the claim follows.
\end{enumerate}
Now suppose $r+1\ge 9$ is not a prime. We write $r+1=2^mq$ for an odd number $q$.  By \cite[(2.2)]{MR130397}, we have $\frac{1}{r!} \sum_{s=0}^r (-1)^{r-s}\binom{r}{s}s^{2n}$ is an integer. And $(2^mq-1)!/2^mq$ is also an integer. Since $(2^m-1)! = 0 \mod 2^{2^m-1-m}$, then  $(2^mq-1)!/2^mq = 0 \mod 2^{2^m-1-2m}$. Therefore if $m\ge 4$, the $r+1=2^mq$ term of \eqref{eqn:sum} is $0\mod 64$. When $m=0$,  we have $(q-1)!/q = 0 \mod 64$ as $q\ge 9$. When $m=1$,  we have  $(2q-1)!/(2q) = 0 \mod 64$ as $2q-1\ge 9$. When $m=2$, we have $(4q-1)!/(4q) = 0\mod 64$ as $4q-1\ge 11$. When $m=3$, we have $(8q-1)!/(8q)=0 \mod 64$ as $8q-1\ge 23$. Sum up all the cases above, we have the first claim holds for $k\ge 4$. 
\end{proof}

\begin{remark}
From \href{https://oeis.org/A027641/b027641.txt}{this list}, we have\footnote{Note that $N_k$ in our convention is the absolute value of the $2k$th term in the list }
\begin{eqnarray*}
    N_{16} & = & 7709321041217, \\
    N_{32} & = & 106783830147866529886385444979142647942017, \\
    N_{64} & = & 2677547077425480828869544055852823947792914595925517406\\ & & 29978686063357792734863530145362663093519862048495908453718017.
\end{eqnarray*}
One can check that \Cref{lemma:numerator_Bernoulli} holds.
\end{remark}

\begin{remark}
By the same argument of \Cref{lemma:numerator_Bernoulli}, one can show that $N_{2^k} \mod 2^l$ stabilizes for any fixed $l$ and $k\gg 0$. 
\end{remark}

\begin{lemma}\label{lemma:trivial}
For $m=2^k\ge 16$, we have 
$$\frac{(2m)!}{2^{2m-1}}=11 \mod 64, \quad \binom{2m}{m}= 6 \mod 8.$$
\end{lemma}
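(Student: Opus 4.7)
The plan is to reduce both congruences to the odd-part factorization of $(2^{k+1})!$ and then invoke the Wilson-type identity for $(\mathbb{Z}/2^n)^\times$. First I would observe that by Legendre's formula $v_2((2m)!) = 2m - s_2(2m) = 2m - 1$ when $m = 2^k$, so $(2m)!/2^{2m-1}$ is exactly the odd part of $(2m)!$, and by Kummer's theorem $v_2\binom{2m}{m}$ equals the number of carries in the base-$2$ addition $m + m = 2^{k+1}$, which is $1$; hence $\binom{2m}{m} = 2 \cdot o((2m)!) / o(m!)^2$, where $o$ denotes the odd part.

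Next I would establish the recursion: splitting $(2^{k+1})!$ into odd factors $\{1,3,\dots,2^{k+1}-1\}$ and even factors $\{2,4,\dots,2^{k+1}\}$ (the latter contributing $2^{2^k} (2^k)!$), one gets
\[
o\bigl((2^{k+1})!\bigr) \;=\; (2^{k+1}-1)!! \cdot o\bigl((2^k)!\bigr) \;=\; \prod_{j=1}^{k+1} P_j, \qquad P_j := (2^j - 1)!!.
\]
The key ingredient is the Wilson-type fact that $\prod_{a \in (\mathbb{Z}/2^n)^\times} a \equiv 1 \pmod{2^n}$ for $n \ge 3$. For $j \ge 6$ the odd integers $1,3,\dots,2^j-1$ hit every odd residue mod $64$ precisely $2^{j-6}$ times, so $P_j \equiv 1 \pmod{64}$; for $j \ge 3$ the same argument modulo $4$ gives $P_j \equiv 1 \pmod 4$.

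I would then assemble the two congruences. For the first, compute $P_1,\dots,P_5 \equiv 1, 3, 41, 17, 33 \pmod{64}$ by direct multiplication, so for any $k \ge 4$,
\[
\frac{(2m)!}{2^{2m-1}} \;\equiv\; \prod_{j=1}^{k+1} P_j \;\equiv\; 1 \cdot 3 \cdot 41 \cdot 17 \cdot 33 \;\equiv\; 11 \pmod{64},
\]
since the remaining factors $P_6, P_7, \ldots$ are all $\equiv 1$. For the second, the quotient
\[
\frac{\binom{2m}{m}}{2} \;=\; \frac{P_{k+1}}{\prod_{j=1}^{k} P_j} \pmod 4
\]
reduces (for $k \ge 4$, using $P_j \equiv 1 \pmod 4$ whenever $j \ge 3$) to $1/(P_1 P_2) = 1/3 \equiv 3 \pmod 4$, so $\binom{2m}{m} \equiv 6 \pmod 8$.

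There is essentially no obstacle here beyond careful bookkeeping; the only slightly delicate point is verifying the Wilson-type stabilization $P_j \equiv 1 \pmod{64}$ for $j \ge 6$, which cleanly separates the ``tail'' from the finite computation of $P_1,\dots,P_5$ and keeps the answer independent of $k$. I would also sanity-check with $m = 16$, where $\binom{32}{16} = 601080390 \equiv 6 \pmod 8$, confirming both claims in the base case.
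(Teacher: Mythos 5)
Your proof is correct and takes essentially the same route as the paper: both reduce to the odd part of $(2m)!$, decompose it into the double-factorial blocks $(2^j-1)!!$, compute the blocks through $j=5$ explicitly mod $64$, dispose of the tail via the fact that the product of all odd residues mod $64$ is $1$ (the paper verifies this by direct multiplication rather than citing the Wilson-type theorem), and then get $\binom{2m}{m}\equiv 6 \pmod 8$ from $v_2\binom{2m}{m}=1$ and the mod-$4$ values of the odd parts. One small caveat: modulo $4$ the product of all units is $-1$, so ``the same argument'' does not literally give $P_j\equiv 1\pmod 4$ for $j\ge 3$; you need the evenness of the multiplicity $2^{j-2}$ (or simply your already-computed values $41,17,33\equiv 1\pmod 4$ together with the mod-$64$ statement for $j\ge 6$), which closes the point immediately.
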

\begin{proof}
We first compute
$$
\begin{array}{rclr}
    1\cdot 3 & = & 3 & \mod 64, \\
    1\cdot 3 \cdot 5 \cdot 7 & = & 41 & \mod 64,\\
    1\cdot 3 \cdot 5 \cdot \ldots  \cdot 13\cdot 15& = & 17 & \mod 64,\\
    1\cdot 3 \cdot 5 \cdot \ldots \cdot 29 \cdot 31 & = & 33 & \mod 64,\\
    \prod_{i=1}^{32}(2i-1) & = & 1  &\mod 64.
\end{array}
$$
$\frac{(2m)!}{2^{2m-1}}$ is $(2m)!$ with all factors of $2$ taken out. When $k=4$, $\frac{(2m)!}{2^{2m-1}}$ is the product of the first $4$ terms, which is $11 \mod 64$. When $k=5$,  $\frac{(2m)!}{2^{2m-1}}$ is the product of the first $5$ terms, which is also $11 \mod 64$. Note that for $k\ge 5$
$$\frac{(2^{k+2})!}{2^{2^{k+2}-1}}=\frac{(2^{k+1})!}{2^{2^{k+1}-1}} \cdot  (\prod_{i=1}^{32}(2i-1))^{2^{k-5}} = \ldots = \frac{(64)!}{2^{63}}=11 \mod 64.$$

Note that we have
\begin{equation}\label{eqn:2}
    \frac{1}{2}\binom{2m}{m} = \frac{\frac{(2m)!}{2^{2m-1}}}{\frac{m!}{2^{m-1}}\frac{m!}{2^{m-1}}}.
\end{equation}

The previous computation shows that for $k\ge 4$, we have
$$\frac{(2m)!}{2^{2m-1}} = \frac{m!}{2^{m-1}} = 3 \mod 4.$$
Therefore we have $\eqref{eqn:2}=3 \mod 4$ and $\binom{2m}{m}= 6 \mod 8$.
\end{proof}

\begin{lemma}\label{lemma:factor}
Let $p$ be a prime number and $m=\sum_{i=0}^Na_ip^i$, the number of factor $p$ in $m!$ is given by
$$\sum_{i=1}^N a_i\cdot \frac{p^i-1}{p-1}$$
\end{lemma}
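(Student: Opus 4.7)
The plan is to reduce this to the classical Legendre formula for the $p$-adic valuation of a factorial, which states that the exponent of the prime $p$ in $m!$ equals $\sum_{i=1}^{\infty}\lfloor m/p^i\rfloor$. The standard proof of Legendre's formula counts, for each $i\ge 1$, the number of multiples of $p^i$ among $1,2,\ldots,m$: a given integer $k\le m$ contributes exactly $v_p(k)$ to the total count when summed over $i$, since it is counted once for each $i\le v_p(k)$. Summing over $k$ gives $v_p(m!)=\sum_{i\ge 1}\lfloor m/p^i\rfloor$. I would either cite this directly or reproduce the one-line double-counting argument.

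Next, I would expand $\lfloor m/p^i\rfloor$ using the base-$p$ representation $m=\sum_{j=0}^N a_j p^j$. Since $0\le a_j\le p-1$, the truncation $\sum_{j=0}^{i-1} a_j p^j$ is strictly less than $p^i$, so
\[
\left\lfloor \frac{m}{p^i}\right\rfloor \;=\; \sum_{j=i}^{N} a_j\, p^{j-i}.
\]
Note that terms with $j<i$ drop out (and in particular the sum over $i$ is finite, extending only up to $i=N$).

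Finally, I would swap the order of summation and recognize a geometric series:
\[
\sum_{i=1}^{N}\left\lfloor \frac{m}{p^i}\right\rfloor
= \sum_{i=1}^{N}\sum_{j=i}^{N} a_j p^{j-i}
= \sum_{j=1}^{N} a_j \sum_{i=1}^{j} p^{j-i}
= \sum_{j=1}^{N} a_j\,(1+p+\cdots+p^{j-1})
= \sum_{j=1}^{N} a_j \cdot \frac{p^{j}-1}{p-1},
\]
which is exactly the claimed expression. There is no real obstacle here; the entire content is Legendre's formula together with a standard index-swap, and the only mild bookkeeping is checking that the $j=0$ term contributes nothing (which it doesn't, since $\frac{p^0-1}{p-1}=0$, consistent with the sum starting at $j=1$). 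One could also cross-check against the well-known closed form $v_p(m!)=(m-s_p(m))/(p-1)$ with $s_p(m)=\sum_j a_j$, since $\sum_{j=1}^N a_j(p^j-1)/(p-1)=(m-a_0-(s_p(m)-a_0))/(p-1)=(m-s_p(m))/(p-1)$.
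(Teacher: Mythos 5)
Your proposal is correct and follows essentially the same route as the paper: both count the multiples of $p^j$ among $1,\ldots,m$ (i.e.\ Legendre's formula), expand $\lfloor m/p^j\rfloor$ in the base-$p$ digits, swap the order of summation, and sum the geometric series to get $\sum_i a_i\frac{p^i-1}{p-1}$. The cross-check via $(m-s_p(m))/(p-1)$ is a nice extra but adds nothing beyond the paper's argument.
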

\begin{proof}
Note that there are precisely $ \sum_{i\ge j}^Na_i p^{i-j}$ many integers no larger than $m$ which are divisible by $p^j$. It is clear that number of factor $p$ in $m!$ is given by 
$$  \sum_{j\le 1}^N \sum_{i\ge j}^Na_i p^{i-j}=\sum_{i\ge 1}^N a_i \sum_{j=0}^{i-1}{p^j}=\sum_{i=1}^N a_i\cdot \frac{p^i-1}{p-1}.$$
\end{proof}

\bibliographystyle{plain} 
\bibliography{ref}
\Addresses

\end{document}